\documentclass[a4paper,twoside,leqno]{article}

\usepackage[english]{babel}
\usepackage[utf8]{inputenc}
\usepackage{csquotes}
\usepackage{amsmath}
\usepackage{amsthm}
\usepackage{amsfonts}
\usepackage{amssymb}
\usepackage{fancyhdr}
\usepackage[mathscr]{eucal}
\usepackage{pdfpages}
\usepackage{multicol}
\usepackage[all]{xy}
\usepackage{pgf,tikz}
\usepackage[g]{esvect}
\usepackage{leftidx}
\usetikzlibrary{arrows}
\usepackage[pdftex,pdfborder={0 0 0},linktoc=all]{hyperref}

\theoremstyle{plain}
	\newtheorem{thm}{Theorem}[section]
	\newtheorem{cor}[thm]{Corollary}
	\newtheorem{lem}[thm]{Lemma}
	\newtheorem{prop}[thm]{Proposition}

\theoremstyle{definition}
	\newtheorem{dfn}[thm]{Definition}
	\newtheorem{ntn}[thm]{Notation}
	\newtheorem{dfns}[thm]{Definitions}

\theoremstyle{remark}
	\newtheorem{rem}[thm]{Remark}

\numberwithin{equation}{section}

\newcommand{\N}{\mathbb{N}}
\newcommand{\Z}{\mathbb{Z}}
\newcommand{\R}{\mathbb{R}}
\newcommand{\C}{\mathbb{C}}
\newcommand{\dx}{\dmesure\!}
\newcommand{\deron}[2]{\frac{\partial #1}{\partial #2}}

\newcommand{\esp}[2][]{\mathbb{E}_{#1}\!\left[ #2 \right]}
\newcommand{\espcond}[3][]{\mathbb{E}_{#1}\!\left[ #2 \rule{0pt}{4mm}\mvert #3 \right]}
\newcommand{\loi}[1]{\,\dmesure\! P_{#1}}
\newcommand{\mvert}{\mathrel{}\middle|\mathrel{}}
\newcommand{\norm}[1]{\left\lvert #1 \right\rvert}
\newcommand{\Norm}[1]{\left\lVert #1 \right\rVert}
\newcommand{\odet}[1]{\norm{\det\! ^\perp\left(#1\right)}}
\newcommand{\prsc}[2]{\left\langle #1\,, #2 \right\rangle}
\newcommand{\rmes}[1]{\norm{\dmesure\! V_{#1}}}
\newcommand{\trans}{\leftidx{^\text{t}}}
\newcommand{\vol}[1]{\Vol\left(#1\right)}

\renewcommand{\S}{\mathbb{S}}
\renewcommand{\P}{\mathbb{P}}
\renewcommand{\epsilon}{\varepsilon}
\renewcommand{\geq}{\geqslant}
\renewcommand{\leq}{\leqslant}

\DeclareMathOperator{\cov}{Cov}
\DeclareMathOperator{\dmesure}{d}
\DeclareMathOperator{\End}{End}
\DeclareMathOperator{\id}{id}
\DeclareMathOperator{\Id}{Id}
\DeclareMathOperator{\II}{II}
\DeclareMathOperator{\Sym}{Sym}
\DeclareMathOperator{\tr}{Tr}
\DeclareMathOperator{\var}{Var}
\DeclareMathOperator{\Vol}{Vol}

\setcounter{MaxMatrixCols}{20}

\setlength{\topmargin}{0pt}
\setlength{\evensidemargin}{20pt}
\setlength{\textwidth}{400pt}
\setlength{\textheight}{620pt}

\author{Thomas Letendre \thanks{Thomas Letendre, École Normale Supérieure de Lyon, Unité de Mathématiques Pures et Appliquées, UMR CNRS 5669, 46 allée d'Italie, 69634 Lyon Cedex 07, France; \newline e-mail address: \url{thomas.letendre@ens-lyon.fr}.}}
\date{\today}
\title{Expected volume and Euler characteristic of random submanifolds}

\begin{document}

\maketitle

\begin{abstract}
In a closed manifold of positive dimension $n$, we estimate the expected volume and Euler characteristic for random submanifolds of codimension $r\in \{1,\dots,n\}$ in two different settings. On one hand, we consider a closed Riemannian manifold and some positive $\lambda$. Then we take $r$ independent random functions in the direct sum of the eigenspaces of the Laplace-Beltrami operator associated to eigenvalues less than $\lambda$ and consider the random submanifold defined as the common zero set of these $r$ functions. We compute asymptotics for the mean volume and Euler characteristic of this random submanifold as $\lambda$ goes to infinity. On the other hand, we consider a complex projective manifold defined over the reals, equipped with an ample line bundle $\mathcal{L}$ and a rank $r$ holomorphic vector bundle $\mathcal{E}$ that are also defined over the reals. Then we get asymptotics for the expected volume and Euler characteristic of the real vanishing locus of a random real holomorphic section of $\mathcal{E}\otimes\mathcal{L}^d$ as $d$ goes to infinity. The same techniques apply to both settings.
\end{abstract}

\paragraph*{Keywords:} Euler characteristic, Riemannian random wave, spectral function, random polynomial, real projective manifold, ample line bundle, Bergman kernel, Gaussian field.

\paragraph*{Mathematics Subject Classification 2010:} 14P25, 32A25, 34L20, 60D05, 60G60.

\section{Introduction}
\label{section intro}

Zeros of random polynomials were first studied by Bloch and P\`olya \cite{BP1932} in the early 30s. About ten years later, Kac \cite{Kac1943} obtained a sharp asymptotic for the expected number of real zeros of a polynomial of degree $d$ with independent standard Gaussian coefficients, as $d$ goes to infinity. This was later generalized to other distributions by Kostlan in \cite{Kos1993}. In particular, he introduced a normal distribution on the space of homogeneous polynomials of degree $d$ --- known as the Kostlan distribution --- which is more geometric, in the sense that it is invariant under isometries of $\C \P^1$. Bogomolny, Bohigas and Leboeuf \cite{BBL1996} showed that this distribution corresponds to the choice of $d$ independent roots, uniformly distributed in the Riemann sphere.

In higher dimension, the question of the number of zeros can be generalized in at least two ways. What is the expected volume of the zero set? And what is its expected Euler characteristic? More generally, one can ask what are the expected volume and Euler characteristic of a random submanifold obtained as the zero set of some Gaussian field on a Riemannian manifold. In this paper, we provide an asymptotic answer to these questions in the case of Riemannian random waves and in the case of real algebraic manifolds.

Let us describe our frameworks and state the main results of this paper. See section~\ref{section random submanifolds} for more details. Let $(M,g)$ be a closed (that is compact without boundary) smooth Riemannian manifold of positive dimension~$n$, equipped with the Riemannian measure $\rmes{M}$ associated to $g$ (defined below \eqref{equation riemannian measure}). This induces a $L^2$-inner product on $\mathcal{C}^\infty(M)$ defined by:
\begin{equation}
\label{equation prsc function}
\forall \phi, \psi \in \mathcal{C}^\infty(M), \qquad \prsc{\phi}{\psi} = \int_{x\in M} \phi(x)\psi(x) \rmes{M}.
\end{equation}
It is well-known that the subspace $V_\lambda \subset \mathcal{C}^\infty(M)$ spanned by the eigenfunctions of the Laplacian associated to eigenvalues smaller than $\lambda$ has finite dimension. Let $1 \leq r \leq n$ and let $f^{(1)},\dots,f^{(r)} \in V_\lambda$ be independent standard Gaussian vectors, then we denote by $Z_f$ the zero set of $f=(f^{(1)},\dots,f^{(r)})$. Then, for $\lambda$ large enough, $Z_f$ is almost surely a submanifold of $M$ of codimension $r$ (see section~\ref{section random submanifolds} below) and we denote by $\vol{Z_f}$ its Riemannian volume for the restriction of $g$ to $Z_f$. We also denote by $\chi(Z_f)$ its Euler characteristic.

\begin{thm}
\label{theorem expected volume harmonic case}
Let $(M,g)$ be a closed Riemannian manifold of dimension $n$. Let $V_\lambda$ be the direct sum of the eigenspaces of the Laplace-Beltrami operator associated to eigenvalues smaller than $\lambda$. Let $f^{(1)},\dots,f^{(r)}$ be $r$ independent standard Gaussian vectors in $V_\lambda$, with $1 \leq r \leq n$. Then the following holds as $\lambda$ goes to infinity:
\begin{equation*}
\esp{\vol{Z_f}} = \left( \frac{\lambda}{n+2} \right)^{\frac{r}{2}} \vol{M} \frac{\vol{\S^{n-r}}}{\vol{\S^n}} +O\!\left(\lambda^\frac{r-1}{2}\right).
\end{equation*}
\end{thm}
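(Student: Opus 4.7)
The natural approach is the Kac--Rice formula applied to the Gaussian field $f \colon M \to \R^r$. As soon as $\lambda$ is large enough for $V_\lambda$ to generate the $1$-jets of $M$ (which also forces $Z_f$ to be a.s.\ a smooth codimension-$r$ submanifold), one can write
\begin{equation*}
\esp{\vol{Z_f}} = \int_{x \in M} p_{f(x)}(0) \, \espcond{\odet{\nabla_x f}}{f(x) = 0} \, \rmes{M},
\end{equation*}
where $p_{f(x)}$ is the Gaussian density of $f(x) \in \R^r$ and $\nabla_x f \colon T_xM \to \R^r$ is the Jacobian of $f$. The problem thus reduces to estimating two Gaussian quantities: the value of the density of $f(x)$ at $0$, and the conditional mean of the Jacobian of $\nabla f(x)$ given $f(x)=0$.

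Both are entirely determined by the covariance kernel of $f^{(k)}$, which is the spectral projector
\[ e_\lambda(x,y) = \sum_{\lambda_i \leq \lambda} \phi_i(x) \phi_i(y), \]
written in any $L^2$-orthonormal eigenbasis $(\phi_i)$ of $-\Delta_g$. Indeed, $\var(f^{(k)}(x)) = e_\lambda(x,x)$, $\cov(f^{(k)}(x), \nabla f^{(k)}(x)) = \nabla_y e_\lambda(x,y)|_{y=x}$, and $\cov(\nabla f^{(k)}(x)) = \nabla_x \nabla_y e_\lambda(x,y)|_{y=x}$, while the different $k$'s remain independent. The analytic heart of the argument is then H\"ormander's pointwise Weyl law together with its differentiated version: uniformly in $x \in M$ and in a $g$-orthonormal frame at $x$,
\begin{equation*}
e_\lambda(x,x) = \frac{\vol{B^n}}{(2\pi)^n}\lambda^{n/2} + O\!\left(\lambda^{(n-1)/2}\right),
\end{equation*}
\begin{equation*}
\partial_{x_i}\partial_{y_j} e_\lambda(x,y)|_{y=x} = \frac{\vol{B^n}}{(n+2)(2\pi)^n}\lambda^{(n+2)/2}\delta_{ij} + O\!\left(\lambda^{(n+1)/2}\right),
\end{equation*}
while $\nabla_y e_\lambda(x,y)|_{y=x} = O(\lambda^{(n-1)/2})$ by symmetry of the leading symbol, where $B^n \subset \R^n$ is the unit Euclidean ball.

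Gaussian conditioning then gives that the conditional law of $\nabla_x f$ given $f(x)=0$ is a product, over the $r$ rows, of identical centred Gaussians with covariance $\frac{\lambda}{n+2}\, e_\lambda(x,x)\, \Id_n + O(\lambda^{(n+1)/2})$ in the chosen frame. Rescaling the rows by $e_\lambda(x,x)^{1/2}$ and invoking the classical identity
\[ \esp{\odet{A}} = (2\pi)^{r/2}\, \frac{\vol{\S^{n-r}}}{\vol{\S^n}} \]
for an $r \times n$ matrix $A$ with independent $\mathcal{N}(0,1)$ entries, combined with $p_{f(x)}(0) = (2\pi e_\lambda(x,x))^{-r/2}$, yields the pointwise estimate
\[ p_{f(x)}(0)\, \espcond{\odet{\nabla_x f}}{f(x)=0} = \left(\frac{\lambda}{n+2}\right)^{\!r/2}\!\frac{\vol{\S^{n-r}}}{\vol{\S^n}} + O\!\left(\lambda^{(r-1)/2}\right), \]
uniformly in $x \in M$, and integration against $\rmes{M}$ finishes the proof. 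The step I expect to require most care is precisely this last uniform estimate: one must invoke the uniform-in-$x$ form of the H\"ormander remainder (standard on a closed manifold) and a quantitative continuity bound for the Gaussian integral $\esp{\odet{A}}$ under $O(\lambda^{-1})$ perturbations of the normalised covariance, so that the $\lambda^{-1}$ relative error of the spectral asymptotics is transported into the claimed additive remainder $O(\lambda^{(r-1)/2})$.
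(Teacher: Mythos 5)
Your proposal follows exactly the paper's route: Kac--Rice formula reducing the expectation to an integral over $M$ of a density factor times a conditional Gaussian expectation of $\odet{\nabla_x f}$, pointwise Weyl-law asymptotics (the paper uses Bin's theorem extending H\"ormander) for $e_\lambda$ and its derivatives on the diagonal, Gaussian conditioning to identify the scaled law of $\nabla_x f$ as close to standard, the identity $\esp{\odet{A}} = (2\pi)^{r/2}\vol{\S^{n-r}}/\vol{\S^n}$ for a standard Gaussian $r\times n$ matrix, and a continuity argument transporting the perturbation of the normalised covariance into the additive remainder. This is essentially the paper's Lemma~\ref{lemma conditional expectation volume} and the surrounding bookkeeping, so the strategy and all the constants match.

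Two small inaccuracies in your sketch, neither fatal. First, the bound $\partial_y e_\lambda(x,y)\big|_{y=x} = O(\lambda^{(n-1)/2})$ ``by symmetry of the leading symbol'' is not what the spectral estimates deliver: the symmetry argument kills the leading Fourier-integral term, but the remainder of the differentiated H\"ormander/Bin asymptotic is $O(\lambda^{n/2})$, which is exactly what the paper records in \eqref{bin x1y0}. After the normalisation this cross-covariance is still $O(\lambda^{-1/2})$, so nothing breaks, but you should not claim the sharper exponent. Second, in your final paragraph you attribute a ``$\lambda^{-1}$ relative error'' to the spectral asymptotics, yet your own display correctly has a $O(\lambda^{(n-1)/2})$ remainder, i.e.\ relative error $O(\lambda^{-1/2})$; the latter is what produces the $O(\lambda^{(r-1)/2})$ remainder in the statement, and claiming $\lambda^{-1}$ would incorrectly give $O(\lambda^{(r-2)/2})$. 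Also note that rescaling rows by $e_\lambda(x,x)^{1/2}$ leaves a row covariance $\tfrac{\lambda}{n+2}\Id_n$ rather than $\Id_n$; you do account for the resulting factor $(\lambda/(n+2))^{r/2}$, but the phrasing is a little misleading.
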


\noindent
Here and throughout this paper, $\esp{ \ \cdot \ }$ denotes the mathematical expectation of the quantity between the brackets and $\S^n$ is, as usual, the unit Euclidean sphere in $\R^{n+1}$.

If $n-r$ is odd, $Z_f$ is almost surely a smooth manifold of odd dimension. In this case, $\chi(Z_f)=0$ almost surely. If $n-r$ is even, we get the following result.

\begin{thm}
\label{theorem expected Euler characteristic harmonic case}
Let $(M,g)$ be a closed Riemannian manifold of dimension $n$. Let $V_\lambda$ be the direct sum of the eigenspaces of the Laplace-Beltrami operator associated to eigenvalues smaller than $\lambda$. Let $f^{(1)},\dots,f^{(r)}$ be $r$ independent standard Gaussian vectors in $V_\lambda$, with $1 \leq r \leq n$. Then, if $n-r$ is even, the following holds as $\lambda$ goes to infinity:
\begin{equation*}
\esp{\chi\left(Z_f\right)} = \left(-1\right)^{\frac{n-r}{2}} \left( \frac{\lambda}{n+2} \right)^\frac{n}{2} \vol{M} \frac{\vol{\S^{n-r+1}}\vol{\S^{r-1}}}{\pi \vol{\S^n}\vol{\S^{n-1}}} + O\!\left(\lambda^\frac{n-1}{2}\right).
\end{equation*}
\end{thm}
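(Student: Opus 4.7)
The plan is to follow the route of Theorem~\ref{theorem expected volume harmonic case}, replacing the integrand $1$ by the Chern--Gauss--Bonnet density of $Z_f$. Since $n-r$ is even and $Z_f$ is almost surely a smooth closed submanifold, Chern's intrinsic formula reads
\begin{equation*}
\chi(Z_f) = \frac{1}{(2\pi)^{(n-r)/2}} \int_{Z_f} \operatorname{Pf}\!\left(R^{Z_f}\right) \rmes{Z_f},
\end{equation*}
where $R^{Z_f}$ is the Riemann curvature of the induced metric. By the Gauss equation, $R^{Z_f}$ is a universal polynomial in the restriction of the ambient curvature $R^M$ and of the second fundamental form $\II$ of $Z_f$ in $M$. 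At a zero $x$ of $f$, the normal bundle to $Z_f$ is framed by $(df^{(1)}(x),\dots,df^{(r)}(x))$, and in this frame the components of $\II$ are explicit rational functions of $\nabla f(x)$ and of the restrictions of the Hessians $\nabla^2 f^{(i)}(x)$ to $T_xZ_f$. Thus $\chi(Z_f)$ is the integral over $Z_f$ of a universal polynomial in the $2$-jet $(f,\nabla f,\nabla^2 f)(x)$ and in $R^M(x)$, to which the Kac--Rice formula can be applied.

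Applying the Kac--Rice formula exactly as in Theorem~\ref{theorem expected volume harmonic case} turns $\esp{\chi(Z_f)}$ into an integral over $M$ of the Gaussian conditional expectation of this density given $f(x)=0$, weighted by the density of $f(x)$ at the origin and by the Jacobian $\odet{df_x}$. I would then rescale by $u\mapsto \exp_x(u/\sqrt{\lambda})$ and invoke the Hörmander asymptotic for the spectral projector onto $V_\lambda$: after rescaling and normalisation, the Gaussian field $f$ converges on compact sets to a universal centred Gaussian field $F$ on $\R^n$, the Hessians $\nabla^2 f$ being of typical size $\lambda$, the gradients $\nabla f$ of typical size $\sqrt{\lambda}$, and $R^M$ remaining $O(1)$. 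A straightforward power count then shows that the only contribution of order $\lambda^{n/2}$ comes from the part of the Pfaffian built purely from the rescaled Hessians; every term containing a factor $R^M$ or coming from the Hörmander remainder is of relative size $O(\lambda^{-1/2})$ and is absorbed in the error $O(\lambda^{(n-1)/2})$.

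The main obstacle is the explicit evaluation of the leading universal constant, namely the Gaussian conditional expectation
\begin{equation*}
\espcond{\odet{dF(0)}\,\operatorname{Pf}\!\left(\Pi \nabla^2 F(0) \Pi\right)}{F(0)=0},
\end{equation*}
where $\Pi$ is the orthogonal projection on $\ker dF(0)$. At the origin, $F(0)$ and $\nabla^2 F(0)$ are correlated Gaussian variables, both independent of $dF(0)$, and the $O(n)\times O(r)$-invariance of the limiting covariance forces every covariance tensor to be a scalar multiple of the unique equivariant one. Consequently the conditional expectation factorises as a product of a chi-type moment of $\Norm{dF(0)}$ and an invariant moment of the Pfaffian of a GOE-like matrix; a direct computation identifies this product as $\vol{\S^{n-r+1}}\vol{\S^{r-1}}/(\pi \vol{\S^n}\vol{\S^{n-1}})$, the sign $(-1)^{(n-r)/2}$ arising from the Gauss equation (each pairing of two $\II$ terms contributes a minus sign). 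Uniformity of the Hörmander remainder in $x\in M$ and dominated convergence then close the argument.
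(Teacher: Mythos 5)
Your plan follows exactly the route the paper takes: express $\chi(Z_f)$ via Chern--Gauss--Bonnet as an integral of a density built from the induced curvature, use the Gauss equation and the identity $\II=(d_xf)^\dagger\circ\nabla^2_xf$ to write that density as a universal polynomial in the $2$-jet, apply the Kac--Rice formula, rescale by the natural powers of $\lambda$, invoke the Hörmander/Bin asymptotics of the spectral function to obtain a universal limiting Gaussian jet, discard the $R^M$-contributions as lower order, and evaluate the remaining invariant Gaussian moment. The only cosmetic difference is that you phrase the limit as local convergence of the rescaled field à la Nazarov--Sodin whereas the paper works pointwise with scaled Gaussian vectors $(t_\lambda,L_\lambda,S_\lambda)$; the actual computation of the universal constant (which you correctly identify as the main remaining work) is carried out in the paper via a decomposition of $(\odet{L},L^{\dagger*}U)$ into independent chi-type and uniform spherical variables (Proposition~\ref{proposition same distribution}) together with an explicit summation over pairings.
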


We also consider the framework of the papers \cite{GW2014b,GW2014c} by Gayet and Welschinger, see section~\ref{subsection real algebraic setting} for more details. Let $\mathcal{X}$ be a smooth complex projective manifold of complex dimension $n$. Let $\mathcal{L}$ be an ample holomorphic line bundle over $\mathcal{X}$ and $\mathcal{E}$ be a holomorphic vector bundle over $\mathcal{X}$ of rank $r$. We assume that $\mathcal{X}$, $\mathcal{L}$ and $\mathcal{E}$ are equipped with compatible real structures and that the real locus $\R \mathcal{X}$ of $\mathcal{X}$ is non-empty.

Let $h_\mathcal{L}$ denote a Hermitian metric on $\mathcal{L}$ with positive curvature $\omega$ and $h_\mathcal{E}$ denote a Hermitian metric on $\mathcal{E}$. Both metrics are assumed to be compatible with the real structures. Then $\omega$ is a Kähler form and it induces a Riemannian metric $g$ and a volume form $\dx V_\mathcal{X}=\frac{\omega^n}{n !}$ on $\mathcal{X}$. For any $d \in \N$, the space of smooth sections of $\mathcal{E}\otimes \mathcal{L}^d$ is equipped with a $L^2$-inner product similar to \eqref{equation prsc function} (see section~\ref{subsection real algebraic setting}).

Let $\R H^0(\mathcal{X},\mathcal{E}\otimes \mathcal{L}^d)$ denote the space of real global holomorphic sections of $\mathcal{E}\otimes \mathcal{L}^d$. This is a Euclidean space for the above inner product. Let $s$ be a standard Gaussian section in $\R H^0(\mathcal{X},\mathcal{E}\otimes \mathcal{L}^d)$, then we denote by $Z_s$ the real part of its zero set. Once again, for $d$ large enough, $Z_s$ is almost surely a smooth submanifold of $\R \mathcal{X}$ of codimension $r$. Let $\vol{Z_s}$ denote the Riemannian volume of $Z_s$ and $\chi(Z_s)$ denote its Euler characteristic. We get the analogues of Theorems~\ref{theorem expected volume harmonic case} and~\ref{theorem expected Euler characteristic harmonic case} in this setting.

\begin{thm}
\label{theorem expected volume algebraic case}
Let $\mathcal{X}$ be a complex projective manifold of dimension $n$ defined over the reals and $r \in \{1,\dots,n\}$. Let $\mathcal{L}$ be an ample holomorphic Hermitian line bundle over $\mathcal{X}$ and $\mathcal{E}$ be a rank $r$ holomorphic Hermitian vector bundle over $\mathcal{X}$, both equipped with real structures compatible with the one on $\mathcal{X}$. Let $s$ be a standard Gaussian vector in $\R H^0(\mathcal{X},\mathcal{E}\otimes \mathcal{L}^d)$. Then the following holds as $d$ goes to infinity:
\begin{equation*}
\esp{\vol{Z_s}} = d^\frac{r}{2} \vol{\R \mathcal{X}} \frac{\vol{\S^{n-r}}}{\vol{\S^n}}  + O\!\left(d^{\frac{r}{2}-1}\right).
\end{equation*}
\end{thm}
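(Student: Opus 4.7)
My plan is to apply a Kac--Rice type formula, then feed in the near-diagonal asymptotics of the Bergman kernel of $\mathcal{E}\otimes \mathcal{L}^d$, exactly as the paper announces for Theorem~\ref{theorem expected volume harmonic case} with the spectral function $e_\lambda(\cdot,\cdot)$ replaced by the Bergman kernel. Since $s$ is a centred Gaussian section whose covariance kernel is the real Bergman kernel of $\R H^0(\mathcal{X},\mathcal{E}\otimes \mathcal{L}^d)$, the Kac--Rice formula yields
\begin{equation*}
\esp{\vol{Z_s}} = \int_{x\in \R\mathcal{X}} \espcond{\odet{\nabla_x s}}{s(x)=0} \rho_x(0) \rmes{\R\mathcal{X}},
\end{equation*}
where $\rho_x$ is the density at $0 \in \mathcal{E}_x^\R$ of the centred Gaussian vector $s(x)$, and $\nabla_x s$ denotes the intrinsic derivative, a well-defined linear map $T_x\R\mathcal{X} \to \mathcal{E}_x^\R$ on the event $\{s(x)=0\}$. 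The problem reduces to controlling, uniformly in $x\in \R\mathcal{X}$, the joint centred Gaussian law of $(s(x),\nabla_x s)$.

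This joint law is entirely determined by the Bergman kernel $e_d$ and its first two derivatives on the diagonal. Using the off-diagonal Bergman kernel expansion of Tian, Zelditch, Catlin, and Ma--Marinescu, I would show that, in an $h_\mathcal{E}$-unitary frame of $\mathcal{E}$ above a K\"ahler normal chart centred at $x$, one has
\begin{equation*}
\esp{s(x) \otimes s(x)^\ast} = \frac{d^n}{\pi^n}\, \Id_{\mathcal{E}_x^\R} + O(d^{n-1}), \qquad \esp{s(x) \otimes (\nabla_x s)^\ast} = O(d^{n-1}),
\end{equation*}
\begin{equation*}
\esp{\nabla_x s \otimes (\nabla_x s)^\ast} = \frac{d^{n+1}}{\pi^n}\, \Id_{\mathcal{E}_x^\R \otimes T^\ast_x \R\mathcal{X}} + O(d^n),
\end{equation*}
with remainders uniform in $x\in \R\mathcal{X}$. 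Thus, to leading order, $s(x)$ and $\nabla_x s$ are asymptotically independent, $s(x)$ is isotropic on $\mathcal{E}_x^\R$, and $\nabla_x s$ behaves like $\sqrt{d^{n+1}/\pi^n}$ times an $r\times n$ matrix of iid standard Gaussians.

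From here the scaling in $d$ is read off immediately: $\rho_x(0) = (2\pi)^{-r/2}\det(\esp{s(x)\otimes s(x)^\ast})^{-1/2}$ contributes $d^{-nr/2}$, while the conditional expectation $\espcond{\odet{\nabla_x s}}{s(x)=0}$ contributes $d^{r(n+1)/2}$, giving the overall power $d^{r/2}$. The universal constant $\vol{\S^{n-r}}/\vol{\S^n}$ comes from the classical evaluation of $\esp{\odet{A}}$ for an $r\times n$ matrix $A$ of iid standard Gaussians, combined with the $(2\pi)^{-r/2}$ factor in $\rho_x(0)$; this is a routine computation in spherical coordinates on the fibre, identical to the one that produces the constant in Theorem~\ref{theorem expected volume harmonic case}. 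Integrating against $\rmes{\R\mathcal{X}}$ then yields the leading term $d^{r/2}\vol{\R\mathcal{X}}\vol{\S^{n-r}}/\vol{\S^n}$.

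The main obstacle is promoting this pointwise limit into the announced uniform $O(d^{r/2-1})$ remainder. This requires the near-diagonal Bergman kernel asymptotics in $\mathcal{C}^2$-norm with a remainder uniform on the compact set $\R\mathcal{X}$, together with a stability estimate showing that an $O(d^{-1})$ relative perturbation of the covariance matrix of $(s(x),\nabla_x s)$ produces an $O(d^{-1})$ relative perturbation of the Kac--Rice integrand. Because $\odet{\cdot}$ is only Lipschitz and the conditioning on $\{s(x)=0\}$ degenerates near the singular locus of $\esp{s(x)\otimes s(x)^\ast}$, this last step relies crucially on the uniform invertibility of the leading covariance, which is granted for $d$ large by the asymptotics above.
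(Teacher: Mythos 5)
Your proposal matches the paper's own proof in its essential architecture: apply the Kac--Rice formula to $\phi\equiv 1$, identify the covariance kernel of the field $(s(x))_{x\in\R\mathcal{X}}$ with the Bergman kernel $E_d$ of $\mathcal{E}\otimes\mathcal{L}^d$, rescale $(s(x),\nabla_x s)$ so the joint covariance becomes $\Id+O(d^{-1})$, reduce the conditional expectation of $\odet{\cdot}$ to the universal Gaussian constant $(2\pi)^{r/2}\vol{\S^{n-r}}/\vol{\S^n}$ (the paper's Lemma~\ref{lemma expectation odet of L} via Proposition~\ref{proposition same distribution}), and integrate over $\R\mathcal{X}$; the $(2\pi)^{-r/2}$ from the density and the scaling factors cancel exactly to give $d^{r/2}\vol{\S^{n-r}}/\vol{\S^n}$ as you describe.

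Where you diverge is in how you source the on-diagonal Bergman kernel estimates. You would cite the Bergman kernel expansion of Tian--Zelditch--Catlin and Ma--Marinescu directly, but the paper explicitly records that the author ``could not find the estimates we needed in codimension higher than $1$ in the literature'' and instead re-derives them from scratch with H\"ormander--Tian peak sections (Proposition~\ref{proposition estimates Bergman}), precisely to handle the rank-$r>1$ bundle $\mathcal{E}\otimes\mathcal{L}^d$ with uniform $\mathcal{C}^2$ remainders; a footnote adds that the estimates can also be extracted from Berman--Berndtsson--Sj\"ostrand, as you essentially propose. Your route is therefore viable provided you verify that the off-diagonal expansion you invoke is stated for twisted bundles $\mathcal{E}\otimes\mathcal{L}^d$ with $\operatorname{rank}\mathcal{E}>1$ and with a uniform-in-$x$ remainder, which the paper chose not to rely on. One small inaccuracy: you claim $\esp{s(x)\otimes(\nabla_x s)^\ast}=O(d^{n-1})$, while the paper's peak-section estimate is only $O(d^{n-1/2})$; but after rescaling either bound gives the needed $O(d^{-1})$ correction to the identity covariance, so this does not affect the conclusion. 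Your final paragraph correctly anticipates the stability argument needed (the paper's Lemma~\ref{lemma conditional expectation volume}), which the paper carries out by direct comparison of Gaussian densities rather than any Lipschitz property of $\odet{\cdot}$.
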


\begin{thm}
\label{theorem expected Euler characteristic algebraic case}
Let $\mathcal{X}$ be a complex projective manifold of dimension $n$ defined over the reals and $r \in \{1,\dots,n\}$. Let $\mathcal{L}$ be an ample holomorphic Hermitian line bundle over $\mathcal{X}$ and $\mathcal{E}$ be a rank $r$ holomorphic Hermitian vector bundle over $\mathcal{X}$, both equipped with real structures compatible with the one on $\mathcal{X}$. Let $s$ be a standard Gaussian vector in $\R H^0(\mathcal{X},\mathcal{E}\otimes \mathcal{L}^d)$. Then, if $n-r$ is even, the following holds as $d$ goes to infinity:
\begin{equation*}
\esp{\chi\left(Z_s\right)} = \left(-1\right)^{\frac{n-r}{2}} d^\frac{n}{2} \vol{\R \mathcal{X}} \frac{\vol{\S^{n-r+1}}\vol{\S^{r-1}}}{\pi \vol{\S^n}\vol{\S^{n-1}}} + O\!\left(d^{\frac{n}{2}-1}\right).
\end{equation*}
\end{thm}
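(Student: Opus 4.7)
The plan is to transport the argument for Theorem~\ref{theorem expected Euler characteristic harmonic case} to the holomorphic setting, with the near-diagonal asymptotics of the Bergman kernel of $\mathcal{E}\otimes\mathcal{L}^d$ playing the role of the scaling limit of the spectral function of the Laplacian. First, since $n-r$ is even, I would apply the Chern-Gauss-Bonnet theorem on $Z_s$ to write $\chi(Z_s)$ as the integral over $Z_s$ of the Pfaffian of the intrinsic curvature of $Z_s$. Via the Gauss equation, this Pfaffian is a universal polynomial $Q$ in the second fundamental form $\II_{Z_s}$ plus a contribution from the ambient curvature of $(\R\mathcal{X},g)$, which is $d$-independent and will only affect lower order terms. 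At an almost sure zero $x$ of $s$, $\II_{Z_s}$ is determined by $\nabla^2_x s$ restricted to $\ker(d_x s)$ and read in the orthonormal normal frame built by Gram-Schmidt from the components of $d_x s$, so $\chi(Z_s) = \int_{Z_s} Q(d_x s, \nabla^2_x s)\, \rmes{Z_s}$ up to lower order terms.

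Second, I would apply a Kac-Rice type formula to convert this into a deterministic integral over $\R\mathcal{X}$:
\begin{equation*}
\esp{\chi(Z_s)} = \int_{\R\mathcal{X}} \espcond{Q(d_x s, \nabla^2_x s)\, \odet{d_x s}}{s(x)=0}\, \rho_{s(x)}(0)\, \rmes{\R\mathcal{X}},
\end{equation*}
where $\rho_{s(x)}$ is the Gaussian density of $s(x) \in \R\mathcal{E}_x$. Since $s(x)$, $\nabla_x s$ and $\nabla^2_x s$ are jointly Gaussian, the right-hand side is an explicit function of the $2$-jet covariance of $s$ at $x$, which is itself expressed through the Bergman kernel of $\mathcal{E}\otimes\mathcal{L}^d$ and its partial derivatives on the diagonal. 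I would then feed in the near-diagonal Bergman asymptotics (Ma-Marinescu, and the real-locus version used by Gayet-Welschinger in \cite{GW2014b,GW2014c}): after the standard $\sqrt{d}$-rescaling in normal coordinates, the $2$-jet covariance converges, with $O(1/d)$ error, to the $2$-jet covariance of a universal Bargmann-Fock type Gaussian field on $\R^n$. The Hessian then scales like $\sqrt{d}$, so $Q$ contributes $d^{(n-r)/2}$, the density $\rho_{s(x)}(0)$ contributes $d^{r/2}$, and the total scaling factor is $d^{n/2}$ as announced.

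The main obstacle is evaluating the limiting universal Gaussian integral: it is the expectation of $Q(A,B)\,\odet{A}$ for $A \in \R^{r\times r}$ and $B$ an $\R^r$-indexed family of symmetric $(n-r)\times(n-r)$ Gaussian matrices whose block covariance mixes the first and second jets in a specific way. The decisive point is that the scaling limit here agrees, up to isometry and an overall normalization, with the one entering the proof of Theorem~\ref{theorem expected Euler characteristic harmonic case}: both describe the same flat Gaussian field on $\R^n$. By this universality, the constant must coincide with the one already computed in the Riemannian case, namely
\begin{equation*}
(-1)^{\frac{n-r}{2}} \frac{\vol{\S^{n-r+1}}\vol{\S^{r-1}}}{\pi\, \vol{\S^n}\vol{\S^{n-1}}},
\end{equation*}
and integrating the limit over $\R\mathcal{X}$ produces the volume factor $\vol{\R\mathcal{X}}$. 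The uniform $O(1/d)$ control in the Bergman expansion, combined with uniform bounds on the Kac-Rice integrand, then yields the announced remainder $O(d^{n/2-1})$.
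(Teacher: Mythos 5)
Your overall architecture matches the paper's: Chern--Gauss--Bonnet plus the Gauss equation to express $\chi(Z_s)$ as an integral of a jet-polynomial over $Z_s$, then Kac--Rice to turn it into an integral over $\R\mathcal{X}$ of a conditional Gaussian expectation, then near-diagonal Bergman asymptotics (the paper establishes these via Hörmander--Tian peak sections rather than citing Ma--Marinescu, but the conclusion is the same up to packaging). The power count $d^{n/2}$ and the $O(d^{n/2-1})$ remainder would go through. The problem is your final step.

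The claim that \emph{``the scaling limit here agrees, up to isometry and an overall normalization, with the one entering the proof of Theorem~\ref{theorem expected Euler characteristic harmonic case}''} is false, and it is exactly the point where the algebraic and Riemannian computations diverge. After the natural rescaling, the Riemannian model has a nontrivial covariance between $f(x)$ and the diagonal entries of the Hessian (coming from $\partial_{x_i,x_i}e_\lambda(x,x)\sim -\gamma_1\lambda^{n/2+1}$), and the conditional variance of the Hessian diagonal is the matrix with $\beta_0 I_r$ on the diagonal blocks and $\beta I_r$ off-diagonal, $\beta=-2/(n+2)$, $\beta_0=(2n+2)/(n+2)$ (see \eqref{equation big Lambda tilde}). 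In the algebraic/Bargmann--Fock model the analogous covariance $\partial^d_{x_i,x_k}E_d(x,x)$ is $O(d^n)$ hence negligible, and after conditioning the Hessian diagonal has covariance $2I_{nr}$ with \emph{no} off-diagonal correlations (see \eqref{equation big Lambda tilde alg}). No isometry and rescaling transforms one into the other: they are genuinely different Gaussian $2$-jet distributions, reflecting the fact that a Bessel-type and a Gaussian-type covariance kernel are not affinely equivalent even at second order. The Gaussian integral you call the ``decisive point'' therefore cannot be imported; it has to be recomputed with the algebraic covariance. The paper does this in Lemma~\ref{lemma computing expectation alg}, where the combinatorial sum over permutations collapses to a single term and the prefactor $\left(-\frac{n+4}{n+2}\right)^m$ from the Riemannian case becomes $(-1)^m$. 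That this \emph{ends up} producing the same constant $(-1)^{(n-r)/2}\frac{\vol{\S^{n-r+1}}\vol{\S^{r-1}}}{\pi\vol{\S^n}\vol{\S^{n-1}}}$ is a coincidence between this factor and the $\gamma_0,\gamma_1,\gamma_2$ ratios in the Riemannian rescaling, and you have no way of seeing it without carrying out the integral. So as written the proof has a genuine gap at the crucial step.
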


In the case of random eigenfunctions of the Laplacian, Theorem~\ref{theorem expected volume harmonic case} was already known to Bérard \cite{Ber1985} for hypersurfaces. See also \cite[thm.~1]{Zel2009} where Zelditch shows that, in the case of hypersurfaces,
\begin{equation*}
\sqrt{\frac{n+2}{\lambda}}\esp{Z_f} \xrightarrow[\lambda \to +\infty]{} \frac{\vol{\S^{n-1}}}{\vol{\S^n}}\rmes{M}
\end{equation*}
in the sense of the weak convergence of measures. He also proves a similar result in the case of band limited eigenfunctions.

Let us discuss Theorems~\ref{theorem expected volume algebraic case} and~\ref{theorem expected Euler characteristic algebraic case} when $\mathcal{X}$ is $\C \P^n$ with the standard real structure induced by the conjugation in $\C^{n+1}$, $\mathcal{E}$ is the trivial bundle $\mathcal{X}\times \C^r$ and $\mathcal{L} = \mathcal{O}(1)$ is the hyperplane bundle with its usual metric. Then $\R \mathcal{X} = \R \P^n$ and $\omega$ is the Fubini-Study metric on $\C \P^n$, normalized so that it is the quotient of the Euclidean metric on the sphere $\S^{2n+1}$. Besides, $\R H^0(\mathcal{X},\mathcal{L}^d)$ is the space of real homogeneous polynomials of degree $d$ in $n+1$ variables, and $Z_s$ is the common real zero set of $r$ independent such polynomials.

In this setting, Kostlan \cite{Kos1993} proved that, for any $d \geq 1$,
\begin{equation*}
\esp{\vol{Z_s}} =d^\frac{r}{2} \vol{\R \P^{n-r}}.
\end{equation*}
See also the paper \cite{SS1993} by Shub and Smale, where they compute the expected number of common real roots for a system of $n$ polynomials in $n$ variables. The expected Euler characteristic of a random algebraic hypersurface of degree $d$ in $\R \P^n$ was computed by Podkorytov \cite{Pod2001}. Both Kostlan's and Podkorytov's results were generalized by Bürgisser. In \cite{Buer2006}, he computed the expected volume and Euler characteristic of a submanifold $Z_s$ of $\R \P^n$ defined as the common zero set of $r$ standard Gaussian polynomials $P_1, \dots, P_r$ of degree $d_1, \dots, d_r$ respectively. In particular, when these polynomials have the same degree $d$ and $n-r$ is even, he showed that:
\begin{equation}
\label{Burgisser}
\esp{\chi\left(Z_s\right)} = d^\frac{r}{2} \sum_{p=0}^{\frac{n-r}{2}} (1-d)^p \ \frac{\Gamma\!\left(p+\frac{r}{2}\right)}{p! \Gamma\!\left(\frac{r}{2}\right)},
\end{equation}
where $\Gamma$ denotes Euler's gamma function. Theorems~\ref{theorem expected volume algebraic case} and~\ref{theorem expected Euler characteristic algebraic case} agree with these previous results.

Recently, Gayet and Welschinger computed upper and lower bounds for the asymptotics of the expected Betti numbers of random real algebraic submanifolds of a projective manifold, see \cite{GW2014b,GW2014c}. This relies on sharp estimates for the expected number of critical points of index $i\in \{0,\dots,n-r\}$ of a fixed Morse function $p: \R \mathcal{X} \to \R$ restricted to the random $Z_s$. More precisely, let $N_i(Z_s)$ denote the number of critical points of index $i$ of $p_{/Z_s}$, let $\Sym(i,n-r-i)$ denote the open cone of symmetric matrices of size $n-r$ and signature $(i,n-r-i)$ and let $\dx \nu$ denote the standard Gaussian measure on the space of symmetric matrices. Gayet and Welschinger show \cite[thm.~3.1.2]{GW2014c} that:
\begin{equation}
\label{Gayet Welschinger}
\esp{N_i(Z_s)} \underset{d \to + \infty}{\sim} \left(\frac{d}{\pi}\right)^\frac{n}{2}\vol{\R \mathcal{X}}\frac{(n-1)!}{(n-r)!}\frac{e_\R(i,n-r-i)}{2^{r-1} \Gamma\left(\frac{r}{2}\right)},
\end{equation}
where $e_\R(i,n-r-i) = \displaystyle\int_{\Sym(i,n-r-i)} \norm{\det(A)} \dx \nu (A)$.

One can indirectly deduce Theorem~\ref{theorem expected Euler characteristic algebraic case} from this result and from \cite{Buer2006} in the following way. By Morse theory:
\[\esp{\chi(Z_s)} = \sum_{i=0}^{n-r} (-1)^i \esp{N_i(Z_s)} \underset{d \to + \infty}{\sim} C_{n,r} \ d^\frac{n}{2} \vol{\R\mathcal{X}}\]
where $C_{n,r}$ is a universal constant depending only on $n$ and $r$. Specifying to the case of $\R \P^n$, equation~\eqref{Burgisser} gives the value of $C_{n,r}$. Gayet and Welschinger also proved a result similar to \eqref{Gayet Welschinger} for hypersurfaces in the case of Riemannian random waves, see \cite{GW2014e}. It gives the order of growth of $\esp{\chi(Z_f)}$ in Theorem~\ref{theorem expected Euler characteristic harmonic case}, for $r=1$.

In their book \cite{AT2007}, Taylor and Adler compute the expected Euler characteristic of the excursion sets of a centered, unit-variance Gaussian field $f$ on a smooth manifold $M$. This expectation is given in terms of the Lipschitz--Killing curvatures of $M$ for the Riemannian metric $g_f$ induced by $f$, see \cite[thm.~12.4.1]{AT2007}. One can deduce from this result the expected Euler characteristic of the random hypersurface $f^{-1}(0)$, always in terms of the Lipschitz-Killing curvatures of $(M,g_f)$. It might be possible to deduce Theorems~\ref{theorem expected Euler characteristic harmonic case} and~\ref{theorem expected Euler characteristic algebraic case} from this result, in the case of hypersurfaces, when the Gaussian field $(f(x))_{x \in M}$ (resp.~$(s(x))_{x \in \R \mathcal{X}}$) has unit variance, but one would need to estimate the Lipschitz-Killing curvatures of $(M,g_f)$ (resp.~$(M,g_s)$) as $\lambda$ (resp.~$d$) goes to infinity.

In a related setting, Bleher, Shiffman and Zelditch \cite{BSZ2000a} computed the scaling limit of the $k$-points correlation function for a random complex submanifold of a complex projective manifold. See also \cite{BSZ2001} in a symplectic framework. Our proofs of Theorems~\ref{theorem expected Euler characteristic harmonic case} and~\ref{theorem expected Euler characteristic algebraic case} use the same formalism as these papers, adapted to our frameworks.

We now sketch the proofs of our main results in the Riemannian setting. The real algebraic case is similar. The first step is to express $\vol{Z_f}$ (resp.~$\chi(Z_f)$) as the integral of some function on $Z_f$. In the case of the volume this is trivial, and the answer is given by the Chern--Gauss--Bonnet theorem (see section~\ref{subsection Chern-Gauss-Bonnet theorem} below) in the case of the Euler characteristic. Then we use the Kac--Rice formula (see Theorem~\ref{theorem Kac-Rice}) which allows us to express $\esp{\vol{Z_f}}$ (resp.~$\esp{\chi(Z_f)}$) as the integral on $M$ of some explicit function that only depends on the geometry of $M$ and on the covariance function of the smooth Gaussian field defined by $f$.

It turns out that the covariance function of the field associated to the $r$ independent standard Gaussian functions $f^{(1)},\dots,f^{(r)}$ in $V_\lambda$ is given by the spectral function of the Laplacian. In the algebraic case, the covariance function is given by the Bergman kernel of $\mathcal{E}\otimes \mathcal{L}^d$. This was already used in \cite{BSZ2000a,Nic2014}.

Then, our results follow from estimates on the spectral function of the Laplacian (resp. the Bergman kernel) and their derivatives (see section~\ref{section estimates covariance kernels}). In the case of random waves, the estimates we need for the spectral function were proved by Bin \cite{Bin2004}, generalizing results of Hörmander \cite{Hoer1968}. In the algebraic case, much is known about the Bergman kernel \cite{BBS2008,BSZ2000a,MM2007,Zel1998} but we could not find the estimates we needed in codimension higher than $1$ in the literature. These estimates are established in section~\ref{subsection Bergman kernel} using Hörmander--Tian peak sections. Peak sections were already used in this context in \cite{GW2014b,GW2014c}, see also \cite{Tia1990}. The author was told by Steve Zelditch, after this paper was written, that one can deduce estimates for the Bergman kernel in higher codimension from the paper~\cite{BBS2008} by Berman, Berndtsson and Sjöstrand.

This paper is organized as follows. In section~\ref{section random submanifolds} we describe how our random submanifolds are generated and the setting of the main theorems. Section~\ref{section estimates covariance kernels} is dedicated to the estimates we need for the spectral function of the Laplacian and the Bergman kernel. In section~\ref{section riemannian geometry} we derive an integral formula for the Euler characteristic of a submanifold. The main theorems are proved in section~\ref{section proofs}, and we deal with two special cases in section~\ref{section special cases}: the flat torus and the real projective space. For these examples, it is possible to compute expectations for fixed $\lambda$ (resp. $d$) and we recover the results of Kostlan and Bürgisser. Three appendices deal respectively with: some standard results about Gaussian vectors, a rather technical proof we postponed until the end, and a derivation of the Kac--Rice formula using Federer's coarea formula.

\paragraph*{Acknowledgements.}
I am thankful to Damien Gayet for his help and support in the course of this work. I would also like to thank Benoît Laslier for answering many of my questions about probability theory.

\tableofcontents


\section{Random submanifolds}
\label{section random submanifolds}

This section is concerned with the precise definition of the random submanifolds we consider. The first two subsections explain how we produce them in a quite general setting. The third one introduces the covariance kernel, which characterizes their distribution. We also describe the distribution induced on the bundle of $2$-jets in terms of this kernel. Then we describe what we called Riemannian random waves, before explaining how to adapt all this in the real algebraic case. This kind of random submanifolds has already been considered by Bérard \cite{Ber1985}, Zelditch \cite{Zel2009} and Nicolaescu \cite{Nic2014} in the Riemannian case, and by Gayet and Welschinger in the real algebraic case, see \cite{GW2014b,GW2014c,GW2014a}. See also \cite{Buer2006,EK1995,LL2013} in special cases.

\subsection{General setting}
\label{subsection general setting}

Let $(M,g)$ be a smooth closed manifold of positive dimension $n$. We denote by $\rmes{M}$ the Riemannian measure on $M$ induced by $g$. That is, if $x=(x_1,\dots,x_n)$ are local coordinates in an open set $U \subset M$ and $\phi$ is a smooth function with compact support in $U$,
\begin{equation}
\label{equation riemannian measure}
\int_M \phi \rmes{M} = \int_{x\in U} \phi(x) \sqrt{\det(g(x))} \dx x_1\dots\dx x_n.
\end{equation}

From now on, we fix some $r \in \{1,\dots,n\}$ that we think of as the codimension of our random submanifolds. Let $\prsc{\cdot}{\cdot}$ denote the $L^2$-scalar product on $\mathcal{C}^\infty(M,\R^r)$ induced by $\rmes{M}$: for any $f_1$ and $f_2 \in \mathcal{C}^\infty(M,\R^r)$,
\begin{equation}
\label{L2 inner product}
\prsc{f_1}{f_2}= \int_{x \in M} \prsc{f_1(x)}{f_2(x)} \rmes{M},
\end{equation}
where the inner product on the right-hand side is the standard one on $\R^r$.

\begin{ntn}
Here and throughout this paper $\prsc{\cdot}{\cdot}$ will always denote the inner product on the concerned Euclidean or Hermitian space.
\end{ntn}

Let $V$ be a subspace of $\mathcal{C}^\infty(M,\R^r)$ of finite dimension $N$. For any $f \in V$, we denote by $Z_f$ the zero set of $f$. Let $D$ denote the \emph{discriminant locus} of $V$, that is the set of $f \in V$ that do not vanish transversally.

If $f$ vanishes transversally, $Z_f$ is a (possibly empty) smooth submanifold of $M$ of codimension $r$ and we denote by $\rmes{f}$ the Riemannian measure induced by the restriction of $g$ to $Z_f$. We also denote by $\vol{Z_f}$ the volume of $Z_f$ and by $\chi(Z_f)$ its Euler characteristic. In the case $r=n$, when $f\notin D$, $Z_f$ is a finite set and $\rmes{f}$ is the sum of the Dirac measures centered on points of $Z_f$.

We consider a random vector $f \in V$ with standard Gaussian distribution. That is the distribution of $f$ admits the density function:
\begin{equation}
\label{equation density function}
x \mapsto \frac{1}{(2\pi)^\frac{N}{2}} \exp\left(-\frac{1}{2}\Norm{x}^2\right)
\end{equation}
with respect to the Lebesgue measure of $V$. Under some further technical assumptions on $V$ (see section~\ref{subsection incidence manifold} below), $f$ vanishes transversally almost surely. Hence, the random variables $\vol{Z_f}$ and $\chi(Z_f)$ are well-defined almost everywhere, and it makes sense to compute their expectation.

For the convenience of the reader, we gathered the few results we need about Gaussian vectors in Appendix \ref{section gaussian vectors}. We introduce some notations here and refer to Appendix \ref{section gaussian vectors} for further details. In the sequel, we will denote by $X \sim \mathcal{N}(m,\Lambda)$ the fact that the random vector $X$ is distributed according to a Gaussian with mean $m$ and variance $\Lambda$. A standard Gaussian vector is $X \sim \mathcal{N}(0,\Id)$. We will denote by $\dx\nu_N$ the standard Gaussian measure on a Euclidean space of dimension $N$, that is the measure with density \eqref{equation density function} with respect to the Lebesgue measure.

\subsection{The incidence manifold}
\label{subsection incidence manifold}

Following \cite{Nic2014}, we say that $V$ is \emph{$0$-ample} if the map $j^0_x: f \mapsto f(x)$ is onto for every $x \in M$. From now on, we assume that this is the case and we introduce an \emph{incidence manifold} as Shub and Smale in \cite{SS1993} (see also \cite{GW2014b,GW2014c}).

Let $F : (f,x) \in V\times M \mapsto f(x) \in \R^r$ and let $\partial_1F$ and $\partial_2F$ denote the partial differentials of $F$ with respect to the first and second variable respectively. For any $(f,x) \in V \times M$, 
\begin{align}
\label{equation partial differentials of F}
\partial_1F(f,x) &= j^0_x & &\text{and} & \partial_2F(f,x) &= d_xf.
\end{align}
We assumed $j^0_x$ to be surjective for every $x \in M$, thus $F$ is a submersion. Then $\Sigma = F^{-1}(0)$ is a smooth submanifold of codimension $r$ of $V \times M$, called the incidence manifold, and for any $(f_0,x) \in V\times M$:
\[T_{(f_0,x)}\Sigma = \{(f,v) \in V \times T_x M \mid f(x) + d_xf_0\cdot v=0 \}.\]

We set $\pi_1:\Sigma \to V$ and $\pi_2:\Sigma\to M$ the projections from $\Sigma$ to each factor. A vector $f \in V$ is in the range of $d_{(f_0,x)}\pi_1$ if and only if there exists some $v \in T_xM$ such that $(f,v) \in T_{(f_0,x)}\Sigma$, that is $f(x)$ is in the range of $d_xf_0$. Since $V$ is $0$-ample, the map $j^0_x$ is onto, and $d_xf_0$ is surjective if and only if $d_{(f_0,x)}\pi_1$ is. Thus, the discriminant locus $D$ is exactly the set of critical values of $\pi_1$. By Sard's theorem, $D$ has measure $0$ in $V$, both for the Lebesgue measure and for $\dx\nu_{N}$, and $f$ vanishes transversally almost surely.

We equip $\Sigma$ with the restriction of the product metric on $V \times M$. Then, whenever $f \notin D$, $(\pi_1)^{-1}(f) = \{f\} \times Z_f$ is isometric to $Z_f$, hence we will identify these sets. Similarly, we will identify $(\pi_2)^{-1}(x)= \ker(j^0_x) \times \{x\}$ with the subspace $\ker(j^0_x)$ of $V$.

\subsection{The covariance kernel}
\label{subsection covariance kernel}

In this subsection we introduce the Schwarz kernel and covariance function associated to our space of random functions. It turns out (see Proposition~\ref{proposition covariance function} below) that these objects are equal. The first to use this fact were Bleher, Shiffman and Zelditch in the case of complex projective manifolds \cite{BSZ2000a} and in the case of symplectic manifolds \cite{BSZ2001}. In the setting of Riemannian random waves this was used by Zelditch \cite{Zel2009} and Nicolaescu \cite{Nic2014}.

In $\mathcal{C}^\infty(M,\R^r)$ equipped with the $L^2$-inner product \eqref{L2 inner product}, the orthogonal projection onto $V$ can be represented by its \emph{Schwartz kernel}, denoted by $E$. That is there exists a unique $E:M\times M \to \R^r \otimes \R^r$ such that for any smooth $f: M \to \R^r$, the projection of $f$ onto $V$ is given by:
\begin{equation}
\label{equation definition E}
x \mapsto \prsc{E(x,\cdot)}{f} = \int_{y\in M} \prsc{E(x,y)}{f(y)} \rmes{M}.
\end{equation}
In the previous formula, the inner product on the right-hand side is the usual one on $\R^r$, acting on the second factor of $\R^r \otimes \R^r$. The kernel $E$ has the following reproducing kernel property:
\begin{equation}
\label{equation reproducing kernel property E}
\forall f \in V,\ \forall x \in M, \quad f(x) = \prsc{E(x,\cdot)}{f}.
\end{equation}

If $(f_1,\dots,f_{N})$ is any orthonormal basis of $V$, one can check that $E$ is defined by:
\begin{equation}
\label{equation E in BON}
E : (x,y) \mapsto \sum_{i=1}^{N} f_i(x) \otimes f_i(y).
\end{equation}
This proves that $E$ is smooth. Besides, for all $x\in M$, $E(x,x)$ is in the span of $\{\zeta \otimes \zeta \mid \zeta \in \R^r\}$, and for all $\zeta \in \R^r$:
\begin{equation}
\label{equation positive kernel}
\prsc{E(x,x)}{\zeta \otimes \zeta} = \sum_{i=1}^N \prsc{f_i(x)\otimes f_i(x)}{\zeta \otimes \zeta} = \sum_{i=1}^N (\prsc{f_i(x)}{\zeta})^2 \geq 0.
\end{equation}
This last equation shows that we can check the $0$-amplitude condition for $V$ on its kernel.

\begin{lem}
\label{lemma checking 0 amplitude}
$V$ is $0$-ample if and only if, for all $x \in M$ and all $\zeta \in \R^r \setminus \{0\}$, we have: $\prsc{E(x,x)}{\zeta \otimes \zeta} > 0$. That is if and only if $E(x,x)$ is a positive-definite bilinear form on $(\R^r)^*$ for any $x \in M$.
\end{lem}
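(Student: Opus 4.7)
The plan is to read the equivalence directly off equation \eqref{equation positive kernel}, which already computes $\prsc{E(x,x)}{\zeta\otimes\zeta}$ as a sum of squares once an orthonormal basis of $V$ is fixed. The lemma is essentially a repackaging of that identity.

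First I would fix $x \in M$, choose an orthonormal basis $(f_1,\dots,f_N)$ of $V$, and observe that the image of the linear map $j^0_x$ is exactly $\mathrm{Span}(f_1(x),\dots,f_N(x)) \subset \R^r$. Consequently $j^0_x$ is surjective if and only if no non-zero $\zeta \in \R^r$ is orthogonal to every $f_i(x)$. Next, equation \eqref{equation positive kernel} gives
\[
\prsc{E(x,x)}{\zeta \otimes \zeta} = \sum_{i=1}^N \prsc{f_i(x)}{\zeta}^2,
\]
a sum of non-negative terms which vanishes exactly when all the scalars $\prsc{f_i(x)}{\zeta}$ do. Combining the two observations, $j^0_x$ is onto if and only if $\prsc{E(x,x)}{\zeta\otimes\zeta} > 0$ for every $\zeta \in \R^r \setminus \{0\}$, and quantifying over $x \in M$ produces the desired equivalence with $0$-amplitude.

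The reformulation as positive-definiteness of $E(x,x)$ as a bilinear form on $(\R^r)^*$ is then just the canonical identification $\R^r \otimes \R^r \simeq \mathrm{Bil}((\R^r)^* \times (\R^r)^*, \R)$ together with $\R^r \simeq (\R^r)^*$ via the inner product, so it requires no extra argument. There is no real obstacle in this proof; the only detail worth flagging is that the argument is independent of the chosen orthonormal basis since the kernel $E$ is intrinsically defined by \eqref{equation definition E}, so the sum-of-squares expression on the right-hand side of \eqref{equation positive kernel} has the same vanishing locus in $\zeta$ for every choice of basis.
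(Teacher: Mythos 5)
Your argument is correct and is exactly the reasoning the paper intends: the lemma is stated immediately after equation~\eqref{equation positive kernel} with no separate proof, precisely because the sum-of-squares identity makes the equivalence between surjectivity of $j^0_x$ and positive-definiteness of $E(x,x)$ immediate. You have simply spelled out the short argument the paper leaves implicit, including the observation that the image of $j^0_x$ is the span of the $f_i(x)$.
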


On the other hand, the standard Gaussian vector $f \in V$ defines a smooth centered Gaussian field $(f(x))_{x \in M}$ with values in $\R^r$. Its distribution is totally determined by its covariance function: $(x,y) \mapsto \cov(f(x),f(y))$ from $M \times M$ to $\R^r \otimes \R^r$, where $\cov(f(x),f(y))$ stands for the covariance form of the random vectors $f(x)$ and $f(y)$ (cf. Appendix~\ref{section gaussian vectors}).

\begin{prop}
\label{proposition covariance function}
Let $V$ be a finite-dimensional subspace of $\mathcal{C}^\infty(M,\R^r)$ and $E$ its Schwartz kernel. Let $f \sim \mathcal{N}(0,\Id)$ in $V$, then we have:
\begin{equation*}
\forall x, y \in M, \qquad \cov(f(x),f(y))= \esp{f(x)\otimes f(y)} = E(x,y).
\end{equation*}
\end{prop}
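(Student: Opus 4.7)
The plan is to reduce the identity to a direct computation via an orthonormal basis, using only the definition of a standard Gaussian on a finite-dimensional Euclidean space and the explicit formula \eqref{equation E in BON} for the Schwartz kernel.

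First I would fix an orthonormal basis $(f_1,\ldots,f_N)$ of $V$ for the $L^2$-inner product \eqref{L2 inner product}. Writing $f = \sum_{i=1}^N \xi_i f_i$, the hypothesis $f \sim \mathcal{N}(0,\Id)$ in $V$ is equivalent to saying that the coordinates $\xi_1,\ldots,\xi_N$ are i.i.d.\ standard real Gaussians (this is a standard fact about $\mathcal{N}(0,\Id)$ on a Euclidean space, recalled in Appendix~\ref{section gaussian vectors}). In particular $\esp{\xi_i} = 0$ and $\esp{\xi_i \xi_j} = \delta_{ij}$.

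Since $f$ is centered, $\esp{f(x)} = 0$ for every $x \in M$, so by definition of the covariance form of a pair of centered random vectors one has $\cov(f(x),f(y)) = \esp{f(x)\otimes f(y)}$, which takes care of the first equality. For the second, I would simply expand
\begin{equation*}
f(x) \otimes f(y) = \sum_{i,j=1}^N \xi_i \xi_j \, f_i(x) \otimes f_j(y),
\end{equation*}
take the expectation termwise (the sum is finite, so no integrability issue arises), and use $\esp{\xi_i \xi_j} = \delta_{ij}$ to obtain
\begin{equation*}
\esp{f(x) \otimes f(y)} = \sum_{i=1}^N f_i(x) \otimes f_i(y) = E(x,y),
\end{equation*}
the last equality being formula \eqref{equation E in BON}.

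There is essentially no obstacle here: the proposition is a direct translation of the identity between the covariance of a standard Gaussian and the Gram matrix of an orthonormal basis, via the reproducing kernel. The only thing to be slightly careful about is to observe that the result does not depend on the chosen orthonormal basis, which is automatic since the left-hand side is defined intrinsically and the right-hand side equals the Schwartz kernel, itself characterized by \eqref{equation definition E} independently of any basis.
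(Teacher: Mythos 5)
Your proof is correct, but it takes a different route from the paper's. The paper never expands $f$ in an orthonormal basis: instead it shows directly that the map $y \mapsto \esp{f(x)\otimes f(y)}$ satisfies the defining property \eqref{equation definition E} of the Schwartz kernel, namely that $f_0 \mapsto \prsc{\esp{f(x)\otimes f(\cdot)}}{f_0}$ is the orthogonal projection of $f_0$ onto $V$ (zero on $V^\perp$, identity on $V$ via the reproducing-kernel identity \eqref{equation reproducing kernel property E} and Lemma~\ref{lemma random scalar product}). Your argument instead fixes an orthonormal basis, uses the fact that the coordinates of a standard Gaussian in such a basis are i.i.d.\ standard normals, and matches the resulting diagonal sum against the explicit basis formula \eqref{equation E in BON}. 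Both are sound; yours is a shorter, more computational derivation that leans on \eqref{equation E in BON} (which the paper states without proof as an easy check), while the paper's is basis-free and invokes instead the characterizing property of $E$. One small cosmetic point: the closing remark about basis-independence is unnecessary once you observe the computation lands exactly on $E(x,y)$, since $E$ is already defined intrinsically; but it does no harm.
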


\begin{proof}
Let $x$ and $y\in M$, then the first equality is given by Lemma~\ref{lemma covariance as a tensor}. We will now show that $\esp{f(x) \otimes f(y)}$ satisfies condition~\eqref{equation definition E} to prove the second equality. Let $f_0 : M \to \R^r$ be a smooth function and $x \in M$,
\begin{align*}
\int_{y\in M} \prsc{\esp{f(x) \otimes f(y)}}{f_0(y)}\rmes{M} &= \int_{y \in M} \esp{\prsc{f(x)\otimes f(y)}{f_0(y)}} \rmes{M}\\
&= \int_{y \in M} \esp{f(x) \prsc{f(y)}{f_0(y)}}\rmes{M} = \esp{f(x) \prsc{f}{f_0}}.
\end{align*}
If $f_0 \in V^\perp$, this equals $0$. If $f_0 \in V$, we have:
\begin{equation*}
\esp{f(x)\prsc{f}{f_0}}= \esp{\prsc{E(x,\cdot)}{f}\prsc{f_0}{f}} = \prsc{E(x,\cdot)}{f_0}=f_0(x),
\end{equation*}
where we used the reproducing kernel property~\eqref{equation reproducing kernel property E} both for $f$ and $f_0$ and we applied Lemma~\ref{lemma random scalar product} to $f \sim \mathcal{N}(0,\Id)$. In both cases, $x \mapsto \esp{f(x) \prsc{f}{f_0}}$ is the projection of $f_0$ onto $V$, which shows the second equality in Proposition~\ref{proposition covariance function}.
\end{proof}

This tells us that the distribution of our Gaussian field is totally determined by the Schwartz kernel $E$. In our cases of interest, asymptotics are known for $E$ and its derivatives, see section~\ref{section estimates covariance kernels} below. This is what allows us to derive asymptotics for the expectation of the volume and Euler characteristic of $Z_f$.

\subsection{Random jets}
\label{subsection random jets}

Let $\nabla^M$ be the Levi-Civita connection on $(M,g)$. For any smooth $f:M\to \R^r$, we denote by $\nabla^2f =\nabla^M df$ the \emph{Hessian} of $f$.

Let $\partial_x$ (resp.~$\partial_y$) denote the partial derivative with respect to the first (resp.~second) variable for maps from $M \times M$ to $\R^r \otimes \R^r$. Likewise, we denote by $\partial_{x,x}$ (resp.~$\partial_{y,y}$) the second  partial derivative with respect to the first (resp.~second) variable twice. As for the Hessian above, all the higher order derivatives are induced by $\nabla^M$.

Now, let $f \sim \mathcal{N}(0,\Id)$ in $V$. We will describe the distribution induced by $f$ on the $2$-jets bundle of $M$. Let $x \in M$, then we denote by $\mathcal{J}^k_x(\R^r)$ the space of $k$-jets of smooth functions from $M$ to $\R^r$ at the point $x$ (we will only use $k\in\{0,1,2\}$). We already defined
\begin{align*}
j^0_x : \mathcal{C}^\infty(M,\R^r)& \longrightarrow \R^r .\\
f& \mapsto f(x)
\end{align*}
We define similarly,
\begin{align*}
j^1_x : \mathcal{C}^\infty(M,\R^r)& \to \R^r \otimes (\R \oplus T^*_xM)\\
f& \mapsto (f(x),d_xf)\\
\text{and} \qquad j^2_x : \mathcal{C}^\infty(M,\R^r)& \to \R^r \otimes (\R \oplus T^*_xM \oplus \Sym(T^*_xM)),\\
f& \mapsto (f(x),d_xf,\nabla^2_xf)
\end{align*}
where $\Sym(T^*_xM)$ denotes the space of symmetric bilinear forms on $T^*_xM$.

The map $j^2_x$ induces an isomorphism between $\mathcal{J}^2_x(\R^r)$ and the image of $j^2_x$. In the sequel we will identify these spaces through $j_x^2$. Likewise, $\mathcal{J}_x^1(\R^r)$ and the image of $j_x^1$ will be identified through $j_x^1$.

\begin{lem}
\label{lemma variance 2-jet}
Let $V$ be a finite-dimensional subspace of $\mathcal{C}^\infty(M,\R^r)$ and $E$ its Schwartz kernel. Let $f \sim \mathcal{N}(0,\Id)$ in $V$ and $x \in M$. Then $j^2_x(f)=(f(x),d_xf,\nabla^2_xf)$ is a centered Gaussian vector, and its variance form $\var(j^2_x(f))$ is characterized by:
\begin{align}
\label{equation var t}
\var\left(f(x)\right) &= \esp{f(x) \otimes f(x)}\; = E(x,x),\\
\label{equation var L}
\rule{0pt}{4mm}\var\left(d_xf\right) &= \esp{\nabla_x f \otimes \nabla_x f}\; = (\partial_x \partial_y E)(x,x),\\
\label{equation var S}
\rule{0pt}{4mm}\var\left(\nabla^2_xf\right) &= \esp{\nabla^2_x f \otimes \nabla^2_x f} = (\partial_{x,x} \partial_{y,y} E)(x,x),\\
\label{equation cov tL}
\rule{0pt}{4mm}\cov\left(f(x),d_xf\right) &= \esp{f(x) \otimes \nabla_x f}\; = (\partial_y E)(x,x),\\
\label{equation cov tS}
\rule{0pt}{4mm}\cov\left(f(x),\nabla^2_xf\right) &= \esp{f(x) \otimes \nabla^2_x f} = (\partial_{y,y} E)(x,x),\\
\label{equation cov LS}
\rule{0pt}{4mm}\cov\left(d_xf,\nabla^2_xf\right) &= \esp{\nabla_x f \otimes \nabla^2_x f} = (\partial_x \partial_{y,y}E)(x,x).
\end{align}
\end{lem}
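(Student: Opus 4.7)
My plan is to deduce every identity from the covariance formula of Proposition~\ref{proposition covariance function}, $\esp{f(x)\otimes f(y)}=E(x,y)$, together with the observation that $j^2_x$ is a linear map on the finite-dimensional space $V$.

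First I would observe that $j^2_x : V \to \R^r\otimes (\R\oplus T^*_xM\oplus \Sym(T^*_xM))$ is linear, since each of the three components (evaluation, $d_x$, and $\nabla^2_x$) depends linearly on $f\in V$. Hence $j^2_x(f)$ is the image of a centered Gaussian vector by a linear map, and so is itself a centered Gaussian vector. By Lemma~\ref{lemma covariance as a tensor}, its law is entirely determined by the six covariance tensors appearing in \eqref{equation var t}--\eqref{equation cov LS}, and each of them equals the expectation of the corresponding tensor product, as written in the statement. It therefore only remains to identify these six expectations with the partial derivatives of $E$ evaluated on the diagonal.

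For this I would use formula~\eqref{equation E in BON}: if $(f_i)_{1\leq i \leq N}$ is an orthonormal basis of $V$, then $E(x,y)=\sum_{i=1}^N f_i(x)\otimes f_i(y)$, and we may decompose $f=\sum_i \xi_i f_i$ with $(\xi_i)\sim\mathcal{N}(0,\Id_N)$. Using $\esp{\xi_i\xi_j}=\delta_{ij}$, a short calculation yields, for any two linear differential operators $A$ and $B$ among $\id$, $d$ and $\nabla^2$,
\begin{equation*}
\esp{A_xf\otimes B_xf} \;=\; \sum_{i=1}^N (A_xf_i)\otimes(B_xf_i) \;=\; \bigl(A_xB_y E\bigr)(x,x),
\end{equation*}
where the subscript $x$ (resp.\ $y$) indicates that the operator acts on the first (resp.\ second) variable of $E$. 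Specialising $(A,B)$ to the six pairs $(\id,\id)$, $(d,d)$, $(\nabla^2,\nabla^2)$, $(\id,d)$, $(\id,\nabla^2)$, $(d,\nabla^2)$ produces exactly \eqref{equation var t}--\eqref{equation cov LS}. The only point requiring a word of justification is that the notations $\partial_x$, $\partial_{x,x}$, $\partial_{y,y}$, $\partial_x\partial_y$, etc., refer to derivatives taken with respect to the Levi-Civita connection $\nabla^M$ on the appropriate factor, as was stipulated at the beginning of the subsection. With this convention, and because the sum defining $E$ is finite, the interchange of differentiation and expectation used above is automatic, and I do not anticipate any serious obstacle in the proof.
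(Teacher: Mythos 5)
Your proof is correct and is essentially the same argument as the paper's: the paper deduces the first equalities from Lemmas~\ref{lemma variance as a tensor} and~\ref{lemma covariance as a tensor}, obtains \eqref{equation var t} from Proposition~\ref{proposition covariance function}, and states that the remaining identities follow by differentiating $\esp{f(x)\otimes f(y)} = E(x,y)$ in $x$ and $y$ and restricting to the diagonal — which is precisely what you carry out, merely made explicit via the orthonormal-basis expansion $E(x,y)=\sum_i f_i(x)\otimes f_i(y)$ and $f=\sum_i\xi_i f_i$. The only cosmetic difference is that you recompute the diagonal version of Proposition~\ref{proposition covariance function} from scratch instead of citing it, which is harmless.
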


\begin{proof}
The first equality on each line is given by Lemmas~\ref{lemma variance as a tensor} and~\ref{lemma covariance as a tensor}. Then Proposition~\ref{proposition covariance function} gives the second equality in~\eqref{equation var t}. The other equalities are obtained by taking partial derivatives of~\eqref{equation var t}
\end{proof}

With this lemma, we have described the distribution of $j_x^2(f)$ only in terms of $E$. Since $j_x^0(f)$ and $j_x^1(f)$ are the projections of $j_x^2(f)$ onto $\R^r$ and $\R^r \otimes (\R \oplus T_x^*M)$ respectively, their distributions are also characterized by Lemma~\ref{lemma variance 2-jet}.

\subsection{Riemannian random waves}
\label{subsection harmonic setting}

In this section, we describe what we called Riemannian random waves, that is random linear combinations of eigenfunctions of the Laplacian.

Let $\Delta$ denote the Laplace-Beltrami operator on the closed Riemannian manifold $(M,g)$. Recall the following classical facts from the theory of elliptical operators, see \cite[thm.~4.43]{GHL1990}.
\begin{thm}
\label{theorem eigenvalues eigenspaces}
\begin{enumerate}
\item The eigenvalues of $\Delta :\mathcal{C}^\infty(M) \to \mathcal{C}^\infty(M)$ can be arranged into a strictly increasing sequence of non-negative numbers $(\lambda_k)_{k \in \N}$ such that $\lambda_k \xrightarrow[k \to +\infty]{} + \infty$.
\item The associated eigenspaces are finite-dimensional, and they are pairwise orthogonal for the $L^2$-inner product \eqref{equation prsc function} on $\mathcal{C}^\infty(M)$ induced by $g$.
\end{enumerate}
\end{thm}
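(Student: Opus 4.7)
The plan is to treat this as a standard consequence of the spectral theory of elliptic self-adjoint operators, so I would reduce everything to the spectral theorem for compact self-adjoint operators on a Hilbert space. The first step is to verify that $\Delta$ is a symmetric non-negative operator on $\mathcal{C}^\infty(M)$ equipped with the inner product \eqref{equation prsc function}. By Green's formula (integration by parts on the closed manifold $M$), one has
\[
\prsc{\Delta\phi}{\psi} = \int_M \prsc{\nabla\phi}{\nabla\psi}\rmes{M} = \prsc{\phi}{\Delta\psi}
\]
for any $\phi,\psi\in\mathcal{C}^\infty(M)$, and $\prsc{\Delta\phi}{\phi} = \int_M\Norm{\nabla\phi}^2\rmes{M}\geq 0$. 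This immediately delivers two pieces of the statement: eigenvalues are non-negative, and eigenfunctions associated with distinct eigenvalues are orthogonal (apply symmetry to a pair $\phi,\psi$ with $\Delta\phi=\lambda\phi$, $\Delta\psi=\mu\psi$ and get $(\lambda-\mu)\prsc{\phi}{\psi}=0$).

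The heart of the argument is to produce a compact self-adjoint resolvent. The operator $\Delta+\id$ is elliptic of order $2$ with no zero-order-one kernel on a closed manifold, so I would use the standard fact that $\Delta+\id : H^2(M)\to L^2(M)$ is an isomorphism. Let $G$ be its inverse, viewed as a bounded operator $G:L^2(M)\to L^2(M)$. Composed with the Rellich--Kondrachov compact embedding $H^2(M)\hookrightarrow L^2(M)$, the operator $G$ is compact. It is self-adjoint because $\Delta+\id$ is, and positive because $\prsc{(\Delta+\id)\phi}{\phi}>0$ for $\phi\neq 0$. The spectral theorem for compact self-adjoint operators then yields a sequence of real eigenvalues $\mu_k>0$ of $G$ tending to $0$, each with a finite-dimensional eigenspace, and an associated Hilbert basis of $L^2(M)$.

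To transfer this back to $\Delta$, note that if $G\phi = \mu\phi$ with $\mu>0$ then $\phi = \mu(\Delta+\id)\phi$, so $\Delta\phi = (\mu^{-1}-1)\phi$. Thus $\mu$-eigenspaces of $G$ are exactly $(\mu^{-1}-1)$-eigenspaces of $\Delta$, finite-dimensional, and since $\mu_k\to 0$ the corresponding eigenvalues of $\Delta$ tend to $+\infty$. Arranging the distinct eigenvalues in increasing order gives the strictly increasing sequence $(\lambda_k)_{k\in\N}$, with $\lambda_k\to +\infty$. It remains to check that eigenfunctions which a priori live in $L^2(M)$ are actually smooth: this is a direct consequence of elliptic regularity applied to the equation $(\Delta+\id)\phi=(\lambda+1)\phi$, bootstrapping from $\phi\in L^2(M)$ to $\phi\in H^s(M)$ for every $s$, hence $\phi\in\mathcal{C}^\infty(M)$ by Sobolev embedding.

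The main obstacle is really packaged inside the black-box statements I rely on: the isomorphism $\Delta+\id:H^2\to L^2$, the compactness of the Sobolev embedding on a closed manifold, and elliptic regularity. Each of these is a non-trivial piece of linear PDE theory, but all of them are completely standard and are precisely why the paper cites \cite[thm.~4.43]{GHL1990} rather than reproving this. If I wanted a self-contained proof avoiding Sobolev spaces, I would instead construct a Green's function for $\Delta+\id$ via a parametrix built from the Euclidean Newton kernel, show directly that the resulting integral operator is Hilbert--Schmidt (hence compact) and self-adjoint, and argue smoothness from the smoothness of the parametrix kernel off the diagonal together with iterated regularity; this route is longer but conceptually the same.
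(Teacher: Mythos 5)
The paper does not prove this statement; it is quoted as a classical fact with a citation to \cite[thm.~4.43]{GHL1990}, so there is no ``paper's proof'' to compare against. Your argument is correct and is the standard one: symmetry and non-negativity of $\Delta$ via Green's formula, a compact self-adjoint resolvent $G=(\Delta+\id)^{-1}$ obtained from the $H^2\to L^2$ isomorphism composed with the Rellich--Kondrachov embedding, the spectral theorem for compact self-adjoint operators, and elliptic bootstrapping for smoothness of eigenfunctions --- which is essentially the route taken in the cited reference.
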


Let $\lambda \geq 0$, then we denote by $V_\lambda$ the subspace of $\mathcal{C}^\infty(M)$ spanned by the eigenfunctions of $\Delta$ associated to eigenvalues that are less or equal to $\lambda$. Each $f=(f^{(1)},\dots,f^{(r)}) \in (V_\lambda)^r$ defines naturally a map from $M$ to $\R^r$ so that we can see $(V_\lambda)^r$ as a subspace of $\mathcal{C}^\infty(M,\R^r)$. By Theorem~\ref{theorem eigenvalues eigenspaces}, $V_\lambda$ is finite-dimensional so we can apply the construction of sections~\ref{subsection general setting} to~\ref{subsection random jets} to $(V_\lambda)^r$.

Since we consider a product situation, it is possible to make several simplifications. First, the scalar product \eqref{L2 inner product} on $(V_\lambda)^r$ is induced by the one on $V_\lambda$. Thus $f=(f^{(1)},\dots,f^{(r)})$ is a standard Gaussian in $(V_\lambda)^r$ if and only if $f^{(1)},\dots,f^{(r)}$ are $r$ independent standard Gaussian in $V_\lambda$. For a $f \in (V_\lambda)^r$ satisfying this condition, $f^{(1)},\dots,f^{(r)}$ are independent, and so are their derivatives of any order. This means that, for every $x\in M$, the matrices of $\var(f(x))$, $\var(d_xf)$ and $\var(\nabla^2_xf)$ in the canonical basis of $\R^r$ are block diagonal.

Another way to say this is that the kernel of $(V_\lambda)^r$ is a product in the following sense. We denote by  $e_\lambda:M\times M \to \R$ the Schwartz kernel of the orthogonal projection onto $V_\lambda$ in $\mathcal{C}^\infty(M)$ and by $E_\lambda$ the Schwartz kernel of $(V_\lambda)^r$. The kernel $e_\lambda$ is the \emph{spectral function of the Laplacian} and precise asymptotics are known for $e_\lambda$ and its derivatives, see section~\ref{section estimates covariance kernels}.

Let $(\varphi_1,\dots,\varphi_{N})$ be an orthonormal basis of $V_\lambda$. By \eqref{equation E in BON},
\begin{equation}
\label{equation e lambda}
e_\lambda :  (x,y) \mapsto \sum_{i=1}^{N} \varphi_i(x)\varphi_i(y).
\end{equation}
Let $(\zeta_1,\dots,\zeta_r)$ denote the canonical basis of $\R^r$. The maps $\varphi_i \zeta_q : M \to \R^r$ with $1\leq i \leq N$ and $1\leq q \leq r$ give an orthonormal basis of $(V_\lambda)^r$ and, for all $x$ and $y\in M$,
\begin{equation*}
E_\lambda(x,y) = \sum_{q=1}^r \sum_{i=1}^{N} (\varphi_i(x)\zeta_q) \otimes (\varphi_i(y) \zeta_q)=  e_\lambda(x,y) \sum_{q=1}^r \zeta_q\otimes\zeta_q.
\end{equation*}
\begin{lem}
\label{lemma E in terms of e}
Let $(\zeta_1,\dots,\zeta_r)$ be any orthonormal basis of $\R^r$, for all $x$, $y\in M$,
\begin{equation*}
E_\lambda(x,y) = e_\lambda(x,y) \left( \sum_{q=1}^r \zeta_q \otimes \zeta_q\right).
\end{equation*}
\end{lem}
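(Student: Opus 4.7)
The plan is to leverage the computation already carried out in the paragraph immediately preceding the lemma, which establishes the formula
\begin{equation*}
E_\lambda(x,y) = e_\lambda(x,y) \sum_{q=1}^r \zeta_q \otimes \zeta_q
\end{equation*}
in the specific case where $(\zeta_1,\dots,\zeta_r)$ is the canonical basis of $\R^r$. The only remaining content of the lemma is the assertion that the tensor $T := \sum_{q=1}^r \zeta_q \otimes \zeta_q \in \R^r \otimes \R^r$ is independent of the choice of orthonormal basis $(\zeta_1,\dots,\zeta_r)$.

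To establish this, I would fix two orthonormal bases $(\zeta_q)_{1 \leq q \leq r}$ and $(\zeta'_q)_{1\leq q \leq r}$ of $\R^r$ and write $\zeta'_q = \sum_{p} P_{pq}\zeta_p$, where $P = (P_{pq})$ is the orthogonal change-of-basis matrix, so that $\sum_q P_{pq}P_{p'q} = \delta_{p p'}$. Expanding by bilinearity of the tensor product,
\begin{equation*}
\sum_q \zeta'_q \otimes \zeta'_q = \sum_{p,p'} \left( \sum_q P_{pq}P_{p'q}\right) \zeta_p \otimes \zeta_{p'} = \sum_{p} \zeta_p \otimes \zeta_p,
\end{equation*}
which gives exactly the basis-independence of $T$. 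Conceptually, this simply records the fact that $T$ corresponds to the identity endomorphism of $\R^r$ under the canonical isomorphism $\R^r\otimes\R^r \simeq \End(\R^r)$, and the identity is intrinsically defined.

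Combining the two steps yields the lemma for an arbitrary orthonormal basis. There is no serious obstacle here: the statement is essentially a bookkeeping remark extending the formula from the canonical basis to an arbitrary one, and the whole argument boils down to the elementary orthogonality relation $P^{\mathrm{t}} P = \Id$ satisfied by an orthonormal change of basis matrix.
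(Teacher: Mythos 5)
Your proof is correct and matches the paper's intent. The paper gives no separate proof environment for this lemma; it derives the formula for the canonical basis in the displayed computation just before the statement, and the passage to an arbitrary orthonormal basis is left implicit. You justify it by the basis-independence of the tensor $\sum_q \zeta_q\otimes\zeta_q$, which is exactly the orthogonality computation $P^{\mathrm{t}}P=\Id$; an equivalent (and perhaps the intended) route is to observe that $(\varphi_i\zeta'_q)_{i,q}$ is again an orthonormal basis of $(V_\lambda)^r$ for any orthonormal basis $(\zeta'_q)$ of $\R^r$, so that \eqref{equation E in BON} applied to this basis yields the formula directly. Both arguments are one line and say the same thing; yours has the small advantage of isolating the purely algebraic fact being used.
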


An immediate consequence of this and Lemma~\ref{lemma checking 0 amplitude} is that $(V_\lambda)^r$ is $0$-ample if and only if $e_\lambda(x,x) >0$ for all $x\in M$. By \eqref{equation e lambda}, this is equivalent to $V_\lambda$ being base-point-free, that is for every $x \in M$, there exists $f \in V_\lambda$ such that $f(x) \neq 0$.

\begin{lem}
\label{lemma 0 amplitude}
For all $\lambda \geq 0$, $(V_\lambda)^r$ is $0$-ample.
\end{lem}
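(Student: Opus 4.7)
My plan is to use the equivalence established immediately before the lemma: $(V_\lambda)^r$ is $0$-ample if and only if $V_\lambda$ is base-point-free, i.e.\ for every $x \in M$ there exists some $f \in V_\lambda$ with $f(x) \neq 0$. So it suffices to exhibit, for each $x \in M$, an eigenfunction of $\Delta$ with eigenvalue $\leq \lambda$ that does not vanish at $x$.

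The key observation is that $0$ is always an eigenvalue of $\Delta$ on the closed manifold $M$, and the corresponding eigenspace contains the locally constant functions. Concretely, if $M_1,\dots,M_\ell$ are the connected components of $M$, then the indicator function $\mathbf{1}_{M_j}$ of each component is smooth (since components of a manifold are both open and closed), harmonic, and hence an eigenfunction of $\Delta$ with eigenvalue $0$. Since $\lambda \geq 0$, all these functions lie in $V_\lambda$.

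Given any $x \in M$, let $M_j$ be the component containing $x$; then $\mathbf{1}_{M_j} \in V_\lambda$ and $\mathbf{1}_{M_j}(x) = 1 \neq 0$. This proves that $V_\lambda$ is base-point-free, and hence that $(V_\lambda)^r$ is $0$-ample by the equivalence recalled above (which combines Lemma~\ref{lemma checking 0 amplitude} with the product description of the kernel in Lemma~\ref{lemma E in terms of e} and the expression~\eqref{equation e lambda} of $e_\lambda$ as a sum of squares). There is no real obstacle here; the statement is essentially an immediate consequence of the fact that $\ker \Delta \subset V_\lambda$ for all $\lambda \geq 0$.
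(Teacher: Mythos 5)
Your proof is correct and takes essentially the same route as the paper: both reduce the claim to $V_\lambda$ being base-point-free and then observe that $\ker\Delta \subset V_\lambda$. The only (harmless) difference is that you use indicator functions of connected components, whereas the paper simply notes that the constant function $\mathbf{1}_M$ already witnesses base-point-freeness at every $x$.
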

\begin{proof}
The constant functions on~$M$ are eigenfunctions of $\Delta$ associated to the eigenvalue~$0$. Thus for every $\lambda \geq 0$, $V_\lambda$ contains all constant functions on $M$, hence is base-point-free. By the above remark, $(V_\lambda)^r$ is then $0$-ample.
\end{proof}

\subsection{The real algebraic setting}
\label{subsection real algebraic setting}

Let us now describe more precisely the real algebraic framework. The main difference with what we did previously is that we consider sections of a rank $r$ vector bundle instead of maps to $\R^r$. The local picture is the same as in sections~\ref{subsection general setting} to~\ref{subsection harmonic setting}, so that we can adapt everything to this setting. But the formalism is a bit heavier. A classical reference for most of this material is \cite{GH1994}.

Let $\mathcal{X}$ be a smooth complex projective manifold of complex dimension $n$. We equip $\mathcal{X}$ with a real structure, that is with an antiholomorphic involution $c_\mathcal{X}$. We assume that its real locus, the set of fixed points of $c_\mathcal{X}$, is not empty and we denote it by $\R \mathcal{X}$. Let $\mathcal{L}$ be an ample holomorphic line bundle over $\mathcal{X}$ equipped with a real structure $c_\mathcal{L}$ compatible with the one on $\mathcal{X}$. By this we mean that $c_\mathcal{X} \circ \pi = \pi \circ c_\mathcal{L}$, where $\pi : \mathcal{L} \to \mathcal{X}$ stands for the projection map onto the base space. Similarly, let $\mathcal{E}$ be a holomorphic vector bundle of rank $r$ over $\mathcal{X}$, with a compatible real structure $c_\mathcal{E}$.

Let $h_\mathcal{L}$ and $h_\mathcal{E}$ be real Hermitian metrics on $\mathcal{L}$ and $\mathcal{E}$ respectively, that is $c_\mathcal{L}^*(h_\mathcal{L})=\overline{h_\mathcal{L}}$ and $c_\mathcal{E}^*(h_\mathcal{E})=\overline{h_\mathcal{E}}$. We assume that $h_\mathcal{L}$ is positive in the sense that its \emph{curvature form} $\omega$ is Kähler. Locally we have:
\begin{equation*}
\omega_{/\Omega} = \frac{1}{2i}\partial \bar{\partial}\ln\left(h_\mathcal{L}(\zeta,\zeta)\right)
\end{equation*}
where $\zeta$ is any non-vanishing local holomorphic section of $\mathcal{L}$ on the open set $\Omega \subset \mathcal{X}$. This form corresponds to a Hermitian metric $g_\C=\omega(\cdot,i\cdot)$ on $\mathcal{X}$ whose real part is a Riemannian metric $g$. We denote by $\dx V_{\mathcal{X}}$ the volume form $\frac{\omega^n}{n!}$ on $\mathcal{X}$.

\begin{rem}
The normalization of $\omega$ is the one of \cite{BBS2008,BSZ2000a}, but differs from our references concerning peak sections \cite{GW2014c,Tia1990}. This will cause some discrepancies with the latter two in section~\ref{subsection peak sections}. With our convention, the Fubini-Study metric on $\R \P^n$ induced by the standard metric on the hyperplane line bundle $\mathcal{O}(1)$ is the quotient of the Euclidean metric on $\S^n$.
\end{rem}

Let $d \in \N$, then the vector bundle $\mathcal{E}\otimes \mathcal{L}^d$ comes with a real structure $c_d = c_\mathcal{E} \otimes c_\mathcal{L}^{d}$ compatible with $c_\mathcal{X}$ and a real Hermitian metric $h_d = h_\mathcal{E} \otimes h_\mathcal{L}^{d}$. We equip the space $\Gamma(\mathcal{E}\otimes \mathcal{L}^d)$ of smooth sections of $\mathcal{E}\otimes \mathcal{L}^d$ with the $L^2$ Hermitian product defined by:
\begin{equation}
\label{equation definition prsc algebraic case}
\forall s_1, s_2 \in \Gamma(\mathcal{E}\otimes \mathcal{L}^d), \qquad \prsc{s_1}{s_2} = \int_{\mathcal{X}} h_d(s_1,s_2) \dx V_\mathcal{X}.
\end{equation}

We know from the vanishing theorem of Kodaira and Serre that the space $H^0(\mathcal{X},\mathcal{E}\otimes \mathcal{L}^d)$ of global holomorphic sections of $\mathcal{E}\otimes \mathcal{L}^d$ has finite dimension $N_d$ and that $N_d$ grows as a polynomial of degree $n$ in $d$, when $d$ goes to infinity.  We denote by:
\begin{equation*}
\R H^0(\mathcal{X},\mathcal{E}\otimes \mathcal{L}^d) = \left\{s \in H^0\left(\mathcal{X},\mathcal{E}\otimes \mathcal{L}^d\right)\mvert c_d \circ s = s \circ c_\mathcal{X} \right\}
\end{equation*}
the space of real holomorphic sections of $\mathcal{E}\otimes \mathcal{L}^d$, which has real dimension $N_d$. The Hermitian product~\eqref{equation definition prsc algebraic case} induces a Euclidean inner product on $\R H^0(\mathcal{X},\mathcal{E}\otimes \mathcal{L}^d)$. Notice that we integrate on the whole of $\mathcal{X}$, not only on the real locus, even when we consider real sections.

If $s \in \R H^0(\mathcal{X},\mathcal{E}\otimes \mathcal{L}^d)$ is such that its restriction to $\R \mathcal{X}$ vanishes transversally, then its real zero set $Z_s=s^{-1}(0)\cap \R \mathcal{X}$ is a (possibly empty) submanifold of $\R \mathcal{X}$ of codimension~$r$. We denote by $\rmes{s}$ the Riemannian measure induced on this submanifold by the metric~$g$.

As in section~\ref{subsection incidence manifold}, we consider the incidence manifold:
\begin{equation}
\label{definition Sigma d}
\Sigma_d = \left\{(s,x)\in \R H^0(\mathcal{X},\mathcal{E}\otimes \mathcal{L}^d) \times \R \mathcal{X} \mvert s(x) = 0 \right\}.
\end{equation}
In this setting, $\Sigma_d$ is the zero set of the bundle map $F_d : \R H^0(\mathcal{X},\mathcal{E}\otimes \mathcal{L}^d) \times \R \mathcal{X} \to \R(\mathcal{E}\otimes \mathcal{L}^d)$ over $\R \mathcal{X}$ defined by $F_d:(s,x)\mapsto s(x)$. In a trivialization, the situation is similar to the one in section~\ref{subsection incidence manifold}. Thus, if $\R H^0(\mathcal{X},\mathcal{E}\otimes \mathcal{L}^d)$ is $0$-ample, $\Sigma_d$ is a smooth manifold equipped with two projection maps, $\pi_1$ and $\pi_2$, onto $\R H^0(\mathcal{X},\mathcal{E}\otimes \mathcal{L}^d)$ and $\R \mathcal{X}$ respectively. By Sard's theorem, the discriminant locus of $\R H^0(\mathcal{X},\mathcal{E}\otimes \mathcal{L}^d)$ then has measure $0$ for any non-singular Gaussian measure, since it is the set of critical values of $\pi_1$.

\begin{rem}
Here, by $\R H^0(\mathcal{X},\mathcal{E}\otimes \mathcal{L}^d)$ is $0$-ample we mean that, for every $x \in \R \mathcal{X}$, the evaluation map $j^{0,d}_x : s\in \R H^0(\mathcal{X},\mathcal{E}\otimes \mathcal{L}^d) \mapsto s(x) \in \R (\mathcal{E}\otimes \mathcal{L}^d)_x$ is onto.
\end{rem}

Let $\nabla^d$ denote any real connection on $\mathcal{E}\otimes \mathcal{L}^d$, that is such that for every smooth section~$s$, $\nabla^d \left(c_d \circ s \circ c_\mathcal{X}\right) = c_d \circ \left(\nabla^d s\right) \circ dc_\mathcal{X}$. For example one could choose the Chern connection. We consider the vertical component $\nabla^d F_d$ of the differential of $F_d$, whose kernel is the tangent space of $\Sigma_d$. For any $(s_0,x)\in \Sigma_d$ the partial derivatives of $F_d$ are given by:
\begin{align}
\label{equation differential incidence manifold alg}
\partial_1^dF_d(s_0,x)&=j^{0,d}_x & &\text{and} & \partial_2^dF_d(s_0,x)&=\nabla_x^ds_0.
\end{align}
Note that we only consider points of the zero section of $\mathcal{E}\otimes \mathcal{L}^d$, hence all this does not depend on the choice of $\nabla^d$.

Let $P_1$ (resp. $P_2$) denote the projection from $\mathcal{X} \times \mathcal{X}$ onto the first (resp. second) factor. Recall that $(\overline{\mathcal{E}\otimes \mathcal{L}^d}) \boxtimes (\mathcal{E}\otimes \mathcal{L}^d)$ stands for the bundle $P_1^*(\overline{\mathcal{E}\otimes \mathcal{L}^d}) \otimes P_2^* (\mathcal{E}\otimes \mathcal{L}^d)$ over $\mathcal{X}\times \mathcal{X}$. Let $E_d$ denote the Schwartz kernel of the orthogonal projection from the space of real smooth sections of $\mathcal{E}\otimes \mathcal{L}^d$ onto $\R H^0(\mathcal{X},\mathcal{E}\otimes \mathcal{L}^d)$. It is the unique section of $(\overline{\mathcal{E}\otimes \mathcal{L}^d}) \boxtimes (\mathcal{E}\otimes \mathcal{L}^d)$ such that, for every real smooth section $s$ of $\mathcal{E}\otimes \mathcal{L}^d$, the projection of $s$ on $\R H^0(\mathcal{X},\mathcal{E}\otimes \mathcal{L}^d)$ is given by:
\begin{equation*}
x \mapsto \prsc{E_d(x,\cdot)}{s} = \int_{y\in \mathcal{X}} h_d(E_d(x,y),s(y)) \dx V_\mathcal{X}.
\end{equation*}
Here, $h_d$ acts on the second factor of $(\overline{\mathcal{E}\otimes \mathcal{L}^d})_x \otimes (\mathcal{E}\otimes \mathcal{L}^d)_y$.

The kernel $E_d$ satisfies a reproducing kernel property similar to~\eqref{equation reproducing kernel property E}:
\begin{equation}
\label{equation reproducing kernel alg}
\forall s \in \R H^0(\mathcal{X},\mathcal{E}\otimes \mathcal{L}^d),\ \forall x \in \R \mathcal{X}, \qquad s(x) = \prsc{E_d(x,\cdot)}{s} \in \R (\mathcal{E}\otimes \mathcal{L}^d)_x.
\end{equation}

If $(s_1,\dots,s_{N_d})$ is an orthonormal basis of $\R H^0(\mathcal{X},\mathcal{E}\otimes \mathcal{L}^d)$ then for all $x$ and $y \in \mathcal{X}$,
\begin{equation}
\label{equation kernel in BON alg}
E_d(x,y) = \sum_{i=1}^{N_d} s_i(x)\otimes s_i(y).
\end{equation}
For any $x\in \R\mathcal{X}$, this shows  $E_d(x,x)$ is in the span $\{\zeta \otimes \zeta \mid \zeta \in \R(\mathcal{E}\otimes \mathcal{L}^d)_x\}$. We also get the analogue of Lemma~\ref{lemma 0 amplitude}.
\begin{lem}
\label{lemma 0 amplitude alg}
$\R H^0(\mathcal{X},\mathcal{E}\otimes \mathcal{L}^d)$ is $0$-ample if and only if for any $x \in \R \mathcal{X}$, $E_d(x,x)$ is a positive-definite bilinear form on $(\R(\mathcal{E}\otimes \mathcal{L}^d)_x)^*$.
\end{lem}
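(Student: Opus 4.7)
The plan is to mimic the proof of Lemma~\ref{lemma checking 0 amplitude} essentially verbatim, using expression~\eqref{equation kernel in BON alg} for $E_d$ in an orthonormal basis. The calculation~\eqref{equation positive kernel} carried out in the trivial setting transfers without change to the bundle setting, once one remembers that at a real point $x\in\R\mathcal{X}$ an orthonormal basis $(s_1,\dots,s_{N_d})$ of $\R H^0(\mathcal{X},\mathcal{E}\otimes\mathcal{L}^d)$ evaluates into $\R(\mathcal{E}\otimes\mathcal{L}^d)_x$, so that $E_d(x,x)$ lies in the real symmetric tensor square $\R(\mathcal{E}\otimes\mathcal{L}^d)_x\odot\R(\mathcal{E}\otimes\mathcal{L}^d)_x$, hence defines a symmetric bilinear form on the dual $(\R(\mathcal{E}\otimes\mathcal{L}^d)_x)^*$.

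Concretely, I fix $x\in\R\mathcal{X}$ and an orthonormal basis $(s_1,\dots,s_{N_d})$ of $\R H^0(\mathcal{X},\mathcal{E}\otimes\mathcal{L}^d)$, and rewrite~\eqref{equation kernel in BON alg} at $(x,x)$ as
\begin{equation*}
E_d(x,x) = \sum_{i=1}^{N_d} s_i(x) \otimes s_i(x).
\end{equation*}
For any $\alpha \in (\R(\mathcal{E}\otimes\mathcal{L}^d)_x)^*$, the associated quadratic form evaluates to
\begin{equation*}
E_d(x,x)(\alpha,\alpha) = \sum_{i=1}^{N_d} \alpha\bigl(s_i(x)\bigr)^2 \geq 0,
\end{equation*}
so $E_d(x,x)$ is automatically positive semi-definite, with equality if and only if $\alpha$ vanishes on every $s_i(x)$, i.e.\ on the image of the evaluation map $j^{0,d}_x$.

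I then conclude by a direct duality argument: $E_d(x,x)$ is positive-definite if and only if the only $\alpha$ with $E_d(x,x)(\alpha,\alpha)=0$ is $\alpha=0$, if and only if the annihilator of $\mathrm{Im}(j^{0,d}_x)$ in $(\R(\mathcal{E}\otimes\mathcal{L}^d)_x)^*$ is trivial, if and only if $j^{0,d}_x$ is surjective onto $\R(\mathcal{E}\otimes\mathcal{L}^d)_x$ --- which is exactly the $0$-ampleness of $\R H^0(\mathcal{X},\mathcal{E}\otimes\mathcal{L}^d)$. There is no real obstacle here; the only points worth verifying are that the expansion~\eqref{equation kernel in BON alg} restricts to real tensors at a real point (which follows from $c_d\circ s_i=s_i\circ c_\mathcal{X}$ together with $c_\mathcal{X}(x)=x$), and that the independence of the statement from the choice of basis is guaranteed by the intrinsic characterization of $E_d$ via~\eqref{equation reproducing kernel alg}.
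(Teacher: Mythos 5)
Your proposal is correct and matches what the paper intends: Lemma~\ref{lemma 0 amplitude alg} is stated without explicit proof precisely because it follows from the orthonormal-basis expansion~\eqref{equation kernel in BON alg}, exactly as Lemma~\ref{lemma checking 0 amplitude} follows from~\eqref{equation positive kernel}, and your computation $E_d(x,x)(\alpha,\alpha)=\sum_i \alpha(s_i(x))^2\geq 0$ together with the annihilator argument reproduces that reasoning faithfully (using a covector $\alpha$ rather than the metric-pairing with $\zeta$ as in~\eqref{equation positive kernel}, an inessential difference). Your side remarks on reality at $x\in\R\mathcal{X}$ and basis-independence are also accurate.
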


Let $s$ be a standard Gaussian vector in $\R H^0(\mathcal{X},\mathcal{E}\otimes \mathcal{L}^d)$, then $(s(x))_{x \in \R \mathcal{X}}$ defines a Gaussian field with values in $\mathcal{E}\otimes\mathcal{L}^d$ and its covariance function is a section of $(\overline{\mathcal{E}\otimes \mathcal{L}^d}) \boxtimes (\mathcal{E}\otimes \mathcal{L}^d)$. The same proof as for Proposition~\ref{proposition covariance function} gives the following.

\begin{prop}
\label{proposition covariance function alg}
Let $E_d$ be the Schwartz kernel of $\R H^0(\mathcal{X},\mathcal{E}\otimes \mathcal{L}^d)$. Let $s \sim \mathcal{N}(0,\Id)$ in $\R H^0(\mathcal{X},\mathcal{E}\otimes \mathcal{L}^d)$, then we have:
\begin{equation*}
\forall x, y \in \mathcal{X}, \qquad \cov(s(x),s(y))= \esp{s(x)\otimes s(y)} = E_d(x,y).
\end{equation*}
\end{prop}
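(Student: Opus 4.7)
The plan is to mimic directly the proof of Proposition~\ref{proposition covariance function}, replacing $V$ by $\R H^0(\mathcal{X},\mathcal{E}\otimes \mathcal{L}^d)$, the $L^2$-inner product \eqref{L2 inner product} by \eqref{equation definition prsc algebraic case}, and the reproducing identity \eqref{equation reproducing kernel property E} by its real algebraic analogue \eqref{equation reproducing kernel alg}. Fix $x,y \in \mathcal{X}$. The first equality $\cov(s(x),s(y))=\esp{s(x)\otimes s(y)}$ is immediate from the fact that $s$ is centered and from Lemma~\ref{lemma covariance as a tensor} applied to the random vectors $s(x) \in \R(\mathcal{E}\otimes \mathcal{L}^d)_x$ and $s(y) \in \R(\mathcal{E}\otimes \mathcal{L}^d)_y$.

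The second equality will be obtained by showing that the smooth section $(x,y) \mapsto \esp{s(x)\otimes s(y)}$ of $(\overline{\mathcal{E}\otimes \mathcal{L}^d})\boxtimes(\mathcal{E}\otimes \mathcal{L}^d)$ reproduces the orthogonal projection onto $\R H^0(\mathcal{X},\mathcal{E}\otimes \mathcal{L}^d)$, hence by uniqueness coincides with $E_d$. Concretely, let $s_0$ be a real smooth section of $\mathcal{E}\otimes \mathcal{L}^d$ and compute, using Fubini (justified by smoothness and compactness of $\mathcal{X}$),
\begin{align*}
\int_{y\in \mathcal{X}}h_d\!\left(\esp{s(x)\otimes s(y)},s_0(y)\right)\dx V_\mathcal{X}
&=\esp{\int_{y\in \mathcal{X}}h_d(s(x)\otimes s(y),s_0(y))\dx V_\mathcal{X}}\\
&=\esp{s(x)\,\prsc{s}{s_0}},
\end{align*}
where $h_d$ acts on the second factor and we recognized the integral defining $\prsc{s}{s_0}$ from \eqref{equation definition prsc algebraic case}. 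If $s_0 \in \R H^0(\mathcal{X},\mathcal{E}\otimes \mathcal{L}^d)^\perp$, then $\prsc{s}{s_0}=0$ almost surely (since $s \in \R H^0(\mathcal{X},\mathcal{E}\otimes \mathcal{L}^d)$) and the expectation vanishes.

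If on the other hand $s_0 \in \R H^0(\mathcal{X},\mathcal{E}\otimes \mathcal{L}^d)$, we apply the reproducing kernel property \eqref{equation reproducing kernel alg} to rewrite $s(x)=\prsc{E_d(x,\cdot)}{s}$, which gives
\begin{equation*}
\esp{s(x)\,\prsc{s}{s_0}}=\esp{\prsc{E_d(x,\cdot)}{s}\,\prsc{s_0}{s}}=\prsc{E_d(x,\cdot)}{s_0}=s_0(x),
\end{equation*}
the middle equality being an application of Lemma~\ref{lemma random scalar product} to the standard Gaussian vector $s$ (this requires noting that $E_d(x,\cdot)$ and $s_0$ both lie in $\R H^0(\mathcal{X},\mathcal{E}\otimes \mathcal{L}^d)$, viewed coordinate by coordinate after trivializing the fiber $\R(\mathcal{E}\otimes \mathcal{L}^d)_x$), and the last equality is again \eqref{equation reproducing kernel alg}. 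Thus $y \mapsto \esp{s(x)\otimes s(y)}$ reproduces the orthogonal projection onto $\R H^0(\mathcal{X},\mathcal{E}\otimes \mathcal{L}^d)$, so by uniqueness of the Schwartz kernel it equals $E_d(x,\cdot)$, which concludes the proof.

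The argument is essentially formal once the correct bookkeeping is in place; the only point that requires a little attention, and which I view as the main (mild) obstacle, is verifying that the vector-valued computation above makes sense fiberwise in $(\overline{\mathcal{E}\otimes \mathcal{L}^d})_x\otimes(\mathcal{E}\otimes \mathcal{L}^d)_y$, and that the Hermitian product $h_d$ restricted to real sections yields the real inner product used to define $\R H^0(\mathcal{X},\mathcal{E}\otimes \mathcal{L}^d)$ as a Euclidean space. Working in a local real frame of $\mathcal{E}\otimes \mathcal{L}^d$ near $x$ reduces everything to the proof of Proposition~\ref{proposition covariance function} applied componentwise, which settles this point.
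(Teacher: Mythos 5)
Your proof is correct and is exactly what the paper intends: the paper's ``proof'' of Proposition~\ref{proposition covariance function alg} is the single sentence ``The same proof as for Proposition~\ref{proposition covariance function} gives the following,'' and your argument is precisely that proof transcribed to the real algebraic setting, with the necessary remark about trivializing the fiber so that Lemma~\ref{lemma random scalar product} can be applied componentwise.
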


\begin{rem}
The kernel $E_d$ is also the kernel of the orthogonal projection onto $H^0(\mathcal{X},\mathcal{E}\otimes \mathcal{L}^d)$ in $\Gamma\left(\mathcal{E}\otimes \mathcal{L}^d\right)$ for the Hermitian inner product~\eqref{equation definition prsc algebraic case}, that is the \emph{Bergman kernel} of $\mathcal{E}\otimes \mathcal{L}^d$.
\end{rem}

As in section~\ref{subsection random jets}, let $\nabla^{\R\mathcal{X}}$ denote the Levi-Civita connection  on $(\R \mathcal{X},g)$. This connection and $\nabla^d$ induce the connection $\nabla^{\R\mathcal{X}}\otimes \Id + \Id \otimes \nabla^d$ on $T^*(\R\mathcal{X}) \otimes (\mathcal{E}\otimes \mathcal{L}^d)$.  We denote by $\nabla^{2,d}$ the second covariant derivative $(\nabla^{\R\mathcal{X}}\otimes \Id + \Id \otimes \nabla^d) \circ \nabla^d$.

Let $x \in \R \mathcal{X}$ and let $\mathcal{J}^k_x(\mathcal{E}\otimes \mathcal{L}^d)$ denote the space of real $k$-jets of real smooth sections of $\mathcal{E}\otimes \mathcal{L}^d$ at $x$. On the space of smooth real sections of $\mathcal{E}\otimes \mathcal{L}^d$ we define
\begin{align*}
j_x^{1,d}&: s \mapsto (s(x),\nabla^d_xs) & &\text{and} & j_x^{2,d}&: s \mapsto (s(x),\nabla_x^ds,\nabla^{2,d}_x s),
\end{align*}
where $\nabla^d_xs$ and $\nabla^{2,d}_xs$ are implicitly restricted to $T_x\R \mathcal{X}$. These maps induce isomorphisms from $\mathcal{J}_x^1(\mathcal{E}\otimes \mathcal{L}^d)$ to the image of $j_x^{1,d}$ in $\R(\mathcal{E}\otimes \mathcal{L}^d)_x \otimes ( \R \oplus T_x^*\R\mathcal{X})$ and from $\mathcal{J}_x^2(\mathcal{E}\otimes \mathcal{L}^d)$ to the image of $j_x^{2,d}$ in $\R(\mathcal{E}\otimes \mathcal{L}^d)_x \otimes ( \R \oplus T_x^*\R\mathcal{X} \oplus \Sym(T_x^*\R\mathcal{X}))$ respectively. Note that the above isomorphisms are not canonical since they depend on the choice of $\nabla^d$.

We have the following equivalent of Lemma~\ref{lemma variance 2-jet}, with the same proof, and similar notations for the partial covariant derivatives.

\begin{lem}
\label{lemma variance 2-jet alg}
Let $E_d$ denote the Bergman kernel of $\R H^0(\mathcal{X},\mathcal{E}\otimes \mathcal{L}^d)$ and let $s$ be a standard Gaussian vector in $\R H^0(\mathcal{X},\mathcal{E}\otimes \mathcal{L}^d)$. Let $x \in \R \mathcal{X}$, then $j_x^{2,d}(s)$ is a centered Gaussian vector, and its variance form is characterized by:
\begin{align}
\label{equation var t alg}
\var\left(s(x)\right) &= \esp{s(x) \otimes s(x)} \quad \ = E_d(x,x),\\
\label{equation var L alg}
\rule{0pt}{4mm}\var\left(\nabla_x^d s\right) &= \esp{\nabla_x^d s \otimes \nabla_x^d s} \quad \, = (\partial_x \partial_y E_d)(x,x),\\
\label{equation var S alg}
\rule{0pt}{4mm}\var\left(\nabla^{2,d}_x s\right) &= \esp{\nabla^{2,d}_x s \otimes \nabla^{2,d}_x s} = (\partial_{x,x} \partial_{y,y} E_d)(x,x),\\
\label{equation cov tL alg}
\rule{0pt}{4mm}\cov\left(s(x),\nabla_x^d s\right) &= \esp{s(x) \otimes \nabla^d_x s} \quad \, = (\partial_y E_d)(x,x),\\
\label{equation cov tS alg}
\rule{0pt}{4mm}\cov\left(s(x),\nabla^{2,d}_x s\right) &= \esp{s(x) \otimes \nabla^{2,d}_x s} \; \ = (\partial_{y,y} E_d)(x,x),\\
\label{equation cov LS alg}
\rule{0pt}{4mm}\cov\left(\nabla_x^d s,\nabla^{2,d}_x s\right) &= \esp{\nabla^d_x s \otimes \nabla^{2,d}_x s}\; \; \, = (\partial_x \partial_{y,y}E_d)(x,x).
\end{align}
\end{lem}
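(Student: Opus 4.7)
The plan is to mirror the proof of Lemma~\ref{lemma variance 2-jet} almost verbatim, the only real novelty being bookkeeping with connections and with the real-structure-preserving evaluation.

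First I would observe that $j_x^{2,d}$ is a linear map from the finite-dimensional Euclidean space $\R H^0(\mathcal{X},\mathcal{E}\otimes\mathcal{L}^d)$ into the finite-dimensional space $\R(\mathcal{E}\otimes\mathcal{L}^d)_x \otimes (\R \oplus T_x^*\R\mathcal{X} \oplus \Sym(T_x^*\R\mathcal{X}))$: evaluation at $x$ is linear, and $\nabla^d$ and $\nabla^{2,d}$ act linearly on sections. Therefore, as the image of a standard Gaussian vector under a linear map, $j_x^{2,d}(s)$ is a centered Gaussian vector, and each of $s(x)$, $\nabla^d_x s$, $\nabla^{2,d}_x s$ is centered Gaussian as well. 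The first equality on every line of \eqref{equation var t alg}--\eqref{equation cov LS alg} is then just the description of variance and covariance forms as tensorial expectations, given by Lemmas~\ref{lemma variance as a tensor} and~\ref{lemma covariance as a tensor} of the appendix.

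For the second equality on each line, the identity \eqref{equation var t alg} is exactly Proposition~\ref{proposition covariance function alg} specialized to $y=x$. For the five remaining identities, I would differentiate the section-valued function $(x,y)\mapsto \esp{s(x)\otimes s(y)} = E_d(x,y)$ of $\mathcal{X}\times\mathcal{X}$ in the appropriate directions. Writing $s = \sum_{i=1}^{N_d} X_i s_i$ with $(s_i)$ an orthonormal basis of $\R H^0(\mathcal{X},\mathcal{E}\otimes\mathcal{L}^d)$ and $(X_i)\sim\mathcal{N}(0,\mathrm{Id})$ in $\R^{N_d}$, one gets from \eqref{equation kernel in BON alg} that
\begin{equation*}
\esp{s(x)\otimes s(y)} = \sum_{i,j} \esp{X_i X_j}\, s_i(x)\otimes s_j(y) = \sum_i s_i(x)\otimes s_i(y) = E_d(x,y).
\end{equation*}
Because the sum is finite, covariant derivatives and expectation commute, and applying $\partial_y$ in the direction of a tangent vector at $x$ yields $\esp{s(x)\otimes\nabla_x^d s} = (\partial_y E_d)(x,x)$, giving \eqref{equation cov tL alg}. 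The identities \eqref{equation cov tS alg}, \eqref{equation var L alg}, \eqref{equation cov LS alg}, \eqref{equation var S alg} follow in the same way by applying $\partial_{y,y}$, $\partial_x\partial_y$, $\partial_x\partial_{y,y}$, and $\partial_{x,x}\partial_{y,y}$ respectively, where the higher covariant derivatives are induced from $\nabla^{\R\mathcal{X}}$ and $\nabla^d$ exactly as in the statement of the lemma.

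There is essentially no obstacle; the only point requiring a moment of care is that, since $j_x^{2,d}(s)$ involves $\nabla^d$, the values $\nabla_x^d s$ and $\nabla_x^{2,d}s$ depend on the connection choice, but the partial covariant derivatives of $E_d$ on the right-hand sides are constructed from the same connection, so the identities are connection-independent statements once both sides are set up consistently. I would make this remark explicitly and then conclude, as in the proof of Lemma~\ref{lemma variance 2-jet}, that \eqref{equation var L alg}--\eqref{equation cov LS alg} are obtained by taking partial covariant derivatives in \eqref{equation var t alg}.
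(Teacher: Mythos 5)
Your proof is correct and follows essentially the same route the paper takes: the paper simply states that Lemma~\ref{lemma variance 2-jet alg} has ``the same proof'' as Lemma~\ref{lemma variance 2-jet}, namely apply Lemmas~\ref{lemma variance as a tensor} and~\ref{lemma covariance as a tensor} for the first equalities, Proposition~\ref{proposition covariance function alg} for the second equality in \eqref{equation var t alg}, then differentiate. Your extra remarks (the finite-sum justification for commuting $\nabla^d$ with $\esp{\cdot}$, and the connection-consistency of both sides) are sound elaborations of what the paper leaves implicit.
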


\section{Estimates for the covariance kernels}
\label{section estimates covariance kernels}

We state in this section the estimates for the kernels described above and their first and second derivatives. These estimates will allow us to compute the limit distribution for the random $2$-jets induced by the Gaussian field $(f(x))_{x \in M}$ (resp. $(s(x))_{x \in \R\mathcal{X}}$).

In the case of the spectral function of the Laplacian $e_\lambda$, the asymptotics of section~\ref{subsection spectral function of the Laplacian} were established by Bin \cite{Bin2004}, extending results of Hörmander \cite{Hoer1968}. In the algebraic case, Bleher, Shiffman and Zelditch used estimates for the related Szegö kernel, see \cite[thm.~3.1]{BSZ2000a}. In terms of the Bergman kernel, a similar result was established in \cite{BBS2008}. Both these results concern line bundles. Here, we establish the estimates we need for the Bergman kernel in the case of a higher rank bundle using Hörmander--Tian peak sections (see sections~\ref{subsection peak sections} and~\ref{subsection Bergman kernel} below).

\subsection{The spectral function of the Laplacian}
\label{subsection spectral function of the Laplacian}

We consider the Riemannian setting of section~\ref{subsection harmonic setting}. Let $x \in M$ and let $(x_1,\dots,x_n)$ be normal coordinates centered at $x$. Let $(y_1,\dots,y_n)$ denote the same coordinates in a second copy of $M$, so that $(x_1,\dots,x_n,y_1,\dots,y_n)$ are normal coordinates around $(x,x) \in M \times M$. We denote by $\partial_{x_i}$ (resp.~$\partial_{y_i}$) the partial derivative with respect to $x_i$ (resp.~$y_i$), and similarly $\partial_{x_i,x_j}$ (resp.~$\partial_{y_i,y_j}$) denotes the second derivative with respect to $x_i$ and $x_j$ (resp.~$y_i$ and $y_j$). Let
\begin{align*}
\gamma_0 &= \frac{1}{(4\pi)^\frac{n}{2} \Gamma\left(1+\frac{n}{2}\right)},\\
\gamma_1 &= \frac{1}{2(4\pi)^\frac{n}{2} \Gamma\left(2+\frac{n}{2}\right)}\\
\intertext{and}
\gamma_2 &= \frac{1}{4(4\pi)^\frac{n}{2} \Gamma\left(3+\frac{n}{2}\right)},
\end{align*}
where $\Gamma$ is Euler's gamma function. Let us recall the main theorem of \cite{Bin2004}.

\begin{thm}[Bin]
\label{theorem estimates bin}
Let $V_\lambda$ be as in section~\ref{subsection harmonic setting} and let $e_\lambda$ denote its Schwartz kernel. The following asymptotics hold, uniformly in $x\in M$, as $\lambda \to +\infty$:
\begin{align}
\label{bin x0y0}
e_\lambda(x,x) &= \gamma_0 \lambda^\frac{n}{2} + O\!\left(\lambda^\frac{n-1}{2}\right),\\
\label{bin x1y0}
\partial_{x_i} e_\lambda(x,x) &= O\!\left(\lambda^\frac{n}{2}\right),\\
\label{bin x2y0}
\partial_{x_i,x_k} e_\lambda(x,x) &= \left\{\begin{aligned} -\gamma_1 \lambda^{\frac{n}{2}+1} +\, &O\!\left(\lambda^\frac{n+1}{2}\right) & &\text{if } i=k,\\ &O\!\left(\lambda^\frac{n+1}{2}\right) & &\text{if } i\neq k,\end{aligned}\right.\\
\label{bin x1y1}
\partial_{x_i} \partial_{y_j} e_\lambda(x,x) &= \left\{\begin{aligned} \gamma_1 \lambda^{\frac{n}{2}+1} +\, &O\!\left(\lambda^\frac{n+1}{2}\right) & &\text{if } i=j,\\ &O\!\left(\lambda^\frac{n+1}{2}\right) & &\text{if } i\neq j, \end{aligned}\right.\\
\label{bin x2y1}
\partial_{x_i,x_k}\partial_{y_j} e_\lambda(x,x) &= O\!\left(\lambda^{\frac{n}{2}+1}\right),\\
\label{bin x2y2}
\partial_{x_i,x_k} \partial_{y_j,y_l} e_\lambda(x,x) &= \left\{\begin{aligned} 3\gamma_2 \lambda^{\frac{n}{2}+2} +\, &O\!\left(\lambda^\frac{n+3}{2}\right) & &\text{if } i=j=k=l,\\ \gamma_2 \lambda^{\frac{n}{2}+2} +\, &O\!\left(\lambda^\frac{n+3}{2}\right) & &\text{if } i=j\neq k=l \text{ or } i=k\neq j=l,\\ &O\!\left(\lambda^\frac{n+3}{2}\right) & &\text{otherwise.} \end{aligned}\right.
\end{align}
\end{thm}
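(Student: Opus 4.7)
The plan is to follow the H\"ormander--Tauberian strategy, which reduces the estimates to Euclidean oscillatory integrals. First, I would construct a Fourier integral operator parametrix for the half-wave propagator $U(t) = e^{it\sqrt{\Delta}}$ on~$M$. In normal coordinates $(y_1,\dots,y_n)$ centered at~$x$, this parametrix has the form
\[
U(t)(x,y) \sim (2\pi)^{-n} \int_{\R^n} e^{i(\prsc{y}{\xi} - t\lvert\xi\rvert_{g(x)})} a(t,x,y,\xi) \dx \xi
\]
for $|t|$ smaller than the injectivity radius of~$M$, with $a$ a classical symbol of order zero whose principal part equals~$1$ on the diagonal. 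The spectral projector kernel $e_\lambda$ is related to~$U$ via Fourier inversion in~$t$; after inserting a smooth cutoff $\chi(t)$ near $t=0$, the regularised cluster
\[
\sum_k \hat{\chi}\!\left(\mu - \sqrt{\lambda_k}\right) \varphi_k(x)\varphi_k(y)
\]
can be analysed by stationary phase on the parametrix, and a H\"ormander-type Tauberian theorem then converts its pointwise behaviour into the one-sided estimate for~$e_\lambda$, with a loss of one power of $\sqrt{\lambda}$ in the error.

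The key simplification is that at the origin of normal coordinates the metric $g(x)$ coincides with the Euclidean one, so the leading contribution to $e_\lambda(x,y)$ is governed by
\[
I_\lambda(x,y) := (2\pi)^{-n} \int_{|\xi| \leq \sqrt{\lambda}} e^{i\prsc{x-y}{\xi}} \dx \xi.
\]
Setting $x=y$ yields $I_\lambda(x,x) = \gamma_0 \lambda^{n/2}$ via the volume of the Euclidean ball of radius $\sqrt{\lambda}$. Differentiating in $x$ and $y$ before restricting to the diagonal brings down polynomial factors in~$\xi$, so every quantity in the statement reduces to a Euclidean moment $(2\pi)^{-n}\int_{|\xi| \leq \sqrt{\lambda}} \xi^\alpha \dx \xi$ up to an error of one power of $\sqrt{\lambda}$ below the corresponding leading term. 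Odd-order moments vanish by parity, which accounts for~\eqref{bin x1y0}, \eqref{bin x2y1} and for the off-diagonal cases of~\eqref{bin x1y1} and~\eqref{bin x2y2}. The nonvanishing even moments are evaluated in polar coordinates: the radial integration produces factors $\tfrac{\lambda^{n/2+1}}{n+2}$ or $\tfrac{\lambda^{n/2+2}}{n+4}$, and the spherical integration uses $\int_{\S^{n-1}} \omega_i^2 \dx \sigma = \vol{\S^{n-1}}/n$ together with the identity $\int_{\S^{n-1}} \omega_i^4 \dx \sigma = 3 \int_{\S^{n-1}} \omega_i^2 \omega_j^2 \dx \sigma$ for $i \neq j$ (the spherical avatar of Wick's formula). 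Unwinding $\Gamma(1+n/2) = \tfrac{n}{2}\Gamma(n/2)$ then produces the constants $\gamma_0$, $\gamma_1$ and $\gamma_2$, as well as the factor~$3$ in the top case of~\eqref{bin x2y2}.

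The main obstacle will be the uniformity in $x \in M$ of the Tauberian remainder, together with the need to upgrade the scalar Tauberian theorem so that it handles up to four derivatives of~$e_\lambda$ at the diagonal without losing the gain of~$\sqrt{\lambda}$ in the remainder. Concretely, one has to differentiate the parametrix under the integral sign and verify that the curvature corrections entering the phase and symbol at subprincipal order remain admissible. An equivalent and perhaps cleaner route is to work directly with short spectral windows $e_{\lambda+1}(x,y) - e_{\lambda-1}(x,y)$, whose pointwise derivative bounds follow transparently from the short-time parametrix, and then recover~$e_\lambda$ by summation.
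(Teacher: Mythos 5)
The paper does not contain a proof of this theorem: it is explicitly stated as a recall of the main result of the reference \cite{Bin2004}, which itself extends Hörmander's 1968 diagonal asymptotics to the first and second derivatives of the spectral function. There is therefore no ``paper's own proof'' for your attempt to be compared against, and for the purposes of this paper the correct move is simply to cite Bin, not to reprove the result.

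That said, your sketch is a faithful outline of the Hörmander--Bin strategy and the arithmetic checks out. The reduction to the Euclidean ball moments $(2\pi)^{-n}\int_{|\xi|\le\sqrt\lambda}\xi^\alpha\,\dx\xi$ reproduces $\gamma_0,\gamma_1,\gamma_2$ exactly: one has $\Gamma\!\left(k+\tfrac n2\right)$ denominators coming from the radial integrals $\int_0^{\sqrt\lambda} r^{n+2k-1}\,\dx r=\tfrac{\lambda^{n/2+k}}{n+2k}$ combined with the recursion $\Gamma(1+t)=t\,\Gamma(t)$, and the factor $3$ in the top case of~\eqref{bin x2y2} is precisely $\int_{\S^{n-1}}\omega_i^4\,\dx\sigma\,\big/\,\int_{\S^{n-1}}\omega_i^2\omega_j^2\,\dx\sigma$. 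The signs ($-\gamma_1$ for $\partial_{x_i,x_i}$ versus $+\gamma_1$ for $\partial_{x_i}\partial_{y_j}$) are exactly the $i^2$ versus $i\cdot(-i)$ bookkeeping you describe. You have also correctly identified what makes Bin's contribution nontrivial beyond Hörmander's original theorem: one must differentiate the FIO parametrix up to total order four before applying the Tauberian argument, keep track of the curvature corrections in the phase and symbol at subprincipal order, and verify that the Tauberian remainder remains one power of $\sqrt\lambda$ below the leading term uniformly in $x$ (uniformity being available because $M$ is compact, so the parametrix and symbol estimates can be taken uniform on a fixed neighbourhood of the diagonal). Your alternative route via short spectral windows $e_{\lambda+1}-e_{\lambda-1}$ is a well-known equivalent device. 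In short: the plan is sound and matches the cited reference, but the paper does not (and should not) prove the theorem itself.
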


Since $e_\lambda$ is symmetric, this also gives the asymptotics for $\partial_{y_j} e_\lambda$, $\partial_{y_j,y_l} e_\lambda$ and $\partial_{x_i} \partial_{y_j,y_l} e_\lambda$ along the diagonal. This theorem, together with Lemma~\ref{lemma E in terms of e}, gives the estimates we need for the kernel $E_\lambda$ of~$(V_\lambda)^r$.

We will need the following relations:
\begin{align}
\label{equation ratio gamma}
\frac{\gamma_0}{\gamma_1} &= n+2 & &\text{and} & \frac{\gamma_1}{\gamma_2} &= n+4.
\end{align}

\subsection{Hörmander--Tian peak sections}
\label{subsection peak sections}

We now recall the construction of Hörmander--Tian peak sections in the framework of section~\ref{subsection real algebraic setting}. Let $\mathcal{X}$ be a complex projective manifold. Let $\mathcal{E}$ be a rank~$r$ holomorphic Hermitian vector bundle and $\mathcal{L}$ be an ample holomorphic Hermitian line bundle, both defined over $\mathcal{X}$. We assume that $\mathcal{X}$, $\mathcal{E}$ and $\mathcal{L}$ are endowed with compatible real structures, and that the Kähler metric $g_\C$ on $\mathcal{X}$ is induced by the curvature $\omega$ of $\mathcal{L}$.

The goal of this subsection is to build, for every $d$ large enough, a family of real sections of $\mathcal{E} \otimes \mathcal{L}^d$ with prescribed $2$-jets at some fixed point $x\in \R\mathcal{X}$. Moreover, we want this family to be orthonormal, up to an error that goes to $0$ as $d$ goes to infinity. Using these sections, we will compute (in section~\ref{subsection Bergman kernel} below) the asymptotics we need for the Bergman kernel.

Let $x \in \R \mathcal{X}$ and $(x_1,\dots,x_n)$ be real holomorphic coordinates centered at $x$ and such that $(\deron{\phantom{f}}{x_1},\dots,\deron{\phantom{f}}{x_n})$ is orthonormal at $x$. The next lemma is established in \cite[lemma~3.3]{GW2014a}, up to a factor $\pi$ coming from different normalizations of the metric.

\begin{lem}
\label{lemma nice frame for L}
There exists a real holomorphic frame $\zeta_0$ for $\mathcal{L}$, defined over some neighborhood of $x$, whose potential $-\ln(h_\mathcal{L}(\zeta_0,\zeta_0))$ vanishes at $x$, where it reaches a local minimum with Hessian $g_\C$.
\end{lem}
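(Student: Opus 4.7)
The plan is to start from an arbitrary local real holomorphic frame $\sigma$ of $\mathcal{L}$ near $x$---which exists since $c_\mathcal{L}$ is compatible with $c_\mathcal{X}$ and $x\in\R\mathcal{X}$, so any local nonvanishing holomorphic section $\tilde\sigma$ with $\tilde\sigma(x)\in\R\mathcal{L}_x$ can be symmetrized via $\tfrac{1}{2}(\tilde\sigma+c_\mathcal{L}\circ\tilde\sigma\circ c_\mathcal{X})$ into a real holomorphic frame $\sigma$---and then to modify it into $\zeta_0=e^h\sigma$ for a carefully chosen real holomorphic function $h$ defined near $x$. Since
\[\varphi_{\zeta_0}(z):=-\ln h_\mathcal{L}(\zeta_0,\zeta_0)(z)=\varphi_\sigma(z)-2\Re h(z),\]
the whole task reduces to choosing $h$ so that the $2$-jet of $2\Re h$ at $x$ absorbs exactly the ``unwanted'' part of the $2$-jet of $\varphi_\sigma$ there.

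First I would Taylor-expand $\varphi_\sigma$ in the given real holomorphic coordinates $(x_1,\dots,x_n)$ centered at $x$. Since both $\sigma$ and $x$ are real, $\varphi_\sigma$ is a real-valued real-analytic function satisfying $\varphi_\sigma(\bar z)=\varphi_\sigma(z)$, and these two constraints together force every Taylor coefficient in $\varphi_\sigma(z)=\sum_{\alpha,\beta}r_{\alpha\beta}z^\alpha\bar z^\beta$ to be real. I would then split the $2$-jet of $\varphi_\sigma$ at $x$ into a constant, a real-linear part, a pluriharmonic quadratic part (the sum of the purely holomorphic and antiholomorphic degree-$2$ contributions), and a Hermitian $(1,1)$ quadratic part, and take
\[h(z)=\tfrac{1}{2}\varphi_\sigma(x)+\sum_i\partial_{z_i}\varphi_\sigma(x)\,z_i+\tfrac{1}{2}\sum_{i,j}\partial^2_{z_iz_j}\varphi_\sigma(x)\,z_iz_j.\]
This $h$ has real Taylor coefficients, hence is a real holomorphic polynomial, and by construction $2\Re h$ has the same constant, real-linear and pluriharmonic quadratic parts at $x$ as $\varphi_\sigma$. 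Setting $\zeta_0=e^h\sigma$ therefore yields $\varphi_{\zeta_0}(x)=0$, $d_x\varphi_{\zeta_0}=0$, and the $2$-jet of $\varphi_{\zeta_0}$ at $x$ reduces to the Hermitian form $\sum_{i,j}\partial^2_{z_i\bar z_j}\varphi_\sigma(x)\,z_i\bar z_j$.

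The last step is to identify this Hermitian form with $g_\C$ at $x$. Since locally $\omega=\frac{i}{2}\partial\bar\partial\varphi_{\zeta_0}$, the matrix $(\partial^2_{z_i\bar z_j}\varphi_\sigma(x))$ is read off directly from $\omega_x$ in our coordinates; combined with the orthonormality of $(\partial/\partial x_1,\dots,\partial/\partial x_n)$ at $x$ for $g_\C=\omega(\cdot,i\cdot)$, this forces the mixed Hessian of $\varphi_{\zeta_0}$ at $x$ to equal $g_\C$, and positivity of $\omega$ shows that $x$ is then a strict local minimum of $\varphi_{\zeta_0}$. The delicate point I expect to have to handle carefully is the bookkeeping of the factors of $2$ and $i$ relating the real Hessian, the mixed complex Hessian $\partial^2/\partial z_i\partial\bar z_j$, and the Kähler form $\omega$---but once the normalizations fixed in section~\ref{subsection real algebraic setting} are made explicit in coordinates, the identification is mechanical.
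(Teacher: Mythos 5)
Your argument correctly reconstructs the standard proof that the paper outsources to \cite[lemma~3.3]{GW2014a}: average a local holomorphic frame against the real structures to produce a real one, then multiply by $e^h$ for a suitable real holomorphic polynomial $h$ so that $2\Re h$ absorbs the constant, linear, and pluriharmonic quadratic parts of the potential, leaving only the Hermitian $(1,1)$ part. The remaining normalization check you flag --- that in real holomorphic coordinates orthonormal at $x$ the mixed Hessian $\partial^2\varphi_{\zeta_0}/\partial z_i\partial\bar z_j$ is $\delta_{ij}$, read off from $\omega=\frac{1}{2i}\partial\bar\partial\ln h_\mathcal{L}(\zeta_0,\zeta_0)$ --- is indeed routine, and this is the same approach as the cited source.
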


We choose such a frame $\zeta_0$. Let $(\zeta_1,\dots,\zeta_r)$ be a real holomorphic frame for $\mathcal{E}$ over a neighborhood of $x$, which is orthonormal at $x$. Since $\mathcal{X}$ is compact, we can find $\rho >0$, not depending on~$x$, such that local coordinates and frames as above are defined at least on the geodesic ball of radius $\rho$ centered at $x$. The following results are proved in \cite[section~2.3]{GW2014c}. See also \cite[section~2.2]{GW2014b} and the paper by Tian \cite[lemmas~1.2 and 2.3]{Tia1990}, without the higher rank bundle $\mathcal{E}$ but with more details.

\begin{prop}
\label{proposition peak sections}
Let $p=(p_1,\dots,p_n) \in \N^n$, $m >p_1+\cdots +p_n$ and $q \in \{1,\dots,r\}$.\\
There exists $d_0 \in \N$ such that, for any $d \geq d_0$, there exist $C_{d,p}>0$ and $s \in \R H^0(\mathcal{X},\mathcal{E}\otimes \mathcal{L}^d)$ such that $\Norm{s}=1$ and
\begin{equation*}
s(x_1,\dots,x_n)= C_{d,p} \left(x_1^{p_1}\cdots x_n^{p_n}+ O\!\left(\Norm{(x_1,\dots,x_n)}^{2m}\right)\right) \left(1+ O\!\left(d^{-2m}\right)\right) \zeta_q \otimes \zeta_0^d
\end{equation*}
in some neighborhood of $x$, where the estimate $O(d^{-2m})$ is uniform in $x \in \R \mathcal{X}$.

Moreover, $d_0$ depends on $m$ but does not depend on $x$, $p$, $q$ or our choices of local coordinates and frames. Finally, $C_{d,p}$ is given by:
\begin{equation*}
\left(C_{d,p}\right)^{-2} = \int_{\left\{\Norm{(x_1,\dots,x_n)}\leq \frac{\ln(d)}{\sqrt{d}}\right\}} \norm{x_1^{p_1}\cdots x_n^{p_n}}^2 h_\mathcal{L}^d\left(\zeta_0^d,\zeta_0^d\right) \dx V_\mathcal{X}.
\end{equation*}
\end{prop}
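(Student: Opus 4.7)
The proof follows the classical Hörmander–Tian peak section construction, adapted to the bundle-valued real setting. The plan has three stages: build a smooth model with the prescribed local shape, correct it to a holomorphic section by solving a $\bar\partial$-equation with a singular weight, and then use the real structure to make it real before normalizing.

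\textbf{Stage 1: approximate section.} Let $\chi$ be a smooth cutoff equal to $1$ on $[0,\tfrac12]$ and vanishing outside $[0,1]$. Consider the smooth (non-holomorphic) section
\[\tilde s(y) = \chi\!\Bigl(\tfrac{\sqrt d}{\ln d}\Norm{y}\Bigr)\, y_1^{p_1}\cdots y_n^{p_n}\, \zeta_q(y)\otimes \zeta_0^d(y),\]
extended by zero outside the coordinate neighborhood. By Lemma~\ref{lemma nice frame for L}, on the support of $\tilde s$ one has $h_\mathcal L^d(\zeta_0^d,\zeta_0^d)=e^{-d\Norm y^2}\bigl(1+O(d\Norm y^3)\bigr)$ with $d\Norm y^3=O((\ln d)^3/\sqrt d)=o(1)$. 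A direct computation in polar coordinates then yields $\Norm{\tilde s}^2_{L^2}=(C_{d,p})^{-2}\bigl(1+O(d^{-\infty})\bigr)$, the contribution of the tail $\Norm y\geq \tfrac{\ln d}{\sqrt d}$ being bounded by $e^{-(\ln d)^2}$, and the factor $h_\mathcal E(\zeta_q,\zeta_q)=1+O(\Norm y^2)$ only producing lower-order corrections.

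\textbf{Stage 2: $\bar\partial$-correction.} The error $\bar\partial\tilde s$ is supported in the annulus $\tfrac{\ln d}{2\sqrt d}\leq \Norm y\leq \tfrac{\ln d}{\sqrt d}$, where $h_\mathcal L^d\leq e^{-(\ln d)^2/4}$; hence $\Norm{\bar\partial\tilde s}_{L^2}=O(d^{-N})$ for every $N\in\mathbb N$. Apply Hörmander's $L^2$-estimate for $\bar\partial$ on $\mathcal E\otimes\mathcal L^d$ twisted by a singular plurisubharmonic weight of the form $-K\,\psi(y)\ln\Norm{y}^2$, where $K=K(m,n)$ is chosen large enough and $\psi$ is a smooth cutoff supported near $x$. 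Because $\mathcal L$ is ample and $\partial\bar\partial\ln\Norm y^2\geq 0$, the curvature of the twisted bundle still dominates $\tfrac d2\,\omega$ for $d\geq d_0$, and the estimate produces a smooth $u$ with $\bar\partial u=\bar\partial\tilde s$ and $\int \Norm y^{-2K}h_d(u,u)\,\dx V_\mathcal X=O(d^{-\infty})$, uniformly in $x\in\R\mathcal X$. Integrability, combined with interior elliptic regularity for $\bar\partial$, forces $u$ to vanish at $x$ to order at least $2m$ with the pointwise bound $\norm{u(y)}=O(d^{-\infty})\Norm y^{2m}$. Therefore $s_0=\tilde s-u$ is a holomorphic section whose Taylor series at $x$, read in the frame $\zeta_q\otimes\zeta_0^d$, coincides with $y^p$ modulo terms of order $\geq 2m$, up to a global multiplicative factor $1+O(d^{-2m})$ inherited from Stage 1.

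\textbf{Stage 3: real structure and normalization.} Since $(x_1,\dots,x_n)$ are real holomorphic coordinates and $\zeta_0$, $\zeta_q$ are real holomorphic frames, we have $c_d\circ\tilde s\circ c_\mathcal X=\tilde s$; uniqueness of the $L^2$-minimal solution of $\bar\partial v=\bar\partial\tilde s$ then gives $c_d\circ u\circ c_\mathcal X=u$, so $s_0\in \R H^0(\mathcal X,\mathcal E\otimes\mathcal L^d)$. Set $s:=s_0/\Norm{s_0}$ and read off $C_{d,p}$ from the $L^2$ computation of Stage 1. The main technical obstacle is the uniformity in $x$: the radius $\rho$, the profile $\chi$, the lower bound on the Hörmander constant (governed by a lower bound on $\omega$), and the control of $d\Norm y^3$ on the support of $\tilde s$ must be chosen simultaneously for all $x\in\R\mathcal X$. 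This is precisely where compactness of $\R\mathcal X$ enters, and it produces a $d_0$ depending on $m$ alone, in agreement with the statement.
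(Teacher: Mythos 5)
The paper does not prove this proposition itself; it invokes \cite[section~2.3]{GW2014c} (and \cite[section~2.2]{GW2014b}, \cite{Tia1990}) for the construction. Your three-stage reconstruction --- cutoff model, $\bar\partial$-correction against a singular weight $-K\psi\ln\Norm{y}^2$ to force vanishing to order $2m$, then symmetrization by the real structure and normalization --- is exactly the Hörmander--Tian argument those references carry out, and the skeleton is sound. In particular, the observations that the annulus contribution is $O(d^{-\infty})$, that $L^2$-minimality with a real-invariant weight forces the correction $u$ to be real, and that compactness of $\R\mathcal{X}$ gives uniformity of $d_0$ and of the constants, are all the right points to make.

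One quantitative step deserves more care than you give it. In Stage~1 you assert $\Norm{\tilde s}^2 = (C_{d,p})^{-2}\bigl(1+O(d^{-\infty})\bigr)$ and dismiss the factor $h_\mathcal{E}(\zeta_q,\zeta_q)=1+O(\Norm{y}^2)$ as a ``lower-order correction.'' But the stated $C_{d,p}$ is defined purely from $h_\mathcal{L}^d$; the $h_\mathcal{E}$ factor, integrated against the Gaussian of width $\sim d^{-1/2}$, contributes a multiplicative error of order $(\ln d)^2/d$, not $O(d^{-\infty})$. As your argument stands it therefore yields a multiplier $\bigl(1+O(d^{-1+\varepsilon})\bigr)$, which is what the paper actually consumes downstream (the $O(d^{-1})$ in Lemma~\ref{lemma asymptotics peak sections} and in Proposition~\ref{proposition estimates Bergman}), but not literally the $\bigl(1+O(d^{-2m})\bigr)$ in the display for $m>1$. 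You should either reinterpret the $(1+O(d^{-2m}))$ as controlling only the error introduced by the $\bar\partial$-correction relative to the cutoff model (which \emph{is} $O(d^{-\infty})$), or verify that \cite{GW2014c}'s definition of $C_{d,p}$ includes the $h_\mathcal{E}$ weight --- either way, isolate this bookkeeping rather than sweeping it into $O(d^{-\infty})$. Everything else in the proposal matches the cited construction.
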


The following definitions use Proposition~\ref{proposition peak sections} with $m=3$ and the corresponding $d_0$.
\begin{dfns}
\label{definition peak sections}
For any $d \geq d_0$ and $q\in \{1,\dots,r\}$ we denote the sections of $\R H^0(\mathcal{X},\mathcal{E}\otimes \mathcal{L}^d)$ given by Proposition~\ref{proposition peak sections} by:
\begin{itemize}
\item $s_{0}^{d,q}$ for $p_1=\cdots = p_n = 0$,
\item $s_{i}^{d,q}$ for $p_i=1$ and $\forall\, k\neq i$, $p_k= 0$,
\item $s_{i,i}^{d,q}$ for $p_i=2$ and $\forall\, k\neq i$, $p_k= 0$,
\item $s_{i,j}^{d,q}$ for $p_i=p_j=1$ and $\forall\, k\notin \{i,j\}$, $p_k= 0$, when $i<j$.
\end{itemize}
\end{dfns}
\noindent
Computing the values of the corresponding $C_{d,p}$ (see \cite[lemma~2.5]{GW2014b}), we get the following asymptotics as $d$ goes to infinity. Once again, $O(d^{-1})$ is uniform in $x$.

\begin{lem}
\label{lemma asymptotics peak sections}
For every $q \in \{1,\dots,r\}$, we have:
\begin{align}
\label{peak section 0}
s_{0}^{d,q} &= \sqrt{\frac{d^n}{\pi^n}} \left(1+ O\!\left(\Norm{(x_1,\dots,x_n)}^6\right)\right)\left(1+ O\!\left(d^{-1}\right)\right) \zeta_q \otimes \zeta_0^d,\\
\label{peak section 1}
\forall i \in \{1,\dots,n\}, \quad s_{i}^{d,q} &= \sqrt{\frac{d^{n+1}}{\pi^n}} \left(x_i+ O\!\left(\Norm{(x_1,\dots,x_n)}^6\right)\right)\left(1+ O\!\left(d^{-1}\right)\right) \zeta_q \otimes \zeta_0^d,\\
\label{peak section 2}
\forall i \in \{1,\dots,n\}, \quad s_{i,i}^{d,q} &= \sqrt{\frac{d^{n+2}}{\pi^n}} \left(\frac{x_i^2}{\sqrt{2}}+ O\!\left(\Norm{(x_1,\dots,x_n)}^6\right)\right)\left(1+ O\!\left(d^{-1}\right)\right) \zeta_q \otimes \zeta_0^d,\\
\intertext{and finally, \  $\forall i, j \in \{1,\dots,n\}$ such that $i<j$,}
\label{peak section 11}
s_{i,j}^{d,q} &= \sqrt{\frac{d^{n+2}}{\pi^n}} \left(x_ix_j+ O\!\left(\Norm{(x_1,\dots,x_n)}^6\right)\right)\left(1+ O\!\left(d^{-1}\right)\right) \zeta_q \otimes \zeta_0^d.
\end{align}
\end{lem}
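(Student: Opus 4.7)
The plan is to evaluate, for each of the four multi-indices $p \in \{0,\, e_i,\, 2e_i,\, e_i+e_j\}$, the normalization constant $C_{d,p}$ supplied by Proposition~\ref{proposition peak sections} via a standard Laplace-type argument, and then substitute the result back into the formula for the peak section. The lemma's exponents of $d$ are already forced by scaling ($d^{-n-|p|}$), so the whole content is identifying the correct constant in front.

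First I would make the potential explicit. By Lemma~\ref{lemma nice frame for L}, writing $\varphi(z) = -\ln h_\mathcal{L}(\zeta_0,\zeta_0)(z)$, we have $\varphi(0) = 0$, $d\varphi(0) = 0$, and $\mathrm{Hess}_0\,\varphi = g_\C$. Complexifying the real holomorphic coordinates $(x_1,\dots,x_n)$ into complex coordinates $(z_1,\dots,z_n)$ on a neighborhood of $x$ in $\mathcal{X}$, this reads $\varphi(z) = \|z\|^2 + O(\|z\|^3)$. In the same coordinates, $dV_\mathcal{X} = (1 + O(\|z\|^2))\,dL(z)$, where $dL$ is Lebesgue measure on $\R^{2n}\simeq\C^n$. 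Thus
\begin{equation*}
C_{d,p}^{-2} = \int_{\|z\|\leq \ln(d)/\sqrt{d}} |z_1|^{2p_1}\cdots|z_n|^{2p_n}\,e^{-d\varphi(z)}\,\bigl(1+O(\|z\|^2)\bigr)\,dL(z).
\end{equation*}

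Second, I would apply the change of variables $z = u/\sqrt{d}$, which expands the ball to radius $\ln(d)$ and produces an overall factor $d^{-n-|p|}$. The exponent becomes $-\|u\|^2 + O(\|u\|^3/\sqrt{d})$; on the domain $\|u\|\leq\ln(d)$ this is $-\|u\|^2\bigl(1+O(\ln(d)/\sqrt{d})\bigr)$, and the tails outside this ball contribute an error $O(d^{-A})$ for every $A>0$. Therefore, as $d\to\infty$ and uniformly in $x\in\R\mathcal{X}$,
\begin{equation*}
C_{d,p}^{-2} = \frac{1}{d^{n+|p|}}\int_{\C^n} |z_1|^{2p_1}\cdots|z_n|^{2p_n}\,e^{-\|z\|^2}\,dL(z)\;\bigl(1+O(d^{-1/2})\bigr).
\end{equation*}
Passing to polar coordinates $z_j = r_j e^{i\theta_j}$ on each factor gives the Gaussian integral $\pi^n\prod_j p_j!$, and evaluating the products $\prod_j p_j!$ for the four cases yields $1$, $1$, $2$, $1$ respectively. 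Hence $C_{d,0}\sim\sqrt{d^n/\pi^n}$, $C_{d,e_i}\sim\sqrt{d^{n+1}/\pi^n}$, $C_{d,2e_i}\sim\sqrt{d^{n+2}/(2\pi^n)}$, and $C_{d,e_i+e_j}\sim\sqrt{d^{n+2}/\pi^n}$.

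Third, I would plug these constants into the formula from Proposition~\ref{proposition peak sections} used with $m=3$, absorbing the $O(d^{-1/2})$ from the normalization into the overall $O(d^{-1})$ already allowed by the proposition's statement (any such factor is squared when it comes from $C_{d,p}^{-1}$). The factor $1/\sqrt{2}$ in $s_{i,i}^{d,q}$ arises exactly from the extra $p_i!=2$ in the Gaussian integral. The main delicate point is verifying that all error terms are uniform in $x\in\R\mathcal{X}$, but this follows from Proposition~\ref{proposition peak sections} (whose constants are uniform in $x$) together with the compactness of $\R\mathcal{X}$, which gives a uniform radius $\rho$ on which the normal coordinates and frames $\zeta_0,\zeta_q$ are defined and on which the remainder in $\varphi(z)=\|z\|^2+O(\|z\|^3)$ is controlled independently of $x$.
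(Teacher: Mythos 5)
Your computation of the leading constants $C_{d,p}$ is correct, and your overall strategy — a self-contained Laplace-type evaluation of the integral defining $C_{d,p}$ rather than the paper's citation of \cite[lemma~2.5]{GW2014b} — is a reasonable route. The issue is in the error term: the lemma asserts a relative error $O(d^{-1})$, but your argument only yields $O(d^{-1/2})$ (in fact $O(\ln(d)/\sqrt{d})$). The parenthetical claim that ``any such factor is squared when it comes from $C_{d,p}^{-1}$'' is false: if $C_{d,p}^{-2} = A\,(1+O(d^{-1/2}))$ then $C_{d,p} = A^{-1/2}(1+O(d^{-1/2}))$, with the error halved, not squared. Since $d^{-1/2} \gg d^{-1}$, this cannot be absorbed into the $O(d^{-1})$ of the statement, and one can check on $\C\P^n$ that $O(d^{-1})$ is the true order (there $C_{d,0}^{-2} = \pi^n d!/(n+d)!$ exactly).

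The missing ingredient is a parity argument that kills the $d^{-1/2}$ contribution. Write $\varphi(z) = \|z\|^2 + \psi_3(z) + \psi_4(z) + \cdots$ with $\psi_k$ homogeneous of degree $k$ in $(z,\bar z)$. After the substitution $z = u/\sqrt{d}$, one has $e^{-d\psi} = 1 - d^{-1/2}\psi_3(u) + O(d^{-1})$ on $\|u\|\leq \ln d$. The only term of order $d^{-1/2}$ is $-d^{-1/2}\int \psi_3(u)\,|u^p|^2 e^{-\|u\|^2}\,dL(u)$. Because the weight $|u^p|^2 e^{-\|u\|^2}$ is invariant under each rotation $u_j \mapsto e^{i\theta_j}u_j$ separately, every monomial $u^\alpha\bar u^\beta$ with $\alpha \neq \beta$ integrates to zero; since $|\alpha|+|\beta|=3$ is odd, $\alpha\neq\beta$ always, so the whole $\psi_3$-contribution vanishes. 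The same rotational-invariance argument also kills all other odd-order corrections, and a careful bookkeeping of the remaining terms (quartic part of $\varphi$, the $\psi_3^2/2$ piece of $e^{-d\psi}$, and the $O(\|z\|^2)$ correction to $dV_\mathcal{X}$) then gives $C_{d,p}^{-2} = d^{-n-|p|}\pi^n \prod_j p_j!\,(1+O(d^{-1}))$, which is what the lemma requires. Without this observation the bound $O(d^{-1/2})$ is the best your argument gives.
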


\begin{rem}
These values differ from the one given in \cite[lemma~2.3.5]{GW2014c} by a factor $\left(\int_\mathcal{X} \dx V_\mathcal{X}\right)^{\frac{1}{2}}$ and some power of $\sqrt{\pi}$ because we do not use the same normalization for the volume form. For the same reason they also differ from \cite[lemma~2.3]{Tia1990} by a factor $\pi^\frac{n}{2}$.
\end{rem}

The sections defined in Definition~\ref{definition peak sections} are linearly independent, at least for $d$ large enough. In fact, they are asymptotically orthonormal in the following sense. Let $H_{2,x}^d \subset \R H^0(\mathcal{X},\mathcal{E}\otimes\mathcal{L}^d)$ denote the subspace of sections that vanish up to order $2$ at $x$.

\begin{lem}
\label{lemma asymptotically orthonormal}
The sections $(s_{i}^{d,q})_{\substack{1\leq q \leq r\\ 0 \leq i \leq n}}$ and $(s_{i,j}^{d,q})_{\substack{1\leq q \leq r\\ 1\leq i\leq j \leq n}}$ defined in Definition~\ref{definition peak sections} have $L^2$-norm equal to $1$ and their pairwise scalar product is dominated by a $O(d^{-1})$ independent of $x$. Moreover, their scalar product with any unit element of $H_{2,x}^d$ is dominated by some $O(d^{-1})$ not depending on $x$.
\end{lem}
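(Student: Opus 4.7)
The plan proceeds through the three claims in order. The unit-norm assertion is immediate, since the sections of Definition~\ref{definition peak sections} are produced by Proposition~\ref{proposition peak sections} and normalized to have $\Norm{s}=1$ by construction.

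For the pairwise inner products, I would localize the integral to the small ball $B_d=B(x,\ln(d)/\sqrt{d})$. The contribution of its complement is $O(d^{-\infty})$: by Lemma~\ref{lemma nice frame for L} combined with a compactness argument in $x$, the Kähler potential $-\ln h_\mathcal{L}(\zeta_0,\zeta_0)$ is bounded below by $c\ln(d)^2/d$ outside $B_d$, which forces $h_\mathcal{L}^d\leq e^{-c\ln(d)^2}$, and the sections satisfy uniform pointwise bounds. On $B_d$ I would substitute the expansions of Lemma~\ref{lemma asymptotics peak sections} together with the local Taylor expansions $h_\mathcal{L}^d(\zeta_0^d,\zeta_0^d)=e^{-d|z|^2}(1+O(d|z|^3+d|z|^4))$, $h_\mathcal{E}(\zeta_q,\zeta_{q'})=\delta_{qq'}+O(|z|)$, and $\dx V_\mathcal{X}=(1+O(|z|^2))\dx\lambda$ (with $\dx\lambda$ the Lebesgue measure in the coordinates $z$). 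The integrand $h_d(s_p^{d,q},s_{p'}^{d,q'})\dx V_\mathcal{X}$ then splits into a leading Gaussian integral
\begin{equation*}
C_{d,p}\,C_{d,p'}\,\delta_{qq'}\int_{B_d}z^p\,\bar z^{p'}\,e^{-d|z|^2}\dx\lambda
\end{equation*}
and a collection of correction terms.

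The decisive observation is that $B_d$ and the weight $e^{-d|z|^2}\dx\lambda$ are invariant under each circle action $z_i\mapsto e^{i\theta_i}z_i$, so $\int_{B_d}z^\alpha\bar z^\beta e^{-d|z|^2}\dx\lambda$ vanishes unless $\alpha=\beta$ componentwise. Hence the leading term vanishes whenever $(p,q)\neq(p',q')$, and using $\int|z^\alpha|^2 e^{-d|z|^2}\dx\lambda=\pi^n\prod\alpha_i!/d^{n+|\alpha|}$ together with $C_{d,p}\sim d^{(n+|p|)/2}$ (read off from Lemma~\ref{lemma asymptotics peak sections}), each correction term is bounded by $O(d^{-1})$ uniformly in $x$. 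For the scalar product with a unit section $s\in H_{2,x}^d$, write locally $s=\sum_{q'}f_{q'}\zeta_{q'}\otimes\zeta_0^d$ with $f_{q'}$ holomorphic. The vanishing of the real $2$-jet of $s$ at $x$, together with analyticity (a holomorphic function vanishing to order $k$ on the real locus vanishes to order $k$ on a complex neighborhood), yields $f_{q'}=\sum_{|\alpha|\geq 3}c_\alpha^{q'}z^\alpha$. The leading Gaussian contribution to $\prsc{s_p^{d,q}}{s}$ is then
\begin{equation*}
C_{d,p}\sum_{|\alpha|\geq 3}\overline{c_\alpha^q}\int_{B_d}z^p\,\bar z^\alpha\, e^{-d|z|^2}\dx\lambda,
\end{equation*}
which vanishes identically because $|\alpha|\geq 3>2\geq|p|$ forces $\alpha\neq p$. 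The remaining corrections are bounded by Cauchy--Schwarz in $\ell^2(\alpha,q')$ using the Parseval-type bound $\sum|c_\alpha^{q'}|^2\pi^n\prod\alpha_i!/d^{n+|\alpha|}\leq\Norm{s}^2=1$, yielding $O(d^{-1})$.

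The main technical obstacle is the bookkeeping of all correction terms (the cubic and higher contributions to the Kähler potential, the expansion of $h_\mathcal{E}$ and of $\dx V_\mathcal{X}$, the $O(|z|^6)$ and $O(d^{-1})$ errors in Lemma~\ref{lemma asymptotics peak sections}, and the restriction to $B_d$ rather than all of $\mathbb{C}^n$) and the verification that none of them degrades the bound below $O(d^{-1})$ uniformly in $x$. The uniformity itself follows from the compactness of $\R\mathcal{X}$ and the uniformity already built into Proposition~\ref{proposition peak sections} and Lemma~\ref{lemma asymptotics peak sections}.
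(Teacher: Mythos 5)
The paper does not actually prove this lemma: it records it as a consequence of the constructions in the cited references of Gayet--Welschinger \cite{GW2014b,GW2014c} and Tian \cite{Tia1990}, and supplies no argument of its own. Your proposal fills in a self-contained proof, and the skeleton — cut the integral at $B_d=B(x,\ln(d)/\sqrt{d})$, approximate by a Gaussian weight, kill the cross terms by the torus action $z_i\mapsto e^{i\theta_i}z_i$, and bound the tail by Cauchy--Schwarz against a Parseval estimate — is indeed the right approach and is what the references do. Your treatment of the third claim (passing from the vanishing of the \emph{real} $2$-jet of $s\in H_{2,x}^d$ to the vanishing of the holomorphic $2$-jet, then using the degree separation $|\alpha|\geq 3>|p|$) is also correct.

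The Taylor remainders as you have written them are, however, too coarse to yield $O(d^{-1})$, and this is a genuine gap. The remainder $h_\mathcal{E}(\zeta_q,\zeta_{q'})=\delta_{qq'}+O(|z|)$ allows a term $c_iz_i+\overline{c_i}\,\bar z_i$ in $h_\mathcal{E}(\zeta_q,\zeta_q)$; its contribution to $\prsc{s_0^{d,q}}{s_i^{d,q}}$, say, is
\begin{equation*}
C_{d,0}\,C_{d,e_i}\,c_i\int_{B_d}|z_i|^2\,e^{-d|z|^2}\dx\lambda\ \sim\ c_i\,d^{-1/2},
\end{equation*}
and this is \emph{not} killed by the circle action, because the matching condition $p+\gamma=p'+\delta$ with $|\gamma|+|\delta|=1$ is solvable whenever $\norm{\,|p|-|p'|\,}=1$. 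Likewise, the $O(d|z|^3)$ allowance in $h_\mathcal{L}^d(\zeta_0^d,\zeta_0^d)$ permits a mixed cubic $z_iz_k\bar z_j$ in the potential (i.e.\ a nonzero $\partial_k g_{i\bar j}$ at $x$), and the same count again yields $d^{-1/2}$ whenever $|p|-|p'|$ is odd. Either remainder would degrade the lemma to $O(d^{-1/2})$, which is incompatible with the bookkeeping of half-powers of $d$ in the proof of Proposition~\ref{proposition estimates Bergman} (for instance, equation~\eqref{estimates bergman x0y0} would then have error $O(d^{n-1/2})$ rather than $O(d^{n-1})$).

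The fix, which is what the cited proofs actually build in, is to normalize more strongly than ``orthonormal at $x$'': one must take a real holomorphic frame for $\mathcal{E}$ with $h_\mathcal{E}=I_r+O(|z|^2)$ (a normal frame, which exists and can be chosen real), and Kähler normal coordinates together with the frame of Lemma~\ref{lemma nice frame for L}, so that the potential is $|z|^2+O(|z|^4)$ with no cubic term at all. With the replacements $O(|z|)\rightsquigarrow O(|z|^2)$ and $O(d|z|^3)\rightsquigarrow O(d|z|^4)$, your degree count gives exactly $O(d^{-1})$ for every correction, uniformly in $x$ by compactness of $\R\mathcal{X}$, and the remainder of your argument stands as written.
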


\subsection{The Bergman kernel}
\label{subsection Bergman kernel}

In this section we compute some asymptotics for the Bergman kernel and its derivatives. Let $x \in \R \mathcal{X}$ and let $(x_1,\dots,x_n)$ be real holomorphic coordinates around $x$ such that $(\deron{\phantom{f}}{x_1},\dots,\deron{\phantom{f}}{x_n})$ is orthonormal at $x$. We denote by $(y_1,\dots,y_n)$ the same coordinates as $(x_1,\dots,x_n)$ in a second copy of $\mathcal{X}$. Let $\zeta_0$ be a real holomorphic frame for $\mathcal{L}$ given by Lemma~\ref{lemma nice frame for L} and $(\zeta_1,\dots,\zeta_r)$ be a real holomorphic frame for $\mathcal{E}$ that is orthonormal at $x$. For simplicity, we set $\zeta_p^d = \zeta_p(x) \otimes \zeta_0^d(x)$ for every $p \in \{1,\dots,r\}$ and $d \in \N$, so that $(\zeta_1^d,\dots,\zeta_r^d)$ is an orthonormal basis of $\R(\mathcal{E}\otimes\mathcal{L}^d)_x$.

Let $\nabla^d$ be any real connection on $\mathcal{E} \otimes \mathcal{L}^d$ such that, for every $p \in \{1,\dots,r\}$, $\nabla^d(\zeta_p \otimes \zeta_0^d)$ vanishes in a neighborhood of $x$. In this neighborhood, we have for every function $f$:
\begin{align}
\label{connection in coordinates}
\nabla^d(f \zeta_p \otimes \zeta_0^d) &= df \otimes \zeta_p \otimes \zeta_0^d & &\text{and} & \nabla^{2,d}(f \zeta_p \otimes \zeta_0^d) &= \nabla^2f \otimes \zeta_p \otimes \zeta_0^d,
\end{align}
where $\nabla^{2,d}$ stands for the associated second covariant derivative, as in section~\ref{subsection real algebraic setting}. This choice of connection may seem restrictive but the quantity we want to compute do not depend on a choice of connection so that we can choose one that suits us.

As usual, $\nabla^d$ induces a connection on $(\overline{\mathcal{E}\otimes \mathcal{L}^d}) \boxtimes (\mathcal{E}\otimes \mathcal{L}^d)$. We denote by $\partial^d_{x_i}$ and $\partial^d_{y_i}$ the partial covariant derivatives with respect to $x_i$ and $y_i$ respectively. We also denote by $\partial^d_{x_i,x_j}$ (resp.~$\partial^d_{y_i,y_j}$) the second derivative with respect to $x_i$ and $x_j$ (resp.~$y_i$ and $y_j$).

\begin{prop}
\label{proposition estimates Bergman}
The following asymptotics hold as $d \to +\infty$. They are independent of $x \in \R \mathcal{X}$ and of the choice of the holomorphic frame $(\zeta_1,\dots,\zeta_r)$.
\begin{align}
\label{estimates bergman x0y0}
\prsc{E_d(x,x)}{\zeta_p^d \otimes \zeta_{p'}^d} &= \left\{\begin{aligned} \frac{d^n}{\pi^n} +\, &O\!\left(d^{n-1}\right) & &\text{if } p=p',\\ &O\!\left(d^{n-1}\right) & &\text{otherwise,}\end{aligned}\right.\\
\label{estimates bergman x1y0}
\prsc{\partial^d_{x_i}E_d(x,x)}{\zeta_p^d \otimes \zeta_{p'}^d} &= O\!\left(d^{n-\frac{1}{2}}\right),\\
\label{estimates bergman x2y0}
\prsc{\partial^d_{x_i,x_k}E_d(x,x)}{\zeta_p^d \otimes \zeta_{p'}^d} &= O\!\left(d^{n}\right),\\
\label{estimates bergman x1y1}
\prsc{\partial^d_{x_i}\partial^d_{y_j}E_d(x,x)}{\zeta_p^d \otimes \zeta_{p'}^d} &= \left\{\begin{aligned} \frac{d^{n+1}}{\pi^n} +\, &O\!\left(d^{n}\right) & &\text{if $p=p'$ and $i=j$,}\\ &O\!\left(d^{n}\right) & &\text{otherwise,} \end{aligned}\right.\\
\label{estimates bergman x2y1}
\prsc{\partial^d_{x_i,x_k}\partial^d_{y_j}E_d(x,x)}{\zeta_p^d \otimes \zeta_{p'}^d} &= O\!\left(d^{n+\frac{1}{2}}\right),
\end{align}
\begin{equation}
\label{estimates bergman x2y2}
\prsc{\partial^d_{x_i,x_k}\partial^d_{y_j,y_l}E_d(x,x)}{\zeta_p^d \otimes \zeta_{p'}^d} = \left\{\begin{aligned} 2 \frac{d^{n+2}}{\pi^n} +\, &O\!\left(d^{n+1}\right) & &\text{if $p=p'$ and $i=j=k=l$,}\\ \frac{d^{n+2}}{\pi^n} +\, &O\!\left(d^{n+1}\right) & &\text{if $p=p'$ and $i=j\neq k=l$,}\\ & & &\text{or if $p=p'$ and $i=l\neq k=j$,}\\ &O\!\left(d^{n+1}\right) & &\text{otherwise.} \end{aligned}\right.
\end{equation}
\end{prop}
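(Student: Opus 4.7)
My strategy is to expand each quantity on the left of \eqref{estimates bergman x0y0}--\eqref{estimates bergman x2y2} in a carefully chosen orthonormal basis of $\R H^0(\mathcal{X},\mathcal{E}\otimes\mathcal{L}^d)$ built from the peak sections of Definition~\ref{definition peak sections}. Applied to any orthonormal basis $(\sigma_k)_k$, identity~\eqref{equation kernel in BON alg} differentiates term by term and expresses each partial derivative $\partial_x^A \partial_y^B E_d(x,x)$ in the proposition as the sum $\sum_k (\partial^A \sigma_k)|_x \otimes (\partial^B\sigma_k)|_x$. Pairing with $\zeta_p^d \otimes \zeta_{p'}^d$ then rewrites each of the six quantities as a sum of products of two scalars of the form $\prsc{(\partial^A \sigma_k)|_x}{\zeta_p^d}\,\prsc{(\partial^B \sigma_k)|_x}{\zeta_{p'}^d}$.

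First I would Gram--Schmidt the peak sections. By Lemma~\ref{lemma asymptotically orthonormal} they have Gram matrix $\Id + O(d^{-1})$ in operator norm, so Gram--Schmidt yields an orthonormal family $(\tilde{s}_0^{d,q}, \tilde{s}_i^{d,q}, \tilde{s}_{i,j}^{d,q})$ whose expansion in the peak sections has coefficients $\delta_{\bullet,\bullet} + O(d^{-1})$. Choosing the connection $\nabla^d$ so that $\nabla^d(\zeta_q\otimes\zeta_0^d)$ vanishes near $x$---harmless since the kernel does not depend on a choice of connection---reduces derivatives of peak sections to derivatives of their scalar coefficient in the frame. Lemma~\ref{lemma asymptotics peak sections} then gives the $2$-jet of each peak section at $x$:
\begin{itemize}
\item $s_0^{d,q}$ has only a nonzero $0$-jet $\sqrt{d^n/\pi^n}\,\zeta_q^d$;
\item $s_i^{d,q}$ has only a nonzero first derivative $\nabla^d_{\partial_{x_k}}s_i^{d,q}|_x = \delta_{i,k}\sqrt{d^{n+1}/\pi^n}\,\zeta_q^d$;
\item $s_{i,i}^{d,q}$ has only a nonzero Hessian $\nabla^{2,d}_{\partial_{x_k},\partial_{x_l}}s_{i,i}^{d,q}|_x = \sqrt{2}\,\delta_{i,k}\delta_{i,l}\sqrt{d^{n+2}/\pi^n}\,\zeta_q^d$;
\item $s_{i,j}^{d,q}$ with $i<j$ has only a nonzero Hessian $\nabla^{2,d}_{\partial_{x_k},\partial_{x_l}}s_{i,j}^{d,q}|_x = (\delta_{i,k}\delta_{j,l}+\delta_{i,l}\delta_{j,k})\sqrt{d^{n+2}/\pi^n}\,\zeta_q^d$.
\end{itemize}
Since the other peak sections mixed in by Gram--Schmidt share the same vanishing pattern at $x$, each $\tilde{s}$ has $2$-jet at $x$ equal to that of the corresponding $s$, up to a multiplicative factor $1+O(d^{-1})$.

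Next, complete $(\tilde{s}_{\bullet}^{d,q})$ into an orthonormal basis by an orthonormal family $(t_\alpha)_\alpha$ of the orthogonal complement of the span $W$ of the peak sections, and argue that the $(t_\alpha)$ contribute only to the remainder. The second assertion of Lemma~\ref{lemma asymptotically orthonormal} says that every unit vector of $H_{2,x}^d$ is $O(d^{-1})$-orthogonal to each peak section; combined with the dimensional equality $\dim W^\perp = \dim H_{2,x}^d$, this forces the orthogonal projections onto $W^\perp$ and onto $H_{2,x}^d$ to differ by $O(d^{-1})$ in operator norm. Hence each unit $t_\alpha$ decomposes as $u_\alpha + v_\alpha$ with $u_\alpha \in H_{2,x}^d$ (so with vanishing $2$-jet at $x$) and $\Norm{v_\alpha} = O(d^{-1})$. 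Together with the uniform pointwise bound $\Norm{\nabla^{k,d}\rho|_x} \leq C_k\, d^{(n+k)/2}\,\Norm{\rho}$ for holomorphic sections of $\mathcal{E}\otimes\mathcal{L}^d$, this gives $\Norm{\nabla^{k,d}t_\alpha|_x} = O(d^{(n+k)/2-1})$, so that the total contribution of the $(t_\alpha)$ to each of the six sums is of order one less in $d$ than the peak contribution of the same type.

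It then remains to read off each identity. Only the peak section whose derivative structure matches the pair $(A,B)$ contributes to the main term: \eqref{estimates bergman x0y0} is fed by $\tilde{s}_0^{d,q}$; \eqref{estimates bergman x1y1} by $\tilde{s}_i^{d,q}$, which survives only when $i=j$; and \eqref{estimates bergman x2y2} by $\tilde{s}_{i,i}^{d,q}$ in the case $i=j=k=l$ (producing the factor $(\sqrt{2})^2 = 2$) and by $\tilde{s}_{\min(i,k),\max(i,k)}^{d,q}$ in the two subcases where two pairs of indices match (producing the factor $1$ from a single surviving Kronecker term). The mixed estimates~\eqref{estimates bergman x1y0}, \eqref{estimates bergman x2y0} and~\eqref{estimates bergman x2y1} admit no matching peak section, so the main term vanishes and the stated remainder arises from pairing a ``main'' factor with an error factor of relative size $O(d^{-1/2})$ coming from Gram--Schmidt or from the $(t_\alpha)$. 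The hard part will be the third paragraph above: proving that the orthogonal complement, which contains the overwhelming majority of the basis vectors, contributes only to the remainder. This rests crucially on the near-orthogonality of the peak sections to $H_{2,x}^d$ provided by Lemma~\ref{lemma asymptotically orthonormal}; without it, the sections that do not peak at $x$ could not be controlled uniformly in $x$.
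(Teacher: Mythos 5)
Your basic plan --- Gram--Schmidt the peak sections, read the derivatives of the Bergman kernel off an orthonormal basis via~\eqref{equation kernel in BON alg}, and isolate the peak-section contribution --- is close to the paper's, but the paper makes a cleverer basis choice that sidesteps exactly the difficulty you flag at the end. The paper completes the peak sections not by an arbitrary orthonormal family of the orthogonal complement $W^\perp$, but by an orthonormal basis of $H_{2,x}^d$, and runs Gram--Schmidt \emph{backwards}, starting from the $H_{2,x}^d$ end. The backward process guarantees that each orthonormalized vector $\tilde b_i$ is a combination of $b_i,b_{i+1},\dots$ only; since all the listed sections after the first $r$ (resp.\ after the first $r(n+1)$, resp.\ after the first $\frac{r(n+1)(n+2)}{2}$) vanish at $x$ (resp.\ vanish to order $1$, resp.\ to order $2$), so do the corresponding $\tilde b_i$. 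Thus only finitely many --- bounded independently of $d$ --- basis vectors have a nonzero $2$-jet at $x$; the bulk contributes exactly zero to each of the six sums, not merely a lower-order error.

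Your third paragraph, which tries to bound the contribution of the remaining $\sim d^n$ vectors $(t_\alpha)$, has a genuine gap. The operator-norm estimate $\Norm{\Pi_{W^\perp}-\Pi_{H_{2,x}^d}}=O(d^{-1})$ and the individual bounds $\Norm{\nabla^{k,d}t_\alpha|_x}=O(d^{(n+k)/2-1})$ are both defensible, but the conclusion ``the total contribution of the $(t_\alpha)$ is one order less in $d$'' does not follow from them: summing the termwise bound $O\bigl(d^{(n+|A|)/2-1}\bigr)\cdot O\bigl(d^{(n+|B|)/2-1}\bigr)$ over the $\sim d^n$ indices $\alpha$ gives $O\bigl(d^{2n+(|A|+|B|)/2-2}\bigr)$, which for $n\geq 1$ is no smaller (and for $n\geq 2$ is far larger) than the main term. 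To close this you would need to use that the $v_\alpha=\Pi_{(H_{2,x}^d)^\perp}t_\alpha$ are projections of an \emph{orthonormal} family into a space of dimension $\frac{r(n+1)(n+2)}{2}$, so that $\sum_\alpha\Norm{v_\alpha}^2\leq \text{const}\cdot\Norm{\Pi_{(H_{2,x}^d)^\perp}\Pi_{W^\perp}}^2=O(d^{-2})$, and then argue via Cauchy--Schwarz or an operator bound on $\Pi_{(H_{2,x}^d)^\perp}\Pi_{W^\perp}\Pi_{(H_{2,x}^d)^\perp}$. You do not carry this out. Separately, the uniform pointwise bound $\Norm{\nabla^{k,d}\rho|_x}\leq C_k\,d^{(n+k)/2}\Norm{\rho}$ is nowhere established in the paper and cannot be extracted from the peak-section lemmas: extremizing the reproducing kernel property over $\R H^0(\mathcal{X},\mathcal{E}\otimes\mathcal{L}^d)$ gives a \emph{lower} bound on the diagonal of $E_d$, while the upper bound you need is (a weaker version of) precisely the statement being proved and requires a sub-mean-value argument as a separate input. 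Also, ``$2$-jet equal up to a multiplicative factor $1+O(d^{-1})$'' in your second paragraph is inaccurate: the Gram--Schmidt correction contributes an \emph{additive} term to the $1$- and $2$-jets whose relative size is $O(d^{-1/2})$, which is in fact the source of the stated remainders in~\eqref{estimates bergman x1y0}, \eqref{estimates bergman x2y0} and~\eqref{estimates bergman x2y1}.
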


\begin{rem}
Since $\R H^0(\mathcal{X},\mathcal{E}\otimes\mathcal{L}^d)$ is not a product space, the terms with $p\neq p'$ are usually not zero. However they are zero when $\mathcal{E}$ is trivial, for example.
\end{rem}

\begin{cor}
\label{cor 0 amplitude}
For every $d$ large enough, $\R H^0(\mathcal{X},\mathcal{E}\otimes \mathcal{L}^d)$ is $0$-ample.
\end{cor}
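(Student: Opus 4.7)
The plan is to deduce this directly from Lemma~\ref{lemma 0 amplitude alg} together with estimate~\eqref{estimates bergman x0y0} of Proposition~\ref{proposition estimates Bergman}. By Lemma~\ref{lemma 0 amplitude alg}, it suffices to show that for every $d$ large enough and every $x \in \R\mathcal{X}$, the bilinear form $E_d(x,x)$ on $(\R(\mathcal{E}\otimes\mathcal{L}^d)_x)^*$ is positive-definite.

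Fix $x \in \R\mathcal{X}$ and pick any orthonormal basis $(\zeta_1^d,\dots,\zeta_r^d)$ of $\R(\mathcal{E}\otimes \mathcal{L}^d)_x$, as in the setup of section~\ref{subsection Bergman kernel}. Then the Gram matrix of $E_d(x,x)$ in this basis has entries $\prsc{E_d(x,x)}{\zeta_p^d\otimes\zeta_{p'}^d}$, and by~\eqref{estimates bergman x0y0} it equals
\begin{equation*}
\frac{d^n}{\pi^n}\bigl(I_r + R_d(x)\bigr),
\end{equation*}
where $R_d(x)$ is a symmetric $r\times r$ matrix whose entries are $O(d^{-1})$. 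The crucial point is that the estimate~\eqref{estimates bergman x0y0} is, as stated in Proposition~\ref{proposition estimates Bergman}, \emph{uniform} in $x \in \R\mathcal{X}$ and independent of the choice of orthonormal frame $(\zeta_1,\dots,\zeta_r)$ used to build the $\zeta_p^d$. Hence there exists a constant $C>0$ and an integer $d_1$ such that for every $d\geq d_1$ and every $x\in\R\mathcal{X}$, the operator norm of $R_d(x)$ is at most $C/d$.

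Choose $d_0 \geq d_1$ large enough so that $C/d_0 < 1$. Then for all $d \geq d_0$ and all $x \in \R\mathcal{X}$, the matrix $I_r+R_d(x)$ is positive-definite, and so is $E_d(x,x)$. By Lemma~\ref{lemma 0 amplitude alg}, this means $\R H^0(\mathcal{X},\mathcal{E}\otimes\mathcal{L}^d)$ is $0$-ample, which is the claim. The only subtlety to watch is the uniformity of the remainder term in~\eqref{estimates bergman x0y0} over $x$ and over the choice of local frame, but this is exactly what is built into the statement of Proposition~\ref{proposition estimates Bergman} (ultimately coming from the compactness of $\R\mathcal{X}$ and the uniform control in Lemma~\ref{lemma asymptotics peak sections}), so no further argument is needed.
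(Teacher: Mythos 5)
Your argument is correct and is essentially identical to the paper's: both invoke Lemma~\ref{lemma 0 amplitude alg} and the uniform estimate~\eqref{estimates bergman x0y0} to conclude that the Gram matrix of $E_d(x,x)$ is $\frac{d^n}{\pi^n}\bigl(I_r + O(d^{-1})\bigr)$, hence positive-definite for $d$ large independently of $x$. The only difference is that you spell out the uniformity step (naming the remainder $R_d(x)$ and bounding its operator norm), which the paper leaves implicit.
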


\begin{proof}[Proof of Corollary~\ref{cor 0 amplitude}]
Let $x \in \R \mathcal{X}$. By~\eqref{estimates bergman x0y0}, the matrix of $E_d(x,x)$ in any orthonormal basis of $\R(\mathcal{E}\otimes \mathcal{L})_x$ is $\left(\frac{d}{\pi}\right)^n I_r \left(1+ O\!\left(d^{-1}\right)\right)$, where $I_r$ stands for the identity matrix of size $r$. Then $E_d(x,x)$ is positive-definite for $d$ larger than some $d_0$ independent of $x$. By Lemma~\ref{lemma 0 amplitude alg}, $\R H^0(\mathcal{X},\mathcal{E}\otimes\mathcal{L}^d)$ is $0$-ample for $d\geq d_0$.
\end{proof}

\begin{proof}[Proof of Proposition~\ref{proposition estimates Bergman}]
First we build an orthonormal basis of $\R H^0(\mathcal{X},\mathcal{E}\otimes\mathcal{L}^d)$ by applying the Gram--Schmidt process to the family of peak sections. Then we use formula~\eqref{equation kernel in BON alg} and the asymptotics of Lemma~\ref{lemma asymptotics peak sections} to prove the proposition.

We order the sections of Definition~\ref{definition peak sections} as follows:
\begin{multline}
\label{eq order}
s_0^{d,1},\dots,s_0^{d,r},s_1^{d,1},\dots,s_1^{d,r},\dots,s_n^{d,1},\dots,s_n^{d,r},s_{1,1}^{d,1},\dots,s_{1,1}^{d,r},s_{2,2}^{d,1},\dots,s_{2,2}^{d,r},\dots,s_{n,n}^{d,1},\dots,s_{n,n}^{d,r},\\
s_{1,2}^{d,1},\dots,s_{1,2}^{d,r}, s_{1,3}^{d,1},\dots,s_{1,3}^{d,r},\dots,s_{1,n}^{d,1},\dots,s_{1,n}^{d,r},s_{2,3}^{d,1},\dots,s_{2,3}^{d,r},\dots,s_{n-1,n}^{d,1},\dots,s_{n-1,n}^{d,r}.
\end{multline}
This family is linearly independent for $d$ large enough and spans a space whose direct sum with $H_{2,x}^d$ is $\R H^0(\mathcal{X},\mathcal{E}\otimes\mathcal{L}^d)$. We complete it into a basis $B$ of $\R H^0(\mathcal{X},\mathcal{E}\otimes\mathcal{L}^d)$ by adding an orthonormal basis of $H_{2,x}^d$ at the end of the previous list.

We apply the Gram--Schmidt process to $B$, starting by the last elements and going backwards. Let $\tilde{B}$ denote the resulting orthonormal basis, and $\tilde{s}_0^{d,1},\dots,\tilde{s}_n^{d,r},\tilde{s}_{1,1}^{d,1},\dots,\tilde{s}_{n-1,n}^{d,r}$ denote its first elements. This way, $\tilde{s}_{n-1,n}^{d,r}$ is a linear combination of $s_{n-1,n}^{d,r}$ and elements of $H_{2,x}^d$, and $\tilde{s}_0^{d,1}$ is a linear combination of (possibly) all elements of $B$. We denote by $(b_i)_{1 \leq i \leq \frac{r(n+1)(n+2)}{2}}$ the first elements of $B$ listed above \eqref{eq order} and by $(\tilde{b}_i)$ the corresponding elements of $\tilde{B}$.

Let $i \in \left\{1,\dots,\frac{r(n+1)(n+2)}{2}\right\}$ and assume that for any $k\leq i$ and any $j>i$ we have $\prsc{b_k}{\tilde{b}_j}=O\!\left(d^{-1}\right)$. Note that this is the case for $i=\frac{r}{2}(n+1)(n+2)$. Then,
\begin{equation}
\label{equation Gram Schmidt}
\tilde{b}_i=\frac{b_i-\displaystyle\sum_{j>i}\prsc{b_i}{\tilde{b}_j}\tilde{b}_j-\pi_i}{\Norm{b_i-\displaystyle\sum_{j>i}\prsc{b_i}{\tilde{b}_j}\tilde{b}_j-\pi_i}}
\end{equation}
where $\pi_i$ stands for the projection of $b_i$ onto $H_{2,x}^d$. By Lemma~\ref{lemma asymptotically orthonormal}, $\Norm{\pi_i}^2 = \prsc{b_i}{\pi_i} = O\!\left(d^{-1}\right)$. Then by our hypothesis $\Norm{b_i-\sum_{j>i} \prsc{b_i}{\tilde{b}_j}\tilde{b}_j-\pi_i}^2=1+O\!\left(d^{-1}\right)$. Using Lemma~\ref{lemma asymptotically orthonormal} and the above hypothesis once again, we get:
\begin{equation*}
\prsc{b_k}{\tilde{b}_i}=\left(1+O\!\left(d^{-1}\right)\right)\left(\prsc{b_k}{b_i}-\sum_{j>i}\prsc{b_k}{\tilde{b}_j}\prsc{b_i}{\tilde{b}_j}-\prsc{b_k}{\pi_i}\right)=O\!\left(d^{-1}\right),
\end{equation*}
for any $k<i$. By induction, for any $1\leq i<j \leq \frac{r(n+1)(n+2)}{2}$, $\prsc{b_i}{\tilde{b}_j}=O\!\left(d^{-1}\right)$. Then, using~\eqref{equation Gram Schmidt}, for any $i \in \left\{1,\dots,\frac{r(n+1)(n+2)}{2}\right\}$, 
\begin{equation*}
\tilde{b}_i=\left(b_i + O\!\left(d^{-1}\right)\sum_{j>i}\tilde{b}_j - \pi_i\right)\left(1+O\!\left(d^{-1}\right)\right).
\end{equation*}
Another induction gives:
\begin{equation}
\label{equation orthonormalized}
\tilde{b}_{i}=\left(b_i + O(d^{-1})\sum_{j>i}b_j+\tilde{\pi}_i\right)\left(1+O\!\left(d^{-1}\right)\right),
\end{equation}
for any $i$, where $\tilde{\pi}_i \in H_{2,x}^d$ is such that $\Norm{\tilde{\pi}_i}^{2}=O\!\left(d^{-1}\right)$. Moreover, all the estimates are independent of $x$ and $i \in \left\{1,\dots,\frac{r(n+1)(n+2)}{2}\right\}$.

Among the elements of $\tilde{B}$, only $\tilde{s}_0^{d,1},\dots,\tilde{s}_0^{d,r}$ do not vanish at $x$. Using formula~\eqref{equation kernel in BON alg}, we get $E_d(x,x) = \displaystyle\sum_{1\leq q \leq r} \tilde{s}_0^{d,q}(x) \otimes \tilde{s}_0^{d,q}(x)$. Then,
\begin{equation*}
\prsc{E_d(x,x)}{\zeta_p^d \otimes \zeta_{p'}^d} = \sum_{q=1}^r \prsc{\tilde{s}_0^{d,q}(x)}{\zeta_p^d}\prsc{\tilde{s}_0^{d,q}(x)}{\zeta_{p'}^d}.
\end{equation*}
Recall that $\tilde{b}_0=\tilde{s}_0^{d,1} ,\dots,\tilde{b}_r=\tilde{s}_0^{d,r}$. Because of~\eqref{equation orthonormalized}, for all $q \in \{1,\dots,r\}$,
\begin{equation}
\label{equation prsc peak section}
\begin{aligned}
\prsc{\tilde{s}_0^{d,q}(x)}{\zeta_p^d} &= \prsc{s_0^{d,q}(x)}{\zeta_p^d}+O\!\left(d^{-1}\right)\left(\sum_{q'=1}^r\prsc{s_0^{d,q'}(x)}{\zeta_p^d}\right)\\
&= \left\{ \begin{aligned}&\left(\frac{d}{\pi}\right)^\frac{n}{2} \left(1+O\!\left(d^{-1}\right)\right) & &\text{if $p=q$,} \\ &O\!\left(d^{\frac{n}{2}-1}\right) & &\text{otherwise,}\end{aligned}\right.
\end{aligned}
\end{equation}
where the last equality comes from equation~\eqref{peak section 0}. This establishes~\eqref{estimates bergman x0y0}.

Likewise,
\begin{align*}
\prsc{\partial^d_{x_i}E_d(x,x)}{\zeta_p^d	\otimes \zeta_{p'}^d} &= \prsc{\sum_{1\leq q \leq r} \partial^d_{x_i} \tilde{s}_0^{d,q}(x) \otimes \tilde{s}_0^{d,q}(x)}{\zeta_p^d \otimes \zeta_{p'}^d}\\
&= \sum_{1\leq q \leq r} \prsc{\partial^d_{x_i}\tilde{s}_0^{d,q}(x)}{\zeta_p^d} \prsc{\tilde{s}_0^{d,q}(x)}{\zeta_{p'}^d}.
\end{align*}
The description~\eqref{equation orthonormalized} shows that $\partial^d_{x_i}\tilde{s}_0^{d,q}(x)$ does not necessarily vanish, but it equals:
\begin{equation*}
O\!\left(d^{-1}\right)\sum_{\substack{1 \leq q' \leq r\\ 1\leq j \leq n}} \partial^d_{x_i}s_j^{d,q'}(x).
\end{equation*}
By~\eqref{peak section 1}, one gets that $\prsc{\partial^d_{x_i}\tilde{s}_0^{d,q}(x)}{\zeta_p^d}=O\!\left(d^{\frac{n-1}{2}}\right)$, for all $p$ and $q$. Besides by~\eqref{equation prsc peak section}, $\prsc{\tilde{s}_0^{d,q}(x)}{\zeta_{p}^d}=O\!\left(d^\frac{n}{2}\right)$ for all $p$ and $q$. This proves~\eqref{estimates bergman x1y0}.

The remaining estimates can be proved in the same way, using Lemma~\ref{lemma asymptotics peak sections} and the fact that the estimates for corresponding elements of $\tilde{B}$ and $B$ are the same.
\end{proof}

\section{An integral formula for the Euler characteristic of a submanifold}
\label{section riemannian geometry}

The goal of this section is to derive an integral formula for the Euler characteristic of a submanifold defined as the zero set of some $f:M\to \R^r$, in terms of $f$ and its derivatives. This section is independent of the previous ones and the results it contains are only useful for computing expected Euler characteristics (Theorems~\ref{theorem expected Euler characteristic harmonic case} and~\ref{theorem expected Euler characteristic algebraic case}).

We start by recalling the formalism of double forms, which was already used in this context by Taylor and Adler, see \cite[section~7.2]{AT2007}. The Riemann curvature tensor and the second fundamental form of a submanifold being naturally double forms, this provides a useful way to formulate the Chern--Gauss--Bonnet theorem and the Gauss equation. This is done in sections~\ref{subsection Chern-Gauss-Bonnet theorem} and~\ref{subsection Gauss equation} respectively. Finally, we express the second fundamental form of a submanifold in terms of the derivatives of a defining function and prove the desired integral formula in section~\ref{subsection an expression for the second fundamental form}.

\subsection{The algebra of double forms}
\label{subsection algebra of double forms}

We follow the exposition of \cite[pp.~157--158]{AT2007}. Let $V$ be a real vector space of dimension $n$. For $p$ and $q \in \{0,\dots,n\}$ we denote by $\bigwedge^{p,q}(V^*)$ the space $\bigwedge^p(V^*) \otimes \bigwedge^q(V^*)$ of $(p+q)$-linear forms on $V$ that are skew-symmetric in the first $p$ and in the last $q$ variables. The \emph{space of double forms} on $V$ is:
\begin{equation}
\textstyle\bigwedge^\bullet(V^*) \otimes \textstyle\bigwedge^\bullet(V^*) = \displaystyle\bigoplus_{0\leq p,q \leq n} \textstyle\bigwedge^{p,q}(V^*).
\end{equation}
Elements of $\bigwedge^{p,q}(V^*)$ are called \emph{$(p,q)$-double forms}, or \emph{double forms of type $(p,q)$}. We set:
\begin{equation}
\textstyle\bigwedge^{\bullet,\bullet}(V^*)=\displaystyle\bigoplus_{p=0}^n \textstyle\bigwedge^{p,p}(V^*).
\end{equation}
Note that $\bigwedge^{1,1}V^*$ is the space of bilinear forms on $V$.

On $\bigwedge^\bullet(V^*) \otimes \bigwedge^\bullet(V^*)$ we can define a double wedge product. It extends the usual wedge product on $\bigwedge^\bullet(V^*)\simeq \bigoplus_{p=0}^{n} \bigwedge^{p,0}(V^*)$, so we simply denote it by $\wedge$. For pure tensors $\alpha \otimes \beta$ and $\alpha ' \otimes \beta ' \in \bigwedge^\bullet(V^*) \otimes \bigwedge^\bullet(V^*)$, we set:
\begin{equation}
(\alpha \otimes \beta) \wedge (\alpha ' \otimes \beta') = (\alpha \wedge \alpha ') \otimes (\beta \wedge \beta ')
\end{equation}
and we extend $\wedge$ to all double forms by bilinearity. This makes $\bigwedge^\bullet(V^*) \otimes \bigwedge^\bullet(V^*)$ into an algebra, of which $\bigwedge^{\bullet,\bullet}(V^*)$ is a commutative subalgebra. We denote by $\gamma^{\wedge k}$ the double wedge product of a double form $\gamma \in \bigwedge^{\bullet,\bullet}(V^*)$ with itself $k$ times.

\begin{lem}
\label{lemma square of a symmetric (1,1)-form}
Let $\alpha$ be a symmetric $(1,1)$-double form on $V$, then for every $x,y,z$ and $w \in V$,
\begin{equation*}
\alpha^{\wedge 2}((x,y),(z,w)) = 2 \left(\alpha(x,z)\alpha(y,w) - \alpha(x,w)\alpha(y,z)\right).
\end{equation*}
\end{lem}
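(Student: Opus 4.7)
The plan is to do a direct computation using the bilinearity of the double wedge product together with a decomposition of $\alpha$ into pure tensors. Since $\alpha \in \bigwedge^{1,1}(V^*) = V^* \otimes V^*$, we may write
\[\alpha = \sum_i \beta_i \otimes \gamma_i\]
for some $\beta_i, \gamma_i \in V^*$, so that $\alpha(u,v) = \sum_i \beta_i(u)\gamma_i(v)$ for every $u,v \in V$.

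By bilinearity of $\wedge$ on $\bigwedge^\bullet(V^*)\otimes\bigwedge^\bullet(V^*)$ we then have
\[\alpha^{\wedge 2} = \sum_{i,j} (\beta_i \wedge \beta_j) \otimes (\gamma_i \wedge \gamma_j),\]
an element of $\bigwedge^{2,2}(V^*)$. Evaluating at $((x,y),(z,w))$ and using the standard formula $(\eta \wedge \eta')(u,v) = \eta(u)\eta'(v) - \eta(v)\eta'(u)$ for $1$-forms, each summand becomes a product of four scalars, and expansion yields four families of terms indexed by $(i,j)$.

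The main (and only) step is then to recognise that each of these four double sums factors as a product of two sums of the form $\sum_i \beta_i(\cdot)\gamma_i(\cdot) = \alpha(\cdot,\cdot)$. Explicitly,
\[\sum_{i,j}\beta_i(x)\gamma_i(z)\,\beta_j(y)\gamma_j(w) = \alpha(x,z)\alpha(y,w),\]
and similarly the three remaining double sums collapse to $\alpha(x,w)\alpha(y,z)$, $\alpha(y,z)\alpha(x,w)$ and $\alpha(y,w)\alpha(x,z)$ respectively, with signs $-,-,+$. Summing gives $2\alpha(x,z)\alpha(y,w) - 2\alpha(x,w)\alpha(y,z)$, which is the claimed identity.

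I do not expect a real obstacle here: the computation is purely algebraic and even the symmetry assumption on $\alpha$ is not needed for the identity (it is stated because this is how $\alpha$ will occur later, as a second fundamental form). One could alternatively work in a basis $(e_i)$ of $V$ with dual basis $(e^i)$, write $\alpha = \sum \alpha_{ij}\, e^i \otimes e^j$, and perform the same bookkeeping; the two presentations are equivalent and both reduce the lemma to the elementary expansion above.
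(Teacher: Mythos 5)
Your proof is correct and essentially the same as the paper's: both expand $\alpha$ in pure tensors (the paper via a basis, $\alpha = \sum \alpha_{ik}\,e_i^*\otimes e_k^*$, you coordinate-free), apply $(\eta\wedge\eta')(u,v)=\eta(u)\eta'(v)-\eta(v)\eta'(u)$ to each factor, and recombine the four resulting double sums into products of $\alpha$-values. Your observation that the symmetry of $\alpha$ is not actually used is correct --- the factor of $2$ comes from commutativity of scalar multiplication when the four terms collapse pairwise, so the paper's parenthetical ``since $\alpha$ is symmetric'' at the analogous step is in fact superfluous.
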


\begin{proof}
Let $(e_1,\dots,e_n)$ be a basis of $V$ and $(e_1^*,\dots,e_n^*)$ its dual basis. We have:
\begin{align*}
\alpha &= \sum_{1\leq i,k\leq n} \alpha_{ik} e_i^*\otimes e_k^* & &\text{ and then } & \alpha^{\wedge 2}&= \sum_{1 \leq i,j,k,l \leq n} \alpha_{ik}\alpha_{jl} (e_i^*\wedge e_j^*) \otimes (e_k^*\wedge e_l^*).
\end{align*}
Note that we do not restrict ourselves to indices satisfying $i<j$ and $k<l$ as is usually the case with skew-symmetric forms. By multilinearity, it is sufficient to check the result on elements of the basis. Let $i,j,k$ and $l \in \{1,\dots,n\}$, then
\begin{align*}
\alpha^{\wedge 2}((e_i,e_j),(e_k,e_l)) &= \alpha_{ik}\alpha_{jl} - \alpha_{jk}\alpha_{il} - \alpha_{il}\alpha_{jk} + \alpha_{jl}\alpha_{ik}\\
&= 2(\alpha_{ik}\alpha_{jl}- \alpha_{il}\alpha_{jk}) \hspace{3cm} \text{(since } \alpha \text{ is symmetric)} \\
&= 2(\alpha(e_i,e_k)\alpha(e_j,e_l)-\alpha(e_i,e_l)\alpha(e_j,e_k)).\qedhere
\end{align*}
\end{proof}

We can consider random vectors in spaces of double forms. The following technical result will be useful in the proofs of Theorems~\ref{theorem expected Euler characteristic harmonic case} and~\ref{theorem expected Euler characteristic algebraic case}. See \cite[lemma 12.3.1]{AT2007} for a proof.
\begin{lem}
\label{lemma exchanging expectation and wedge product}
Let $V$ be a vector space of finite dimension $n$. Let $\alpha$ be a Gaussian vector in $\bigwedge^{1,1}V^*$. If $\alpha$ is centered, then for any $p \leq \frac{n}{2}$,
\begin{equation*}
\esp{\alpha^{\wedge 2p}} = \frac{(2p)!}{2^p p!} \left(\esp{\alpha^{\wedge 2}}\right)^{\wedge p}.
\end{equation*}
\end{lem}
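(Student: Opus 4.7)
The plan is to apply Wick's formula (Isserlis' theorem) coordinate-wise and exploit the antisymmetry of the wedge product to collapse all pair-partitions onto a single representative term.

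Fix a basis $(e_1,\dots,e_n)$ of $V$ with dual basis $(e_1^*,\dots,e_n^*)$ and expand
\[
\alpha = \sum_{i,j=1}^{n} \alpha_{ij}\, e_i^* \otimes e_j^*,
\]
so that $(\alpha_{ij})_{1\leq i,j\leq n}$ is a centered Gaussian family. By multilinearity of the double wedge,
\[
\alpha^{\wedge 2p} = \sum_{i_1,\dots,i_{2p}}\sum_{j_1,\dots,j_{2p}} \alpha_{i_1 j_1}\cdots\alpha_{i_{2p}j_{2p}}\, \bigl(e_{i_1}^*\wedge\cdots\wedge e_{i_{2p}}^*\bigr) \otimes \bigl(e_{j_1}^*\wedge\cdots\wedge e_{j_{2p}}^*\bigr).
\]
Taking expectations, Wick's formula for the centered Gaussian family $(\alpha_{ij})$ gives
\[
\esp{\alpha_{i_1 j_1}\cdots\alpha_{i_{2p}j_{2p}}} = \sum_{\pi\in\mathcal{P}_{2p}} \prod_{\{a,b\}\in\pi}\esp{\alpha_{i_a j_a}\,\alpha_{i_b j_b}},
\]
where $\mathcal{P}_{2p}$ denotes the set of perfect matchings of $\{1,\dots,2p\}$, of cardinality $(2p)!/(2^p p!)$. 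Denote by $T_\pi$ the summand of $\esp{\alpha^{\wedge 2p}}$ coming from a matching $\pi$.

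For the standard matching $\pi_0 = \{\{1,2\},\{3,4\},\dots,\{2p-1,2p\}\}$, the pairwise product factors as $\prod_{k=1}^{p}\esp{\alpha_{i_{2k-1}j_{2k-1}}\alpha_{i_{2k}j_{2k}}}$, and associativity of $\wedge$ in each tensor slot lets one recognize, term by term in the basis, the expansion of $\bigl(\esp{\alpha^{\wedge 2}}\bigr)^{\wedge p}$. Hence $T_{\pi_0} = \bigl(\esp{\alpha^{\wedge 2}}\bigr)^{\wedge p}$, which is already the right-hand side of the lemma up to the combinatorial constant.

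The main step is therefore to verify $T_\pi = T_{\pi_0}$ for every $\pi\in\mathcal{P}_{2p}$. Pick any $\sigma\in\mathfrak{S}_{2p}$ with $\sigma\bigl(\{\{2k-1,2k\}\}_{k=1}^p\bigr) = \pi$, and perform the change of dummy indices $(i_k,j_k) \mapsto (i_{\sigma(k)}, j_{\sigma(k)})$ in $T_\pi$. The product of pairwise covariances then becomes the standard one, while each of the two antisymmetric wedge factors picks up the Koszul sign $\varepsilon(\sigma)$; the total sign $\varepsilon(\sigma)^2 = 1$ yields $T_\pi = T_{\pi_0}$. Summing over the $(2p)!/(2^p p!)$ matchings establishes the identity.

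The only delicate point is this sign bookkeeping, which works precisely because $\alpha$ is a $(1,1)$-double form: both tensor slots are antisymmetrized in parallel, so the permutation signs appear squared and cancel. No other obstacle is expected; everything else is multilinear algebra and the classical Wick formula.
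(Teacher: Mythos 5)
Your proof is correct, and it is essentially the standard argument for this identity — the paper itself gives no proof but cites Adler--Taylor \cite[lemma 12.3.1]{AT2007}, which uses the same Wick-formula mechanism. The key sign observation is exactly right: because $\alpha$ is a $(1,1)$-double form, the relabeling permutation $\sigma$ acts \emph{simultaneously} on the $i$-indices and the $j$-indices, so each of the two antisymmetric slots picks up $\varepsilon(\sigma)$ and the signs cancel, making every pairing contribute the same double form $\bigl(\esp{\alpha^{\wedge 2}}\bigr)^{\wedge p}$; the count $(2p)!/(2^p p!)$ of perfect matchings then gives the constant.
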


Assume now that $V$ is endowed with an inner product. It induces a natural inner product on $\bigwedge^\bullet(V^*)$ such that, if $(e_1,\dots,e_n)$ is an orthonormal basis of $V$,
\[\left\{e^*_{i_1}\wedge \dots \wedge e^*_{i_p} \mvert 1\leq p \leq n \text{ and } 1\leq i_1<i_2< \dots <i_p\leq n \right\}\]
is an orthonormal basis of $\bigwedge^\bullet(V^*)$. We define the \emph{trace operator} $\tr$ on $\bigwedge^{\bullet,\bullet}(V^*)$ in the following way. If $\alpha \otimes \beta \in \bigwedge^{\bullet,\bullet}(V^*)$ is a pure tensor, then:
\begin{equation}
\label{eq trace}
\tr(\alpha \otimes \beta)= \prsc{\alpha}{\beta}
\end{equation}
and we extend $\tr$ to $\bigwedge^{\bullet,\bullet}(V^*)$ by linearity.

Let $M$ be a smooth manifold of dimension $n$. Applying the previous construction pointwise to $T_xM$, we define the vector bundle $\bigwedge^\bullet(T^*M) \otimes \bigwedge^\bullet(T^*M)$ on $M$. Sections of this bundle are called \emph{differential double forms} on $M$, and we can take the double wedge product of two such sections. Finally, if $M$ is equipped with a Riemannian metric, we have a trace operator $\tr$ which is defined pointwise by \eqref{eq trace}. This operator is $\mathcal{C}^{\infty}(M)$-linear and takes sections of the subbundle $\bigwedge^{\bullet,\bullet}(T^*M)=\bigoplus_{p=0}^n \bigwedge^{p,p}(T^*M)$ to smooth functions.

\subsection{The Chern--Gauss--Bonnet theorem}
\label{subsection Chern-Gauss-Bonnet theorem}

Let $(M,g)$ be a closed smooth Riemannian manifold of dimension $n$. We denote by $\nabla^M$ the Levi-Civita connection of $M$, and by $\kappa$ its \emph{curvature operator}. That is $\kappa$ is the $2$-form on $M$ with values in the bundle $\End(TM)= TM \otimes T^*M$ defined by:
\begin{equation*}
\kappa(X,Y)Z = \nabla^M_X \nabla^M_Y Z - \nabla^M_Y \nabla^M_X Z - \nabla^M_{[X,Y]}Z
\end{equation*}
for any vector fields $X$, $Y$ and $Z$. Here $[X,Y]$ is the Lie bracket of $X$ and $Y$.

We denote by $R$ the \emph{Riemann curvature tensor} of $M$, defined by:
\begin{equation*}
R(X,Y,Z,W) = g(\kappa(X,Y)W,Z),
\end{equation*}
for any vector fields $X$, $Y$, $Z$ and $W$ on $M$. This defines a four times covariant tensor on $M$ which is skew-symmetric in the first two and in the last two variables, hence $R$ can naturally be seen as a $(2,2)$-double form. All this is standard material, except for the very last point, see for example \cite[section~3.3]{Jos2008}.

We now state the Chern--Gauss--Bonnet theorem in terms of double forms. Recall that $\rmes{M}$ denotes the Riemannian measure on $M$ (see \eqref{equation riemannian measure}).
\begin{thm}[Chern--Gauss--Bonnet]
\label{theorem Chern Gauss Bonnet}
Let $M$ be a closed Riemannian manifold of even dimension $n$. Let $R$ denote its Riemann curvature tensor and $\chi(M)$ denote its Euler characteristic. We have:
\begin{equation*}
\chi\left( M \right) = \frac{1}{(2\pi)^\frac{n}{2} \left( \frac{n}{2}\right)!} \int_M \tr \left( R^{\wedge \frac{n}{2}} \right) \rmes{M}.
\end{equation*}
\end{thm}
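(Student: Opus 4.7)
The plan is to reduce this statement to the classical Chern--Gauss--Bonnet theorem in its Pfaffian form, which reads
\begin{equation*}
\chi(M) = \frac{1}{(2\pi)^{n/2}} \int_M \operatorname{Pf}(\Omega) \rmes{M},
\end{equation*}
where, in a local orthonormal frame $(e_1,\dots,e_n)$ of $TM$, $\Omega = (\Omega_{ij})$ is the skew-symmetric matrix of curvature $2$-forms $\Omega_{ij} = \tfrac12 \sum_{k<l} R_{ijkl}\, e_k^* \wedge e_l^*$ and $\operatorname{Pf}$ denotes the Pfaffian. It therefore suffices to establish the pointwise identity
\begin{equation*}
\tr\!\left(R^{\wedge n/2}\right) = \left(\tfrac{n}{2}\right)! \cdot \operatorname{Pf}(\Omega)
\end{equation*}
in each fiber, since multiplying by $\frac{1}{(2\pi)^{n/2}(n/2)!}$ and integrating against $\rmes{M}$ then yields the stated formula.

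To prove the pointwise identity, I would fix $x \in M$ and an orthonormal basis $(e_1,\dots,e_n)$ of $T_xM$ with dual basis $(e_1^*,\dots,e_n^*)$. Writing the Riemann tensor as a $(2,2)$-double form
\begin{equation*}
R = \sum_{i<j,\,k<l} R_{ijkl}\, (e_i^* \wedge e_j^*) \otimes (e_k^* \wedge e_l^*),
\end{equation*}
the double wedge power $R^{\wedge n/2}$ lies in the one-dimensional space $\bigwedge^{n,n}(T_x^*M)$, which is spanned by $(e_1^* \wedge \cdots \wedge e_n^*) \otimes (e_1^* \wedge \cdots \wedge e_n^*)$. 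By the definition of the trace, $\tr$ sends this basis element to $1$, so $\tr(R^{\wedge n/2})$ is precisely the coefficient of that top double form. Expanding the $n/2$-fold product via the definition of $\wedge$ on double forms, this coefficient is a signed sum indexed by pairs of ordered partitions of $\{1,\dots,n\}$ into $n/2$ blocks of size $2$, weighted by products of the $R_{ijkl}$'s.

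Next, I would expand $\operatorname{Pf}(\Omega)$ using the standard formula
\begin{equation*}
\operatorname{Pf}(\Omega) = \frac{1}{2^{n/2}(n/2)!}\sum_{\sigma \in S_n} \epsilon(\sigma)\,\Omega_{\sigma(1)\sigma(2)} \wedge \cdots \wedge \Omega_{\sigma(n-1)\sigma(n)},
\end{equation*}
substitute $\Omega_{ij} = \tfrac12\sum_{k,l} R_{ijkl}\,e_k^* \wedge e_l^*$, and collect the coefficient of $e_1^* \wedge \cdots \wedge e_n^*$. The two combinatorial expressions then match up to the factor $(n/2)!$, which precisely accounts for the ordering of the $n/2$ factors in $R^{\wedge n/2}$ versus the antisymmetrization over $S_n$ in the Pfaffian. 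The main obstacle will be the bookkeeping of this combinatorial comparison: keeping track separately of the antisymmetrizations internal to each factor $R$, the cross-antisymmetrizations between distinct factors, and the permutations of blocks, all while verifying that the symmetries $R_{ijkl}=-R_{jikl}=-R_{ijlk}=R_{klij}$ make the signs line up. Once this identification is made, the theorem follows immediately from the classical Pfaffian form of Chern--Gauss--Bonnet.
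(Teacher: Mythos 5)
The paper does not prove this theorem: it states it and refers to Atiyah--Singer (resp. \cite{Pal1978} for the non-orientable case) and to \cite[thm.~12.6.18]{AT2007} for the double-forms formulation, with a remark about sign conventions. Your plan of reducing the double-form identity to the classical Pfaffian form of Chern--Gauss--Bonnet is indeed the natural way to verify what the paper cites, and the overall strategy is sound. However, a few points need care.

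First, $\tr(R^{\wedge n/2})$ is a scalar function while $\operatorname{Pf}(\Omega)$ is an $n$-form, so the pointwise identity should read $\tr(R^{\wedge n/2})\,\rmes{M} = \left(\tfrac{n}{2}\right)!\,\operatorname{Pf}(\Omega)$, and in the Pfaffian form of the theorem one integrates $\operatorname{Pf}(\Omega)$ as a top form, not against $\rmes{M}$. Second, your normalization of the curvature $2$-form carries an extra factor of $\tfrac12$: the usual convention in an orthonormal frame is $\Omega_{ij} = \tfrac12\sum_{k,l} R_{ijkl}\,e_k^*\wedge e_l^* = \sum_{k<l} R_{ijkl}\,e_k^*\wedge e_l^*$, and with your $\Omega_{ij}=\tfrac12\sum_{k<l}R_{ijkl}\,e_k^*\wedge e_l^*$ the $n=2$ case already gives $\chi(M)=\frac{1}{4\pi}\int_M K\,dA$, off by a factor of $2$. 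Third, the Pfaffian version as written presupposes orientability; since $\tr(R^{\wedge n/2})$ is a genuine scalar, the double-form statement holds in the non-orientable case, but deducing it from the Pfaffian form then requires passing through the orientation double cover and dividing by $2$ (this is what \cite{Pal1978} addresses). Finally, the combinatorial matching is asserted but not carried out; a concrete check at $n=2$ and $n=4$, which would also have caught the normalization slip, is worth doing before claiming the factor is exactly $\left(\tfrac{n}{2}\right)!$.
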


\noindent
If $M$ is orientable, this can be deduced from Atiyah-Singer's index theorem. The general case is treated in \cite{Pal1978}. The above formula in terms of double forms can be found in \cite[thm.~12.6.18]{AT2007}, up to a sign coming from different sign conventions in the definition of~$R$.

\begin{rem}
\label{rem odd dimension}
If $M$ is a closed manifold of odd dimension then $\chi(M) = 0$, see \cite[cor.~3.37]{Hat2002}.
\end{rem}

\subsection{The Gauss equation}
\label{subsection Gauss equation}

Let $(M,g)$ be a smooth Riemannian manifold of dimension $n$ and $\tilde{M}$ be a smooth submanifold of $M$ of codimension $r\in \{1,\dots,n-1\}$. We denote by $\nabla^M$ and $\tilde{\nabla}$ the Levi-Civita connections on $M$ and $\tilde{M}$ respectively. Likewise, we denote by $R$ and $\tilde{R}$ their Riemann curvature tensor. We wish to relate $R$ and $\tilde{R}$. This is done by the Gauss equation, see Proposition~\ref{proposition Gauss equation} below.

We denote by $\II$ the \emph{second fundamental form} of $\tilde{M} \subset M$ which is defined as the section of $T^\perp \tilde{M} \otimes T^*\tilde{M} \otimes T^*\tilde{M}$ satisfying:
\begin{equation}
\label{equation definition second fundamental form}
\II(X,Y) = -\left(\nabla^M_X Y - \tilde{\nabla}_X Y\right) = - \left(\nabla^M_X Y\right)^\perp
\end{equation}
for any vector fields $X$ and $Y$ on $\tilde{M}$. Here, $\left(\nabla^M_X Y\right)^\perp$ stands for the orthogonal projection of $\nabla^M_X Y$ on $T^\perp \tilde{M}$. It is well-known that $\II$ is symmetric in $X$ and $Y$, see \cite[lemma~3.6.2]{Jos2008}.

The second fundamental form encodes the difference between $\tilde{R}$ and $R$ in the following sense, see \cite[thm.~3.6.2]{Jos2008}.
\begin{prop}[Gauss equation]
\label{proposition Gauss equation}
Let $X$, $Y$, $Z$ and $W$ be vector fields on $\tilde{M}$, then:
\[\tilde{R}(X,Y,Z,W) = R(X,Y,Z,W) + \prsc{\II(X,Z)}{\II(Y,W)} - \prsc{\II(X,W)}{\II(Y,Z)}.\]
\end{prop}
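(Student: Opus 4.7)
The plan is to compare the iterated covariant derivatives $\nabla^M_X \nabla^M_Y W$ and $\tilde{\nabla}_X\tilde{\nabla}_Y W$ by repeatedly applying the decomposition provided by \eqref{equation definition second fundamental form}, namely
\[
\nabla^M_X Y = \tilde{\nabla}_X Y - \II(X,Y)
\]
whenever $X$ and $Y$ are tangent to $\tilde{M}$ (here I use that the tangential part of $\nabla^M$ restricted to $\tilde{M}$ coincides with $\tilde{\nabla}$ by uniqueness of the Levi-Civita connection, and that $\II$ takes values in the normal bundle). The whole computation will be set up so that at the end we pair everything against the tangent vector $Z$, which will make all ``mixed'' tangential/normal terms drop out except for the two expected $\II \cdot \II$ contributions.

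First I would write
\[
\nabla^M_X\nabla^M_Y W = \nabla^M_X\bigl(\tilde{\nabla}_Y W\bigr) - \nabla^M_X\bigl(\II(Y,W)\bigr),
\]
and apply the decomposition once more to $\nabla^M_X(\tilde{\nabla}_Y W)$. Taking the $g$-inner product with the tangent field $Z$ kills the $\II(X,\tilde{\nabla}_Y W)$ term immediately. For the normal vector field $\II(Y,W)$, I would use compatibility of $\nabla^M$ with $g$ in the form $X\cdot g(\II(Y,W),Z)=0$ to move the derivative onto $Z$, getting
\[
g\bigl(\nabla^M_X\II(Y,W),Z\bigr) = -g\bigl(\II(Y,W),\nabla^M_X Z\bigr) = g\bigl(\II(Y,W),\II(X,Z)\bigr),
\]
where the last equality again uses the tangential/normal decomposition of $\nabla^M_X Z$ and the fact that $\II(Y,W)$ is normal. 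This yields
\[
g\bigl(\nabla^M_X\nabla^M_Y W,Z\bigr) = g\bigl(\tilde{\nabla}_X\tilde{\nabla}_Y W,Z\bigr) - \prsc{\II(Y,W)}{\II(X,Z)}.
\]

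Then I would repeat the same computation swapping $X$ and $Y$ to obtain the analogous identity with $\prsc{\II(X,W)}{\II(Y,Z)}$. For the $\nabla^M_{[X,Y]}W$ term in $\kappa(X,Y)W$, the key observation is that $[X,Y]$ is tangent to $\tilde{M}$ (since $X$ and $Y$ are), so applying the decomposition once and pairing with the tangent vector $Z$ yields simply $g(\tilde{\nabla}_{[X,Y]}W,Z)$, with no curvature correction. Combining the three pieces via $R(X,Y,Z,W) = g(\kappa(X,Y)W,Z)$, the tangential parts assemble into $\tilde{R}(X,Y,Z,W)$ and the two normal contributions give exactly the Gauss equation.

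The proof is essentially a bookkeeping exercise rather than a deep argument, so the main thing to watch is sign conventions: the paper uses $\II(X,Y) = -(\nabla^M_X Y)^\perp$ rather than the more common $+(\nabla^M_X Y)^\perp$, but since $\II$ enters the equation quadratically this sign is invisible in the final formula. The only genuine subtlety is the step where compatibility of the ambient connection with $g$ is used to transfer the derivative from $\II(Y,W)$ onto $Z$; otherwise everything is forced by the tangential/normal splitting.
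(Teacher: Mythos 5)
The paper does not prove this proposition; it simply cites \cite[thm.~3.6.2]{Jos2008}. Your argument is the standard textbook derivation of the Gauss equation and it is correct: you decompose $\nabla^M$ into $\tilde\nabla$ plus a normal term via \eqref{equation definition second fundamental form}, use metric compatibility to move the derivative off the normal vector $\II(Y,W)$ and onto $Z$, and observe that $[X,Y]$ stays tangent so the bracket term contributes no correction. You also correctly track the paper's two nonstandard conventions: the index placement $R(X,Y,Z,W)=g(\kappa(X,Y)W,Z)$ (curvature operator applied to the fourth slot, paired against the third), and the sign $\II(X,Y)=-(\nabla^M_X Y)^\perp$, which as you note is invisible in the final formula since $\II$ appears only quadratically. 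The one assertion you use without argument — that the tangential part of $\nabla^M$ restricted to $\tilde M$ is $\tilde\nabla$, equivalently that $\nabla^M_X Y-\tilde\nabla_X Y$ is normal — is precisely the content packaged in the paper's definition \eqref{equation definition second fundamental form}, so there is no gap.
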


We want to write this Gauss equation in terms of double forms. Let $x \in \tilde{M}$ and $X$, $Y$, $Z$ and $W \in T_x\tilde{M}$. Let $U \sim \mathcal{N}(0,\Id)$ in $T_x M$, by Lemma~\ref{lemma random scalar product}:
\begin{multline*}
\prsc{\II(X,Z)}{\II(Y,W)} - \prsc{\II(X,W)}{\II(Y,Z)}\\
= \esp{\prsc{\II(X,Z)}{U}\prsc{\II(Y,W)}{U} - \prsc{\II(X,W)}{U}\prsc{\II(Y,Z)}{U}}.
\end{multline*}
Then we apply Lemma~\ref{lemma square of a symmetric (1,1)-form} to the symmetric $(1,1)$-double form $\prsc{\II}{U}$ for fixed $U$. This gives:
\[\prsc{\II(X,Z)}{\II(Y,W)} - \prsc{\II(X,W)}{\II(Y,Z)}= \frac{1}{2}\, \esp{\prsc{\II}{U}^{\wedge 2}((X,Y),(Z,W))}.\]

We proved the following version of the Gauss equation.
\begin{prop}[Gauss equation]
\label{lemma random Gauss equation}
Let $(M,g)$ be a Riemannian manifold and let $\tilde{M}$ be a smooth submanifold of $M$, such that $\dim(M) > \dim(\tilde{M}) \geq 1$. Let $R$ and $\tilde{R}$ denote the Riemann curvature of $M$ and $\tilde{M}$ respectively, and let $\II$ be the second fundamental form of $\tilde{M}\subset M$. Then, in the sense of double forms:
\begin{equation*}
\forall x \in \tilde{M}, \qquad \tilde{R}(x) = R(x) + \frac{1}{2}\, \esp{\prsc{\II(x)}{U}^{\wedge 2}},
\end{equation*}
where $U\sim \mathcal{N}(0,\Id)$ with values in $T_xM$, and $R(x)$ is implicitly restricted to $T_x\tilde{M}$.
\end{prop}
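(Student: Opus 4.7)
The plan is to start from the classical Gauss equation (Proposition~\ref{proposition Gauss equation}), which gives
\[\tilde{R}(X,Y,Z,W) - R(X,Y,Z,W) = \prsc{\II(X,Z)}{\II(Y,W)} - \prsc{\II(X,W)}{\II(Y,Z)},\]
and rewrite the right-hand side as the Gaussian expectation of a double wedge square. The first step is to use the reproducing identity for a standard Gaussian $U \sim \mathcal{N}(0,\Id)$ valued in $T_xM$: for any fixed vectors $u,v \in T_xM$ one has $\esp{\prsc{u}{U}\prsc{v}{U}} = \prsc{u}{v}$ (Lemma~\ref{lemma random scalar product} from Appendix~\ref{section gaussian vectors}). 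Applying this with $(u,v) = (\II(X,Z), \II(Y,W))$ and $(u,v) = (\II(X,W), \II(Y,Z))$, I can replace each scalar product of normal vectors by a Gaussian expectation and combine them into one expectation by linearity.

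The second step is a purely algebraic observation: for every fixed realization of $U$, the bilinear form $\alpha_U := \prsc{\II(x)}{U}$ on $T_x\tilde{M}$ is symmetric (because $\II$ is symmetric in its two arguments), so it is a symmetric $(1,1)$-double form and Lemma~\ref{lemma square of a symmetric (1,1)-form} applies, giving
\[\alpha_U^{\wedge 2}((X,Y),(Z,W)) = 2\bigl(\alpha_U(X,Z)\alpha_U(Y,W) - \alpha_U(X,W)\alpha_U(Y,Z)\bigr).\]
This expression is exactly twice the integrand produced in the first step. Taking expectation over $U$ and dividing by $2$ yields
\[\prsc{\II(X,Z)}{\II(Y,W)} - \prsc{\II(X,W)}{\II(Y,Z)} = \tfrac{1}{2}\esp{\prsc{\II(x)}{U}^{\wedge 2}((X,Y),(Z,W))},\]
and plugging this back into Proposition~\ref{proposition Gauss equation} concludes the proof.

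There is no real obstacle here: the statement is essentially a repackaging of the classical Gauss equation through the probabilistic identity $\esp{\prsc{u}{U}\prsc{v}{U}}=\prsc{u}{v}$, combined with the explicit formula for $\alpha^{\wedge 2}$. The only minor point to verify is that exchanging the expectation with the evaluation on the quadruple $(X,Y,Z,W)$ is legitimate, which is immediate because $U$ lives in the finite-dimensional space $T_xM$ and everything in sight is a polynomial in $U$ of degree two, hence integrable.
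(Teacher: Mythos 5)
Your proof is correct and follows exactly the paper's own argument: start from the classical Gauss equation (Proposition~\ref{proposition Gauss equation}), replace the two inner products of normal vectors by Gaussian expectations via Lemma~\ref{lemma random scalar product}, and then recognize the resulting combination as $\tfrac12\esp{\prsc{\II}{U}^{\wedge 2}}$ through Lemma~\ref{lemma square of a symmetric (1,1)-form}. The added remark on integrability is a harmless elaboration not present in the paper.
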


\subsection{An expression for the second fundamental form}
\label{subsection an expression for the second fundamental form}

Let us now express the second fundamental form $\II$ of a submanifold $\tilde{M}$ of $M$ defined as the zero set of a smooth map $f:M\to \R^r$. For this we need some further definitions.

Let $V$ and $V'$ be two Euclidean spaces of dimension $n$ and $r$ respectively. Let $L : V \to V'$ be a linear surjection, then the adjoint operator $L^*$ is injective and its image is $\ker(L)^\perp$, so that $LL^*$ is invertible.

\begin{dfn}
\label{definition pseudo-inverse}
Let $L:V\to V'$ be a surjection, the \emph{pseudo-inverse} (or \emph{Moore--Penrose inverse}) of $L$ is defined as $L^\dagger = L^* (LL^*)^{-1}$ from $V'$ to $V$.
\end{dfn}

\noindent
The map $L^\dagger$ is the inverse of the restriction of $L$ to $\ker(L)^\perp$. It is characterized by the fact that $LL^\dagger$ is the identity map of $V'$ and $L^\dagger L$ is the orthogonal projection onto $\ker(L)^\perp$.

Let $f: M \to \R^r$ be a smooth submersion and assume that $\tilde{M}=f^{-1}(0)$. Recall that $\nabla^2 f = \nabla^M df$.
\begin{lem}
\label{lemma second fundamental form and Hessian}
Let $M$ be a Riemannian manifold and let $\tilde{M} \subset M$ be a submanifold of $M$ defined as the zero set of the smooth submersion $f:M \to \R^r$. Let $\II$ denote the second fundamental form of $\tilde{M} \subset M$. Then, 
\[ \forall x \in \tilde{M}, \quad \II(x) = (d_xf)^\dagger \circ \nabla^2_xf,\]
where $\nabla^2_xf$ is implicitly restricted to $T_x \tilde{M}$.
\end{lem}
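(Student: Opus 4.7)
The plan is to evaluate $\nabla^2_x f(X,Y)$ directly from its definition as $\nabla^M df$ and compare the result with the Gauss decomposition of $\nabla^M_X Y$. Fix $x \in \tilde M$ and let $X,Y$ be vector fields on $\tilde M$ near $x$, extended arbitrarily to vector fields on a neighborhood of $x$ in $M$; since $\nabla^2 f$ is a $(0,2)$-tensor on $M$, its value at $x$ will not depend on the extension.

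By the definition of the Hessian,
\[
\nabla^2_x f(X,Y) = (\nabla^M_X df)(Y)\big|_x = X\cdot (df(Y))\big|_x - d_xf\!\left(\nabla^M_X Y\right).
\]
The function $df(Y)$ is a smooth $\R^r$-valued function on $M$; because $f\equiv 0$ on $\tilde M$ and $Y$ is tangent to $\tilde M$ along $\tilde M$, this function vanishes identically on $\tilde M$. Since $X$ is tangent to $\tilde M$, the first term therefore vanishes at $x$. Using the definition~\eqref{equation definition second fundamental form} of $\II$, we decompose $\nabla^M_X Y = \tilde\nabla_X Y - \II(X,Y)$; the tangential part lies in $\ker(d_xf)$, so
\[
\nabla^2_x f(X,Y) = -d_xf\!\left(\tilde\nabla_X Y - \II(X,Y)\right) = d_xf\!\left(\II(X,Y)\right).
\]

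Finally, $\II(X,Y) \in T_x^\perp \tilde M = \ker(d_xf)^\perp$, so by the characterization of the pseudo-inverse recalled just after Definition~\ref{definition pseudo-inverse}, the operator $(d_xf)^\dagger \circ d_xf$ acts as the identity on $\II(X,Y)$. Applying $(d_xf)^\dagger$ to the previous identity therefore yields $\II(X,Y) = (d_xf)^\dagger \nabla^2_x f(X,Y)$, as desired. The only mildly delicate point is verifying that the term $X\cdot(df(Y))$ vanishes at $x$, which ultimately amounts to the observation that $f|_{\tilde M}\equiv 0$ together with the tangency of $Y$; the rest is a direct manipulation of the Gauss decomposition and the defining property of the Moore--Penrose inverse.
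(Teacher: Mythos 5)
Your proof is correct and follows essentially the same route as the paper's: both expand $\nabla^2_x f(X,Y)$ via the Leibniz rule, use that $df(Y)$ vanishes along $\tilde M$ so the derivative term drops, kill the tangential part of $\nabla^M_X Y$ with $d_xf$, and then invoke the defining property of the Moore--Penrose inverse on $\ker(d_xf)^\perp$ to pass from $d_xf\circ\II = \nabla^2_xf$ to $\II = (d_xf)^\dagger\circ\nabla^2_xf$.
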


\begin{proof}
Let $x\in \tilde{M}$, since $\II(x)$ and $(d_xf)^\dagger$ take values in $T_x\tilde{M}^\perp = \ker(d_xf)^\perp$, we only need to prove that $d_xf \circ \II(x) = \nabla^2_xf$. Let $X$ and $Y$ be two vector fields on $\tilde{M}$. The map $df\cdot Y$ vanishes uniformly on $\tilde{M}$, hence:
\begin{equation*}
d_x( df\cdot Y) \cdot X = (\nabla^M_X df)_x \cdot Y + d_xf \cdot (\nabla^M_X Y) = 0.
\end{equation*}
Then, using equation~\eqref{equation definition second fundamental form} and $\ker(d_xf) = T_x \tilde{M}$,
\begin{equation*}
(d_xf\circ \II(x))(X,Y) = -d_xf\cdot(\nabla^M_X Y)^\perp = -d_xf\cdot(\nabla^M_X Y) = (\nabla^M_X df)_x \cdot Y = \nabla_x^2f(X,Y).
\qedhere
\end{equation*}
\end{proof}

\begin{prop}
\label{proposition Euler chi of a submanifold}
Let $(M,g)$ be a closed Riemannian manifold of dimension $n$ and $R$ its Riemann curvature. Let $f:M\to \R^r$ be a smooth submersion and $Z_f = f^{-1}(0)$. We denote by $\rmes{f}$ the Riemannian measure on $Z_f$ and by $R_f$ its Riemann curvature.

If $n-r$ is even, the Euler characteristic of $Z_f$ is:
\begin{equation}
\label{eq CGB}
\chi(Z_f) = \frac{1}{(2\pi)^m m!} \int_{x\in Z_f}\tr \left( R_f(x)^{\wedge m} \right)\rmes{f},
\end{equation}
where $m=\frac{n-r}{2}$. Furthermore, for all $x \in Z_f$,
\begin{equation}
\label{eq riemann}
R_f(x) = R(x)+\frac{1}{2}\esp{\prsc{\nabla^2_xf}{(d_xf)^{\dagger *}(U)}^{\wedge 2}},
\end{equation}
where $U$ is a standard Gaussian vector in $T_xM$.
\end{prop}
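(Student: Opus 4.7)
The two statements are essentially independent and each is a one-move application of a result already in the paper; the proof will be an assembly of Theorem~\ref{theorem Chern Gauss Bonnet}, Proposition~\ref{lemma random Gauss equation} and Lemma~\ref{lemma second fundamental form and Hessian}, with a small dualisation step.

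For the formula~\eqref{eq CGB}, my plan is to simply observe that, since $f$ is a submersion and $M$ is closed, $Z_f = f^{-1}(0)$ is a closed (compact without boundary) smooth submanifold of $M$ of dimension $n-r = 2m$, and this dimension is even by hypothesis. Equipped with the restriction of $g$, it is a closed Riemannian manifold whose Riemann curvature is $R_f$ and whose Riemannian measure is $\rmes{f}$. The Chern--Gauss--Bonnet theorem (Theorem~\ref{theorem Chern Gauss Bonnet}) applied to this manifold immediately gives~\eqref{eq CGB}. Nothing else is needed here.

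For the formula~\eqref{eq riemann}, the idea is to combine the Gauss equation with the expression for $\II$ provided by Lemma~\ref{lemma second fundamental form and Hessian}. Since $\dim Z_f = n-r \geq 1$ (we only need the formula where $\chi(Z_f)$ is interesting; the case $r=n$ is trivial and may be treated separately), Proposition~\ref{lemma random Gauss equation} applied to $Z_f \subset M$ gives, for $x \in Z_f$,
\[
R_f(x) \;=\; R(x) + \tfrac{1}{2}\,\esp{\prsc{\II(x)}{U}^{\wedge 2}},
\]
with $U \sim \mathcal{N}(0,\Id)$ in $T_xM$. Lemma~\ref{lemma second fundamental form and Hessian} then yields $\II(x) = (d_xf)^\dagger \circ \nabla^2_xf$, so that for any $X,Y \in T_xZ_f$,
\[
\prsc{\II(x)(X,Y)}{U}_{T_xM}
= \prsc{(d_xf)^\dagger \nabla^2_xf(X,Y)}{U}_{T_xM}
= \prsc{\nabla^2_xf(X,Y)}{(d_xf)^{\dagger *}(U)}_{\R^r}
\]
by the very definition of the adjoint. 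For fixed $U$, this is an equality between two symmetric $(1,1)$-double forms on $T_xZ_f$: $\prsc{\II(x)}{U} = \prsc{\nabla^2_xf}{(d_xf)^{\dagger *}(U)}$. Taking the double wedge square commutes with this equality, so $\prsc{\II(x)}{U}^{\wedge 2} = \prsc{\nabla^2_xf}{(d_xf)^{\dagger *}(U)}^{\wedge 2}$ in $\bigwedge^{2,2}(T_x^*Z_f)$, and plugging this into the Gauss equation above gives~\eqref{eq riemann}.

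There is no real obstacle: all the ingredients have been prepared in sections~\ref{subsection Chern-Gauss-Bonnet theorem}--\ref{subsection an expression for the second fundamental form}. The only point requiring a minimum of care is the dualisation step, \emph{i.e.} keeping track of the fact that the inner product on the left-hand side is the one of $T_xM$ (in which $\II$ takes values) while the right-hand side is a pairing in $\R^r$ (the target of $\nabla^2_xf$), and that both $\II(x)$ and $(d_xf)^\dagger$ take values in $T_xZ_f^\perp = \ker(d_xf)^\perp$, which is exactly the subspace on which $d_xf$ restricts to an isomorphism, so that the characterisation of $\II$ via $d_xf\circ \II(x) = \nabla^2_xf|_{T_xZ_f}$ used in the proof of Lemma~\ref{lemma second fundamental form and Hessian} is consistent.
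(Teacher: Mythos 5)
Your proof is correct and follows exactly the same route as the paper: Chern--Gauss--Bonnet for \eqref{eq CGB}, then Proposition~\ref{lemma random Gauss equation} followed by Lemma~\ref{lemma second fundamental form and Hessian} for \eqref{eq riemann}. The dualisation step you spell out (moving $(d_xf)^\dagger$ to the other side of the pairing as $(d_xf)^{\dagger*}$) is exactly what the paper leaves implicit when it says ``we conclude by Lemma~\ref{lemma second fundamental form and Hessian}.''
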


\begin{proof}
First, we apply the Chern--Gauss--Bonnet theorem~\ref{theorem Chern Gauss Bonnet} to $Z_f$, which gives~\eqref{eq CGB}. Then let $x\in Z_f$ and $U\sim \mathcal{N}(0,\Id)$ in $T_x M$, by Proposition~\ref{lemma random Gauss equation},
\[R_f(x) = R(x) +\frac{1}{2} \esp{\prsc{\II_f(x)}{U}^{\wedge 2}},\]
where $\II_f$ is the second fundamental form of $Z_f \subset M$. We conclude by Lemma~\ref{lemma second fundamental form and Hessian}.
\end{proof}

Proposition~\ref{proposition Euler chi of a submanifold} is also true for zero sets of sections. Let $s$ be a section of some rank $r$ vector bundle over $M$ that vanishes transversally and $Z_s$ be its zero set. Let $\rmes{s}$ denote the Riemannian measure on $Z_s$ and $R_s$ denote its Riemann tensor. As above, we can apply Theorem~\ref{theorem Chern Gauss Bonnet}, so that:
\begin{equation}
\label{CGB alg}
\chi(Z_s) = \frac{1}{(2\pi)^m m!} \int_{x\in Z_s}\tr \left( R_s(x)^{\wedge m} \right)\rmes{s}.
\end{equation}
The result of Proposition~\ref{lemma random Gauss equation} is still valid for $Z_s$. Besides, the same proof as in Lemma~\ref{lemma second fundamental form and Hessian} shows that, for any connection $\nabla^d$, the second fundamental form $\II_s$ of $Z_s$ satisfies:
\begin{equation*}
\forall x \in Z_s, \quad \II_s(x) = (\nabla^d_xs)^\dagger \circ \nabla^{2,d}_xs.
\end{equation*}

\begin{rem}
This is not surprising since the terms of this equality do not depend on a choice of connection and the result in a trivialization is given by Lemma~\ref{lemma second fundamental form and Hessian}.
\end{rem}

Finally, for every connection $\nabla^d$ and every $x\in Z_s$, we get:
\begin{equation}
\label{Riemann alg}
R_s(x) = R(x)+\frac{1}{2}\esp{\prsc{\nabla^{2,d}_xs}{(\nabla^d_xs)^{\dagger *}(U)}^{\wedge 2}},
\end{equation}
where $U$ is a standard Gaussian vector in $T_xM$, as in \eqref{eq riemann}.

\section{Proofs of the main theorems}
\label{section proofs}

We now set to prove the main theorems. The proofs will be detailed in the Riemannian case but only sketched in the real algebraic one, since they are essentially the same.

\subsection{The Kac--Rice formula}
\label{subsection Kac Rice formula}

First, we state the celebrated Kac--Rice formula, which is one of the key ingredients in our proofs. This formula is proved in \cite[thm.~4.2]{BSZ2001}, see also \cite[chap.~6]{AW2009}. For the reader's convenience, we include a proof in Appendix~\ref{section proof of Kac-Rice}.

\begin{dfn}
\label{definition odet}
Let $L:V \to V'$ be a linear map between Euclidean vector spaces. We denote by $\odet{L}$ the \emph{orthogonal determinant} of $L$: 
\[\odet{L}=\sqrt{\det(LL^*)},\]
where $L^*:V'\to V$ is the adjoint operator of $L$.
\end{dfn}

\begin{rem}
If $L$ is not onto then $\odet{L}=0$. Else, let $A$ be the matrix of the restriction of $L$ to $\ker(L)^\perp$ in any orthonormal basis of $\ker(L)^\perp$ and $V'$, then we have $\odet{L}= \norm{\det(A)}$.
\end{rem}

As in section~\ref{section random submanifolds}, we consider a closed Riemannian manifold $M$ of dimension $n$ and a subspace $V\subset\mathcal{C}^\infty(M,\R^r)$ of dimension $N$ (recall that $1 \leq r \leq n$). We assume that $V$ is $0$-ample, in the sense of section~\ref{subsection incidence manifold}, so that
\[\Sigma = \left\{(f,x) \in V \times M \mid f(x) = 0\right\}\]
is a submanifold of codimension $r$ of $V\times M$. Let $f$ be a standard Gaussian vector in $V$. Then $Z_f$ is almost surely a smooth submanifold of codimension $r$ of $M$ (see section~\ref{subsection incidence manifold}). Recall that $E$ denotes both the Schwartz kernel of $V$ and the covariance function of $(f(x))_{x\in M}$.

\begin{thm}[Kac--Rice formula]
\label{theorem Kac-Rice}
Let $\phi:\Sigma \to \R$ be a Borel measurable function, then
\begin{equation*}
\esp{\int_{x \in Z_f}\hspace{-2mm} \phi(f,x) \rmes{f}} = \frac{1}{(2\pi)^\frac{r}{2}}\!\int_{x \in M}\! \frac{1}{\sqrt{\det{E(x,x)}}} \espcond{\phi(f,x) \odet{d_xf}}{f(x)=0} \rmes{M},
\end{equation*}
whenever one of these integrals is well-defined.
\end{thm}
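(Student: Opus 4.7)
The plan is to apply Federer's coarea formula to the map $f \colon M \to \R^r$ for each fixed $f \notin D$, take the expectation, and then carry out the resulting Gaussian integral by conditioning on the value $f(x)$. This strategy reduces the theorem to a combination of a deterministic change of variables (coarea) with the disintegration of the standard Gaussian on $V$ along evaluation at a point.

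Concretely, I would first fix $f \notin D$, so that $f$ is a submersion near $Z_f$, and invoke Federer's coarea formula in the form
\[
\int_M \phi(f,x)\, \psi(f(x))\, \odet{d_xf}\, \rmes{M} = \int_{y \in \R^r} \psi(y) \left( \int_{f^{-1}(y)} \phi(f,\cdot)\, \rmes{y} \right) \dx y
\]
for any bounded Borel $\psi \colon \R^r \to \R$, where $\rmes{y}$ denotes the induced Riemannian measure on the level set $f^{-1}(y)$. Applying this to a mollifier $\psi_\epsilon$ approximating the Dirac mass at $0$ and letting $\epsilon \to 0$ gives, for $\dx\nu_N$-almost every $f$,
\[
\int_{Z_f} \phi(f,\cdot)\, \rmes{f} = \lim_{\epsilon \to 0} \int_M \phi(f,x)\, \psi_\epsilon(f(x))\, \odet{d_xf}\, \rmes{M}.
\]
Taking expectations and swapping limit, expectation and integration by Fubini--Tonelli then yields
\[
\esp{\int_{Z_f} \phi(f,x)\, \rmes{f}} = \lim_{\epsilon \to 0} \int_M \esp{\phi(f,x)\, \psi_\epsilon(f(x))\, \odet{d_xf}}\, \rmes{M}.
\]

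The last step is to evaluate the inner expectation. By Proposition~\ref{proposition covariance function}, the random vector $f(x)$ is centered Gaussian on $\R^r$ with covariance $E(x,x)$, positive-definite by Lemma~\ref{lemma checking 0 amplitude}. Its density at $0$ therefore equals $(2\pi)^{-r/2}/\sqrt{\det E(x,x)}$. Conditioning on $f(x) = y$ gives
\[
\esp{\phi(f,x)\, \psi_\epsilon(f(x))\, \odet{d_xf}} = \int_{\R^r} \psi_\epsilon(y)\, \rho_x(y)\, \espcond{\phi(f,x)\, \odet{d_xf}}{f(x) = y} \dx y,
\]
where $\rho_x$ denotes the density of $f(x)$, and the limit $\epsilon \to 0$ collapses this to $\rho_x(0)\, \espcond{\phi(f,x)\, \odet{d_xf}}{f(x) = 0}$. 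Substituting back produces the announced formula.

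The main obstacle is to justify the two limit exchanges (the passage $\epsilon \to 0$ inside the expectation, and the Fubini swap of expectation and integration over $M$) when $\phi$ is merely Borel. Both can be controlled using the smoothness of the joint Gaussian density of the finite-dimensional vector $(f(x), d_xf)$, combined with truncation of $\phi$ and monotone or dominated convergence; the hypothesis that one of the two sides of the identity is well-defined is precisely what allows the argument to close. A minor secondary point is the continuity in $y$ of the map $y \mapsto \int_{f^{-1}(y)} \phi(f,\cdot)\, \rmes{y}$ for a typical $f$, needed to pass to the limit in the mollification; this follows from the implicit function theorem since $f$ is a submersion near $Z_f$ for $f \notin D$.
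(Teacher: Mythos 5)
Your proposal is correct in outline but takes a genuinely different route from the paper. The paper applies Federer's coarea formula \emph{twice}, once to each projection $\pi_1:\Sigma\to V$ and $\pi_2:\Sigma\to M$ from the incidence manifold $\Sigma\subset V\times M$ (the ``double-fibration trick'' of Proposition~\ref{proposition double fibration formula}), with the key technical input being the linear-algebra identity of Lemma~\ref{lemma exchanging orthogonal determinant} relating $\odet{d_x\pi_1}\odet{\partial_1F(x)}$ to $\odet{d_x\pi_2}\odet{\partial_2F(x)}$. This yields the Kac--Rice identity without any mollification or limit passage, and the ``whenever one side is well-defined'' hypothesis is inherited directly from the coarea formula. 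Your argument instead applies coarea \emph{once}, to the map $f:M\to\R^r$ for fixed $f\notin D$, mollifies the Dirac mass, takes expectations, and then disintegrates the Gaussian along $j^0_x$; this is the classical Kac-counting route (closer in spirit to the Adler--Taylor treatment). It is more elementary in its ingredients, needing no double-fibration and no analogue of Lemma~\ref{lemma exchanging orthogonal determinant}, but it pushes the difficulty into several delicate limit exchanges — continuity in $y$ of $y\mapsto\int_{f^{-1}(y)}\phi\,\rmes{y}$ and of $y\mapsto\espcond{\phi\odet{d_xf}}{f(x)=y}$ at $y=0$, plus the Fubini and dominated-convergence swaps — which are genuinely nontrivial for merely Borel $\phi$ (one would typically reduce to bounded continuous $\phi$ by approximation and monotone classes, then use that $(f(x),d_xf)$ has a smooth nondegenerate joint density, as you indicate). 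Both arguments are standard and correct; the paper's avoids all analytic limit manipulations at the cost of the extra geometric lemma, while yours is closer to the intuitive ``counting zeros'' picture but requires more bookkeeping to make rigorous in the Borel-measurable generality claimed.
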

\noindent
The expectation on the right-hand side is to be understood as the conditional expectation of $\phi(f,x) \odet{d_xf}$ given $f(x)=0$. By $\det(E(x,x))$, we mean the determinant of the matrix of the bilinear form $E(x,x)$ in any orthonormal basis of $\R^r$.

\subsection{Proof of Theorem \ref{theorem expected volume harmonic case}}
\label{subsection proof volume harmonic}

We start with the expectation of the volume, which is the toy-model for this kind of computations. In this case, the proof is closely related to \cite{BSZ2000a,BSZ2001}, in a slightly different setting. The first step is to apply Kac--Rice formula above with $\phi:(f,x)\mapsto 1$. We get:
\begin{equation}
\label{equation expectation volume 2}
\esp{\vol{Z_f}}= \frac{1}{(2\pi)^\frac{r}{2}}\int_{x \in M} \frac{1}{\sqrt{\det(E(x,x))}} \espcond{\odet{d_xf}}{f(x)=0}\rmes{M}.
\end{equation}

Let $x \in M$, then $j_x^1(f)=(f(x),d_xf)$ is a Gaussian vector in $\R^r \otimes( \R \oplus T_x^*M)$ whose distribution only depends on the values of $E$ and its derivatives at $(x,x)$, see Lemma~\ref{lemma variance 2-jet}. Thus $\esp{\vol{Z_f}}$ will only depend on the values of $E$ and its derivatives along the diagonal as was expected from \cite[thm.~2.2]{BSZ2000a}.

The next step is to compute pointwise asymptotics for the integrand on the right-hand side of \eqref{equation expectation volume 2}. We will use Lemma~\ref{lemma variance 2-jet}, which describes the distribution of $j_x^1(f)$, and the estimates of section~\ref{section estimates covariance kernels}. Both in the Riemannian and the algebraic settings, the pointwise asymptotic turns out to be universal: it does not depend on $x$ or even on the ambient manifold. This is because the distribution of $j_x^1(f)$ is determined by the asymptotics of section~\ref{section estimates covariance kernels} which are universal.

We now specify to the case of Riemannian random waves (see section~\ref{subsection harmonic setting}), that is $V=(V_\lambda)^r$ for some non-negative $\lambda$. Recall that $(V_\lambda)^r$ is $0$-ample (Lemma~\ref{lemma 0 amplitude}) so that equation~\eqref{equation expectation volume 2} is valid in this case. Let $x \in M$, by Lemma~\ref{lemma E in terms of e} and \eqref{bin x0y0} we have: 
\begin{equation}
\label{eq estimate det variance}
\det\left(E_\lambda(x,x)\right) = \left(e_\lambda(x,x)\right)^r = \left(\gamma_0 \lambda^\frac{n}{2}\right)^r \left(1 + O\!\left(\lambda^{-\frac{1}{2}}\right)\right).
\end{equation}

Then we want to estimate the conditional expectation in \eqref{equation expectation volume 2}. Before going further, the asymptotics of section~\ref{subsection spectral function of the Laplacian} suggest to consider the scaled variables:
\begin{equation}
\label{eq scaled variables}
(t_\lambda,L_\lambda) = \left(\frac{1}{\sqrt{\gamma_0}\lambda^\frac{n}{4}}f(x),\frac{1}{\sqrt{\gamma_1}\lambda^\frac{n+2}{4}}d_xf\right)
\end{equation}
instead of $j_x^1(f)$. This is a centered Gaussian vector whose variance is determined by \eqref{eq scaled variables}. Besides, by Definition~\ref{definition odet}, the orthogonal determinant is homogeneous of degree $r$ for linear maps taking values in $\R^r$, so that:
\begin{equation}
\label{equation oubliee}
\espcond{\odet{d_xf}}{f(x)=0} = (\gamma_1)^\frac{r}{2}\lambda^\frac{r(n+2)}{4} \espcond{\odet{L_\lambda}}{t_\lambda = 0}.
\end{equation}

\begin{lem}
\label{lemma conditional expectation volume}
For every $x\in M$, we have:
\[\espcond{\odet{L_\lambda}}{t_\lambda = 0} = (2\pi)^\frac{r}{2} \frac{\vol{\S^{n-r}}}{\vol{\S^n}}\left(1 + O\!\left(\lambda^{-\frac{1}{2}}\right)\right),\]
where the error term does not depend on the point $x\in M$.
\end{lem}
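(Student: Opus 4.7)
The plan is first to compute the covariance structure of the centered Gaussian vector $(t_\lambda, L_\lambda)$, then identify its asymptotic law, and finally reduce the conditional expectation to a Gaussian integral that can be evaluated in closed form.

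First, I would combine Lemma~\ref{lemma variance 2-jet} (which expresses $\var(f(x))$, $\var(d_x f)$ and $\cov(f(x),d_x f)$ in terms of $E_\lambda$ and its diagonal derivatives) with Lemma~\ref{lemma E in terms of e} (which reduces these to the scalar spectral function $e_\lambda$). Choosing normal coordinates at $x$ and the canonical basis of $\R^r$, the asymptotics \eqref{bin x0y0}, \eqref{bin x1y0} and \eqref{bin x1y1} of Theorem~\ref{theorem estimates bin} yield, uniformly in $x\in M$:
\begin{align*}
\var(t_\lambda) &= \Id_r + O\!\left(\lambda^{-1/2}\right),\\
\var(L_\lambda) &= \Id_{rn} + O\!\left(\lambda^{-1/2}\right),\\
\cov(t_\lambda, L_\lambda) &= O\!\left(\lambda^{-1/2}\right),
\end{align*}
where $L_\lambda$ is viewed as an $r\times n$ matrix. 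The normalizations $\sqrt{\gamma_0}\lambda^{n/4}$ and $\sqrt{\gamma_1}\lambda^{(n+2)/4}$ appearing in the definitions of $t_\lambda$ and $L_\lambda$ are precisely tuned so that the leading-order covariance is the identity.

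Next, since $(t_\lambda,L_\lambda)$ is jointly Gaussian and centered, the conditional law of $L_\lambda$ given $t_\lambda=0$ is itself centered Gaussian, with covariance equal to the Schur complement
\[
\var(L_\lambda) - \cov(L_\lambda,t_\lambda)\,\var(t_\lambda)^{-1}\,\cov(t_\lambda,L_\lambda) = \Id_{rn}+O\!\left(\lambda^{-1/2}\right).
\]
Thus this conditional law converges to the law of an $r\times n$ matrix $L$ with independent standard Gaussian entries. Since $\odet{L_\lambda}^2=\det(L_\lambda L_\lambda^*)$ is a polynomial of fixed degree in the entries and the covariance stays uniformly bounded away from degeneracy, the Gaussian moment of $\odet{\cdot}$ depends continuously on the covariance, giving
\[
\espcond{\odet{L_\lambda}}{t_\lambda=0} = \esp{\odet{L}} + O\!\left(\lambda^{-1/2}\right),
\]
with error uniform in $x$ by the uniformity in Theorem~\ref{theorem estimates bin}.

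It remains to evaluate $\esp{\odet{L}}$ for $L$ an $r\times n$ matrix with i.i.d.\ standard Gaussian entries. Writing the columns of $L$ as $n$ independent standard Gaussians in $\R^r$ and applying a QR-type decomposition (the Bartlett decomposition of the Wishart matrix $LL^*$) one obtains $\odet{L}=\prod_{i=1}^r \chi_{n-i+1}$, a product of independent chi-distributed factors. A direct telescoping computation then gives
\[
\esp{\odet{L}} = \prod_{i=1}^r \sqrt{2}\,\frac{\Gamma\!\left(\tfrac{n-i+2}{2}\right)}{\Gamma\!\left(\tfrac{n-i+1}{2}\right)} = 2^{r/2}\frac{\Gamma\!\left(\tfrac{n+1}{2}\right)}{\Gamma\!\left(\tfrac{n-r+1}{2}\right)} = (2\pi)^{r/2}\,\frac{\vol{\S^{n-r}}}{\vol{\S^n}},
\]
using $\vol{\S^k}=2\pi^{(k+1)/2}/\Gamma(\tfrac{k+1}{2})$. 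The main technical hurdle is ensuring that the $O(\lambda^{-1/2})$ in the Schur complement/continuity step is genuinely uniform in $x$; this follows from the uniformity of Bin's estimates and from the fact that the covariance matrices remain uniformly non-degenerate as $\lambda\to+\infty$.
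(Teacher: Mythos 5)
Your proof is correct and follows essentially the same route as the paper: compute the covariance structure of $(t_\lambda,L_\lambda)$ from Lemmas~\ref{lemma variance 2-jet} and~\ref{lemma E in terms of e} and Theorem~\ref{theorem estimates bin}, pass to the conditional law via the Schur complement (the paper's Corollary~\ref{corollary conditional expectation}), and compare with the standard Gaussian case; the Bartlett decomposition you invoke is exactly what the paper proves from scratch in Appendix~B (Corollary~\ref{corollary distribution orthogonal det}), and your $\Gamma$-function telescoping matches Lemma~\ref{lemma expectation odet of L}.

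One small imprecision: you justify $\espcond{\odet{L_\lambda}}{t_\lambda=0} = \esp{\odet{L}} + O(\lambda^{-1/2})$ by saying the Gaussian moment ``depends continuously on the covariance.'' Continuity alone only gives $o(1)$; to obtain the rate $O(\lambda^{-1/2})$ you need the map $\Lambda \mapsto \esp{\odet{L_\Lambda}}$ to be Lipschitz (in fact it is smooth) on a neighbourhood of $\Id$ in the positive-definite cone. The paper makes this quantitative by the explicit mean value bound on $\exp(-\frac12\langle(\tilde\Lambda(\lambda)^{-1}-\Id)L,L\rangle)-1$ followed by dominated convergence; your argument is easily repaired by replacing ``continuously'' with ``smoothly'' or by writing out that integral estimate.
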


We postpone the proof of this lemma for now and conclude the proof of Theorem~\ref{theorem expected volume harmonic case}. By~\eqref{eq estimate det variance}, \eqref{equation oubliee} and Lemma~\ref{lemma conditional expectation volume}, for every $x \in M$,
\begin{equation*}
\frac{1}{(2\pi)^\frac{r}{2} \sqrt{\det(E(x,x))}} \espcond{\odet{d_xf}}{f(x)=0} = \left(\frac{\gamma_1}{\gamma_0}\lambda\right)^\frac{r}{2}\frac{\vol{\S^{n-r}}}{\vol{\S^n}}  \left(1 + O\!\left(\lambda^{-\frac{1}{2}}\right)\right),
\end{equation*}
where the error term does not depend on the point $x\in M$. By~\eqref{equation ratio gamma} this equals:
\begin{equation*}
\left(\frac{\lambda}{n+2}\right)^\frac{r}{2} \frac{\vol{\S^{n-r}}}{\vol{\S^n}} \left(1 + O\!\left(\lambda^{-\frac{1}{2}}\right)\right).
\end{equation*}
Plugging this into equation~\eqref{equation expectation volume 2} gives Theorem~\ref{theorem expected volume harmonic case}.

\begin{rem}
The same proof shows that, for any continuous function $\phi:M\to \R$, we have:
\begin{equation*}
\esp{\int_{Z_f} \phi \rmes{f}} = \left(\frac{\lambda}{n+2}\right)^\frac{r}{2} \left(\int_M \phi \rmes{M}\right) \frac{\vol{\S^{n-r}}}{\vol{\S^n}} + O(\lambda^\frac{r-1}{2}).
\end{equation*}
Hence,
\begin{equation*}
\left(\frac{n+2}{\lambda}\right)^\frac{r}{2}\esp{\rmes{f}} \xrightarrow[\lambda \to +\infty]{} \frac{\vol{\S^{n-r}}}{\vol{\S^n}}\rmes{M}
\end{equation*}
in the sense of the weak convergence of measures.
\end{rem}

We still have to prove Lemma~\ref{lemma conditional expectation volume}. For this, we need to compute the variance of $L_\lambda$ given $t_\lambda =0$. Let $(x_1,\dots,x_n)$ be normal coordinates centered at $x$, and $(\zeta_1,\dots,\zeta_r)$ denote the canonical basis of $\R^r$. We equip $\R^r \otimes (\R \oplus T_x^*M)$ with the orthonormal basis:
\begin{equation}
\label{basis volume}
(\zeta_1,\dots,\zeta_r,\zeta_1 \otimes dx_1,\dots,\zeta_1 \otimes dx_n,\zeta_2\otimes dx_1,\dots,\zeta_2\otimes dx_n,\dots,\zeta_r\otimes dx_1,\dots,\zeta_r\otimes dx_n).
\end{equation}
Let $\Lambda(\lambda)$ denote the matrix of $\var(t_\lambda,L_\lambda)$ in this basis. This matrix splits as
\begin{equation}
\label{equation def Lambda}
\Lambda(\lambda)=\begin{pmatrix} \Lambda_{00}(\lambda) & \Lambda_{01}(\lambda) \\ \Lambda_{10}(\lambda) & \Lambda_{11}(\lambda) \end{pmatrix},
\end{equation}
where $\Lambda_{00}(\lambda)$ and $\Lambda_{11}(\lambda)$ are the matrices of $\var(t_\lambda)$ and $\var(L_\lambda)$ and $\Lambda_{01}(\lambda)= \trans\Lambda_{10}(\lambda)$ is the matrix of $\cov(t_\lambda,L_\lambda)$. We can decompose further $\Lambda_{10}$ and $\Lambda_{11}$ into blocks of size $r \times r$:
\begin{align}
\label{equation decomposition Lambda}
\Lambda_{10}(\lambda)&=\begin{pmatrix}\Lambda_{10}^i(\lambda) \end{pmatrix}_{1\leq i \leq n}, & \Lambda_{11}(\lambda) & = \begin{pmatrix} \Lambda_{11}^{i,j}(\lambda)\end{pmatrix}_{1\leq i,j \leq n}. 
\end{align}

By the Definition~\eqref{eq scaled variables} of $(t_\lambda,L_\lambda)$, these blocks are obtained by scaling the corresponding blocks in the matrix of $\var(j_x^1(f))$. Lemmas~\ref{lemma variance 2-jet} and~\ref{lemma E in terms of e} tell us that each one of these blocks is a scalar matrix. Then, using the estimates of section~\ref{subsection spectral function of the Laplacian}:
\begin{align}
\label{Lambda00}
\Lambda_{00}(\lambda)&= \frac{e_\lambda(x,x)}{\gamma_0 \lambda^\frac{n}{2}} \ I_r = I_r + O\!\left(\lambda^{-\frac{1}{2}}\right),\\
\label{Lambda10}
\forall i \in \{1,\dots,n\}, \qquad  \Lambda_{10}^i(\lambda)&= \frac{\partial_{x_i}e_\lambda(x,x)}{\sqrt{\gamma_0\gamma_1} \lambda^\frac{n+1}{2}} \ I_r = O\!\left(\lambda^{-\frac{1}{2}}\right),\\
\label{Lambda11}
\text{and} \ \forall i,j \in \{1,\dots,n\}, \qquad \Lambda_{11}^{i,j}(\lambda)&= \frac{\partial_{x_i}\partial_{y_j}e_\lambda (x,x)}{\gamma_1 \lambda^{\frac{n}{2}+1}} \ I_r = \left\{ \begin{aligned} &I_r + O\!\left(\lambda^{-\frac{1}{2}}\right) & &\text{if $i=j$,}\\ &O\!\left(\lambda^{-\frac{1}{2}}\right) & &\text{otherwise,} \end{aligned} \right.
\end{align}
where $I_r$ stands for the identity matrix of size $r$. Thus $\Lambda(\lambda)=I_{r(n+1)}+O\!\left(\lambda^{-\frac{1}{2}}\right)$ and, by Corollary~\ref{corollary conditional expectation}, the distribution of $L_\lambda$ conditioned on $t_\lambda=0$ is a centered Gaussian with variance operator $\tilde{\Lambda}(\lambda)=\Id + O\!\left(\lambda^{-\frac{1}{2}}\right)$. Note that these estimates do not depend on $x$ or our choices of coordinates.

\begin{proof}[Proof of Lemma~\ref{lemma conditional expectation volume}]
Let $\tilde{L}_\lambda \sim \mathcal{N}(0,\tilde{\Lambda}(\lambda))$ in $\R^r \otimes T^*_xM$. For $\lambda$ large enough, $\tilde{\Lambda}(\lambda)$ is non-singular, and we have:
\begin{multline}
\label{equation lemma volume}
\espcond{\odet{L_\lambda}}{t_\lambda=0}= \esp{\odet{\tilde{L}_\lambda}}\\
=\frac{1}{(2\pi)^\frac{nr}{2}\sqrt{\det(\tilde{\Lambda}(\lambda))}} \int \odet{L} \exp\left(-\frac{1}{2}\prsc{\tilde{\Lambda}(\lambda)^{-1}L}{L}\right)\dx L,
\end{multline}
where $\dx L$ stands for the Lebesgue measure on $\R^r \otimes T^*_xM$. Beware that we see $L$ as a linear map in the term $\odet{L}$ but as a vector in $\tilde{\Lambda}(\lambda)^{-1}L$. The latter is not a composition.

Then $\tilde{\Lambda}(\lambda)=\Id + O\!\left(\lambda^{-\frac{1}{2}}\right)$, so that $\Norm{\tilde{\Lambda}(\lambda)^{-1}-\Id}$ is bounded by $\frac{C}{\sqrt{\lambda}}$ for some positive $C$. Hence, for all $L \in \R^r \otimes T_x^*M$,
\begin{equation*}
\norm{\prsc{\left(\tilde{\Lambda}(\lambda)^{-1}-\Id\right)L}{L}} \leq \frac{C}{\sqrt{\lambda}} \Norm{L}^2,
\end{equation*}
and by the mean value theorem,
\begin{equation*}
\norm{\exp\left(-\frac{1}{2}\prsc{\left(\tilde{\Lambda}(\lambda)^{-1}-\Id\right)L}{L}\right)-1} \leq \frac{C}{2\sqrt{\lambda}} \Norm{L}^2 \exp \left(\frac{C}{2\sqrt{\lambda}} \Norm{L}^2\right).
\end{equation*}
Then,
\begin{multline}
\label{big O1}
\norm{\int \odet{L}\left(\exp\left(-\frac{1}{2}\prsc{\tilde{\Lambda}(\lambda)^{-1}L}{L}\right)-\exp\left(-\frac{\Norm{L}^2}{2}\right)\right) \dx L}\\
\leq \frac{C}{2\sqrt{\lambda}} \int \odet{L}\Norm{L}^2 \exp\left(-\frac{\Norm{L}^2}{2}\left(1-\frac{C}{\sqrt{\lambda}}\right)\right) \dx L.
\end{multline}
The integral on the right-hand side of \eqref{big O1} converges to some finite limit as $\lambda \to +\infty$ by Lebesgue's dominated convergence theorem, so that:
\begin{equation}
\label{big O2}
\int \odet{L}\exp\left(-\frac{1}{2}\prsc{\tilde{\Lambda}(\lambda)^{-1}L}{L}\right)\dx L = \int \odet{L} e^{-\frac{1}{2}\Norm{L}^2}\dx L + O\!\left(\lambda^{-\frac{1}{2}}\right).
\end{equation}
Since $\det\left(\tilde{\Lambda}(\lambda)\right) = 1 + O\!\left(\lambda^{-\frac{1}{2}}\right)$, by~\eqref{equation lemma volume}, \eqref{big O2} we have:
\begin{equation*}
\espcond{\odet{L_\lambda}}{t_\lambda=0} = \esp{\odet{L}} + O\!\left(\lambda^{-\frac{1}{2}}\right),
\end{equation*}
where $L$ is a standard Gaussian vector in $\R^r \otimes T^*_xM$. The result of the lemma is given by Lemma~\ref{lemma expectation odet of L}.
\end{proof}

\subsection{Proof of Theorem \ref{theorem expected volume algebraic case}}
\label{subsection proof volume algebraic}

We now consider the real algebraic setting described in section~\ref{subsection real algebraic setting}. The proof goes along the same lines as above. Recall that $\mathcal{X}$ is a complex projective manifold of dimension $n$, equipped with a rank $r$ holomorphic vector bundle $\mathcal{E}$ and an ample holomorphic line bundle~$\mathcal{L}$, and that $\mathcal{X}$, $\mathcal{E}$ and $\mathcal{L}$ are endowed with compatible real structures. We are interested in the volume of the real zero set $Z_s$ of a standard Gaussian section $s$ in $\R H^0(\mathcal{X},\mathcal{E}\otimes \mathcal{L}^d)$.

By Corollary~\ref{cor 0 amplitude}, $\R H^0(\mathcal{X},\mathcal{E}\otimes \mathcal{L}^d)$ is $0$-ample for $d$ large enough, so that we can apply Kac--Rice formula (Theorem~\ref{theorem Kac-Rice}) with $\phi:(s,x)\mapsto 1$, as in the Riemannian case. Note that we have to use the incidence manifold $\Sigma_d$ defined by \eqref{definition Sigma d} here. As in~\eqref{equation expectation volume 2}, we get:
\begin{equation}
\label{eq alg1}
\esp{\vol{Z_s}}= \frac{1}{(2\pi)^\frac{r}{2}}\int_{x \in \R\mathcal{X}} \frac{1}{\sqrt{\det(E_d(x,x))}} \espcond{\odet{\nabla^d_xs}}{s(x)=0}\rmes{\R\mathcal{X}},
\end{equation}
where $\nabla^d$ is any real connection on $\mathcal{E}\otimes \mathcal{L}^d$.

Let $x \in \R\mathcal{X}$ and $(x_1,\dots,x_n)$ be real holomorphic coordinates around $x$ such that $(\deron{\phantom{f}}{x_1},\dots,\deron{\phantom{f}}{x_n})$ is orthonormal at $x$. Let $(\zeta_1^d,\dots,\zeta_r^d)$ be an orthonormal basis of $\R(\mathcal{E}\otimes\mathcal{L}^d)_x$. This yields an orthonormal basis of $\mathcal{J}_x^2(\mathcal{E}\otimes \mathcal{L}^d)$ similar to \eqref{basis volume}.

The value of $\espcond{\odet{\nabla^d_xs}}{s(x)=0}$ does not depend on the choice of $\nabla^d$, since $\nabla_x^ds$ does not depend on $\nabla^d$ when $s (x)=0$. We choose a connection that satisfies the conditions of section~\ref{subsection Bergman kernel}, in order to compute the pointwise asymptotic of this quantity.

The estimates of Proposition~\ref{proposition estimates Bergman} suggest to consider the scaled variables:
\begin{equation}
\label{eq scaled variables alg}
(t_d,L_d) = \left(\sqrt{\frac{\pi^n}{d^n}} s(x),\sqrt{\frac{\pi^n}{d^{n+1}}}\nabla^d_xs\right).
\end{equation}
Then, $(t_d,L_d)$ is a centered Gaussian vector in $\R(\mathcal{E}\otimes\mathcal{L}^d)_x$, and the matrix of $\var(t_d,L_d)$ in the basis described above is $I_{r(n+1)}+O(d^{-1})$. This is proved by the same kind of computation as in the Riemannian case, using the estimates of Proposition~\ref{proposition estimates Bergman}. The distribution of $L_d$ given $t_d=0$ is then a centered Gaussian with variance operator $\tilde{\Lambda}(d)= \Id +O(d^{-1})$.

As in the previous section (cf.~Lemma~\ref{lemma conditional expectation volume}), for every $x \in \R \mathcal{X}$,
\begin{equation}
\label{eq alg2}
\begin{aligned}
\espcond{\odet{\nabla^d_xs}}{s(x)=0} &= \left(\frac{d^{n+1}}{\pi^n}\right)^\frac{r}{2} \espcond{\odet{L_d}}{t_d=0}\\
&=\left(\frac{d^{n+1}}{\pi^n}\right)^\frac{r}{2} (2\pi)^\frac{r}{2} \frac{\vol{\S^{n-r}}}{\vol{\S^n}}\left(1 + O\!\left(d^{-1}\right)\right).
\end{aligned}
\end{equation}
Besides, the estimate~\eqref{estimates bergman x0y0} shows that:
\begin{equation}
\label{eq alg3}
\det\left(E_d(x,x)\right) = \left(\frac{d}{\pi}\right)^{rn}\left(1 + O\!\left(d^{-1}\right)\right),
\end{equation}
Finally, by~\eqref{eq alg1}, \eqref{eq alg2} and \eqref{eq alg3}, we have proved Theorem~\ref{theorem expected volume algebraic case}.

As for Riemannian random waves, the same proof shows that for any continuous function $\phi:\R\mathcal{X}\to \R$, we have:
\begin{equation*}
\esp{\int_{Z_s} \phi \rmes{s}} = d^\frac{r}{2} \left(\int_{\R \mathcal{X}} \phi \rmes{\R \mathcal{X}}\right) \frac{\vol{\S^{n-r}}}{\vol{\S^n}} + O\!\left(d^{\frac{r}{2}-1}\right).
\end{equation*}
Hence,
\begin{equation*}
\left(d^{-\frac{r}{2}}\right)\esp{\rmes{s}} \xrightarrow[\lambda \to +\infty]{} \frac{\vol{\S^{n-r}}}{\vol{\S^n}}\rmes{M}
\end{equation*}
in the sense of the weak convergence of measures.

\subsection{Proof of Theorem \ref{theorem expected Euler characteristic harmonic case}}
\label{subsection proof Euler harmonic}

In this section we compute the expected Euler characteristic of our random submanifolds. The proof is basically the same as in the volume case: apply Kac--Rice formula then compute a pointwise asymptotic for the conditional expectation that appears in Theorem~\ref{theorem Kac-Rice}. Only, this time, we apply Kac--Rice formula to a quantity $\phi(f,x)$ that really depends on the couple $(f,x) \in \Sigma$. This makes the computations a bit more complex. Luckily, $\phi(f,x)$ only depends on the  $2$-jet of $f$ at $x$, so we can still make pointwise computations.

Consider first the general setting of sections~\ref{subsection general setting} to~\ref{subsection random jets}: $f$ is a standard Gaussian vector in the finite-dimensional subspace $V\subset \mathcal{C}^\infty(M,\R^r)$. We assume that $V$ is $0$-ample, so that for almost every $f \in V$, $Z_f$ is a closed submanifold of dimension $n-r$.

If $n-r$ is odd, then $\chi(Z_f)=0$ almost surely (see Remark~\ref{rem odd dimension}). From now on, we assume that $n-r$ is even and set $m=\frac{n-r}{2}$. If $n=r$, then $Z_f$ is almost surely a finite set, and $\chi(Z_f)=\vol{Z_f}$ is just the cardinal of $Z_f$. In this case Theorems~\ref{theorem expected Euler characteristic harmonic case} and~\ref{theorem expected volume harmonic case} coincide, so we need only consider the case $r<n$ in the sequel.

We denote by $R$ the Riemann curvature of the ambient manifold $M$ and, for any $f \in V$, we denote by $R_f$ the Riemann curvature of $Z_f$. By Proposition~\ref{proposition Euler chi of a submanifold} and Kac--Rice formula (Theorem~\ref{theorem Kac-Rice}),
\begin{multline}
\label{equation expectation Euler 1}
\esp{\chi(Z_f)} = \esp{\frac{1}{(2\pi)^m m!} \int_{Z_f} \tr\left((R_f)^{\wedge m}\right)\rmes{f}}\\
=\frac{1}{m! (2\pi)^\frac{n}{2}} \int_{x \in M} \frac{1}{\sqrt{\det(E(x,x))}} \espcond{\odet{d_xf}\tr(R_f(x)^{\wedge m})}{f(x)=0}\rmes{M}.
\end{multline}
Moreover, for all $x \in M$, let $U \sim \mathcal{N}(0,\Id)$ in $T_xM$ be independent of $f$. Then, Proposition~\ref{proposition Euler chi of a submanifold} gives:\begin{equation}
\label{equation curvature of Zf}
R_f(x) = R(x) + \frac{1}{2}\esp[U]{\prsc{\nabla_x^2f}{d_xf^{\dagger *}(U)}^{\wedge 2}},
\end{equation}
where the notation $\esp[U]{\ \cdot \ }$ means that we only take the expectation with respect to the variable $U$. Here and in everything that follows, $R(x)$ and $\nabla^2_xf$ are implicitly restricted to $\ker(d_xf)=T_xZ_f$.

As in the volume case, the next step is to compute the pointwise asymptotic for the integrand in the last term of~\eqref{equation expectation Euler 1}. By~\eqref{equation curvature of Zf}, it only depends on $R(x)$ and the distribution of $j_x^2(f)$ which is characterized by Lemma~\ref{lemma variance 2-jet}. This shows that the expected Euler characteristic only depends on $R$ and the values of $E$ and its derivatives (up to order $2$ in each variable) along the diagonal in $M \times M$. It turns out that, both in the Riemannian and the algebraic cases, the pointwise asymptotic is universal and no longer depends on $R$.

Focusing on random waves, $V = (V_\lambda)^r$ and we already know from Lemma~\ref{lemma 0 amplitude} that $(V_\lambda)^r$ is $0$-ample. Let $x \in M$, recall that $\det(E_\lambda(x,x)) = \left(\gamma_0 \lambda^\frac{n}{2}\right)^r \left(1 + O\!\left(\lambda^{-\frac{1}{2}}\right)\right)$ (see~\eqref{eq estimate det variance}). Then, the main task is to estimate the conditional expectation in \eqref{equation expectation Euler 1}.

We consider the scaled variables:
\begin{equation}
\label{scaled variables 2}
(t_\lambda,L_\lambda,S_\lambda) = \left(\frac{1}{\sqrt{\gamma_0}\lambda^\frac{n}{4}}f(x), \frac{1}{\sqrt{\gamma_1}\lambda^\frac{n+2}{4}}d_xf, \frac{1}{\sqrt{\gamma_2}\lambda^\frac{n+4}{4}}\nabla^2_xf\right)
\end{equation}
in $\R^r \otimes (\R \oplus T_x^*M \oplus \Sym(T_x^*M))$. By Lemma~\ref{lemma variance 2-jet}, $(t_\lambda,L_\lambda,S_\lambda)$ is a centered Gaussian vector. We denote by $(\tilde{L}_\lambda,\tilde{S}_\lambda)$ a random variable in $\R^r \otimes (T_x^*M \oplus \Sym(T_x^*M))$ distributed as $(L_\lambda,S_\lambda)$ given $t_\lambda = 0$. Again, $(\tilde{L}_\lambda,\tilde{S}_\lambda)$ is a centered Gaussian, see Corollary~\ref{corollary conditional expectation}.

Let $U \sim \mathcal{N}(0,\Id)$ in $T_xM$ be independent of $f$ (hence of $j_x^2(f)$) and $(\tilde{L}_\lambda,\tilde{S}_\lambda)$. Then, by~\eqref{equation curvature of Zf}, \eqref{scaled variables 2} and~\eqref{equation ratio gamma}:
\begin{align*}
R_f(x) &= R(x) + \frac{1}{2}\esp[U]{\prsc{\nabla_x^2f}{d_xf^{\dagger *}(U)}^{\wedge 2}}\\
&= R(x) + \frac{1}{2}\esp[U]{\prsc{\sqrt{\gamma_2}\lambda^\frac{n+4}{4} S_\lambda}{\frac{L_\lambda^{\dagger *}(U)}{\sqrt{\gamma_1}\lambda^\frac{n+2}{4}}}^{\wedge 2}}\\
&= R(x) + \frac{\lambda}{2(n+4)}\esp[U]{\prsc{S_\lambda}{L_\lambda^{\dagger *}(U)}^{\wedge 2}}.
\end{align*}
Besides, $\odet{d_xf} = (\gamma_1)^\frac{r}{2} \lambda^\frac{r(n+2)}{4} \odet{L_\lambda}$, so that,
\begin{multline}
\label{equation cond exp}
\espcond{\odet{d_xf}\tr(R_f(x)^{\wedge m})}{f(x)=0} \\
\begin{aligned}
&=(\gamma_1)^\frac{r}{2} \lambda^\frac{r(n+2)}{4} \espcond{\odet{L_\lambda} \tr\left(\left(R(x) + \frac{\lambda}{2(n+4)}\esp[U]{\prsc{S_\lambda}{L_\lambda^{\dagger *}(U)}^{\!\wedge 2}}\right)^{\!\wedge m}\right)\hspace{-2mm}}{t_\lambda = 0}\\
&=(\gamma_1)^\frac{r}{2} \lambda^\frac{r(n+2)}{4} \esp{\odet{\tilde{L}_\lambda} \tr\left(\left(R(x) + \frac{\lambda}{2(n+4)}\esp[U]{\prsc{\tilde{S}_\lambda}{\tilde{L}_\lambda^{\dagger *}(U)}^{\!\wedge 2}}\right)^{\!\wedge m}\right)}.
\end{aligned}
\end{multline}

To conclude the proof, we will use the following lemmas.
\begin{lem}
\label{lemma dominant term}
The random vectors $\left(\tilde{L}_\lambda,\tilde{S}_\lambda\right)$ converge in distribution to a Gaussian vector $(L,S)$ as $\lambda \to +\infty$. Let $U \sim \mathcal{N}(0,\Id)$ in $T_xM$ be independent of $\left(\tilde{L}_\lambda,\tilde{S}_\lambda\right)$ and $(L,S)$, then:
\begin{multline*}
\esp{\odet{\tilde{L}_\lambda} \tr\left(\left(R(x) + \frac{\lambda}{2(n+4)}\esp[U]{\prsc{\tilde{S}_\lambda}{\tilde{L}_\lambda^{\dagger *}(U)}^{\wedge 2}}\right)^{\wedge m}\right)}\\
= \left(\frac{\lambda}{n+4}\right)^{m} \frac{m!}{(2m)!} \esp{\odet{L} \tr\left(\prsc{S}{L^{\dagger *}(U)}^{\wedge 2m}\right)}\left(1 + O\!\left(\lambda^{-\frac{1}{2}}\right)\right),
\end{multline*}
where the error term is uniform in $x \in M$.
\end{lem}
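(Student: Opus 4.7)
First I would establish that $(\tilde{L}_\lambda,\tilde{S}_\lambda)$ converges in distribution to a centered Gaussian $(L,S)$. By Lemmas~\ref{lemma variance 2-jet} and~\ref{lemma E in terms of e} the variance matrix of the scaled vector $(t_\lambda,L_\lambda,S_\lambda)$ splits into blocks proportional to the scaled derivatives of $e_\lambda$ at $(x,x)$. Theorem~\ref{theorem estimates bin} then shows that this matrix equals an explicit limit $\Lambda_\infty$ up to $O(\lambda^{-1/2})$, uniformly in $x\in M$. Since $\Lambda_{00}(\lambda)=I_r+O(\lambda^{-1/2})$ is invertible for $\lambda$ large, Corollary~\ref{corollary conditional expectation} yields an explicit limiting Gaussian $(L,S)$ for the conditional distribution, with covariance independent of $x$ and converging at rate $O(\lambda^{-1/2})$.

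Second, writing~\eqref{equation curvature of Zf} in the scaled variables and using $\gamma_1/\gamma_2=n+4$ from~\eqref{equation ratio gamma} gives, conditionally on $t_\lambda=0$,
\begin{equation*}
R_f(x) = R(x) + \frac{\lambda}{2(n+4)}\, Y_\lambda, \qquad Y_\lambda := \esp[U]{\prsc{\tilde{S}_\lambda}{\tilde{L}_\lambda^{\dagger *}(U)}^{\wedge 2}}.
\end{equation*}
Expanding $R_f(x)^{\wedge m}$ binomially, which is legal since the subalgebra $\bigwedge^{\bullet,\bullet}(T_x^*M)$ is commutative, the term indexed by $k\in\{0,\dots,m\}$ carries a factor $\lambda^{m-k}$; only $k=0$ contributes at leading order, and the remaining terms produce a multiplicative $O(\lambda^{-1})$ error. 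For the leading term, for fixed $(\tilde{L}_\lambda,\tilde{S}_\lambda)$ the $(1,1)$-double form $\alpha_U:=\prsc{\tilde{S}_\lambda}{\tilde{L}_\lambda^{\dagger *}(U)}$ is a centered Gaussian in $U\sim\mathcal{N}(0,\Id)$, so Lemma~\ref{lemma exchanging expectation and wedge product} yields $Y_\lambda^{\wedge m}=\frac{2^m m!}{(2m)!}\esp[U]{\alpha_U^{\wedge 2m}}$. Combining factors, the leading contribution reads
\begin{equation*}
\left(\frac{\lambda}{n+4}\right)^{\!m}\frac{m!}{(2m)!}\,\esp{\odet{\tilde{L}_\lambda}\tr\!\left(\prsc{\tilde{S}_\lambda}{\tilde{L}_\lambda^{\dagger *}(U)}^{\wedge 2m}\right)}.
\end{equation*}

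Finally I would replace $(\tilde{L}_\lambda,\tilde{S}_\lambda)$ by $(L,S)$ in the last expectation using the weak convergence of step one, with the $O(\lambda^{-1/2})$ rate inherited from the convergence of the covariance matrices. The main obstacle is uniform integrability: since $L^{\dagger *}=(LL^*)^{-1}L$ has entries of order $\odet{L}^{-2}$, the integrand $\odet{L}\cdot\tr(\prsc{S}{L^{\dagger *}(U)}^{\wedge 2m})$ is of order $\odet{L}^{-(4m-1)}\Norm{S}^{2m}\Norm{U}^{2m}$ near the singular locus $\{\odet{L}=0\}$. One has to write $L^\dagger$ by the cofactor formula, bound the integrand by a polynomial in $(L,S,U)$ divided by a controlled power of $\odet{L}$, and exploit the asymptotic non-degeneracy of the Gaussian density of $(\tilde{L}_\lambda,\tilde{S}_\lambda)$ to dominate it by a function integrable uniformly in $\lambda$. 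This uniform-integrability and quantitative-convergence step, treated in the technical appendix announced in the introduction, is the main difficulty of the proof.
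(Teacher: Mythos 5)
The overall structure you propose — binomial expansion of $R_f(x)^{\wedge m}$, isolation of the top-degree-in-$\lambda$ term, application of Lemma~\ref{lemma exchanging expectation and wedge product}, and passage to the limiting Gaussian with a quantitative $O(\lambda^{-1/2})$ error — matches the paper's strategy, and your bookkeeping of the constants (the factor $2^m$ cancelling against the $2$ in $\frac{\lambda}{2(n+4)}$ via $(\esp[U]{\alpha^{\wedge 2}})^{\wedge m}=\frac{2^m m!}{(2m)!}\esp[U]{\alpha^{\wedge 2m}}$) is correct. The paper carries the estimate term by term over all $q$ in the binomial sum rather than isolating $q=m$ from the outset, but the two variants are substantively the same; you should note, however, that discarding the $q<m$ terms as $O(\lambda^{-1})$ requires their expectations to be finite and $\lambda$-uniformly bounded, a point that also needs justification.

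The genuine gap is in the integrability step. You write $L^{\dagger*}=(LL^*)^{-1}L$, invoke the cofactor formula to bound its entries by a polynomial divided by $\det(LL^*)=\odet{L}^2$, and conclude that the integrand is of order $\odet{L}^{-(4m-1)}\Norm{S}^{2m}\Norm{U}^{2m}$. This cofactor bound is an upper bound but is far from sharp, and taken at face value it would suggest the expectation diverges: since $\odet{L}$ has a density vanishing like $\rho^{n-r}$ near $0$, the quantity $\esp{\odet{L}^{-(4m-1)}}$ is finite only when $4m-1<n-r+1=2m+1$, i.e.\ $m\leq 1$, which fails in general. The correct, much milder singularity is $\Norm{X_{n-r+1}}^{-(2m-1)}$: by Proposition~\ref{proposition same distribution} the pair $\bigl(\odet{L},(L^\dagger)^*U\bigr)$ has the same law as $\bigl(\Norm{X_n}\cdots\Norm{X_{n-r+1}},\,U'/\Norm{X_{n-r+1}}\bigr)$ with $U',X_{n-r+1},\dots,X_n$ independent standard Gaussians, so the singular factor of $\odet{L}$ cancels all but one power of $\Norm{X_{n-r+1}}$ appearing in the denominator, and the remaining expectation factors into integrable pieces since $2m-1=n-r-1<n-r+1$ (Lemma~\ref{lemma expectation norm Gaussian}). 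That distributional identity, proved in Appendix~\ref{section a useful result}, is the key to finiteness; your description of the technical appendix as a brute-force cofactor-and-domination argument misreads it. As stated, your plan would either fail to prove integrability or require a much sharper pointwise analysis of how the polynomial numerator in the cofactor expansion vanishes along the singular locus — which is exactly what Proposition~\ref{proposition same distribution} packages cleanly.
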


\begin{lem}
\label{lemma computing expectation}
Let $(L,S)$ be distributed as the limit of $\left(\tilde{L}_\lambda,\tilde{S}_\lambda\right)$ and $U \sim \mathcal{N}(0,\Id)$ in $T_xM$ be independent of $(L,S)$. We have:
\begin{equation*}
\esp{\odet{L} \tr\left(\prsc{S}{L^{\dagger *}(U)}^{\wedge 2m}\right)} = \left(-\frac{n+4}{n+2}\right)^m (2\pi)^\frac{n}{2} (2m)! \frac{\vol{\S^{n-r+1}}\vol{\S^{r-1}}}{\pi \vol{\S^n}\vol{\S^{n-1}}}.
\end{equation*}
\end{lem}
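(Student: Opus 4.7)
First I identify the joint limit distribution of $(L,S)$. Combining Lemma~\ref{lemma variance 2-jet}, Lemma~\ref{lemma E in terms of e}, the estimates \eqref{bin x0y0}--\eqref{bin x2y2}, the scalings \eqref{scaled variables 2} and the ratios \eqref{equation ratio gamma}, the covariance of $(t_\lambda,L_\lambda,S_\lambda)$ converges as $\lambda\to+\infty$ to that of a Gaussian whose components labelled by $q\in\{1,\dots,r\}$ are independent, with $L$ a standard Gaussian in $\R^r\otimes T_x^*M$ independent of $(t,S)$, and whose only nonvanishing intra-$q$ cross-covariance between $t^{(q)}$ and $S^{(q)}_{ij}$ is $-\sqrt{(n+4)/(n+2)}\,\delta_{ij}$. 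Conditioning on $t=0$ via the standard Gaussian formula preserves the law of $L$ and yields
\[
\cov\bigl(S^{(q)}_{ij},S^{(q)}_{kl}\bigr)=\delta_{ik}\delta_{jl}+\delta_{il}\delta_{jk}-\tfrac{2}{n+2}\delta_{ij}\delta_{kl},
\]
with $L$ and $S$ independent in the limit.

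Next I collapse the trace to a determinant and isolate a random-matrix factor. The identity $\tr(B^{\wedge 2m})=(2m)!\det B$, valid for any symmetric $(1,1)$-double form on a $2m$-dimensional Euclidean space (verified in an eigenbasis from the diagonal expansion of $B^{\wedge 2m}$), applied to $B:=\prsc{S}{L^{\dagger*}(U)}|_{\ker L}$ reduces the target to $(2m)!\,\esp{\odet{L}\det B}$. Setting $Y:=L^{\dagger*}(U)\in\R^r$ and using the $O(T_xM)$-invariance of the covariance of $S$, the conditional law of $B$ given $(L,U)$ is that of $\Norm{Y}M$, for a $2m\times 2m$ symmetric centered Gaussian matrix $M$ with $\cov(M_{ij},M_{kl})=\delta_{ik}\delta_{jl}+\delta_{il}\delta_{jk}-\tfrac{2}{n+2}\delta_{ij}\delta_{kl}$ (a law not depending on $(L,U)$), so the expectation factorises as
\[
\esp{\odet{L}\tr\bigl(\prsc{S}{L^{\dagger*}(U)}^{\wedge 2m}\bigr)}=(2m)!\,\esp{\det M}\cdot \esp{\odet{L}\Norm{L^{\dagger*}(U)}^{2m}}.
\]
I evaluate $\esp{\det M}$ by Wick's formula. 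Expanding $\det M=\sum_\sigma\mathrm{sgn}(\sigma)\prod_i M_{i,\sigma(i)}$ and pairing, each pair $\{a,b\}$ either forces $\sigma$ to swap $a$ and $b$ (covariance contribution $+1$) or to fix both (contribution $-2/(n+2)$), while the $\delta_{ab}$ covariance term vanishes on pairs. For a pairing $P$ and a subset $T\subseteq P$ of fix-pairs, the induced $\sigma$ has sign $(-1)^{m-|T|}$; combining with the product of covariances and summing over $T$ via the binomial theorem contributes $(-1)^m((n+4)/(n+2))^m$ per pairing, and multiplying by the number $(2m)!/(2^mm!)$ of pairings yields $\esp{\det M}=(-1)^m\frac{(2m)!}{2^mm!}\bigl(\frac{n+4}{n+2}\bigr)^m$.

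For the geometric factor I change variables via $U=L^*Y+U_\parallel$ with $(Y,U_\parallel)\in\R^r\times\ker L$, whose Jacobian is $\odet{L}$ and under which $\Norm{L^{\dagger*}(U)}=\Norm{Y}$. After integrating out the Gaussian $U_\parallel$ and reducing to $Y=\rho e_1$ by the joint $O(r)$-invariance $(L,Y)\mapsto(Q^TL,QY)$, the remaining Gaussian integral over $L$ involves $\odet{L}^2\exp(-\rho^2(LL^*)_{11}/2)$. Rescaling the first row of $L$ by $\sqrt{1+\rho^2}$ absorbs the extra exponential weight into the standard Gaussian density, producing a factor $(1+\rho^2)^{-1-n/2}$ together with the Wishart moment $\esp{\det(LL^*)}=n!/(n-r)!$. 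The remaining radial integral $\int_0^\infty \rho^{2m+r-1}(1+\rho^2)^{-1-n/2}\dx\rho$ is a Beta integral whose second parameter equals $1$ thanks to $n=2m+r$, hence equals $1/n$, giving $\esp{\odet{L}\Norm{L^{\dagger*}(U)}^{2m}}=(2\pi)^{-r/2}\vol{\S^{r-1}}(n-1)!/(n-r)!$. Multiplying by $(2m)!\,\esp{\det M}$ and rewriting $(n-1)!$ via the Legendre duplication formula $\Gamma(n/2)\Gamma((n+1)/2)=2^{1-n}\sqrt{\pi}(n-1)!$ together with the standard identities $\vol{\S^k}=2\pi^{(k+1)/2}/\Gamma((k+1)/2)$ recovers the spherical form stated in the lemma. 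The main obstacle is the Wick bookkeeping for $\esp{\det M}$: correctly tracking which permutations contribute and checking that the signs from $\mathrm{sgn}(\sigma)$ combine with those from the negative $\delta_{ij}\delta_{kl}$ covariance term so that the $2^{|T|}$ factor recombines into the clean ratio $(n+4)/(n+2)$.
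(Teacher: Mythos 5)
Your proof is correct and reaches the same answer, but it takes a genuinely different route from the paper's in two places, both worth noting.

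The paper keeps everything inside the algebra of double forms: it factors $\esp{\odet{L}\tr\left(\prsc{S}{L^{\dagger*}(U)}^{\wedge 2m}\right)}$ via Proposition~\ref{proposition same distribution} into a product of $\esp{\tr(\prsc{S}{U'}^{\wedge 2m})}$ with Gaussian norm moments, then evaluates the first factor with the double-form Wick identity (Lemma~\ref{lemma exchanging expectation and wedge product}) followed by an explicit permutation-sum reduction. You instead collapse the double-form trace at the outset with the identity $\tr(B^{\wedge 2m})=(2m)!\det B$ for a symmetric $(1,1)$-form on a $2m$-dimensional space --- a clean reduction that deserves a one-line derivation (diagonalize $B$ orthogonally, expand, and use $\tr(\omega\otimes\omega)=1$) --- which sends the whole problem to the classical random-matrix expectation $\esp{\det M}$ for a GOE-like matrix with a $-\frac{2}{n+2}\delta_{ij}\delta_{kl}$ correction; your Wick bookkeeping for $\esp{\det M}$ is a repackaging of the paper's permutation sum, but it is correct. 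For the geometric factor $\esp{\odet{L}\Norm{L^{\dagger*}(U)}^{2m}}$ you avoid Proposition~\ref{proposition same distribution} altogether and compute directly by the change of variables $U=L^*Y+U_\parallel$ with Jacobian $\odet{L}$, the one-row rescaling giving $(1+\rho^2)^{-1-n/2}$, the Wishart moment $\esp{\det(LL^*)}=n!/(n-r)!$ (which the paper would derive from Corollary~\ref{corollary distribution orthogonal det}), and a Beta integral that conveniently reduces to $1/n$ because $n=2m+r$. I checked each intermediate: the limiting conditional covariance $\delta_{ik}\delta_{jl}+\delta_{il}\delta_{jk}-\tfrac{2}{n+2}\delta_{ij}\delta_{kl}$, the factorisation via conditioning on $(L,U)$ and the rotational invariance of $S$ (justified since $L$ and $S$ are independent in the limit, equation~\eqref{equation big Lambda tilde}), the Wick count giving $(-1)^m\frac{(2m)!}{2^mm!}\left(\frac{n+4}{n+2}\right)^m$, the value $(2\pi)^{-r/2}\vol{\S^{r-1}}\,(n-1)!/(n-r)!$, and the final Legendre-duplication reconciliation with the stated spherical form; all are right. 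The paper's route has the advantage of reusing Proposition~\ref{proposition same distribution} and Lemma~\ref{lemma exchanging expectation and wedge product}, which it needs elsewhere; yours is more self-contained and more elementary, and the reduction to a determinant of a random matrix is a nice conceptual clarification of what the double-form trace is actually computing.
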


Assuming these lemmas and recalling~\eqref{eq estimate det variance} and~\eqref{equation cond exp}, the integrand in the last term of~\eqref{equation expectation Euler 1} equals:
\begin{equation*}
(-1)^m m! (2\pi)^\frac{n}{2} \left(\frac{\gamma_1}{\gamma_0}\lambda\right)^\frac{r}{2} \left(\frac{\lambda}{n+2}\right)^m \frac{\vol{\S^{n-r+1}}\vol{\S^{r-1}}}{\pi \vol{\S^n}\vol{\S^{n-1}}} \left(1 + O\!\left(\lambda^{-\frac{1}{2}}\right)\right),
\end{equation*}
where the error term is uniform in $x \in M$. Since $r+2m = n$ and $\frac{\gamma_0}{\gamma_1}=n+2$, we finally get:
\begin{equation*}
\esp{\chi(Z_f)} = (-1)^m \left(\frac{\lambda}{n+2}\right)^\frac{n}{2} \frac{\vol{\S^{n-r+1}}\vol{\S^{r-1}}}{\pi \vol{\S^n}\vol{\S^{n-1}}} \int_{x \in M}\left(1 + O\!\left(\lambda^{-\frac{1}{2}}\right)\right) \rmes{M},
\end{equation*}
and this is Theorem~\ref{theorem expected Euler characteristic harmonic case}.

We now have to prove Lemmas~\ref{lemma dominant term} and~\ref{lemma computing expectation}. For this we will need the following technical result which is a reformulation of \cite[prop.~3.12]{Buer2006}. The proof of Proposition~\ref{proposition same distribution} is mostly tedious computations and we postpone it until Appendix~\ref{section a useful result}.
\begin{prop}
\label{proposition same distribution}
Let $V$ and $V'$ be two Euclidean spaces of dimension $n$ and $r$ respectively, with $1 \leq r \leq n$. Let $L \in V' \otimes V^*$ and $U \in V$ be independent standard Gaussian vectors. Then, $L^\dagger$ is well-defined almost surely and $\left(\odet{L},(L^\dagger)^*U\right)$ has the same distribution as
\[\left(\Norm{X_n}\Norm{X_{n-1}}\cdots\Norm{X_{n-r+1}}, \frac{U'}{\Norm{X_{n-r+1}}}\right),\] where $U' \in V'$, $X_p \in \R^p$ for all $p \in \{n-r+1,\dots,n\}$ and $U',X_n,\dots,X_{n-r+1}$ are globally independent standard Gaussian vectors.
\end{prop}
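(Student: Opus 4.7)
The plan is to compute the joint density of $(\odet{L},(L^\dagger)^*U)$ directly via two changes of variables, and then to match it with the asserted joint density by a conditioning argument. Since $\odet{L}=\sqrt{\det(LL^*)}$ and $(L^\dagger)^*U=(LL^*)^{-1}LU$, both quantities depend on $(L,U)$ only through $\Gamma:=LL^*$ and $\Psi:=LU$. The pair $(\Gamma,\Psi)$ has the classical joint law: $\Gamma$ is Wishart with $n$ degrees of freedom in dimension $r$, and $\Psi\mid\Gamma\sim\mathcal{N}(0,\Gamma)$. Up to a positive constant,
\[p(\Gamma,\Psi)\propto(\det\Gamma)^{(n-r-2)/2}\exp\Bigl(-\tfrac12\tr\Gamma-\tfrac12\Psi^*\Gamma^{-1}\Psi\Bigr)\]
on the cone of positive-definite $r\times r$ matrices.

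I would first replace $\Psi$ by $W:=\Gamma^{-1}\Psi=(L^\dagger)^*U$ (Jacobian $\det\Gamma$), then replace $\Gamma$ by $\Gamma':=M^{1/2}\Gamma M^{1/2}$ where $M:=I_r+WW^*$. The second change of variable has Jacobian $\det(M)^{(r+1)/2}$ on symmetric $r\times r$ matrices, and combined with the identities $\tr(\Gamma M)=\tr\Gamma'$, $\det\Gamma=\det\Gamma'/\det M$ and $\det M=1+\Norm{W}^2$, one obtains the factorisation
\[p(\Gamma',W)\propto(\det\Gamma')^{(n-r)/2}e^{-\tr\Gamma'/2}\cdot(1+\Norm{W}^2)^{-(n+1)/2}.\]
Thus $\Gamma'$ and $W$ are independent: $\Gamma'$ is Wishart with $n+1$ degrees of freedom in dimension $r$, while $W$ has density proportional to $(1+\Norm{W}^2)^{-(n+1)/2}$. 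A polar-coordinate integration identifies this density as that of $U'/\Norm{X_{n-r+1}}$ for independent standard Gaussians $U'\in V'$ and $X_{n-r+1}\in\R^{n-r+1}$, and Bartlett's decomposition applied to $\Gamma'$ gives $\sqrt{\det\Gamma'}\stackrel{d}{=}\Norm{X_{n+1}}\Norm{X_{n}}\cdots\Norm{X_{n-r+2}}$ for independent standard Gaussians $X_p\in\R^p$, independent of $W$.

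Since $\odet{L}=\sqrt{\det\Gamma}=\sqrt{\det\Gamma'}/\sqrt{1+\Norm{W}^2}$, the pair $(\odet{L},(L^\dagger)^*U)$ is now realised on the left-hand side. To match with the right-hand side, I would condition on $W=w$. A short Bayes computation using $V:=U'/\Norm{X_{n-r+1}}$ shows that given $V=w$ the conditional density of $\Norm{X_{n-r+1}}$ is proportional to $z^{n}e^{-z^{2}(1+\Norm{w}^2)/2}$, which coincides with the law of $\Norm{X_{n+1}}/\sqrt{1+\Norm{w}^2}$ for an independent $X_{n+1}\in\R^{n+1}$. Hence given $V=w$, the scalar $\Norm{X_n}\cdots\Norm{X_{n-r+1}}$ is distributed as $\Norm{X_n}\cdots\Norm{X_{n-r+2}}\cdot\Norm{X_{n+1}}/\sqrt{1+\Norm{w}^2}$ with mutually independent norms. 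This is exactly the conditional distribution of $\sqrt{\det\Gamma'}/\sqrt{1+\Norm{W}^2}$ given $W=w$ furnished by Bartlett's decomposition, and since the marginal laws of $W$ also agree, the two joint distributions coincide.

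The principal technical point is the Jacobian of the conjugation $\Gamma\mapsto M^{1/2}\Gamma M^{1/2}$ on symmetric $r\times r$ matrices: the factor $\det(M)^{(r+1)/2}$ is precisely what simultaneously shifts the Wishart degree from $n$ to $n+1$ and produces the density $(1+\Norm{W}^2)^{-(n+1)/2}$ for $W$. The final matching of conditional laws is then a routine computation that hinges on the chi-square identity $\chi_{n-r+1}^2+\chi_r^2\stackrel{d}{=}\chi_{n+1}^2$, which is what reintroduces the missing index $n-r+1$ on the right-hand side from the index $n+1$ appearing in Bartlett's decomposition of $\Gamma'$.
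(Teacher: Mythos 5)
Your proof is correct, and it takes a genuinely different route from the paper's. The paper works directly with $(L,U)$ in $V'\otimes V^*\times V$: it introduces the unit vectors $\theta\in\S(\ker L^\perp)$ and $\theta'\in\S(V')$, factors $\odet{L}=\odet{L'}\Norm{L^*\theta'}$, and performs a chain of explicit polar-coordinate changes of variables (successively on $U$, on $\theta\mapsto\theta'$, and then on the ``rows'' $L_i=L^*e_i'$ via a Gram--Schmidt-type orthogonalization), computing a Jacobian at each stage to peel off one independent chi factor at a time; in effect it rederives Bartlett's decomposition and the sufficient-statistic reduction from scratch in the ambient space. You instead compress $(L,U)$ immediately to the sufficient statistic $(\Gamma,\Psi)=(LL^*,LU)$, invoke the Wishart density, change variables to $W=\Gamma^{-1}\Psi=(L^\dagger)^*U$, and then use the conjugation $\Gamma'=M^{1/2}\Gamma M^{1/2}$ with $M=I_r+WW^*$ (whose Jacobian $(\det M)^{(r+1)/2}$ is the crux) to split the density cleanly into a Wishart $W_r(n+1,I_r)$ factor and a multivariate $t$ factor, after which Bartlett's decomposition and a Bayes/conditioning argument finish the identification. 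Your version is shorter and conceptually crisper, but leans on external machinery (Wishart densities, Jacobian of $X\mapsto AXA^*$ on symmetric matrices, Bartlett's theorem); the paper's version is longer but self-contained and entirely elementary. Both reach the same place, and the chi-square additivity $\chi^2_{n-r+1}+\chi^2_r\stackrel{d}{=}\chi^2_{n+1}$ you use to reconcile the index shift is exactly what the paper's polar change of variables in $U$ accomplishes implicitly when it separates $\rho=\lVert\tilde U\rVert$ (a chi-$r$ variable) from $\rho_r=\Norm{L^*\theta'}$ (a chi-$(n-r+1)$ variable).
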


We start by computing the variance of $(t_\lambda,L_\lambda,S_\lambda)$. As in section~\ref{subsection proof volume harmonic}, we choose normal coordinates $(x_1,\dots,x_n)$ centered at $x$ and denote by $(\zeta_1,\dots,\zeta_r)$ the canonical basis of $\R^r$. For any $i$ and $j$ such that $1\leq i < j \leq n$, we set $dx_{ij} = (dx_i \otimes dx_j + dx_j \otimes dx_i)$ and $dx_{ii} = dx_i \otimes dx_i$. We complete the basis of $\mathcal{J}_x^1(\R^r)$ given in~\eqref{basis volume} into an orthonormal basis of $\mathcal{J}_x^2(\R^r)$ by adding the following elements (in this order) at the end of the list:
\begin{multline}
\label{basis Euler characteristic}
\zeta_1\otimes dx_{11},\dots,\zeta_r\otimes dx_{11},\zeta_1\otimes dx_{22},\dots,\zeta_r\otimes dx_{22},\dots,\zeta_1\otimes dx_{nn},\dots,\zeta_r\otimes dx_{nn},\\
\zeta_1 \otimes dx_{12},\dots,\zeta_r \otimes dx_{12},\zeta_1 \otimes dx_{13},\dots,\zeta_r \otimes dx_{13},\dots,\zeta_1 \otimes dx_{1n},\dots,\zeta_r \otimes dx_{1n},\\
\dots,\zeta_1 \otimes dx_{(n-1)n},\dots,\zeta_r \otimes dx_{(n-1)n}.
\end{multline}

The matrix of $\var(t_\lambda,L_\lambda,S_\lambda)$ with respect to this basis is:
\begin{equation*}
\Lambda(\lambda)=\begin{pmatrix} \Lambda_{00}(\lambda) & \Lambda_{01}(\lambda) & \Lambda_{02}(\lambda) \\ \Lambda_{10}(\lambda) & \Lambda_{11}(\lambda) & \Lambda_{12}(\lambda) \\ \Lambda_{20}(\lambda) & \Lambda_{21}(\lambda) & \Lambda_{22}(\lambda)\end{pmatrix},
\end{equation*}
where $\Lambda_{00}(\lambda)$, $\Lambda_{10}(\lambda)$ and $\Lambda_{11}(\lambda)$ are as in~\eqref{equation def Lambda} and, similarly, $\Lambda_{22}(\lambda)$ is the matrix of $\var(S_\lambda)$, $\Lambda_{02}(\lambda)=\trans\Lambda_{20}(\lambda)$ is the matrix of $\cov(t_\lambda,S_\lambda)$ and $\Lambda_{12}(\lambda)=\trans\Lambda_{21}(\lambda)$ is the matrix of $\cov(L_\lambda,S_\lambda)$. As in~\eqref{equation decomposition Lambda}, we can decompose further each of these matrices in blocks of size $r \times r$. That is, $\Lambda_{10}(\lambda)$ and $\Lambda_{11}(\lambda)$ satisfy~\eqref{equation decomposition Lambda} and,
\begin{equation}
\label{decomposition Lambda}
\begin{aligned}
\Lambda_{20}(\lambda) &= \begin{pmatrix} \Lambda_{20}^{ik}(\lambda) \end{pmatrix}_{1\leq i\leq k \leq n},& \Lambda_{21}(\lambda) &= \begin{pmatrix} \Lambda_{21}^{ik,j}(\lambda)\end{pmatrix}_{\substack{1\leq i\leq k \leq n \\ 1\leq j \leq n}}\\
&\text{and} &  \Lambda_{22}(\lambda) &= \begin{pmatrix} \Lambda_{22}^{ik,jl}(\lambda)\end{pmatrix}_{\substack{1\leq i\leq k \leq n \\ 1\leq j \leq l \leq n}}.
\end{aligned}
\end{equation}

By definition of $(t_\lambda,L_\lambda,S_\lambda)$, these blocks are obtained by scaling the corresponding blocks in the matrix of $\var(j_x^2(f))$. By Lemmas~\ref{lemma variance 2-jet} and~\ref{lemma E in terms of e} the matrices in \eqref{decomposition Lambda} are scalar matrices. Recalling~\eqref{equation ratio gamma}, we set: $\gamma = -\sqrt{\frac{\gamma_1^2}{\gamma_0\gamma_2}} = -\sqrt{\frac{n+4}{n+2}}$. Then, by~\eqref{scaled variables 2} and Theorem~\ref{theorem estimates bin} we have:
\begin{align}
\label{Lambda20}
\Lambda_{20}^{ik}(\lambda) &= \frac{\partial_{x_i,x_k}e_\lambda(x,x)}{\sqrt{\gamma_0\gamma_2}\lambda^\frac{n+2}{2}} I_r = \left\{\begin{aligned} \gamma I_r + &O\!\left(\lambda^{-\frac{1}{2}}\right) & &\text{if } i=k,\\ &O\!\left(\lambda^{-\frac{1}{2}}\right) & &\text{if } i\neq k,\end{aligned}\right.\\
\label{Lambda21}
\Lambda_{21}^{ik,j}(\lambda) &= \frac{\partial_{x_i,x_k}\partial_{y_j}e_\lambda(x,x)}{\sqrt{\gamma_1\gamma_2}\lambda^\frac{n+3}{2}} I_r = O\!\left(\lambda^{-\frac{1}{2}}\right),\\
\label{Lambda22}
\Lambda_{22}^{ik,jl}(\lambda) &= \frac{\partial_{x_i,x_k} \partial_{y_j,y_l} e_\lambda(x,x)}{\gamma_2\lambda^\frac{n+4}{2}} I_r = \left\{\begin{aligned} 3I_r + &O\!\left(\lambda^{-\frac{1}{2}}\right) & &\text{if } i=j=k=l,\\ I_r + &O\!\left(\lambda^{-\frac{1}{2}}\right) & &\text{if } i=j\neq k=l\\ & & &\text{or } i=k \neq j=l,\\ &O\!\left(\lambda^{-\frac{1}{2}}\right) & &\text{otherwise,} \end{aligned}\right.
\end{align}
where $I_r$ denotes the identity matrix of size $r$. Similar estimates for $\Lambda_{00}(\lambda)$, $\Lambda_{10}(\lambda)$ and $\Lambda_{11}(\lambda)$ are given by~\eqref{Lambda00}, \eqref{Lambda10} and~\eqref{Lambda11} respectively. Then $\Lambda(\lambda)$ writes by blocks:
\begin{equation*}
\Lambda(\lambda) = 
\left(\begin{array}{c|c|cccc|c}
I_r &  & \gamma I_r & \gamma I_r & \cdots & \gamma I_r & \\
\hline
 & I_{nr} & & & & & \\
\hline
\gamma I_r & & 3I_r & I_r & \cdots & I_r & \\
\gamma I_r & & I_r & 3I_r & \ddots & \vdots & \\
\vdots & & \vdots & \ddots & \ddots & I_r & \\
\gamma I_r & & I_r & \cdots & I_r & 3I_r & \\
\hline
 & & & & & & I_{\frac{rn(n-1)}{2}} \\
\end{array}\right)
+ O\!\left(\lambda^{-\frac{1}{2}}\right),
\end{equation*}
where the empty blocks are zeros. The distribution of $(\tilde{L}_\lambda,\tilde{S}_\lambda)$,
that is the distribution of $(L_\lambda,S_\lambda)$ given $t_\lambda = 0$, is a centered Gaussian whose variance matrix is:
\begin{equation}
\label{equation big Lambda tilde}
\tilde{\Lambda}(\lambda) =
\left(\begin{array}{c|cccc|c}
I_{nr} & & & & & \\
\hline
& \beta_0 I_r & \beta I_r & \cdots & \beta I_r &\\
& \beta I_r & \beta_0 I_r & \ddots & \vdots &\\
& \vdots & \ddots & \ddots & \beta I_r &\\
& \beta I_r & \cdots & \beta I_r & \beta_0 I_r &\\
\hline
& & & & & I_{\frac{rn(n-1)}{2}}\\
\end{array}\right)
+ O\!\left(\lambda^{-\frac{1}{2}}\right),
\end{equation}
where $\beta = 1 - \gamma^2 = -\dfrac{2}{n+2}$ and $\beta_0 = 3 -\gamma^2 = \dfrac{2n+2}{n+2}$ (see Corollary~\ref{corollary conditional expectation}).

Let $\Lambda$ denote the leading term in~\eqref{equation big Lambda tilde}. Equation~\eqref{equation big Lambda tilde} shows that the random vectors $\left(\tilde{L}_\lambda,\tilde{S}_\lambda\right)$ converge in distribution to a random vector $(L,S)\sim \mathcal{N}(0,\Lambda)$ (see Lemma~\ref{lemma convergence in distribution}). We have:
\begin{equation*}
\det\left(\Lambda\right) = \left(\beta_0 + (n-1)\beta\right)^r \left(\beta_0-\beta\right)^{r(n-1)} = \left(\frac{4(2n+4)^{n-1}}{(n+2)^n}\right)^r,
\end{equation*}
so that $\Lambda$ is non-singular, and $\tilde{\Lambda}(\lambda)$ is non-singular for $\lambda$ large enough.

\begin{proof}[Proof of Lemma~\ref{lemma dominant term}]
We have already seen that $\left(\tilde{L}_\lambda,\tilde{S}_\lambda\right)$ converges in distribution, as $\lambda$ goes to infinity, to $(L,S)\sim \mathcal{N}(0,\Lambda)$. We still have to prove the estimate in the lemma. We have:
\begin{multline*}
\left(R(x) + \frac{\lambda}{2(n+4)}\esp[U]{\prsc{\tilde{S}_\lambda}{\tilde{L}_\lambda^{\dagger *}(U)}^{\wedge 2}}\right)^{\wedge m}\\
= \sum_{q=0}^m \binom{m}{q}\left(\frac{\lambda}{2(n+4)}\right)^q R(x)^{\wedge (m-q)}\wedge \esp[U]{\prsc{\tilde{S}_\lambda}{\tilde{L}_\lambda^{\dagger *}(U)}^{\wedge 2}}^{\wedge q}.
\end{multline*}
We can apply Lemma~\ref{lemma exchanging expectation and wedge product} to each term in this sum. This yields:
\begin{multline}
\label{equation big sum}
\esp{\odet{\tilde{L}_\lambda} \tr\left(\left(R(x) + \frac{\lambda}{2(n+4)}\esp[U]{\prsc{\tilde{S}_\lambda}{\tilde{L}_\lambda^{\dagger *}(U)}^{\wedge 2}}\right)^{\wedge m}\right)}\\
= \sum_{q=0}^m \binom{m}{q}\frac{q !}{(2q)!} \left(\frac{\lambda}{2(n+4)}\right)^q \esp{\odet{\tilde{L}_\lambda}\tr\left(R(x)^{\wedge (m-q)} \wedge \prsc{\tilde{S}_\lambda}{\tilde{L}_\lambda^{\dagger *}(U)}^{\wedge 2q}\right)}.
\end{multline}
Then it is sufficient to show that, for all $q \in \{0,\dots,m\}$,
\begin{multline}
\label{eq inter}
\esp{\odet{\tilde{L}_\lambda}\tr\left(R(x)^{\wedge (m-q)} \wedge \prsc{\tilde{S}_\lambda}{\tilde{L}_\lambda^{\dagger *}(U)}^{\wedge 2q}\right)}\\
= \esp{\odet{L}\tr\left(R(x)^{\wedge (m-q)} \wedge \prsc{S}{L^{\dagger *}(U)}^{\wedge 2q}\right)} + O\!\left(\lambda^{-\frac{1}{2}}\right),
\end{multline}
and that these terms are finite. Then~\eqref{equation big sum} and~\eqref{eq inter} yield the estimate in the lemma.

Let $q\in \{0,\dots,m\}$, then we first show that the principal part on the right-hand side of \eqref{eq inter} is finite. Let $\zeta\in \R^r$, then
\begin{equation}
\label{expectation S}
\esp[S]{\tr \left( R(x)^{\wedge (m-q)} \wedge \prsc{S}{\zeta}^{\wedge 2q} \right)}
\end{equation}
is finite since it is the expectation of some polynomial in the coefficients of $S$. Thus, \eqref{expectation S} only depends on $\zeta$, and it is an homogeneous polynomial in $\zeta$ of degree $2q$.

We assumed $U$ to be independent of $(L,S)$, and the expression~\eqref{equation big Lambda tilde} of $\Lambda$ shows that $L$ and $S$ are independent. Let $U' \sim \mathcal{N}(0,\Id)$ in $\R^r$ and $X_{n-r+1},\dots,X_n$ be standard Gaussian vectors, with $X_p \in \R^p$, such that $U',S,X_{n-r+1},\dots,X_n$ are globally independent. Applying Proposition~\ref{proposition same distribution}, we have:
\begin{multline}
\label{equation integrability}
\esp{\odet{L}\tr\left(R(x)^{\wedge (m-q)} \wedge \prsc{S}{L^{\dagger *}(U)}^{\wedge 2q}\right)}\\
\begin{aligned}
&= \esp{\frac{\Norm{X_n}\cdots\Norm{X_{n-r+2}}}{\Norm{X_{n-r+1}}^{2q-1}}\tr\left(R(x)^{\wedge (m-q)} \wedge \prsc{S}{U'}^{\wedge 2q}\right)}\\
&= \esp{\frac{1}{\Norm{X_{n-r+1}}^{2q-1}}} \esp{\tr\left(R(x)^{\wedge (m-q)} \wedge \prsc{S}{U'}^{\wedge 2q}\right)} \prod_{p=n-r+2}^n\esp{\Norm{X_p}}.
\end{aligned}
\end{multline}
Since $2q-1\leq 2m-1 \leq n-r-1$ and $X_{n-r+1}$ is a standard Gaussian in $\R^{n-r+1}$, $\esp{\frac{1}{\Norm{X_{n-r+1}}^{2q-1}}} < +\infty$ (see Lemma~\ref{lemma expectation norm Gaussian}). The other factors on the right-hand side of~\eqref{equation integrability} are expectations of polynomials is some standard Gaussian variables, so they are finite.

Then, $\tilde{\Lambda}(\lambda)=\Lambda + O\!\left(\lambda^{-\frac{1}{2}}\right)$, and the same kind of computations as in the proof of Lemma~\ref{lemma conditional expectation volume} gives~\eqref{eq inter}, which concludes the proof of Lemma~\ref{lemma dominant term}. Note that, since $M$ is compact, $R(x)$ is bounded, independently of $x$. We need this fact to ensure that the error term in~\eqref{eq inter} is independent of $x$.
\end{proof}

\begin{proof}[Proof of Lemma~\ref{lemma computing expectation}]
Let $(L,S) \sim \mathcal{N}(0,\Lambda)$ in $\R^r \otimes (T_x^*M \oplus \Sym(T_x^*M))$ and $U \sim \mathcal{N}(0,\Id)$ in $T_xM$ be independent of $(L,S)$. By~\eqref{equation integrability}:
\begin{equation}
\label{eq inter computation}
\esp{\odet{L}\tr\left(\prsc{S}{L^{\dagger *}(U)}^{\wedge 2m}\right)} = \esp{\frac{\Norm{X_{n-r+2}}\dots \Norm{X_n}}{\Norm{X_{n-r+1}}^{2m-1}}} \esp{\tr\left(\prsc{S}{U'}^{\wedge 2m}\right)},
\end{equation}
where $U'\sim \mathcal{N}(0,\Id)$ in $\R^r$, $X_p \sim \mathcal{N}(0,\Id)$ in $\R^p$ for all $p$, and $U',S,X_{n-r+1},\dots,X_n$ are globally independent.

Recall that we are only interested in the restriction of $S$ to $\ker(L)\subset T_xM$. But $L$ and $S$ are independent, as one can see on the expression of $\Lambda$ \eqref{equation big Lambda tilde}, and the distribution of $S$ is invariant under orthogonal transformations of $T_xM$. Thus, we can consider $S$ restricted to any $2m$-dimensional subspace of $T_xM$ in our computations. For simplicity, we restrict $S$ to $V$, the span of $\left(\deron{\phantom{f}}{x_1},\dots,\deron{\phantom{f}}{x_{2m}}\right)$.

We now compute the term $\displaystyle\esp{\tr\left(\prsc{S_{/V}}{U'}^{\wedge 2m}\right)}$. By Lemma~\ref{lemma exchanging expectation and wedge product},
\begin{equation*}
\esp[S]{\prsc{S_{/V}}{U'}^{\wedge 2m}} = \frac{(2m)!}{2^m m!} \esp[S]{\prsc{S_{/V}}{U'}^{\wedge 2}}^{\wedge m}.
\end{equation*}
Assuming that $S_{/V}= \displaystyle\sum_{1\leq i, k \leq 2m} S_{ik} dx_i \otimes dx_k$, with $S_{ik} \in \R^r$, we have:
\begin{equation*}
\prsc{S_{/V}}{U'} = \sum_{1\leq i, k \leq 2m} \prsc{S_{ik}}{U'} dx_i \otimes dx_k.
\end{equation*}
By Lemma~\ref{lemma random scalar product},
\begin{align*}
\esp[S]{\prsc{S_{/V}}{U'}^{\wedge 2}} &= \sum_{1\leq i, j, k, l \leq 2m} \esp[S]{\prsc{S_{ik}}{U'}\prsc{S_{jl}}{U'}} (dx_i \wedge dx_j) \otimes (dx_k \wedge dx_l)\\
&= \sum_{1\leq i, j, k, l \leq 2m} \prsc{U'}{\left( \Lambda^{ik,jl}\right)U'} (dx_i \wedge dx_j) \otimes (dx_k \wedge dx_l),
\end{align*}
where we denoted by $\Lambda^{ik,jl}$ the covariance operator of $S_{ik}$ and $S_{jl}$. Then,
\begin{multline*}
\esp[S]{\prsc{S}{U'}^{\wedge 2m}} = \frac{(2m)!}{2^m m!} \sum_{\substack{1\leq i_1,\dots, i_m\leq 2m \\ 1\leq j_1,\dots, j_m\leq 2m \\ 1\leq k_1,\dots, k_m\leq 2m \\ 1\leq l_1,\dots, l_m\leq 2m}} \left( \prod_{p=1}^m \prsc{U'}{\left(\Lambda^{i_pk_p,j_pl_p}\right)U'} \right)\times \\
\begin{aligned}
(dx_{i_1} \wedge  dx_{j_1} \wedge \dots \wedge dx_{i_m} \wedge dx_{j_m}) \otimes (dx_{k_1} \wedge dx_{l_1} \wedge \dots \wedge dx_{k_m} \wedge dx_{l_m})&\\
= \frac{(2m)!}{2^m m!} \sum_{\sigma, \sigma' \in \mathfrak{S}_{2m}}  \epsilon(\sigma)\epsilon(\sigma')\prod_{p=1}^m \prsc{U'}{\left(\Lambda^{\sigma(2p-1)\sigma'(2p-1),\sigma(2p)\sigma'(2p)}\right)U'}(dx \otimes dx)&,
\end{aligned}
\end{multline*}
where $\mathfrak{S}_{2m}$ is the set of permutations of $\{1,\dots,2m\}$, $\epsilon : \mathfrak{S}_{2m}\to \{-1,1\}$ denotes the signature morphism and $dx = dx_1 \wedge \dots \wedge dx_{2m}$. We get the last line by setting $\sigma(2p-1)=i_p$, $\sigma(2p)=j_p$, $\sigma'(2p-1)=k_p$ and $\sigma'(2p)=l_p$ and reordering the wedge products.

Since our local coordinates are such that $\left(\deron{\phantom{f}}{x_1},\dots,\deron{\phantom{f}}{x_{2m}}\right)$ is orthonormal at $x$, we have $\tr(dx \otimes dx) = \Norm{dx}^2=1$. Thus,
\begin{equation}
\label{equation trace of expectation}
\tr\left(\!\esp[S]{\prsc{S}{U'}^{\wedge 2m}}\right) =  \frac{(2m)!}{2^m m!}\! \sum_{\sigma, \sigma' \in \mathfrak{S}_{2m}} \epsilon(\sigma\sigma')\prod_{p=1}^m \prsc{U'}{\left(\Lambda^{\sigma(2p-1)\sigma'(2p-1),\sigma(2p)\sigma'(2p)}\right)U'}.
\end{equation}

By equation~\eqref{equation big Lambda tilde}, for any $\sigma$, $\sigma' \in \mathfrak{S}_{2m}$ and for any $p \in \{1,\dots,m\}$,
\begin{equation*}
\Lambda^{\sigma(2p-1)\sigma'(2p-1),\sigma(2p)\sigma'(2p)} = K(p,\sigma,\sigma') \Id,
\end{equation*}
where
\begin{equation}
\label{definition K}
K(p,\sigma,\sigma') = \left\{\begin{aligned} &\beta & &\text{if } \sigma(2p-1) = \sigma'(2p-1) \text{ and } \sigma(2p) = \sigma'(2p),\\
&1 & &\text{if } \sigma(2p-1) = \sigma'(2p) \text{ and } \sigma(2p) = \sigma'(2p-1),\\
&0 & &\text{otherwise.} \end{aligned} \right.
\end{equation}
Note that $K(p,\sigma,\sigma') = K(p,\id,\sigma^{-1}\circ \sigma')$, where $\id$ stands for the identity permutation. Then, setting $\tau = \sigma^{-1} \circ \sigma'$,
\begin{multline}
\label{sum on sigma}
\sum_{\sigma, \sigma' \in \mathfrak{S}_{2m}} \epsilon(\sigma\sigma') \prod_{p=1}^m \prsc{U'}{\left(\tilde{\Lambda}^{\sigma(2p-1)\sigma'(2p-1),\sigma(2p)\sigma'(2p)}\right)U'}\\
\begin{aligned}
&= \sum_{\sigma, \sigma' \in \mathfrak{S}_{2m}} \epsilon(\sigma\sigma') \Norm{U'}^{2m}\prod_{p=1}^m K(p,\sigma,\sigma')\\
&= (2m)!\Norm{U'}^{2m} \sum_{\tau \in \mathfrak{S}_{2m}} \epsilon(\tau) \prod_{p=1}^m K(p,\id,\tau).
\end{aligned}
\end{multline}
From the Definition~\eqref{definition K} of $K(p,\id,\tau)$, we get that $\prod_{p=1}^m K(p,\id,\tau) \neq 0$ if and only if~$\tau$ is a product of transpositions of the type $((2p-1) \ (2p))$. Now, if $I \subset \{1,\dots,m\}$ and $\tau = \prod_{p \in I} ((2p-1) \ (2p))$, we have:
\begin{align*}
\prod_{p=1}^m K(p,\id,\tau) &= \beta^{m-\norm{I}} & &\text{and} & \epsilon(\tau) &= (-1)^{\norm{I}},
\end{align*}
where $\norm{I}$ stands for the cardinal of $I$. Thus,
\begin{equation}
\label{equation sum on tau}
\begin{aligned}
\sum_{\tau \in \mathfrak{S}_{2m}} \epsilon(\tau) \prod_{p=1}^m K(p,\id,\tau) &= \sum_{I \subset \{1,\dots,m\}} (-1)^{\norm{I}} \beta^{m-\norm{I}} = \sum_{p=1}^m \binom{m}{p} (-1)^p \beta^{m-p}\\
&= (\beta -1)^m = (-1)^m \left(\frac{n+4}{n+2}\right)^m.
\end{aligned}
\end{equation}
Finally, by equations~\eqref{equation trace of expectation}, \eqref{sum on sigma} and~\eqref{equation sum on tau},
\begin{equation}
\label{eq bilan}
\esp{\tr\left(\prsc{S}{U'}^{\wedge 2m}\right)} = (-1)^m \left(\frac{n+4}{n+2}\right)^m \frac{((2m)!)^2}{2^m m!} \esp{\Norm{U'}^{2m}}.
\end{equation}
Then, by~\eqref{eq inter computation}, \eqref{eq bilan} and Lemma~\ref{lemma expectation norm Gaussian},
\begin{multline*}
\esp{\odet{L}\tr\left(\prsc{S}{L^{\dagger *}(U)}^{\wedge 2m}\right)}\\
=(-1)^m \left(\frac{n+4}{n+2}\right)^m \frac{((2m)!)^2}{2^m m!} (2\pi)^\frac{r}{2} \frac{\vol{\S^{n-r}}}{\vol{\S^1}}\frac{\vol{\S^{n-r+1}}}{\vol{\S^{n}}}\frac{\vol{\S^{r-1}}}{\vol{\S^{n-1}}}.
\end{multline*}
We conclude the proof of Lemma~\ref{lemma computing expectation} by computing:
\begin{equation*}
(2\pi)^\frac{r}{2} \frac{(2m)!}{2^m m!}\frac{\vol{\S^{n-r}}}{2} = (2\pi)^\frac{r}{2} \frac{2^m \Gamma\left(m+\frac{1}{2}\right)}{\sqrt{\pi}} \frac{\pi^{m+\frac{1}{2}}}{\Gamma\left(m+\frac{1}{2}\right)} = (2\pi)^\frac{n}{2}. \qedhere
\end{equation*}
\end{proof}

\subsection{Proof of Theorem \ref{theorem expected Euler characteristic algebraic case}}
\label{subsection proof Euler algebraic}

Let us now adapt the proof of the previous section to the case of real algebraic submanifolds. Once again, we need only consider the case where $r<n$ and $n-r$ is even. The framework is the same as in sections~\ref{subsection real algebraic setting} and~\ref{subsection proof volume algebraic}.

We already know that $\R H^0(\mathcal{X},\mathcal{E}\otimes \mathcal{L}^d)$ is $0$-ample by Corollary~\ref{cor 0 amplitude}. We also have the estimate \eqref{eq alg3} for $\det(E_d)$ along the diagonal, where $E_d$ is the Bergman kernel of $\mathcal{E}\otimes \mathcal{L}^d$. As in the Riemannian case, we use~\eqref{CGB alg} and Kac--Rice formula:
\begin{equation}
\label{equation expectation Euler 1 alg}
\esp{\chi(Z_s)} = \frac{1}{m! (2\pi)^\frac{n}{2}} \int_{x \in \R \mathcal{X}} \frac{ \espcond{\odet{\nabla^d_xs}\tr\left(R_s(x)^{\wedge m}\right)}{s(x)=0}}{\sqrt{\det(E_d(x,x))}}\rmes{\R \mathcal{X}},
\end{equation}
where $\nabla^d$ is any real connection on $(\mathcal{E}\otimes \mathcal{L}^d)$ and $R_s$ denotes the Riemann tensor of $Z_s$.

We need to compute the conditional expectation in~\eqref{equation expectation Euler 1 alg} for some fixed $x \in \R \mathcal{X}$. Since it does not depend on our choice of connection, we will use one that is adapted to $x$ as in sections~\ref{subsection Bergman kernel} and~\ref{subsection proof volume algebraic}. Let $x \in \R \mathcal{X}$, then we consider the scaled variables:
\begin{equation}
\label{scaled alg}
(t_d,L_d,S_d) = \left(\sqrt{\frac{\pi^n}{d^n}} s(x), \sqrt{\frac{\pi^n}{d^{n+1}}} \nabla^d_xs,\sqrt{\frac{\pi^n}{d^{n+2}}}\nabla^{2,d}_xs \right),
\end{equation}
and $(\tilde{L}_d,\tilde{S}_d) \in \R(\mathcal{E}\otimes\mathcal{L}^d)_x \otimes (T_x^*\R\mathcal{X} \oplus \Sym(T_x^*\R\mathcal{X}))$ distributed as $(L_d,S_d)$ given $t_d=0$. 

As in section~\ref{subsection proof volume algebraic}, let $(x_1,\dots,x_n)$ be real holomorphic coordinates centered at $x$ and such that $(\deron{\phantom{f}}{x_1},\dots,\deron{\phantom{f}}{x_n})$ is orthonormal at $x$. Let $(\zeta_1^d,\dots,\zeta_r^d)$ be an orthonormal basis of $\R(\mathcal{E}\otimes \mathcal{L}^d)_x$. We get an orthonormal basis of $\mathcal{J}_x^2(\mathcal{E}\otimes \mathcal{L}^d)$ similar to the one defined by~\eqref{basis volume} and~\eqref{basis Euler characteristic}.

We compute the matrix $\Lambda(d)$ of $\var(t_d,L_d,S_d)$ in this basis. For this we use the estimates for the blocks of $\Lambda(d)$ given by Proposition~\ref{proposition estimates Bergman} and \eqref{scaled alg}. Namely, for all $i$, $j$, $k$ and $l \in \{1,\dots,n\}$, with $i\leq k$ and $j\leq l$,
\begin{align}
\label{Lambda20 21 alg}
\Lambda_{20}^{ik}(d) &= O\!\left(d^{-1}\right), \hspace{15mm} \Lambda_{21}^{ik,j}(d) = O\!\left(d^{-1}\right),\\
\label{Lambda22 alg}
\Lambda_{22}^{ik,jl}(d) &= \left\{\begin{aligned} 2I_r + &O\!\left(d^{-1}\right) & &\text{if } i=j=k=l,\\ I_r + &O\!\left(d^{-1}\right) & &\text{if } i=j\neq k=l,\\ &O\!\left(d^{-1}\right) & &\text{otherwise.} \end{aligned}\right.
\end{align}
Recall from section~\ref{subsection proof volume algebraic} that the matrix of $\var(t_d,L_d)$ in this basis is $I_{r(n+1)}+O\!\left(d^{-1}\right)$. By Corollary~\ref{corollary conditional expectation}, the distribution of $(L_d,S_d)$ given $t_d=0$ is then a centered Gaussian whose variance matrix is, by blocks,
\begin{equation}
\label{equation big Lambda tilde alg}
\tilde{\Lambda}(d) =
\left(\begin{array}{c|c|c}
I_{nr} & & \\
\hline
& 2 I_{nr} & \\
\hline
& & I_{\frac{rn(n-1)}{2}}\\
\end{array}\right)
+ O\!\left(d^{-1}\right),
\end{equation}
where the empty blocks are zeros. Let $\Lambda$ denote the leading term in equation~\eqref{equation big Lambda tilde alg} and let $(L,S)\sim \mathcal{N}(0,\Lambda)$ in $\R(\mathcal{E}\otimes\mathcal{L}^d)_x \otimes (T_x^*\R\mathcal{X} \oplus \Sym(T_x^*\R\mathcal{X}))$. By Lemma~\ref{lemma convergence in distribution}, $(\tilde{L}_d,\tilde{S}_d)$ converges in distribution to $(L,S)$.

Let $U \sim \mathcal{N}(0,\Id)$ in $T_x\R \mathcal{X}$ be independent of all the other variables. By \eqref{Riemann alg},
\begin{equation}
\label{equation curvature of Zs}
R_s(x) = R(x)+\frac{1}{2}\esp{\prsc{\nabla^{2,d}_xs}{(\nabla^d_xs)^{\dagger *}(U)}^{\wedge 2}}.
\end{equation}

As in the case of Riemannian random waves~\eqref{equation cond exp}, we have:
\begin{multline}
\label{equation cond exp alg}
\espcond{\odet{\nabla^d_xs}\tr(R_s(x)^{\wedge m})}{s(x)=0} \\
= \left(\frac{d^{n+1}}{\pi^n}\right)^\frac{r}{2} \esp{\odet{\tilde{L}_d} \tr\left(\left(R(x) + d\ \esp[U]{\prsc{\tilde{S}_d}{\tilde{L}_d^{\dagger *}(U)}^{\wedge 2}}\right)^{\wedge m}\right)}.
\end{multline}
The proof of Lemma~\ref{lemma dominant term} adapts immediately to this setting, so that:
\begin{multline}
\label{equation dominant term}
\esp{\odet{\tilde{L}_d} \tr\left(\left(R(x) + d\ \esp[U]{\prsc{\tilde{S}_d}{\tilde{L}_d^{\dagger *}(U)}^{\wedge 2}}\right)^{\wedge m}\right)}\\
= d^{m} \frac{m!}{(2m)!} \esp{\odet{L} \tr\left(\prsc{S}{L^{\dagger *}(U)}^{\wedge 2m}\right)}\left(1 + O\!\left(d^{-1}\right)\right),
\end{multline}
where the error term is uniform in $x \in M$.

\begin{lem}
\label{lemma computing expectation alg}
Let $(L,S)\sim\mathcal{N}(0,\Lambda)$ in $\R(\mathcal{E}\otimes\mathcal{L}^d)_x \otimes (T_x^*\R\mathcal{X} \oplus \Sym(T_x^*\R\mathcal{X}))$ and $U \sim \mathcal{N}(0,\Id)$ in $T_xM$ be independent of $(L,S)$. We have:
\begin{equation*}
\esp{\odet{L} \tr\left(\prsc{S}{L^{\dagger *}(U)}^{\wedge 2m}\right)} = (-1)^m (2\pi)^\frac{n}{2} (2m)! \frac{\vol{\S^{n-r+1}}\vol{\S^{r-1}}}{\pi \vol{\S^n}\vol{\S^{n-1}}}.
\end{equation*}
\end{lem}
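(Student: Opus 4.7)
The plan is to mirror the proof of Lemma~\ref{lemma computing expectation} with the simpler covariance structure of the algebraic case. From~\eqref{equation big Lambda tilde alg}, the variance $\Lambda$ is block-diagonal, so $L$ and $S$ are independent Gaussian vectors; moreover, reading off the diagonal entries ($\var(S_{ii}) = 2I_r$), the off-diagonal entries ($\var(S_{ij}) = I_r$ for $i<j$), and the absence of all other correlations, one obtains the GOE-type formula
\[
\cov(S_{ik},S_{jl}) = (\delta_{ij}\delta_{kl} + \delta_{il}\delta_{kj})\,I_r.
\]

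First, I would apply Proposition~\ref{proposition same distribution} to $L$ exactly as in~\eqref{equation integrability}, using the independence of $L$, $S$ and $U$, to write
\[
\esp{\odet{L}\tr\!\left(\prsc{S}{L^{\dagger *}(U)}^{\wedge 2m}\right)} = \esp{\frac{\Norm{X_{n-r+2}}\cdots\Norm{X_n}}{\Norm{X_{n-r+1}}^{2m-1}}}\,\esp{\tr\!\left(\prsc{S}{U'}^{\wedge 2m}\right)},
\]
where $U'\sim\mathcal{N}(0,\Id)$ in $\R^r$ and $X_p\sim\mathcal{N}(0,\Id)$ in $\R^p$ are globally independent of $S$. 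The first factor depends only on $n$ and $r$ and is computed via Lemma~\ref{lemma expectation norm Gaussian} in terms of volumes of spheres, identically to the Riemannian case. Since $L$ and $S$ are independent and $S$ is orthogonally invariant, I may then restrict $S$ to any fixed $2m$-dimensional subspace of $T_x\R\mathcal{X}$; writing $S = \sum_{i,k} S_{ik}\, dx_i \otimes dx_k$ in an orthonormal basis of that subspace and applying Lemma~\ref{lemma exchanging expectation and wedge product} gives, as in~\eqref{equation trace of expectation},
\[
\tr\esp[S]{\prsc{S}{U'}^{\wedge 2m}} = \frac{(2m)!}{2^m m!} \sum_{\sigma,\sigma'\in\mathfrak{S}_{2m}} \epsilon(\sigma\sigma') \prod_{p=1}^m \prsc{U'}{\Lambda^{\sigma(2p-1)\sigma'(2p-1),\sigma(2p)\sigma'(2p)} U'}.
\]

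The crux of the argument is the evaluation of this combinatorial sum. Substituting the covariance formula above, the $\delta_{ij}\delta_{kl}$ summand would force $\sigma(2p-1)=\sigma(2p)$, which is impossible because $\sigma$ is a permutation; only the $\delta_{il}\delta_{kj}$ summand survives, and it forces $\sigma'(2p-1) = \sigma(2p)$ and $\sigma'(2p) = \sigma(2p-1)$ for every $p$. Hence $\sigma' = \sigma\circ\tau$ with $\tau = (1\,2)(3\,4)\cdots(2m-1\,2m)$, whose signature is $(-1)^m$. The sum therefore collapses to $(2m)!\,(-1)^m \Norm{U'}^{2m}$, replacing the factor $(\beta-1)^m = (-1)^m\left(\frac{n+4}{n+2}\right)^m$ that appeared in the Riemannian computation. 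This is the only substantive departure from the proof of Lemma~\ref{lemma computing expectation} and constitutes the main (though elementary) obstacle: one has to recognize that the $\beta$-correlation between distinct diagonal entries, which is responsible for the extra factor in the Riemannian case, simply vanishes here.

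Assembling the three factors, computing $\esp{\Norm{U'}^{2m}}$ and the moments of $\Norm{X_p}$ in terms of volumes of spheres via Lemma~\ref{lemma expectation norm Gaussian}, and applying the same concluding identity $(2\pi)^{r/2}\frac{(2m)!}{2^m m!}\frac{\vol{\S^{n-r}}}{2} = (2\pi)^{n/2}$ used at the end of the Riemannian proof, one obtains exactly the stated formula.
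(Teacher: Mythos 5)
Your proof is correct and follows essentially the same route as the paper: restrict $S$ to a $2m$-dimensional subspace, pass to independent norms via Proposition~\ref{proposition same distribution}, and recompute the combinatorial kernel $K$ from the new covariance matrix~\eqref{equation big Lambda tilde alg}. Your GOE-form covariance $\cov(S_{ik},S_{jl}) = (\delta_{ij}\delta_{kl} + \delta_{il}\delta_{kj})\Id$ is just a compact restatement of~\eqref{equation big Lambda tilde alg}, and it yields precisely the paper's new $K$ (nonzero only when $\sigma'=\sigma\tau_0$ with $\tau_0 = \prod_p((2p-1)\,(2p))$), so the sum collapses to $(-1)^m(2m)!\Norm{U'}^{2m}$ as in the paper's proof.
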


Once this lemma is proved, we get Theorem~\ref{theorem expected Euler characteristic algebraic case} immediately by~\eqref{eq alg3}, \eqref{equation expectation Euler 1 alg}, \eqref{equation cond exp alg}, \eqref{equation dominant term} and Lemma~\ref{lemma computing expectation alg}. We sketch the proof of Lemma~\ref{lemma computing expectation alg} which is, unsurprisingly, adapted from the proof of Lemma~\ref{lemma computing expectation}.

\begin{proof}
The only difference between Lemmas~\ref{lemma computing expectation} and~\ref{lemma computing expectation alg} comes from the definition of $\Lambda$ which is not the same in the algebraic case. The proof is exactly the same as the proof of Lemma~\ref{lemma computing expectation} until the definition of $K$~\eqref{definition K}. The $\Lambda^{ik,jl}$ are now given by~\eqref{equation big Lambda tilde alg}, hence we have to change the definition of $K$. In this setting,
\begin{equation*}
K(p,\sigma,\sigma') = \left\{\begin{aligned} &1 & &\text{if } \sigma(2p-1) = \sigma'(2p) \text{ and } \sigma(2p) = \sigma'(2p-1),\\
&0 & &\text{otherwise,} \end{aligned} \right.
\end{equation*}
so that $\displaystyle\prod_{p=1}^m K(p,\id,\tau)$ is $0$, unless $\tau = \tau_0 = \displaystyle\prod_{p=1}^m ((2p-1) (2p))$. Then~\eqref{equation sum on tau} becomes:
\[\sum_{\tau \in \mathfrak{S}_{2m}} \epsilon(\tau) \prod_{p=1}^m K(p,\id,\tau) = \epsilon(\tau_0) \prod_{p=1}^m K(p,\id,\tau_0) = (-1)^m.\]
This explains why the factor $\left(-\frac{n+4}{n+2}\right)^m$ becomes $(-1)^m$ in the algebraic case. What remains of the proof is as in Lemma~\ref{lemma computing expectation}.
\end{proof}

\section{Two special cases}
\label{section special cases}

In some special cases, the covariance kernel is known explicitly. It is then possible to prove more precise results. In this section, we sketch what happens on the flat torus and in the real projective space. In these cases, we get the expectation of the volume and the Euler characteristic of our random submanifolds for fixed $\lambda$ (resp. fixed $d$).

\subsection{The flat torus}

Let $\mathbb{T}^n = \R^n\diagup (2\pi \Z)^n$ denote the torus of dimension $n$ that we equip with the quotient of the Euclidean metric on $\R^n$. We have $\vol{\mathbb{T}^n}=(2\pi)^n$. We identify functions on $\mathbb{T}^n$ and $(2\pi \Z)^n$-periodic functions on $\R^n$. Then, the Laplacian is $\Delta = - \sum_{i=1}^n \deron{^2\phantom{f}}{x_i^2}$, and it is known that its eigenvalues are the integers of the form $\Norm{p}^2=\sum (p_i)^2$, with $p=(p_1,\dots,p_n) \in \N^n$. The eigenspace associated to $0$ is spanned by the constant function~$x \mapsto (2\pi)^{-\frac{n}{2}}$. For $\lambda >0$, the eigenspace associated to $\lambda$ is spanned by the normalized functions of the form:
\begin{align*}
x &\mapsto \frac{\sqrt{2}}{(2\pi)^\frac{n}{2}} \sin (\prsc{p}{x}) & &\text{and} & x &\mapsto \frac{\sqrt{2}}{(2\pi)^\frac{n}{2}} \cos (\prsc{p}{x}),
\end{align*}
where $p \in \Z^n$ is such that $\Norm{p}^2=\lambda$, and $\prsc{\cdot}{\cdot}$ is the canonical scalar product on $\R^n$.

We set $\mathbb{B}_\lambda = \{ p \in \Z^n \mid \Norm{p}^2 \leq \lambda \}$. After some computations, we get $e_\lambda$, the spectral function of the Laplacian on $\mathbb{T}^n$:
\begin{equation}
\label{equation kernel torus}
\forall \lambda \geq 0, \ \forall x, y \in \mathbb{T}^n, \qquad e_\lambda(x,y) = \frac{1}{(2\pi)^n} \sum_{p\in \mathbb{B}_\lambda} \cos(\prsc{p}{x-y}).
\end{equation}

Let $\lambda \geq 0$ and $r \in \{1,\dots,n\}$, let $V_\lambda$ be spanned by the eigenfunctions of $\Delta$ associated to eigenvalues smaller than $\lambda$, and let $f\sim \mathcal{N}(0,\Id)$ in $(V_\lambda)^r$. For all $x \in \mathbb{T}^n$, $j_x^2(f)$ is a centered Gaussian variable whose variance is determined by Lemma~\ref{lemma variance 2-jet} , Lemma~\ref{lemma E in terms of e} and the above formula~\eqref{equation kernel torus}. Note that $e_\lambda$ and its derivatives are constant along the diagonal.

We can compute explicitly the variance of $j_x^2(f)$ (which is independent of $x$) and follow the same steps as in sections~\ref{subsection proof volume harmonic} and~\ref{subsection proof Euler harmonic}. Only, this time, we can make exact computations with $\lambda$ fixed, instead of deriving asymptotics. These computations are not difficult, and similar to what we already did, so we simply state the final results. Note that the scaling of the variables has to be adapted.

\begin{prop}
\label{proposition flat torus volume}
On the flat torus $\mathbb{T}^n$, let $\lambda\geq 0$ and let $f_1,\dots,f_r$ be independent standard Gaussian functions in $V_\lambda$, with $1 \leq r\leq n$. We have:
\[\esp{\vol{Z_f}} = \left(\frac{1}{\norm{\mathbb{B}_\lambda}}\sum_{(p_1,\dots,p_n) \in \mathbb{B}_\lambda} (p_1)^2 \right)^\frac{r}{2} (2\pi)^n \frac{\vol{\S^{n-r}}}{\vol{\S^n}},\]
where $\mathbb{B}_\lambda = \{ p \in \Z^n \mid \Norm{p}^2 \leq \lambda \}$ and $\norm{\mathbb{B}_\lambda}$ denotes the cardinal of $\mathbb{B}_\lambda$.
\end{prop}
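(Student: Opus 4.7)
The plan is to mirror the proof of Theorem~\ref{theorem expected volume harmonic case} but, because the spectral function $e_\lambda$ on $\mathbb{T}^n$ is given explicitly by~\eqref{equation kernel torus}, every estimate becomes an exact identity rather than an asymptotic. First I would apply the Kac--Rice formula (Theorem~\ref{theorem Kac-Rice}) with $\phi \equiv 1$ to obtain
\begin{equation*}
\esp{\vol{Z_f}} = \frac{1}{(2\pi)^\frac{r}{2}}\int_{\mathbb{T}^n} \frac{1}{\sqrt{\det(E_\lambda(x,x))}}\espcond{\odet{d_xf}}{f(x)=0}\rmes{\mathbb{T}^n}.
\end{equation*}

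Next I would compute $E_\lambda$ and the joint law of $(f(x),d_xf)$ by differentiating~\eqref{equation kernel torus} termwise. Evaluating on the diagonal yields $e_\lambda(x,x)=|\mathbb{B}_\lambda|/(2\pi)^n$, and Lemma~\ref{lemma E in terms of e} gives $\det(E_\lambda(x,x))=\left(|\mathbb{B}_\lambda|/(2\pi)^n\right)^r$. Each summand $\cos(\prsc{p}{x-y})$ in $e_\lambda$ contributes $p_ip_j\cos(\prsc{p}{x-y})$ to $\partial_{x_i}\partial_{y_j}e_\lambda$; at $x=y$ the off-diagonal sums vanish under the symmetry $p\mapsto -p$ of $\mathbb{B}_\lambda$, and by invariance of $\mathbb{B}_\lambda$ under coordinate permutations all the diagonal entries are equal to $S_\lambda/(2\pi)^n$, where $S_\lambda := \sum_{p\in\mathbb{B}_\lambda}(p_1)^2$. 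Likewise $\partial_{x_i}e_\lambda(x,x)=0$. By Lemma~\ref{lemma variance 2-jet} and Lemma~\ref{lemma E in terms of e} this means $f(x)$ and $d_xf$ are uncorrelated, hence (being jointly Gaussian) independent, so conditioning on $f(x)=0$ leaves the distribution of $d_xf$ unchanged.

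Rescaling, $L:=\sqrt{(2\pi)^n/S_\lambda}\,d_xf$ is a standard Gaussian vector in $\R^r \otimes T_x^*\mathbb{T}^n$, and the homogeneity of $\odet{\cdot}$ gives
\begin{equation*}
\espcond{\odet{d_xf}}{f(x)=0} = \left(\frac{S_\lambda}{(2\pi)^n}\right)^\frac{r}{2}\esp{\odet{L}} = \left(\frac{S_\lambda}{(2\pi)^n}\right)^\frac{r}{2}(2\pi)^\frac{r}{2}\frac{\vol{\S^{n-r}}}{\vol{\S^n}},
\end{equation*}
the last equality being the computation of $\esp{\odet{L}}$ used already in the proof of Lemma~\ref{lemma conditional expectation volume}. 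Substituting back into Kac--Rice and noting that the integrand is constant (by translation invariance), the integral becomes a multiplication by $\vol{\mathbb{T}^n}=(2\pi)^n$, yielding the stated formula. There is essentially no obstacle here: the point is that on the flat torus the decorrelation of $f(x)$ with $d_xf$ is exact rather than of order $\lambda^{-1/2}$, so no Gram--Schmidt correction and no error term appear.
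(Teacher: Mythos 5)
Your proof is correct and takes essentially the same approach the paper indicates: apply Kac--Rice, use the explicit kernel \eqref{equation kernel torus} to compute the variance of $j^1_x(f)$ exactly (with the independence of $f(x)$ and $d_xf$ and the scalar, isotropic variance of $d_xf$ both being exact here by the symmetries of $\mathbb{B}_\lambda$), rescale, apply Lemma~\ref{lemma expectation odet of L}, and integrate the constant integrand over $\mathbb{T}^n$. The paper only sketches this, stating the computations are "similar to what we already did"; your write-out fills in precisely the expected details.
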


\noindent
This result was already known, see \cite{RW2008}, where Rudnick and Wigman compute the variance of $\vol{Z_f}$ when $r=1$, and the references therein.

\begin{prop}
\label{proposition flat torus Euler}
On the flat torus $\mathbb{T}^n$, let $\lambda\geq 0$ and let $f_1,\dots,f_r$ be independent standard Gaussian functions in $V_\lambda$, with $1 \leq r\leq n$. If $n-r$ is even, we have:
\[\esp{\chi(Z_f)} = (-1)^\frac{n-r}{2}\left(\frac{1}{\norm{\mathbb{B}_\lambda}}\sum_{(p_1,\dots,p_n) \in \mathbb{B}_\lambda} (p_1)^2 \right)^\frac{n}{2} (2\pi)^n \frac{\vol{\S^{n-r+1}}\vol{\S^{r-1}}}{\pi \vol{\S^n}\vol{\S^{n-1}}},\]
where $\mathbb{B}_\lambda = \{ p \in \Z^n \mid \Norm{p}^2 \leq \lambda \}$ and $\norm{\mathbb{B}_\lambda}$ denotes the cardinal of $\mathbb{B}_\lambda$.
\end{prop}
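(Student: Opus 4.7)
The plan is to follow the strategy of section~\ref{subsection proof Euler harmonic}, but to replace the asymptotic estimates of Theorem~\ref{theorem estimates bin} by exact computations using the explicit spectral function~\eqref{equation kernel torus}. Since $(M,g)=\mathbb{T}^n$ is flat, we have $R\equiv 0$, and by translation invariance $e_\lambda$ and all its derivatives on the diagonal are independent of~$x$. Applying the Kac--Rice formula together with Chern--Gauss--Bonnet as in equation~\eqref{equation expectation Euler 1} immediately yields
\[
\esp{\chi(Z_f)}
= \frac{\vol{\mathbb{T}^n}}{m!(2\pi)^{n/2}}\,\alpha_0^{-r/2}\,
\espcond{\odet{d_xf}\tr\!\left(R_f(x)^{\wedge m}\right)}{f(x)=0},
\]
where $\alpha_0 = e_\lambda(x,x) = |\mathbb{B}_\lambda|/(2\pi)^n$ and $\vol{\mathbb{T}^n}=(2\pi)^n$, and where (by Proposition~\ref{proposition Euler chi of a submanifold}) $R_f(x)=\tfrac12\esp[U]{\prsc{\nabla^2_xf}{(d_xf)^{\dagger*}U}^{\wedge 2}}$.

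The symmetry $p\mapsto -p$ of $\mathbb{B}_\lambda$ kills all odd-order derivatives of $e_\lambda$ on the diagonal; combined with permutation invariance this gives $\cov(f(x),d_xf)=0$, $\cov(d_xf,\nabla^2_xf)=0$, and $\cov(f(x),\nabla^2_{ij}f)=0$ for $i\neq j$. In particular $d_xf$ is independent of $(f(x),\nabla^2_xf)$, and all nonzero blocks are scalar and expressed in terms of three moments: $\mu_\lambda$, $\nu_\lambda:=\tfrac{1}{|\mathbb{B}_\lambda|}\sum_p p_1^2p_2^2$ and $c_\lambda:=\tfrac{1}{|\mathbb{B}_\lambda|}\sum_p p_1^4$. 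I rescale the $2$-jet by
\[
\hat L := d_xf/\sqrt{\alpha},\qquad \hat S := \nabla^2_xf/\sigma,
\]
with $\alpha=|\mathbb{B}_\lambda|\mu_\lambda/(2\pi)^n$ and $\sigma^2=|\mathbb{B}_\lambda|\nu_\lambda/(2\pi)^n$. Conditioning on $f(x)=0$ leaves $\hat L\sim\mathcal{N}(0,\Id)$ unchanged, and the conditional covariance of $\hat S$ has exactly the block structure of the matrix $\tilde\Lambda$ in~\eqref{equation big Lambda tilde}, now with the torus values
\[
\beta := 1-\mu_\lambda^2/\nu_\lambda,\qquad \beta_0:=(c_\lambda-\mu_\lambda^2)/\nu_\lambda.
\]

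With this structure in place, the computation of Lemma~\ref{lemma computing expectation} applies verbatim: only permutations $\sigma,\sigma'\in\mathfrak{S}_{2m}$ with $\sigma(2p-1)\neq\sigma(2p)$ occur, so the diagonal constant $\beta_0$ never enters, and the combinatorial factor becomes $(\beta-1)^m=(-\mu_\lambda^2/\nu_\lambda)^m$. Combining this with the trivial scaling identities $\odet{d_xf}=\alpha^{r/2}\odet{\hat L}$ and $\prsc{\nabla^2_xf}{(d_xf)^{\dagger*}U}=(\sigma/\sqrt\alpha)\prsc{\hat S}{\hat L^{\dagger*}U}$, the conditional expectation equals
\[
\alpha^{r/2}\!\left(\tfrac{\sigma^2}{\alpha}\right)^{\!m}\!(\beta-1)^m\cdot m!\,(2\pi)^{n/2}\,\frac{\vol{\S^{n-r+1}}\vol{\S^{r-1}}}{\pi\vol{\S^n}\vol{\S^{n-1}}},
\]
and the product $(\sigma^2/\alpha)^m(\beta-1)^m=(\nu_\lambda/\mu_\lambda)^m(-\mu_\lambda^2/\nu_\lambda)^m=(-\mu_\lambda)^m$ eliminates $\nu_\lambda$. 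Dividing by $\alpha_0^{r/2}$ turns the prefactor $\alpha^{r/2}$ into $\mu_\lambda^{r/2}$, so altogether we get $\mu_\lambda^{r/2+m}=\mu_\lambda^{n/2}$ times the stated universal constants.

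The only delicate point is the last one: it looks a priori as if the auxiliary fourth moments $c_\lambda,\nu_\lambda$ should survive, but they cancel due to three independent rescalings (the Hessian normalization by $\sigma$, the curvature rescaling $\sigma^2/\alpha$ coming from $(d_xf)^{\dagger*}$ in the Gauss equation, and the factor $(\beta-1)^m$ coming from the combinatorial identity of Lemma~\ref{lemma computing expectation}). Verifying that these three conspire to leave only $\mu_\lambda$ is the main content of the argument; once traced through, Proposition~\ref{proposition flat torus Euler} follows immediately.
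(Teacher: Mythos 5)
Your proof is correct, and it follows exactly the route the paper itself sketches (the paper explicitly declines to write out the details, saying the computations are "not difficult, and similar to what we already did"). You correctly identify that $R\equiv 0$, that all odd diagonal derivatives of $e_\lambda$ vanish by the $p\mapsto -p$ symmetry of $\mathbb{B}_\lambda$, that coordinatewise sign symmetry kills the mixed covariances, and that the conditional Hessian covariance has the same block structure as in~\eqref{equation big Lambda tilde} with $\gamma^2=\mu_\lambda^2/\nu_\lambda$, hence $\beta=1-\mu_\lambda^2/\nu_\lambda$. The key cancellation you flag as delicate is indeed the heart of it: the combinatorial sum~\eqref{equation sum on tau} gives $(\beta-1)^m$ for any $\beta$ (the diagonal constant $\beta_0$ never enters because $\sigma(2p-1)\neq\sigma(2p)$), and the chain $(\sigma^2/\alpha)^m(\beta-1)^m=(-\mu_\lambda)^m$ together with $\alpha^{r/2}/\alpha_0^{r/2}=\mu_\lambda^{r/2}$ makes the auxiliary fourth moments $\nu_\lambda,c_\lambda$ drop out, leaving $(-1)^m\mu_\lambda^{n/2}$ times the universal spherical-volume constant. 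This agrees with the statement.
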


\begin{rem}
For this last result, one of the points that make the computations tractable is that the Riemann tensor of the ambient manifold is zero.
\end{rem}

\subsection{The projective space}

We consider the algebraic case with $\mathcal{X}=\C \P^n$, $\mathcal{E}= \C^r \times \C \P^n$ with the standard Hermitian metric on each fiber, and $\mathcal{L}=\mathcal{O}(1)$ the hyperplane bundle with its usual metric. Then, $\omega$ is the standard Fubini-Study form. We consider the real structures induced by the standard conjugation in $\C$.

Let $e_d$ denote the Bergman kernel of $\mathcal{L}^d$ and $E_d$ denote the Bergman kernel of $\mathcal{E}\otimes \mathcal{L}^d$. Since $\mathcal{E}$ is trivial we have a product situation, as in section~\ref{subsection harmonic setting}, and $E_d$ and $e_d$ are related as in Lemma~\ref{lemma E in terms of e}. Let $(\zeta_1,\dots,\zeta_r)$ be any orthonormal basis of $\C^r$, for all $x$, $y\in \C \P^n$,
\begin{equation}
\label{E in terms of e projective}
E_d(x,y) = \left( \sum_{q=1}^r \zeta_q \otimes \zeta_q\right) \otimes e_d(x,y).
\end{equation}

In this case, $\R \mathcal{X}=\R \P^n$ and the elements of $\R H^0(\mathcal{X},\mathcal{E}\otimes \mathcal{L}^d)$ are $r$-tuples of real homogeneous polynomials of degree $d$ in $n+1$ variables. We denote by $\R^{hom}_d[X_0,\dots,X_n]$ the space of real homogeneous polynomials of degree $d$ in $X_0,\dots,X_n$. Let $\alpha = (\alpha_0,\dots,\alpha_n) \in \N^{n+1}$, then we set $X^\alpha=X_0^{\alpha_0} \cdots X_n^{\alpha_n}$, $\norm{\alpha}= \alpha_0+\cdots+\alpha_n$ and, if $\norm{\alpha}= d$, we also set $\binom{d}{\alpha}= \frac{d!}{\alpha_0! \cdots \alpha_n !}$.

It is well-known that an orthonormal basis of $\R^{hom}_d[X_0,\dots,X_n]$ for the inner product~\eqref{equation definition prsc algebraic case} is given by the sections:
\begin{equation}
s_\alpha = \sqrt{\frac{(n+d)!}{\pi^n d!}\binom{d}{\alpha}} X^\alpha, \qquad \text{with } \norm{\alpha}=d,
\end{equation}
see \cite{BSZ2000a,BBL1996,Buer2006,Kos1993}. Then, formally,
\begin{equation}
e_d = \frac{(n+d)!}{\pi^n d!} \sum_{\norm{\alpha}=d} \binom{d}{\alpha} X^\alpha Y^\alpha = \frac{(n+d)!}{\pi^n d!} \prsc{X}{Y}^d.
\end{equation}

More precisely, we consider the local coordinates $(x_1,\dots,x_n) \mapsto [1:x_1:\dots:x_n]$, defined on a neighborhood of $[1:0:\dots:0]$, and the real holomorphic frame $s_{(d,0,\dots,0)}$ for~$\mathcal{O}(d)$ on this neighborhood. In these coordinates,
\begin{equation}
\label{equation kernel projective}
e_d(x,y) = (1+\prsc{x}{y})^d \left(s_{(d,0,\dots,0)}(x) \otimes s_{(d,0,\dots,0)}(y)\right),
\end{equation}
with $x=(x_1,\dots,x_n)$ and $y=(y_1,\dots,y_n)$. Since everything is invariant under orthogonal transformations of $\R \P^n$, this totally describes $e_d$. In particular, the covariances that appear in the proofs of Theorems~\ref{theorem expected volume algebraic case} and~\ref{theorem expected Euler characteristic algebraic case} are given, in this case, by the values of the function
\begin{equation*}
(x,y) \mapsto \frac{(n+d)!}{\pi^n d!}(1+\prsc{x}{y})^d
\end{equation*}
and its derivatives at the point $(0,0)$ in $\R^n \times \R^n$.

Let $d \in \N$, $r \in \{1,\dots,n\}$ and $s \sim \mathcal{N}(0,\Id)$ in $(\R^{hom}_d[X_0,\dots,X_n])^r\!$. For every $x\in \R \P^n$, $j_x^{2,d}(s)$ is a centered Gaussian variable whose variance is determined by Lemma~\ref{lemma variance 2-jet alg}, \eqref{E in terms of e projective} and~\eqref{equation kernel projective}. Once again, we can compute the variance of $j_x^{2,d}(s)$ explicitly, and it does not depend on $x\in \R \P^n$. Then, we can follow the steps of the proof of Theorem~\ref{theorem expected volume algebraic case}, and make exact computations for fixed degree $d$. This yields Kostlan's result \cite{Kos1993}. Note that we have to adapt the scaling of the variables.
\begin{thm}[Kostlan]
\label{theorem Kostlan}
In the real projective space $\R \P^n$, let $d \in \N$ and let $P_1,\dots,P_r$ be independent standard Gaussian polynomials in $\R^{hom}_d[X_0,\dots,X_n]$, with $1 \leq r\leq n$. Let $Z_s$ denote the common zero set of $P_1,\dots,P_r$. We have:
\[\esp{\vol{Z_s}} = d^\frac{r}{2} \vol{\R \P^{n-r}}.\]
\end{thm}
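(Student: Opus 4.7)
The approach is to specialize the Kac--Rice computation from the proof of Theorem~\ref{theorem expected volume algebraic case}, but exploiting the fact that in this homogeneous setting the Bergman kernel is known explicitly via~\eqref{equation kernel projective}, so no asymptotic expansion is required and one obtains an exact identity valid for every $d$. Concretely, I would start from
\begin{equation*}
\esp{\vol{Z_s}} = \frac{1}{(2\pi)^\frac{r}{2}}\int_{x \in \R\P^n} \frac{1}{\sqrt{\det(E_d(x,x))}} \espcond{\odet{\nabla^d_xs}}{s(x)=0} \rmes{\R\P^n}
\end{equation*}
and observe that the data of the problem (the Fubini--Study metric, the standard Hermitian metric on $\mathcal{O}(1)$, and the trivial bundle $\mathcal{E}$) are invariant under the action of $O(n+1)$ on $\R\P^n$. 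Since the distribution of $s$ is characterized by $E_d$ and since the action preserves $E_d$, the integrand is a constant function of $x \in \R\P^n$, and it suffices to evaluate it at the base point $x_0 = [1:0:\dots:0]$ in the chart $(x_1,\dots,x_n)\mapsto [1:x_1:\dots:x_n]$ with the frame $s_{(d,0,\dots,0)}$.

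The key computation is then a direct differentiation of $(1+\prsc{x}{y})^d$ at $(0,0)$, using Lemma~\ref{lemma variance 2-jet alg} and the product form~\eqref{E in terms of e projective}. One finds $E_d(x_0,x_0)$ proportional to the identity on $\R(\mathcal{E}\otimes \mathcal{L}^d)_{x_0}$ with scalar factor $c_d := \frac{(n+d)!}{\pi^n d!}$, and $\var(\nabla^d_{x_0}s) = d\, c_d\, \Id$ (in the orthonormal bases coming from the choices above). Crucially, $\partial_{y_j}e_d(x_0,x_0)=d(1+\prsc{x}{y})^{d-1}x_j\big|_{(0,0)}=0$, so $\cov(s(x_0),\nabla^d_{x_0}s)=0$; the components are jointly Gaussian, hence independent, and the conditioning on $s(x_0)=0$ has no effect. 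Thus $\nabla^d_{x_0}s$ is distributed as $\sqrt{d\, c_d}\,L$ with $L$ a standard Gaussian in $\R^r\otimes T^*_{x_0}\R\P^n$, and by homogeneity
\begin{equation*}
\espcond{\odet{\nabla^d_{x_0}s}}{s(x_0)=0} = (d\,c_d)^{r/2}\,\esp{\odet{L}}.
\end{equation*}

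Combining with $\sqrt{\det E_d(x_0,x_0)} = c_d^{\,r/2}$ and the standard identity $\esp{\odet{L}} = (2\pi)^{r/2}\frac{\vol{\S^{n-r}}}{\vol{\S^n}}$ (which is the same result used in the proof of Theorem~\ref{theorem expected volume harmonic case}, valid for every standard Gaussian $L$), the integrand collapses to $d^{r/2}\,\frac{\vol{\S^{n-r}}}{\vol{\S^n}}$. Integrating over $\R\P^n$ and using the fact that the Fubini--Study metric, with the normalization fixed in section~\ref{subsection real algebraic setting}, makes $\R\P^n$ the quotient of $\S^n$ by the antipodal involution (so $\vol{\R\P^n}=\tfrac{1}{2}\vol{\S^n}$ and similarly $\vol{\R\P^{n-r}}=\tfrac{1}{2}\vol{\S^{n-r}}$), one gets
\begin{equation*}
\esp{\vol{Z_s}} = d^{r/2}\,\frac{\vol{\S^{n-r}}}{\vol{\S^n}}\cdot\tfrac{1}{2}\vol{\S^n} = d^{r/2}\vol{\R\P^{n-r}},
\end{equation*}
as claimed. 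The only genuine obstacle is bookkeeping: one must carefully track the fiber normalizations of $h_d$ at $x_0$ relative to the frame $s_{(d,0,\dots,0)}$ (so that the scalar $c_d$ drops out cleanly between numerator and denominator) and make sure the volume normalization of $\omega^n/n!$ agrees with the spherical-quotient identification of $\R\P^n$; everything else is routine once the covariance reduces to a scalar multiple of the identity and the cross-covariance vanishes.
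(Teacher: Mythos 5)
Your proof is correct and follows essentially the same route as the paper, which in section 6.2 only sketches the argument ("follow the steps of the proof of Theorem~\ref{theorem expected volume algebraic case}, and make exact computations for fixed degree $d$... adapt the scaling of the variables"). Your write-up fills in exactly what the paper leaves implicit: the $O(n+1)$-invariance argument reducing everything to the base point $[1:0:\dots:0]$, the direct differentiation of $c_d(1+\prsc{x}{y})^d$ giving $E_d(x_0,x_0)=c_d\Id$ and $\var(\nabla^d_{x_0}s)=dc_d\Id$, the observation that $\partial_y e_d(x_0,x_0)=0$ kills the cross-covariance so the conditioning on $s(x_0)=0$ is vacuous, the homogeneity of $\odet{\cdot}$ providing the clean factor $(dc_d)^{r/2}/c_d^{r/2}=d^{r/2}$, and the application of Lemma~\ref{lemma expectation odet of L} together with $\vol{\R\P^k}=\tfrac12\vol{\S^k}$. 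There is no genuine deviation; yours is simply the expanded version of the paper's sketch.
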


Assuming that $n-r$ is even and equals $2m$, we can also adapt the proof of Theorem~\ref{theorem expected Euler characteristic algebraic case} to get the expected Euler characteristic for fixed $d$. This computation is a bit more complex than on the torus since the Riemann curvature of $\R \P^n$ is not zero.

By Kac--Rice formula and equations~\eqref{E in terms of e projective} and \eqref{Riemann alg},
\begin{multline}
\label{equation expectation Euler proj}
\esp{\chi(Z_s)} = \frac{1}{m! (2\pi)^\frac{n}{2}} \int_{x \in \R \P^n} \frac{1}{\left(e_d(x,x)\right)^\frac{n}{2}}\times \cdots\\
\esp{\odet{\nabla^d_xs}\tr\left(\left(R(x)+\frac{1}{2}\esp{\prsc{\nabla^{2,d}_xs}{(\nabla^d_xs)^{\dagger *}(U)}^{\wedge 2}}\right)^{\wedge m}\right)}\rmes{\R \P^n},
\end{multline}
where $R$ is the Riemann curvature of $\R \P^n$ and $U \sim \mathcal{N}(0,\Id)$ in $T_x\R \P^n$. We used the fact that $s(x)$, $\nabla^d_xs$ and $\nabla^{2,d}_xs$ are independent in this particular case. This is why the expectation is not conditioned on $s(x)=0$.

Since everything is invariant under orthogonal transformations of $\R \P^n$, we only need to compute the expectation in the integrand at the point $x=[1:0:\dots:0]$. We do this in the same chart as for~\eqref{equation kernel projective} above. We get:
\[R(x) = \frac{1}{2} \sum_{1\leq i,j \leq n} dx_i \wedge dx_j \otimes dx_i \wedge dx_j.\]

Following the computations of section~\ref{subsection proof Euler harmonic}, we get (after a suitable rescaling) that the expectation in the integrand of \eqref{equation expectation Euler proj} equals:
\begin{equation}
\label{eq last}
d^\frac{r}{2} \sum_{q=0}^m \frac{(d-1)^q}{2^q} \binom{m}{q} \esp{\frac{\Norm{X_{n-r+2}}\dots \Norm{X_n}}{\Norm{X_{n-r+1}}^{2q-1}}} \esp{\tr\left(R(x)^{\wedge (m-q)}\wedge \prsc{S}{U'}^{\wedge 2q}\right)},
\end{equation}
where $U'\in \R^r$, $S \in (\R\mathcal{O}(d)_x)^r \otimes \Sym(T_x^*\R \P^n )$ and $X_p \in \R^p$ (for $n-r+1 \leq p \leq n$) are globally independent standard Gaussian vectors. By the same kind of computations as in the proof of Lemma~\ref{lemma computing expectation}, we get:
\begin{equation}
\esp[S]{\prsc{S}{U'}^{\wedge 2}} = -\Norm{U'}^2 \sum_{1\leq i,j \leq n} dx_i \wedge dx_j \otimes dx_i \wedge dx_j.
\end{equation}
Restricting $R$ and $S$ to the span of $\left(\deron{\phantom{f}}{x_1},\dots,\deron{\phantom{f}}{x_{2m}}\right)$, we have:
\[\forall q \in \{0,\dots,m\}, \qquad \tr\left(\R(x)^{\wedge m-q} \wedge \esp[S]{\prsc{S}{U'}^{\wedge 2}}^{\wedge q}\right) = (-1)^q\frac{1}{2^{m-q}} (2m)!\Norm{U'}^{2q}.\]
As in the proof of Lemma~\ref{lemma computing expectation}, we compute the expectation of this term with respect to the variable $U'$ by Lemma~\ref{lemma expectation norm Gaussian}. The same lemma allows us to compute the value of~\eqref{eq last}. Finally, we recover Bürgisser's result \cite{Buer2006}.

\begin{thm}[Bürgisser]
\label{theorem Burgisser}
In the real projective space $\R \P^n$, let $d \in \N$ and let $P_1,\dots,P_r$ be independent standard Gaussian polynomials in $\R^{hom}_d[X_0,\dots,X_n]$, with $1 \leq r\leq n$. Let $Z_s$ denote the common zero set of $P_1,\dots,P_r$. If $n-r$ is even, we have:
\[\esp{\chi(Z_s)} = d^\frac{r}{2} \sum_{p=0}^{\frac{n-r}{2}} (1-d)^p \ \frac{\Gamma\!\left(p+\frac{r}{2}\right)}{p! \ \Gamma\!\left(\frac{r}{2}\right)}.\]
\end{thm}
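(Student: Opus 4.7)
The plan is to specialize the proof of Theorem~\ref{theorem expected Euler characteristic algebraic case} to $\R\P^n$ using the explicit Bergman kernel~\eqref{equation kernel projective}, keeping track of exact numerical factors at finite $d$ rather than only leading-order asymptotics.

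First I would apply the Chern--Gauss--Bonnet formula~\eqref{CGB alg} together with Kac--Rice (Theorem~\ref{theorem Kac-Rice}) to reach \eqref{equation expectation Euler proj}. Two simplifications specific to this setting are crucial. By the $O(n+1)$-equivariance of both the Fubini--Study metric and the Kostlan Gaussian ensemble, the integrand is constant on $\R\P^n$, so evaluating at the single point $x=[1:0:\dots:0]$ and multiplying by $\vol{\R\P^n}=\tfrac{1}{2}\vol{\S^n}$ suffices. Moreover, reading off derivatives of $(x,y)\mapsto C_d(1+\prsc{x}{y})^d$ at the origin of the affine chart, Lemma~\ref{lemma variance 2-jet alg} shows that $s(x)$, $\nabla^d_x s$ and $\nabla^{2,d}_x s$ are mutually independent Gaussians. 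This collapses the conditional expectation in \eqref{equation expectation Euler proj} to an ordinary expectation and sidesteps the conditional variance computation of the general case.

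Next, I would rescale $(s(x),\nabla^d_x s,\nabla^{2,d}_x s)$ to standard Gaussians $(t_d,L_d,S_d)$ by factors proportional to $\sqrt{C_d}$, $\sqrt{d\,C_d}$ and $\sqrt{d(d-1)C_d}$ respectively, the factor $d(d-1)$ arising from differentiating $(1+\prsc{x}{y})^d$ four times at the origin. The ratio $\sqrt{d(d-1)}/\sqrt{d}=\sqrt{d-1}$ then pulls a $(d-1)$ in front of each copy of $\esp[U]{\prsc{S_d}{L_d^{\dagger *}(U)}^{\wedge 2}}$, so the $m$-fold wedge power inside the trace of~\eqref{equation expectation Euler proj} becomes $\bigl(R(x)+\tfrac{d-1}{2}\esp[U]{\prsc{S_d}{L_d^{\dagger *}(U)}^{\wedge 2}}\bigr)^{\wedge m}$. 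Binomial expansion, followed by Lemma~\ref{lemma exchanging expectation and wedge product} and Proposition~\ref{proposition same distribution}, produces~\eqref{eq last}. At the base point the Riemann tensor of $\R\P^n$ is $R(x)=\tfrac{1}{2}\sum_{i,j}dx_i\wedge dx_j\otimes dx_i\wedge dx_j$, and a direct computation gives $\esp[S]{\prsc{S}{U'}^{\wedge 2}}=-\Norm{U'}^2\sum_{i,j}dx_i\wedge dx_j\otimes dx_i\wedge dx_j$ on the relevant $2m$-dimensional subspace. Proportionality of these two symmetric $(2,2)$-double forms collapses the trace inside each summand to $(-1)^q\tfrac{(2m)!}{2^{m-q}}\Norm{U'}^{2q}$, providing the sign that turns $(d-1)^q$ into $(1-d)^q$.

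What remains is pure arithmetic. Using Lemma~\ref{lemma expectation norm Gaussian} to evaluate $\esp{\Norm{X_p}}$, $\esp{\Norm{X_{n-r+1}}^{1-2q}}$ and $\esp{\Norm{U'}^{2q}}$, and combining with the overall prefactor $(2\pi)^{-n/2}/m!$ from Chern--Gauss--Bonnet and $\vol{\R\P^n}=\tfrac{1}{2}\vol{\S^n}$, one checks that the coefficient of $(1-d)^q$ reduces to $\Gamma(q+\tfrac{r}{2})/(q!\,\Gamma(\tfrac{r}{2}))$, which after renaming $q\rightsquigarrow p$ is exactly~\eqref{Burgisser}. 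The main obstacle is precisely this bookkeeping: one must verify that every $\pi$-power, factor of $2$, binomial coefficient $\binom{m}{q}$ and ratio of sphere volumes cancels cleanly, using $\vol{\S^k}=2\pi^{(k+1)/2}/\Gamma(\tfrac{k+1}{2})$ and the duplication identity $(2q)!/(4^q q!)=\Gamma(q+\tfrac{1}{2})/\Gamma(\tfrac{1}{2})$.
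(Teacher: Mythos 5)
Your proposal is correct and follows essentially the same route as the paper: Kac--Rice plus Chern--Gauss--Bonnet, reduction to the base point by $O(n+1)$-invariance, the observation that the $0$-, $1$- and $2$-jet components are independent because the mixed derivatives of $(1+\prsc{x}{y})^d$ vanish at the origin, the explicit rescaling that pulls out $d-1$, the proportionality of $R(x)$ and $\esp[S]{\prsc{S}{U'}^{\wedge 2}}$ to the same $(2,2)$-double form, and the final Gamma-function bookkeeping. The only small slip is calling the rescaled $S_d$ a \emph{standard} Gaussian --- its diagonal entries $S_{ii}$ actually have variance $2$, matching the block $2I_{nr}$ in~\eqref{equation big Lambda tilde alg} --- but you use the correct covariance in the ensuing computation of $\esp[S]{\prsc{S}{U'}^{\wedge 2}}$, so the argument is unaffected.
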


\appendix

\section{Concerning Gaussian vectors}
\label{section gaussian vectors}

In this appendix we survey the few facts we need concerning random vectors, especially Gaussian ones. It is essentially borrowed from \cite[Appendix~A]{Nic2014}. We include it here for the reader's convenience.

\subsection{Variance and covariance as tensors}
\label{subsection basics of random vectors}

Let $V$ be a real vector space of finite dimension and $X$ a random vector with values in $V$. For any $\xi \in V^*$, $\xi(X)$ is a real random variable. From now on, we assume that these variables are square integrable.
\begin{dfn}
The \emph{expectation} (or \emph{mean}) of $X$ is the linear form on $V^*$ defined by:
\begin{equation}
\label{equation definition expectation}
m_X : \xi \mapsto \esp{\xi(X)}.
\end{equation}
If $m_X = 0$, we say that $X$ (resp. its distribution $\loi{X}$) is \emph{centered}.
\end{dfn}
\noindent
Under the canonical isomorphism $V^{**}\simeq V$, we have $m_X = \displaystyle\int_V x \loi{X}$.

\begin{dfn}
The \emph{variance} of $X$ is the non-negative symmetric bilinear form on $V^*$ defined by:
\begin{equation}
\label{equation definition covariance}
\var(X) : (\xi,\eta) \mapsto \esp{\xi(X-m_X)\eta(X-m_X)}.
\end{equation}
\end{dfn}
\begin{rem}
Traditionally, the term ``variance'' is only used when $V$ has dimension $1$ and one speaks of ``covariance'' when $\dim(V) \geq 2$. We chose to use the term ``covariance'' for couples of distinct random vectors (see below) and ``variance'' otherwise. This is the convention of \cite{AW2009}, for example.
\end{rem}

As a bilinear form on $V^* \times V^*$, $\var(X)$ is naturally an element of $V \otimes V$ and we have the following lemma.
\begin{lem}
\label{lemma variance as a tensor}
Let $X$ be a random vector in $V$, then we have:
\begin{equation}
\label{equation variance as a tensor}
\var(X) = \esp{(X-m_X) \otimes (X-m_X)}.
\end{equation}
\end{lem}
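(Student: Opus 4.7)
The plan is to unfold both sides as bilinear forms on $V^* \times V^*$ via the canonical identification $V \otimes V \simeq \operatorname{Bil}(V^* \times V^*)$, and then check they agree by linearity of expectation. Concretely, under this identification a pure tensor $v \otimes w \in V \otimes V$ corresponds to the bilinear form $(\xi,\eta) \mapsto \xi(v)\eta(w)$, and since two bilinear forms are equal iff they agree on every pair of arguments, it suffices to evaluate both sides on an arbitrary $(\xi,\eta) \in V^* \times V^*$.

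First I would note that for each fixed $(\xi,\eta)$ the map $v \otimes w \mapsto \xi(v)\eta(w)$ extends by bilinearity to a continuous (even algebraic) linear form on the finite-dimensional space $V \otimes V$. Since expectation commutes with any linear functional on a finite-dimensional space containing integrable random vectors, evaluating $\esp{(X - m_X) \otimes (X - m_X)}$ at $(\xi,\eta)$ yields $\esp{\xi(X - m_X)\,\eta(X - m_X)}$, which is exactly $\var(X)(\xi,\eta)$ by \eqref{equation definition covariance}. This gives the identity of bilinear forms, hence the identity of elements of $V \otimes V$.

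Since integrability of $\xi(X - m_X)\eta(X - m_X)$ for all $\xi,\eta$ is already granted by the standing square-integrability assumption (via Cauchy--Schwarz), there is no technical obstacle. The only point worth spelling out carefully is the naturality of the identification between $V \otimes V$ and bilinear forms on $V^*$, so that the ``outer product'' $(X - m_X) \otimes (X - m_X)$ truly represents the bilinear form one would write down component-wise in a basis; this is the step I would emphasize for clarity, as the rest is pure linearity of $\mathbb{E}$.
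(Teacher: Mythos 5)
Your proof is correct and follows the same route as the paper's: evaluate both sides as bilinear forms at an arbitrary $(\xi,\eta)\in V^*\times V^*$, observe that $(X-m_X)\otimes(X-m_X)\mapsto \xi(X-m_X)\eta(X-m_X)$ is a linear functional on $V\otimes V$ commuting with $\esp{\,\cdot\,}$, and match the result with the defining formula \eqref{equation definition covariance}. The paper is slightly terser, writing the linear functional as $\xi\otimes\eta$ acting on $V\otimes V$, but the content is identical.
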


\begin{proof}
For any $\xi$ and $\eta \in V^*$, we have:
\begin{align*}
\var(X)(\xi,\eta)&=\esp{\xi(X-m_X)\eta(X-m_X)}=\esp{(\xi \otimes \eta)((X-m_X) \otimes (X-m_X))}\\
&= (\xi \otimes \eta) \esp{(X-m_X) \otimes (X-m_X)}.
\qedhere
\end{align*}
\end{proof}

\begin{dfn}
The \emph{variance operator} of $X$ is the linear map $\Lambda_X : V^* \to V$ such that, for any $\xi$ and $\eta \in V^*$,
\begin{equation}
\label{equation definition covariance operator}
\xi\left(\Lambda_X \eta \right) = \var(X)(\xi,\eta).
\end{equation}
\end{dfn}

By Lemma~\ref{lemma variance as a tensor} we have:
\begin{equation}
\label{equation variance operator as a tensor}
\Lambda_X:\eta \mapsto \esp{(X-m_X)\otimes \eta(X-m_X)}.
\end{equation}

If $V=V_1 \oplus V_2$ and $X=(X_1,X_2)$, with $X_i$ a random vector in $V_i$, then $m_X = m_{X_1}+m_{X_2}$ and the variance form $\var(X)$ splits accordingly into four parts:
\begin{align*}
\var(X_1) &: V_1^* \times V_1^* \to \R, & & & \cov(X_1,X_2) &: V_1^* \times V_2^* \to \R, \\ \var(X_2) &: V_2^* \times V_2^* \to \R & &\text{and} & \cov(X_2,X_1) &: V_2^* \times V_1^* \to \R.
\end{align*}
These bilinear forms are associated, as above, to the following operators:
\begin{align*}
\Lambda_{11} &: V_1^* \to V_1, & & & \Lambda_{12} &: V_2^* \to V_1, \\ \Lambda_{22} &: V_2^* \to V_2 & &\text{and} & \Lambda_{21} &: V_1^* \to V_2.
\end{align*}
Since $\var(X)$ is symmetric, $\cov(X_1,X_2)(\xi,\eta)=\cov(X_2,X_1)(\eta,\xi)$ for any $\xi$ and $\eta$.

\begin{dfn}
We say that $\cov(X_1,X_2)$ is the \emph{covariance} of $X_1$ and $X_2$, and that $\Lambda_{12}$ is their \emph{covariance operator}.
\end{dfn}

As above, $\cov(X_1,X_2)$ is naturally an element of $V_1 \otimes V_2$.
\begin{lem}
\label{lemma covariance as a tensor}
Let $X_1$ and $X_2$ be random vectors in $V_1$ and $V_2$ respectively, then we have: \[\cov(X_1,X_2)= \esp{(X_1-m_{X_1}) \otimes (X_2-m_{X_2})}.\]
Moreover, for any $\eta \in V_2^*$, \qquad $\Lambda_{12}(\eta) = \esp{(X_1 - m_{X_1}) \otimes \eta(X_2 - m_{X_2})}$.
\end{lem}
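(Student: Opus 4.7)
The plan is to imitate directly the proof of the earlier lemma (``lemma variance as a tensor''), since the present statement is the bilateral analogue of that one, with a single random vector $X$ replaced by the pair $(X_1,X_2)$. Nothing new is needed beyond unpacking the definitions and using bilinearity of the tensor product together with linearity of expectation.

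First I would verify the tensor identity by evaluating both sides on an arbitrary pair of linear forms $(\xi,\eta) \in V_1^* \times V_2^*$. By the definition of $\cov(X_1,X_2)$ as a bilinear form on $V_1^* \times V_2^*$, one has
\[
\cov(X_1,X_2)(\xi,\eta) = \esp{\xi(X_1-m_{X_1})\,\eta(X_2-m_{X_2})}.
\]
Then the key observation is the pointwise identity
\[
\xi(X_1-m_{X_1})\,\eta(X_2-m_{X_2}) = (\xi\otimes\eta)\bigl((X_1-m_{X_1})\otimes(X_2-m_{X_2})\bigr),
\]
which holds because the action of $\xi\otimes\eta \in V_1^*\otimes V_2^* \simeq (V_1\otimes V_2)^*$ on a pure tensor $v_1\otimes v_2$ is precisely $\xi(v_1)\eta(v_2)$. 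Applying linearity of expectation (here $\xi\otimes\eta$ acts as a fixed linear form, so it commutes with $\mathbb{E}$) yields
\[
\cov(X_1,X_2)(\xi,\eta) = (\xi\otimes\eta)\,\esp{(X_1-m_{X_1})\otimes(X_2-m_{X_2})},
\]
and since this holds for every $(\xi,\eta)$, the two elements of $V_1\otimes V_2$ must coincide.

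For the second assertion, I would start from the defining relation $\xi(\Lambda_{12}\eta) = \cov(X_1,X_2)(\xi,\eta)$ for all $\xi \in V_1^*$. Substituting the tensor formula just established and moving $\xi$ inside the expectation gives
\[
\xi(\Lambda_{12}\eta) = \esp{\xi(X_1-m_{X_1})\,\eta(X_2-m_{X_2})} = \xi\!\left(\esp{(X_1-m_{X_1})\,\eta(X_2-m_{X_2})}\right).
\]
Since this identity holds for every $\xi$, the desired formula for $\Lambda_{12}(\eta)$ follows.

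There is no real obstacle here: the only point to be mildly careful about is the natural identification of bilinear forms on $V_1^*\times V_2^*$ with elements of $V_1\otimes V_2$, and the corresponding identification of $\Lambda_{12}:V_2^* \to V_1$ with the bilinear form $\cov(X_1,X_2)$. Once those identifications are made explicit (as they already are in the setup above Lemma~\ref{lemma variance as a tensor}), the proof reduces to a one-line manipulation of a pure tensor under a linear functional, exactly parallel to the proof of ``lemma variance as a tensor''.
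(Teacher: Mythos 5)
Your proof is correct and is precisely the argument the paper intends but omits: Lemma~\ref{lemma covariance as a tensor} is stated without proof because it is the exact two-variable analogue of Lemma~\ref{lemma variance as a tensor}, whose proof you have faithfully reproduced (evaluate on $(\xi,\eta)$, use $\xi(v_1)\eta(v_2)=(\xi\otimes\eta)(v_1\otimes v_2)$, and pull the fixed linear form out of the expectation), and then derived the operator form $\Lambda_{12}$ by the defining duality $\xi(\Lambda_{12}\eta)=\cov(X_1,X_2)(\xi,\eta)$.
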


Let $L:V\to V'$ be a linear map between finite-dimensional vector spaces and $X$ be a random vector in $V$. Then $L(X)$ is a random vector in $V'$ with $\loi{L(X)} = L_*(\loi{X})$. An immediate consequence of \eqref{equation definition expectation}, \eqref{equation definition covariance} and \eqref{equation definition covariance operator} is that:
\begin{align}
\label{equation pushforward mean}
m_{L(X)} &= m_X \circ L^*,\\
\label{equation pushforward variance form}
\var(L(X)) &= \var(X)(L^* \cdot ,L^* \cdot),\\
\intertext{and}
\label{equation pushforward variance operator}
\Lambda_{L(X)} &= L \Lambda_X L^*,
\end{align}
where $L^*: (V')^* \to V^*$ is defined by $L^* : \xi \mapsto (\xi \circ L)$.

If $X$ is a random vector in a Euclidean space $(V,\prsc{\cdot}{\cdot})$, we can see $\var(X)$ as a bilinear symmetric form on $V$, and $\Lambda_X$ as a self-adjoint operator on $V$. Then, by~\eqref{equation variance as a tensor} and~\eqref{equation variance operator as a tensor} :
\begin{align*}
\var(X) &= \esp{(X-m_X)^* \otimes (X-m_X)^*},\\
\Lambda_X &= \esp{(X-m_X) \otimes (X-m_X)^*},
\end{align*}
where for any $v \in V$, we set $v^* = \prsc{v}{\cdot} \in V^*$.

If $V=V_1 \oplus V_2$, we can see $\Lambda_{12}$ as a linear operator from $V_2$ to $V_1$ and by Lemma~\ref{lemma covariance as a tensor}:
\begin{align*}
\cov(X_1,X_2)&= \esp{(X_1-m_{X_1})^* \otimes (X_2-m_{X_2})^*},\\
\Lambda_{12} &= \esp{(X_1-m_{X_1}) \otimes (X_2-m_{X_2})^*},
\end{align*}

\begin{lem}
\label{lemma random scalar product}
Let $X$ be a random vector in a Euclidean space $V$, then we have:
\begin{equation}
\label{equation random scalar product var}
\forall v \in V, \forall w \in V, \quad \esp{\prsc{v}{X-m_X}\prsc{w}{X-m_X}}=\prsc{v}{\Lambda_X w}.
\end{equation}
\end{lem}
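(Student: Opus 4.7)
The plan is to unwind the definitions established just before the lemma. The key observation is that once $V$ carries an inner product, any vector $v \in V$ corresponds to the linear form $v^* = \prsc{v}{\cdot} \in V^*$, and under this identification the variance operator $\Lambda_X$ becomes a self-adjoint operator on $V$ itself, while the bilinear form $\var(X)$ becomes a symmetric bilinear form on $V$. So the statement to prove is really just the compatibility between the variance form viewed on $V$ and the variance operator viewed on $V$, which was already noted in the paragraph preceding the lemma.

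First I would fix $v,w \in V$ and rewrite the left-hand side as
\[
\esp{\prsc{v}{X-m_X}\prsc{w}{X-m_X}} = \esp{v^*(X-m_X)\,w^*(X-m_X)}.
\]
By the very definition \eqref{equation definition covariance} of $\var(X)$ applied to the dual vectors $v^*, w^* \in V^*$, this equals $\var(X)(v^*,w^*)$. Then, by the defining property \eqref{equation definition covariance operator} of the variance operator, $\var(X)(v^*,w^*) = v^*(\Lambda_X w^*)$. Finally, unwinding the Euclidean identification $V^* \simeq V$ (so that $v^*$ acting on a vector is the inner product with $v$, and $\Lambda_X$ is regarded as an operator $V \to V$), one gets $v^*(\Lambda_X w^*) = \prsc{v}{\Lambda_X w}$, which is the right-hand side.

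There is no real obstacle: this is a three-line tautology chaining \eqref{equation definition covariance} and \eqref{equation definition covariance operator} with the Riesz identification. The only thing to watch is notational consistency, since the paper uses the same symbol $\Lambda_X$ for the map $V^* \to V$ and, in the Euclidean setting, for the induced self-adjoint map $V \to V$. Once that identification is made explicit, the lemma is immediate; no computation in coordinates or extra hypothesis on $X$ (beyond square integrability, which is the standing assumption) is needed.
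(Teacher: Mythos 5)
Your proof is correct and takes essentially the same route as the paper: both reduce the claim to the definitions \eqref{equation definition covariance} and \eqref{equation definition covariance operator} combined with the Euclidean identification $V \simeq V^*$. The paper merely compresses your three steps into two lines, writing $\var(X)(v,w)$ directly as a bilinear form on $V$ rather than passing explicitly through $v^*, w^*$.
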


\begin{proof}
Let $v$ and $w\in V$, then we have:
\begin{align*}
\esp{\prsc{v}{X-m_X}\prsc{w}{X-m_X}} &= \var(X)(v,w) & &\text{as a bilinear form on }V,\\
&= \prsc{v}{\Lambda_X w} & &\text{where } \Lambda_X : V\to V.\qedhere
\end{align*}
\end{proof}

\subsection{Gaussian vectors}
\label{subsection gaussian vectors}

The following material can be found either in \cite[section~1.2]{AW2009} or \cite[section~1.2]{AT2007}. We present it in a coordinate-free fashion, in the spirit of \cite{Nic2014}.

Let $m \in \R$ and $\sigma \geq 0$, then the \emph{Gaussian} (or \emph{normal}) distribution on $\R$ with expectation $m$ and variance $\sigma^2$ is the distribution whose characteristic function is $\xi \mapsto \exp\left(im\xi - \frac{1}{2}\sigma^2\xi^2\right)$. If $\sigma = 0$, this is the Dirac measure centered at $m$, otherwise it has a density with respect to the Lebesgue measure, given by $x \mapsto \frac{1}{\sigma\sqrt{2\pi}}\exp\left(-\frac{(x-m)^2}{2\sigma^2}\right)$.

Let $V$ be a real vector space of dimension $n$, then a random vector $X$ in $V$ is said to be \emph{Gaussian}, or \emph{normally distributed}, if for any $\xi \in V^*$, $\xi(X)$ is a Gaussian variable in $\R$. Recall that  a Gaussian vector has finite moments of all orders and that its distribution is totally determined by its expectation and variance. We denote by $\mathcal{N}(m,\Lambda)$ the Gaussian distribution with expectation $m$ and variance operator $\Lambda$ and by $X \sim \mathcal{N}(m,\Lambda)$ the fact that $X$ is distributed according to $\mathcal{N}(m,\Lambda)$. From~\eqref{equation pushforward mean} and \eqref{equation pushforward variance operator} we deduce the following.

\begin{lem}
\label{lemma pushforward gaussian variable}
Let $L:V \to V'$ be a linear map between finite-dimensional vector spaces and $X\sim \mathcal{N}(m,\Lambda)$ in $V$. Then $L(X) \sim \mathcal{N}(Lm,L\Lambda L^*)$ in $V'$.
\end{lem}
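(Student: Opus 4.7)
The plan is to reduce the statement directly to the definition of a Gaussian vector together with the functorial identities \eqref{equation pushforward mean} and \eqref{equation pushforward variance operator} established earlier.

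First I would show that $L(X)$ is Gaussian. By definition, this means checking that $\xi'(L(X))$ is a real Gaussian variable for every $\xi' \in (V')^*$. But $\xi'(L(X)) = (L^* \xi')(X)$, and since $L^*\xi' \in V^*$ and $X$ is Gaussian in $V$, the random variable $(L^*\xi')(X)$ is Gaussian in $\R$ by the very definition of a Gaussian vector applied to $X$. Hence $L(X)$ is Gaussian.

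Next I would identify its parameters. A Gaussian distribution on $V'$ is completely determined by its expectation and its variance operator, so it suffices to compute these for $L(X)$. Formula~\eqref{equation pushforward mean} gives $m_{L(X)} = m_X \circ L^*$, which, under the canonical identification $V \simeq V^{**}$, is exactly $Lm$; and formula~\eqref{equation pushforward variance operator} gives $\Lambda_{L(X)} = L\Lambda_X L^* = L \Lambda L^*$. Combining these with uniqueness of the Gaussian distribution with given mean and variance yields $L(X) \sim \mathcal{N}(Lm, L\Lambda L^*)$.

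There is no real obstacle here — the only subtlety worth mentioning is the implicit identification $V^{**} \simeq V$ used to view $m_X \circ L^*$ as the vector $Lm \in V'$, which is the same convention already used below \eqref{equation definition expectation}. Everything else is a direct application of the pushforward formulas \eqref{equation pushforward mean}--\eqref{equation pushforward variance operator} to the finite-dimensional linear map $L$.
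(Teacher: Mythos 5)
Your proof is correct and takes essentially the same route as the paper, which simply invokes \eqref{equation pushforward mean} and \eqref{equation pushforward variance operator} to deduce the lemma. The only difference is that you explicitly verify Gaussianity of $L(X)$ via the defining property (that $\xi'(L(X)) = (L^*\xi')(X)$ is a real Gaussian for every $\xi' \in (V')^*$), a step the paper leaves implicit.
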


If $V = V_1 \oplus V_2$ and $X=(X_1,X_2) \sim \mathcal{N}(m,\Lambda)$ then, with the notations of section~\ref{subsection basics of random vectors}, Lemma~\ref{lemma pushforward gaussian variable} shows that $X_1 \sim \mathcal{N}(m_{X_1},\Lambda_{11})$ and $X_2 \sim \mathcal{N}(m_{X_2},\Lambda_{22})$. Besides, $X_1$ and $X_2$ are independent if and only if $\cov(X_1,X_2)=0$, or equivalently $\Lambda_{12}=0$.

\begin{prop}[Regression formula]
\label{proposition regression formula}
Let $X=(X_1,X_2)$ be a Gaussian vector in $V_1\oplus V_2$. If $\var(X_1)$ is non-degenerate then $X_2$ has the same distribution as
\begin{equation*}
m_{X_2} + \Lambda_{21}(\Lambda_{11})^{-1}\left(X_1-m_{X_1}\right) + Y
\end{equation*}
where $Y$ is a centered Gaussian vector in $V_2$ with variance operator $\Lambda_{22}-\Lambda_{21}(\Lambda_{11})^{-1}\Lambda_{12}$, independent of $X_1$.
\end{prop}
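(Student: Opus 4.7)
The plan is to explicitly construct $Y$ as a linear function of $X$ and then verify that it has the required properties. First I would reduce to the centered case: replacing $X_1$ by $X_1 - m_{X_1}$ and $X_2$ by $X_2 - m_{X_2}$ does not change the variance/covariance operators $\Lambda_{ij}$, and the general formula follows from the centered one by translation. So assume $m_{X_1} = 0$ and $m_{X_2} = 0$.

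Next, define explicitly
\begin{equation*}
Y := X_2 - \Lambda_{21}(\Lambda_{11})^{-1} X_1,
\end{equation*}
which makes sense because $\Lambda_{11}:V_1^* \to V_1$ is invertible by hypothesis, so $\Lambda_{21}(\Lambda_{11})^{-1}:V_1 \to V_2$ is a well-defined linear map. Then tautologically $X_2 = \Lambda_{21}(\Lambda_{11})^{-1}X_1 + Y$, which is the claimed decomposition (after reinjecting the means). Since $(X_1,Y)$ is the image of the Gaussian vector $(X_1,X_2)$ under the linear map $(x_1,x_2) \mapsto (x_1, x_2 - \Lambda_{21}(\Lambda_{11})^{-1}x_1)$, Lemma~\ref{lemma pushforward gaussian variable} shows that $(X_1,Y)$ is jointly Gaussian, hence in particular $Y$ is a centered Gaussian vector in $V_2$.

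It then remains to check two points: that $Y$ is independent of $X_1$, and that $\var(Y)$ has the announced expression. For independence, since $(X_1,Y)$ is jointly Gaussian, it suffices by the remark preceding the proposition to verify $\cov(X_1,Y) = 0$. Using bilinearity of the covariance and~\eqref{equation pushforward variance operator}, one computes
\begin{equation*}
\cov(X_1,Y) = \cov(X_1,X_2) - \cov\!\left(X_1,\Lambda_{21}(\Lambda_{11})^{-1}X_1\right) = \Lambda_{12} - \Lambda_{11}\bigl((\Lambda_{11})^{-1}\bigr)^{*}\Lambda_{21}^{*},
\end{equation*}
and since $\Lambda_{11}$ is self-adjoint (as the variance operator of $X_1$) and $\Lambda_{21}^{*} = \Lambda_{12}$, the right-hand side vanishes. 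The same kind of bilinear expansion gives
\begin{equation*}
\var(Y) = \Lambda_{22} - 2\Lambda_{21}(\Lambda_{11})^{-1}\Lambda_{12} + \Lambda_{21}(\Lambda_{11})^{-1}\Lambda_{11}(\Lambda_{11})^{-1}\Lambda_{12} = \Lambda_{22} - \Lambda_{21}(\Lambda_{11})^{-1}\Lambda_{12}.
\end{equation*}

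The main subtlety, which is not really an obstacle but requires care, is the bookkeeping of the source and target spaces of the operators: $\Lambda_{ij}:V_j^*\to V_i$, so one must apply \eqref{equation pushforward variance operator} with the correct adjoints and use the self-adjointness of $\Lambda_{11}$ and the symmetry relation $\Lambda_{21}^{*}=\Lambda_{12}$ (both consequences of the symmetry of $\var(X)$). Apart from this, everything is a straightforward linear-algebra computation combined with the fact that for jointly Gaussian vectors uncorrelation is equivalent to independence.
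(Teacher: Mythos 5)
Your proof is correct. The paper does not give its own proof of this proposition — it simply cites \cite[prop.~1.2]{AW2009} — and the argument you give (reduce to the centered case, set $Y := X_2 - \Lambda_{21}\Lambda_{11}^{-1}X_1$, check via the pushforward lemma that $(X_1,Y)$ is jointly Gaussian, then verify $\cov(X_1,Y)=0$ and compute $\var(Y)$ using the self-adjointness of $\Lambda_{11}$ and the identity $\Lambda_{21}^{*}=\Lambda_{12}$) is exactly the standard derivation found in that reference. Note that your construction actually gives the decomposition as an almost-sure equality rather than merely equality in distribution, which is a slightly stronger statement and entirely fine.
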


\noindent
This is shown in \cite[prop.~1.2]{AW2009}. From this, we deduce that the distribution of $X_2$ given $X_1=x_1$ is Gaussian in $V_2$ with expectation $m_{X_2}+\Lambda_{21}(\Lambda_{11})^{-1}(x_1-m_{X_1})$ and variance operator $\Lambda_{22}-\Lambda_{21}(\Lambda_{11})^{-1}\Lambda_{12}$. We use this in the case where $X$ is centered and $x_1=0$.

\begin{cor}
\label{corollary conditional expectation}
Let $X=(X_1,X_2)$ be a centered Gaussian vector in $V_1\oplus V_2$ and assume that $\var(X_1)$ is non-degenerate. Then the distribution of $X_2$ given $X_1 = 0$ is a centered Gaussian in $V_2$ with variance operator $\Lambda_{22}- \Lambda_{21} (\Lambda_{11})^{-1} \Lambda_{12}$.
\end{cor}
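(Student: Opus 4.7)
The plan is to deduce this corollary directly from Proposition~\ref{proposition regression formula}, which has just been stated. The regression formula provides a decomposition of the random vector $X_2$ as a deterministic affine function of $X_1$ plus an independent Gaussian noise $Y$, and conditioning on $X_1 = 0$ then simply picks out this noise term.

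More precisely, I would proceed as follows. First, since $X$ is centered we have $m_{X_1} = 0$ and $m_{X_2} = 0$. Applying Proposition~\ref{proposition regression formula} (which is legitimate because $\var(X_1)$, hence $\Lambda_{11}$, is assumed non-degenerate), I would write that $X_2$ has the same distribution as $\Lambda_{21}(\Lambda_{11})^{-1} X_1 + Y$, where $Y$ is a centered Gaussian vector in $V_2$ with variance operator $\Lambda_{22} - \Lambda_{21}(\Lambda_{11})^{-1}\Lambda_{12}$, independent of $X_1$.

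Then I would condition on the event $\{X_1 = 0\}$ (which is technically a conditioning on a Gaussian random vector with non-degenerate variance, and this makes sense via the regular conditional distribution implicit in the regression formula). On this event the affine term $\Lambda_{21}(\Lambda_{11})^{-1} X_1$ vanishes, and since $Y$ is independent of $X_1$ its conditional distribution given $\{X_1=0\}$ coincides with its unconditional distribution. Hence the conditional distribution of $X_2$ given $X_1 = 0$ is the same as the law of $Y$, namely a centered Gaussian in $V_2$ with variance operator $\Lambda_{22} - \Lambda_{21}(\Lambda_{11})^{-1}\Lambda_{12}$, which is the claimed formula.

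There is really no significant obstacle: the content of the corollary is entirely packaged inside the regression formula, and all one has to do is to set the conditioning value to $0$ and use that a centered $X$ has $m_{X_1} = m_{X_2} = 0$. The only conceptual point worth mentioning explicitly is the independence of $Y$ and $X_1$, which guarantees that conditioning on $X_1 = 0$ does not alter the law of $Y$; this is where the assumption that $\var(X_1)$ is non-degenerate is used, as it is needed to invoke Proposition~\ref{proposition regression formula} in the first place.
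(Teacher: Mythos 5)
Your proof is correct and follows exactly the paper's route: both deduce the corollary directly from the regression formula of Proposition~\ref{proposition regression formula}, noting that conditioning on $X_1=0$ kills the affine term and that the independence of $Y$ from $X_1$ leaves the law of $Y$ unchanged.
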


In what follows, we assume that $V$ is a Euclidean space. Recall that in this case, we can see the variance operator of a random vector as an endomorphism of $V$. We will say that $\mathcal{N}(0,\Id)$ is the \emph{standard normal distribution} on $V$, where $\Id$ denotes the identity map on $V$.

If $\var(X)$ is non-degenerate, then $\loi{X}$ has the following density with respect to the Lebesgue measure on $V$:
\begin{equation*}
x \mapsto \frac{1}{(2\pi)^\frac{n}{2}\sqrt{\det(\Lambda_X)}}\exp\left(-\frac{1}{2}\prsc{(\Lambda_X)^{-1} (x-m)}{x-m}\right).
\end{equation*}
If $\var(X)$ is singular, $\loi{X}$ is supported on $\ker(\Lambda_X)^\perp$.

\begin{lem}
\label{lemma convergence in distribution}
Let $(X_k)_{k \in \N}$ be a sequence of random vectors in $V$ such that $X_k \sim \mathcal{N}(m_k,\Lambda_k)$ for all $k\in \N$. We assume that $m_k \xrightarrow[k \to +\infty]{} m$ and $\Lambda_k \xrightarrow[k \to +\infty]{} \Lambda$. Then $X_k$ converges in distribution to $\mathcal{N}(m,\Lambda)$.
\end{lem}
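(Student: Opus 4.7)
The plan is to invoke Lévy's continuity theorem: a sequence of random vectors converges in distribution if and only if its characteristic functions converge pointwise to the characteristic function of the limiting distribution (and that limit is continuous at $0$, which is automatic for Gaussians).

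Concretely, I would first recall that the characteristic function of a Gaussian vector $Y \sim \mathcal{N}(\mu,\Sigma)$ on the Euclidean space $V$ is given, for every $\xi \in V^*$ (or equivalently $\xi \in V$ via the Euclidean identification), by
\begin{equation*}
\varphi_Y(\xi) = \esp{e^{i\xi(Y)}} = \exp\!\left(i\xi(\mu) - \tfrac{1}{2}\xi(\Sigma \xi)\right).
\end{equation*}
This is just the standard formula and follows from the one-dimensional Gaussian characteristic function applied to the real random variable $\xi(Y)$, since $\xi(Y) \sim \mathcal{N}(\xi(\mu),\xi(\Sigma\xi))$ by Lemma~\ref{lemma pushforward gaussian variable}.

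Applying this to each $X_k \sim \mathcal{N}(m_k,\Lambda_k)$, I obtain
\begin{equation*}
\varphi_{X_k}(\xi) = \exp\!\left(i\xi(m_k) - \tfrac{1}{2}\xi(\Lambda_k \xi)\right).
\end{equation*}
Since $m_k \to m$ and $\Lambda_k \to \Lambda$ (as vectors and as operators on the finite-dimensional space $V$, so in particular for every fixed $\xi$ the scalars $\xi(m_k)$ and $\xi(\Lambda_k \xi)$ converge to $\xi(m)$ and $\xi(\Lambda \xi)$), continuity of the complex exponential gives
\begin{equation*}
\varphi_{X_k}(\xi) \xrightarrow[k \to +\infty]{} \exp\!\left(i\xi(m) - \tfrac{1}{2}\xi(\Lambda \xi)\right),
\end{equation*}
which is precisely the characteristic function of $\mathcal{N}(m,\Lambda)$. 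This limit is continuous at $\xi = 0$, so Lévy's continuity theorem yields $X_k \Rightarrow \mathcal{N}(m,\Lambda)$, which is the claim.

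There is no serious obstacle here: the statement is a textbook consequence of the explicit form of Gaussian characteristic functions together with Lévy's theorem, and is included in the appendix only for the reader's convenience. The only minor point of care is to check that the convergence of $\Lambda_k$ to $\Lambda$ as operators (equivalently as tensors, since $V$ is finite-dimensional) does imply pointwise convergence of the quadratic forms $\xi \mapsto \xi(\Lambda_k \xi)$, but this is immediate since all norms on $\End(V)$ are equivalent.
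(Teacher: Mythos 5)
Your proof is correct and matches the paper's argument exactly: both proceed by computing the Gaussian characteristic functions, observing pointwise convergence from $m_k \to m$ and $\Lambda_k \to \Lambda$, and concluding via Lévy's continuity theorem. You simply spell out the explicit form of the characteristic function, which the paper leaves implicit.
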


\begin{proof}
Under the hypothesis of the lemma, the characteristic function of $X_k$ converges pointwise to the characteristic function of a Gaussian vector $X\sim \mathcal{N}(m,\Lambda)$. Then, Lévy's continuity theorem gives the result.
\end{proof}

We conclude this appendix by computing two Gaussian expectations.
\begin{lem}
\label{lemma expectation norm Gaussian}
Let $X \sim \mathcal{N}(0,\Id)$ with values in a Euclidean space of dimension $n$ and let $k \in \Z$ such that $k > -n$. Then,
\begin{equation*}
\esp{\Norm{X}^k} = (2\pi)^\frac{k}{2} \frac{\vol{\S^{n-1}}}{\vol{\S^{n+k-1}}}.
\end{equation*}
\end{lem}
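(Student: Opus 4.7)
The plan is to reduce the expectation to a one-dimensional integral via polar (spherical) coordinates and then identify the resulting Gamma factor with the ratio of sphere volumes.

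First I would write the expectation as a Lebesgue integral against the standard Gaussian density:
\begin{equation*}
\esp{\Norm{X}^k} = \frac{1}{(2\pi)^{n/2}} \int_{\R^n} \Norm{x}^k \exp\!\left(-\tfrac{1}{2}\Norm{x}^2\right) \dx x.
\end{equation*}
Switching to spherical coordinates on $\R^n$, the angular part gives a factor $\vol{\S^{n-1}}$ (the integrand depends only on $\Norm{x}$), and the radial part becomes
\begin{equation*}
\int_0^{+\infty} r^{k+n-1} e^{-r^2/2} \dx r.
\end{equation*}
Here the hypothesis $k>-n$ is exactly what is needed for convergence at $r=0$, and the Gaussian decay handles $r\to\infty$.

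Next I would evaluate the radial integral by the substitution $u=r^2/2$, yielding $2^{(k+n-2)/2} \Gamma\!\left(\frac{k+n}{2}\right)$. At this stage we have
\begin{equation*}
\esp{\Norm{X}^k} = \frac{\vol{\S^{n-1}}}{(2\pi)^{n/2}}\, 2^{(k+n-2)/2} \Gamma\!\left(\tfrac{k+n}{2}\right).
\end{equation*}

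Finally I would eliminate the Gamma function using the classical identity $\vol{\S^{m-1}} = \dfrac{2\pi^{m/2}}{\Gamma(m/2)}$, applied with $m=n+k$, to rewrite $\Gamma\!\left(\frac{n+k}{2}\right) = \dfrac{2\pi^{(n+k)/2}}{\vol{\S^{n+k-1}}}$. Substituting and simplifying the powers of $2$ and $\pi$ collapses the expression to
\begin{equation*}
(2\pi)^{k/2}\frac{\vol{\S^{n-1}}}{\vol{\S^{n+k-1}}},
\end{equation*}
which is the claimed identity. There is no genuine obstacle here: the only point requiring a little care is the bookkeeping of powers of $2$ and $\pi$ in the last step, which is a routine verification.
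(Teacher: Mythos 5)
Your proof is correct and follows exactly the same route as the paper: write the expectation as a Lebesgue integral against the standard Gaussian density, pass to spherical coordinates, evaluate the radial integral via the substitution $u=r^2/2$ to produce a Gamma factor, and then trade $\Gamma\!\left(\frac{n+k}{2}\right)$ for $\vol{\S^{n+k-1}}$ using $\vol{\S^{m-1}} = 2\pi^{m/2}/\Gamma(m/2)$. The only cosmetic difference is that the paper leaves its final line in terms of the Gamma function and tacitly invokes the sphere-volume identity, whereas you spell it out.
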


\begin{proof}
\begin{align*}
\esp{\Norm{X}^k} &= \frac{1}{(2\pi)^\frac{n}{2}}\int_{V} \Norm{x}^k e^{-\frac{1}{2}\Norm{x}^2} \dx x = \frac{\vol{\S^{n-1}}}{(2\pi)^\frac{n}{2}}\int_0^{+\infty} r^{k+n-1}e^{-\frac{1}{2}r^2} \dx r \\
&= \frac{\vol{\S^{n-1}}}{(2\pi)^\frac{n}{2}} \int_0^{+\infty} (2t)^{\frac{k+n}{2}-1}e^{-t} \dx t = \frac{(2\pi)^\frac{k}{2}\vol{\S^{n-1}}}{2\pi^\frac{k+n}{2}}\  \Gamma\!\left(\frac{k+n}{2}\right).
\qedhere
\end{align*}
\end{proof}
\noindent
Assuming we proved Proposition~\ref{proposition same distribution} (more precisely its Corollary~\ref{corollary distribution orthogonal det}), we have the following.
\begin{lem}
\label{lemma expectation odet of L}
Let $V$ and $V'$ be two Euclidean spaces of dimension $n$ and $r$ respectively, with $1 \leq r \leq n$. Let $L \sim \mathcal{N}(0,\Id)$ in $V' \otimes V^*$. Then:
\[\esp{\odet{L}} = (2\pi)^\frac{r}{2} \frac{\vol{\S^{n-r}}}{\vol{\S^n}}.\]
\end{lem}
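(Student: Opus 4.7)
The plan is to reduce the computation to independent one-dimensional Gaussian norms and then apply Lemma \ref{lemma expectation norm Gaussian}. The hint in the statement (``assuming Corollary \ref{corollary distribution orthogonal det}'') suggests extracting the first marginal of Proposition \ref{proposition same distribution}: since $(\odet{L},(L^\dagger)^*U)$ is distributed as $(\Norm{X_n}\cdots\Norm{X_{n-r+1}},\,U'/\Norm{X_{n-r+1}})$ with $X_p\in\R^p$ standard Gaussian and the $X_{n-r+1},\dots,X_n$ mutually independent, forgetting the second coordinate yields
\[
\odet{L} \;\stackrel{\text{law}}{=}\; \Norm{X_n}\,\Norm{X_{n-1}}\cdots\Norm{X_{n-r+1}}.
\]

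From there I would simply take the expectation and use independence of the $X_p$'s to split the product:
\[
\esp{\odet{L}} \;=\; \prod_{p=n-r+1}^{n}\esp{\Norm{X_p}}.
\]
Each factor is computed by Lemma \ref{lemma expectation norm Gaussian} applied with $k=1$ in dimension $p$, giving $\esp{\Norm{X_p}}=(2\pi)^{1/2}\,\vol{\S^{p-1}}/\vol{\S^p}$. Plugging this in produces a telescoping product:
\[
\esp{\odet{L}} \;=\; (2\pi)^{r/2}\prod_{p=n-r+1}^{n}\frac{\vol{\S^{p-1}}}{\vol{\S^p}} \;=\; (2\pi)^{r/2}\,\frac{\vol{\S^{n-r}}}{\vol{\S^n}},
\]
which is the claimed formula.

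There is essentially no obstacle here provided one accepts Proposition \ref{proposition same distribution}; the only care needed is to check that the distributional identity it supplies truly projects onto $\odet{L}$ alone (which is immediate, since marginals of a coupling are determined by the joint law), and that $n-r+1\geq 1$ so that all the one-dimensional Gaussian norm moments are finite (guaranteed by the hypothesis $r\leq n$, since $k=1>-p$ for every $p\geq 1$ in Lemma \ref{lemma expectation norm Gaussian}). All the real work is therefore absorbed into Proposition \ref{proposition same distribution}, whose proof is postponed to Appendix \ref{section a useful result}.
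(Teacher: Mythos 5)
Your argument is correct and is essentially identical to the paper's proof: both reduce via Corollary~\ref{corollary distribution orthogonal det} to the product $\Norm{X_{n-r+1}}\cdots\Norm{X_n}$, split the expectation by independence, and evaluate each factor with Lemma~\ref{lemma expectation norm Gaussian} so the product telescopes. The extra remarks about well-definedness and finiteness are sound but not strictly necessary.
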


\begin{proof}
By Corollary~\ref{corollary distribution orthogonal det}, $\odet{L}$ is distributed as $\Norm{X_{n-r+1}}\cdots\Norm{X_n}$, with $X_p$ a standard Gaussian in $\R^p$ for all $p$ and $X_{n-r+1},\dots,X_n$ independent. Then, using Lemma~\ref{lemma expectation norm Gaussian},
\begin{equation*}
\esp{\odet{L}} = \esp{\Norm{X_{n-r+1}}\cdots\Norm{X_n}}=\prod_{p=n-r+1}^n \esp{\Norm{X_p}}= (2\pi)^\frac{r}{2} \frac{\vol{\S^{n-r}}}{\vol{\S^n}}.\qedhere
\end{equation*}
\end{proof}

\section{Proof of Proposition \ref{proposition same distribution}}
\label{section a useful result}

This appendix is devoted to the proof of Proposition~\ref{proposition same distribution} which is a reformulation of~\cite[prop.~3.12]{Buer2006}. Let $V$ and $V'$ be two Euclidean spaces of dimension $n$ and $r$ respectively, with $1 \leq r \leq n$. The space $V' \otimes V^*$ of linear maps from $V$ to $V'$ comes with a natural scalar product induced by those on $V$ and $V'$. The set of linear maps of rank less than $r$ is an algebraic submanifold of $V'\otimes V^*$ of codimension at least $1$, hence it has measure $0$ for any non-singular Gaussian measure. Let $L$ be a standard Gaussian vector in $V' \otimes V^*$, then the rank of $L$ is $r$ almost surely. Hence $L^\dagger$ is well-defined almost surely (recall Definition~\ref{definition pseudo-inverse}).

We introduce some further notations. Let $\mathcal{B} \subset V'\otimes V^*$ denote the set of maps of rank~$r$. We set $\mathcal{F}=\{(L,U)\in \mathcal{B} \times V \mid U \in \ker(L)^\perp \}$ and $\mathcal{S}=\{(L,U)\in \mathcal{B} \times V \mid U \in \S(\ker(L)^\perp) \}$. Here and in the sequel, $\S(\cdot)$ stands for the unit sphere of the concerned space. Given $L \in \mathcal{B}$ and $U \in V$, we denote by $\tilde{U}$ the orthogonal projection of $U$ onto $\ker(L)^\perp$. Then we set:
\begin{align}
\label{equation theta'}
\rho &= \lVert\tilde{U}\rVert, & \theta &= \frac{\tilde{U}}{\lVert\tilde{U}\rVert} & &\text{and} & \theta' &= \frac{(L^\dagger)^*\theta}{\Norm{(L^\dagger)^*\theta}}.
\end{align}
Note that $L^*\theta' = \displaystyle\frac{(L^\dagger L)^*\theta}{\Norm{(L^\dagger)^*\theta}} = \frac{\theta}{\Norm{(L^\dagger)^*\theta}}$, hence $\Norm{L^*\theta'}=\displaystyle \frac{1}{\Norm{(L^\dagger)^*\theta}}$ and finally:
\begin{align}
\label{equation L dagger *}
\theta &=  \frac{L^*\theta'}{\Norm{L^*\theta'}} & &\text{and} & L^{\dagger *}U &= L^{\dagger *} \tilde{U} = \rho L^{\dagger *} \theta = \frac{\rho}{\Norm{L^*\theta'}} \theta'.
\end{align}

We choose orthonormal bases $(e_1,\dots,e_n)$ and $(e'_1,\dots,e'_r)$, of $V$ and $V'$ respectively, such that $e_r=\theta$, $e'_r=\theta'$ and $(e_1,\dots,e_r)$ is a basis of $\ker(L)^\perp$. Then,
\begin{equation*}
\forall i \in \{1, \dots,n\}, \quad \prsc{Le_i}{\theta'} = \prsc{e_i}{L^*\theta'} = \Norm{L^*\theta'}\prsc{e_i}{\theta}.
\end{equation*}
Thus the matrix of $L$ in these bases has the form:
\begin{equation}
\label{equation matrix L}
\left( \begin{array}{c|c|c}
A & \begin{smallmatrix} * \vspace{-1mm} \\ \vdots \\ * \vspace{1mm} \end{smallmatrix} & 0 \\ \hline \begin{smallmatrix} 0 & \cdots & 0 \end{smallmatrix} & \Norm{L^*\theta'} & \begin{smallmatrix} 0 & \cdots & 0 \end{smallmatrix}
\end{array}\right),
\end{equation}
and $\odet{L}=\norm{\det(A)}\Norm{L^*\theta'}$.

Let $\pi_\theta$ and $\pi_{\theta'}$ denote the orthogonal projections along $\R\cdot\theta$ in $V$ and along $\R\cdot \theta'$ in $V'$ respectively. We define $L':V \to (\R\cdot\theta')^\perp$ by $L'= \pi_{\theta'} \circ L \circ \pi_\theta$. Then $\norm{\det(A)}=\odet{L'}$, and $L'$ does not depend on our choice of bases. Finally, we have:
\begin{equation}
\label{equation L'}
\left(\odet{L},L^{\dagger *}U\right) = \left(\odet{L'}\Norm{L^*\theta'},\frac{\rho \theta'}{\Norm{L^*\theta'}}\right).
\end{equation}
To prove Proposition~\ref{proposition same distribution}, we will show that  $\odet{L'}$, $\Norm{L^*\theta'}$, $\rho$ and $\theta'$ are independent and identify their distributions.

If $L$ and $U$ are independent standard Gaussians, then almost surely $L \in \mathcal{B}$ and we can consider $(L,U)$ as a random element of $\mathcal{B}\times V$. Then $(L,\tilde{U})$ is a random element of $\mathcal{F}$ and its distribution is characterized by:
\begin{align*}
\esp{\phi(L,\tilde{U})} &= \int_{L\in \mathcal{B}} \left( \int_{U \in V}\phi(L,\tilde{U}) \dx\nu_n(U)\right)\dx\nu_{nr}(L)\\
&= \int_{L \in \mathcal{B}} \left( \int_{\tilde{U} \in \ker(L)^\perp} \phi(L,\tilde{U}) \dx\nu_r(\tilde{U})\right) \dx\nu_{nr}(L),
\end{align*}
for any bounded continuous function $\phi:\mathcal{F}\to \R$. Recall that $\dx\nu_N$ stands for the standard Gaussian measure in dimension $N$. We get the distribution of $(L,\theta,\rho) \in \mathcal{S}\times \R_+$ by a polar change of variables in the innermost integral: for any bounded continuous $\phi : \mathcal{S}\times \R_+ \to \R$,
\begin{equation*}
\esp{\phi(L,\theta,\rho)} = \int_{L \in \mathcal{B}}  \int_{\theta \in \S(\ker(L)^\perp)} \int_{\rho =0}^{+\infty} \phi(L,\theta,\rho)\rho^{r-1} e^{-\frac{\rho^2}{2}} \frac{\dx \rho}{(2\pi)^\frac{r}{2}} \dx \theta \dx\nu_{nr}(L),
\end{equation*}
where $\dx \rho$ is the Lebesgue measure on $\R$ and $\dx \theta$ is the Euclidean measure on the sphere $\S(\ker(L)^\perp)$.

This distribution is a product measure on $\mathcal{S}\times \R_+$, thus $(L,\theta)$ and $\rho$ are independent variables. Since $\left(\odet{L'},\theta',\Norm{L^*\theta'}\right)$ only depends on $(L,\theta)$, this triple is independent of $\rho$. Besides, $\rho$ is distributed as the norm of a standard Gaussian vector in $\R^r$ since its density with respect to the Lebesgue measure is $\rho \mapsto \vol{\S^{r-1}}(2\pi)^{-\frac{r}{2}} \rho^{r-1}e^{-\frac{\rho^2}{2}}$ on $\R_+$ and vanishes elsewhere. Finally, the distribution of $(L,\theta)$ satisfies:
\begin{equation}
\label{equation distribution on S}
\esp{\phi(L,\theta)}= \int_{L \in \mathcal{B}} \int_{\theta \in \S(\ker(L)^\perp)} \phi(L,\theta) \frac{\dx \theta}{\vol{\S^{r-1}}} \dx\nu_{nr}(L),
\end{equation}
for any bounded and continuous $\phi : \mathcal{S} \to \R$.

We will now compute the distribution of $(L,\theta')$ in $\mathcal{B}\times \S(V')$.

\begin{lem}
\label{lemma distribution of L theta'}
For any bounded and continuous $\phi : \mathcal{B}\times \S(V') \to \R$,
\begin{equation}
\label{equation distribution of L theta'}
\esp{\phi(L,\theta')}= \int_{\theta' \in \S(V')} \int_{L \in \mathcal{B}} \phi(L,\theta') \frac{\odet{L'}}{\Norm{L^*\theta'}^{r-1}} e^{-\frac{\Norm{L}^2}{2}}\frac{\dx \theta'}{\vol{\S^{r-1}}} \frac{\dx L}{(2\pi)^\frac{nr}{2}},
\end{equation}
where $\dx \theta'$ is the Euclidean measure on $\S(V')$, $\dx L$ is the Lebesgue measure on $V'\otimes V^*$ and $L'$ is defined as in \eqref{equation L'}.
\end{lem}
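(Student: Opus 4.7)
The plan is to transport formula~\eqref{equation distribution on S} from the variable $(L,\theta)$ to the variable $(L,\theta')$ via the change of variables formula on spheres, keeping $L$ fixed. For $L \in \mathcal{B}$, the map $\theta' \mapsto L^*\theta'/\Norm{L^*\theta'}$ is a diffeomorphism $\Psi_L : \S(V') \to \S(\ker(L)^\perp)$ whose inverse is $\theta \mapsto (L^\dagger)^*\theta/\Norm{(L^\dagger)^*\theta}$ (this is exactly the content of~\eqref{equation L dagger *}). So I would apply \eqref{equation distribution on S} to the function $(L,\theta) \mapsto \phi(L,\Psi_L^{-1}(\theta))$ and then pull back the inner integral along $\Psi_L$. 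The only nontrivial piece is the Jacobian of $\Psi_L$.

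To compute it, I would use the same orthonormal bases $(e_1,\dots,e_n)$ of $V$ and $(e_1',\dots,e_r')$ of $V'$ as in the discussion preceding the lemma, so that $e_r=\theta$, $e_r'=\theta'$, $(e_1,\dots,e_r)$ spans $\ker(L)^\perp$, and $L$ has the block form~\eqref{equation matrix L}. Differentiating $\Psi_L$ at $\theta'$ on a tangent vector $v \in (\theta')^\perp$ gives
\begin{equation*}
d\Psi_L(\theta')\cdot v \;=\; \frac{1}{\Norm{L^*\theta'}}\,\pi_\theta\bigl(L^* v\bigr),
\end{equation*}
which sends $(\theta')^\perp$ into $\theta^\perp \cap \ker(L)^\perp$, both of dimension $r-1$. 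Reading this differential in the orthonormal bases $(e_1',\dots,e_{r-1}')$ of $(\theta')^\perp$ and $(e_1,\dots,e_{r-1})$ of $\theta^\perp \cap \ker(L)^\perp$, the block structure~\eqref{equation matrix L} shows that $L^* e_j' = \sum_{i=1}^{r-1} A_{ji} e_i + (\ast_j)\theta$ for $j \leq r-1$, so projecting out $\theta$ yields the matrix $A^{\mathrm{t}}$. Therefore
\begin{equation*}
\bigl|\mathrm{Jac}\,\Psi_L(\theta')\bigr| \;=\; \frac{\norm{\det A}}{\Norm{L^*\theta'}^{r-1}} \;=\; \frac{\odet{L'}}{\Norm{L^*\theta'}^{r-1}},
\end{equation*}
where the last equality uses that $\odet{L'}=\norm{\det A}$ as established just before~\eqref{equation L'}.

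Plugging this Jacobian into the change-of-variables formula applied to the inner sphere integral in~\eqref{equation distribution on S}, and rewriting $\dx\nu_{nr}(L)=(2\pi)^{-nr/2}e^{-\Norm{L}^2/2}\dx L$, yields exactly~\eqref{equation distribution of L theta'}; swapping the order of integration (legitimate by Fubini--Tonelli since the Jacobian is non-negative and $\phi$ is bounded) places the integration over $\theta'$ outside, as in the statement. The main technical point is really the Jacobian computation: checking that the two orthonormal bases match up so cleanly that the matrix of $\pi_\theta \circ L^*|_{(\theta')^\perp}$ is literally $A^{\mathrm{t}}$, which is the source of the factor $\odet{L'}/\Norm{L^*\theta'}^{r-1}$ and ultimately of the product structure that will let one identify the joint law of $\odet{L'}$, $\Norm{L^*\theta'}$, $\rho$ and $\theta'$ in Proposition~\ref{proposition same distribution}.
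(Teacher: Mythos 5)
Your proposal is correct and follows essentially the same route as the paper: start from the distribution formula for $(L,\theta)$, change variables on the sphere via the diffeomorphism $\theta'\mapsto L^*\theta'/\Norm{L^*\theta'}$ (the paper's $\psi^{-1}$, your $\Psi_L$), and compute its Jacobian using the block form~\eqref{equation matrix L} to obtain $\odet{L'}/\Norm{L^*\theta'}^{r-1}$. The identification of the matrix as $A^{\mathrm{t}}$ matches the paper's $\norm{\det(A^*)}=\norm{\det(A)}$, so there is no material difference.
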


\begin{proof}
Fixing some $\phi$, we see from~\eqref{equation distribution on S} and~\eqref{equation theta'} that:
\[\esp{\phi(L,\theta')}=\int_{L \in \mathcal{B}} \int_{\theta \in \S(\ker(L)^\perp)} \phi\left(L,\frac{(L^\dagger)^*\theta}{\Norm{(L^\dagger)^*\theta}}\right) e^{-\frac{\Norm{L}^2}{2}} \frac{\dx \theta}{\vol{\S^{r-1}}} \frac{\dx L}{(2\pi)^\frac{nr}{2}}.\]
Then we make the change of variables $\theta' = \psi(\theta)=\displaystyle\frac{(L^\dagger)^*\theta}{\Norm{(L^\dagger)^*\theta}}$ in the innermost integral, with $L$ fixed. Recalling~\eqref{equation L dagger *}, we have $\psi^{-1}:\theta' \mapsto \displaystyle\frac{L^*\theta'}{\Norm{L^*\theta'}}$ from $\S(V')$ to $\S(\ker(L)^\perp)$. Now, the differential of $\psi^{-1}$ at $\theta' \in \S(V')$ satisfies:
\[\forall v \in (\R\cdot\theta')^\perp,\quad d_{\theta'}(\psi^{-1})\cdot v = \frac{1}{\Norm{L^*\theta'}}\left(L^*v - \prsc{L^*v}{\frac{L^*\theta'}{\Norm{L^*\theta'}}}\frac{L^*\theta'}{\Norm{L^*\theta'}}\right)=\frac{\pi_\theta(L^*v)}{\Norm{L^*\theta'}}.\]
As above, we choose an orthonormal basis $\left(e'_1,\dots,e'_{r-1}\right)$ of $\left(\R\cdot\theta'\right)^\perp$ and an orthonormal basis $(e_1,\dots,e_{r-1})$ of $\left(\R\cdot \theta \oplus \ker(L)\right)^\perp$. In these coordinates we have:
\[\norm{\det(d_{\theta'}(\psi^{-1}))} = \frac{\norm{\det\left(\pi_\theta \circ L^*_{/(\theta')^\perp}\right)}}{\Norm{L^*\theta'}^{r-1}} = \frac{\norm{\det(A^*)}}{\Norm{L^*\theta'}^{r-1}} = \frac{\norm{\det(A)}}{\Norm{L^*\theta'}^{r-1}} = \frac{\odet{L'}}{\Norm{L^*\theta'}^{r-1}},\]
where $A$ is as in \eqref{equation matrix L} and $L'$ as in \eqref{equation L'}. This proves~\eqref{equation distribution of L theta'}.
\end{proof}

We can now compute the joint distribution of $\left(\odet{L'},\theta',\Norm{L^*\theta'}\right)$ from the one of $(L,\theta')$. We fix $\theta' \in \S(V')$ and an orthonormal basis $\left(e'_1,\dots e'_r\right)$ of $V'$ such that $e'_r=\theta'$. The choice of $L \in V'\otimes V^*$ is equivalent to the choice of the $r$ independent standard Gaussian vectors $L^*e'_1,\dots,L^*e'_r$ in $V$. For simplicity, we set $L_i = L^*e'_i$. Note that if we choose a basis for $V$ as well, these are the rows of the matrix of $L$. We can rewrite \eqref{equation distribution of L theta'} as:
\[\esp{\phi(L,\theta')}= \int_{\theta' \in \S(V')}\int_{L_1,\dots,L_{r-1} \in V}\int_{L_r \in V}\phi(L,\theta') \frac{\odet{L'}}{\Norm{L_r}^{r-1}} \frac{e^{-\frac{1}{2}\sum \Norm{L_i}^2}\dx \theta'}{\vol{\S^{r-1}}} \frac{\dx L_1 \cdots \dx L_r}{(2\pi)^\frac{nr}{2}},\]
where $\dx L_i$ denotes the Lebesgue measure in the $i$-th copy of $V$. We set $\alpha_r = \frac{L_r}{\Norm{L_r}}$ and $\rho_r=\Norm{L_r}$. Here, $L'=\pi_{\theta'}\circ L \circ \pi_{\alpha_r}$ depends on $\alpha_r$ and $\theta'$ but not on $\rho_r$. Making a polar change of variables, the above integral equals:
\[\int_{\substack{\theta' \in \S(V')\\ \alpha_r \in \S(V)\\L_1,\dots,L_{r-1}\in V}}\!\int_{\rho_r=0}^{+\infty}\phi(L,\theta')\frac{\left(\rho_r\right)^{n-r}e^{-\frac{\rho_r^2}{2}}\dx \rho_r}{(2\pi)^\frac{n-r+1}{2}} \frac{e^{-\frac{1}{2}\sum_{i=1}^{r-1} \Norm{L_i}^2}\odet{L'}}{\vol{\S^{r-1}}} \frac{\dx \theta' \dx \alpha_r \dx L_1 \cdots \dx L_{r-1}}{(2\pi)^\frac{(n+1)(r-1)}{2}}.\]

Then, $\rho_r = \Norm{L^*\theta'}$ is independent of $(\theta',\alpha_r,L_1,\dots,L_{r-1})$, hence of $\left(\theta',\odet{L'}\right)$. Moreover $\rho_r$ is distributed as the norm of a standard Gaussian in $\R^{n-r+1}$, since it has the same density. Finally, $\left(\theta',\alpha_r,L_1,\dots,L_{r-1}\right)$ has the density:
\[\left(\theta',\alpha_r,L_1,\dots,L_{r-1}\right) \mapsto \frac{e^{-\frac{1}{2}\sum_{i=1}^{r-1} \Norm{L_i}^2}}{(2\pi)^\frac{(n+1)(r-1)}{2}} \frac{\odet{L'}}{\vol{\S^{r-1}}\vol{\S^{n-r}}}\]
with respect to $\dx \theta' \otimes \dx \alpha_r \otimes \dx L_1 \otimes \cdots \otimes \dx L_{r-1}$.

For $i \in \{1,\dots,r-1\}$ we denote by $L_i^\perp$ the orthogonal projection of $L_i$ onto the orthogonal of the subspace spanned by $(\alpha_r,L_1,\dots,L_{i-1})$.

\begin{lem}
\label{lemma expression of odet(L')}
For any $L \in \mathcal{B}$, $\odet{L'} = \Norm{L_1^\perp} \cdots \Norm{L_{r-1}^\perp}$.
\end{lem}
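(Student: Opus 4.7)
The plan is to recognize $\odet{L'}$ as a Gram determinant and then apply the classical Gram--Schmidt formula expressing the volume of a parallelepiped as a telescoping product of orthogonal heights.

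First I would unpack the definition of $L' = \pi_{\theta'} \circ L \circ \pi_{\alpha_r}$, which I view as a map from $V$ to $(\R\cdot\theta')^\perp$. Since its kernel contains $\R\cdot\alpha_r$, it is enough to compute its orthogonal determinant through its adjoint: with the basis $(e'_1,\dots,e'_{r-1})$ of $(\R\cdot\theta')^\perp$ chosen in Appendix~\ref{section a useful result}, for every $i\in\{1,\dots,r-1\}$, we have $(L')^*e'_i = \pi_{\alpha_r}\circ L^*\circ \pi_{\theta'}(e'_i) = \pi_{\alpha_r}(L_i)$. Hence, by Definition~\ref{definition odet},
\[
\odet{L'}^2 = \det\!\left( L'(L')^*\right) = \det\!\left(\prsc{\pi_{\alpha_r}(L_i)}{\pi_{\alpha_r}(L_j)}\right)_{1\leq i,j\leq r-1},
\]
i.e.\ the Gram determinant of the vectors $v_i := \pi_{\alpha_r}(L_i) \in (\R\cdot\alpha_r)^\perp$, $i = 1,\dots,r-1$.

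Next I would invoke the standard fact that the Gram determinant of $(v_1,\dots,v_{r-1})$ equals the squared volume of the parallelepiped they span, which by iterated Gram--Schmidt factorizes as
\[
\det\!\left(\prsc{v_i}{v_j}\right) = \prod_{i=1}^{r-1} \Norm{v_i^\perp}^2,
\]
where $v_i^\perp$ denotes the orthogonal projection of $v_i$ onto the orthogonal complement, inside $(\R\cdot\alpha_r)^\perp$, of $\mathrm{span}(v_1,\dots,v_{i-1})$.

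Finally I would check that $v_i^\perp = L_i^\perp$. Since $v_i = L_i - \prsc{L_i}{\alpha_r}\alpha_r$, the spans $\mathrm{span}(\alpha_r,L_1,\dots,L_{i-1})$ and $\mathrm{span}(\alpha_r,v_1,\dots,v_{i-1})$ coincide, and as all the $v_k$ are already orthogonal to $\alpha_r$, their common orthogonal complement in $V$ agrees with the orthogonal complement of $\mathrm{span}(v_1,\dots,v_{i-1})$ inside $(\R\cdot\alpha_r)^\perp$. Projecting $L_i = \prsc{L_i}{\alpha_r}\alpha_r + v_i$ onto this space kills the $\alpha_r$ component and returns exactly $v_i^\perp$, so $L_i^\perp = v_i^\perp$. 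Taking square roots gives $\odet{L'} = \Norm{L_1^\perp}\cdots\Norm{L_{r-1}^\perp}$.

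There is no real obstacle here; the only point that requires care is the bookkeeping identifying $v_i^\perp$ with $L_i^\perp$, i.e.\ checking that inserting $\alpha_r$ in the list of orthogonalized vectors is harmless because the $v_i$'s are already orthogonal to $\alpha_r$.
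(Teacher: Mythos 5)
Your argument is correct, and it proves the lemma by a genuinely different route from the paper. You compute $\odet{L'}^2 = \det(L'(L')^*)$ directly and recognize the matrix $\bigl(\prsc{(L')^*e'_i}{(L')^*e'_j}\bigr) = \bigl(\prsc{v_i}{v_j}\bigr)$ as a Gram matrix, then invoke the classical Gram--Schmidt factorization of the Gram determinant into the product of squared heights $\prod_i \Norm{v_i^\perp}^2$, and finally match $v_i^\perp$ with $L_i^\perp$. The paper instead constructs an explicit orthonormal basis of $\ker(L')^\perp$ (namely the normalized $L_i^\perp$), observes that the matrix of $L'|_{\ker(L')^\perp}$ in that basis and $(e'_1,\dots,e'_{r-1})$ is lower triangular with diagonal entries $\Norm{L_i^\perp}$, and reads off the determinant via the remark after Definition~\ref{definition odet}. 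Your approach sidesteps having to identify $\ker(L')^\perp$ at all, at the cost of appealing to the Gram-determinant volume formula; the paper's is more hands-on but requires one to check the triangular structure. One small remark: the observation that $\R\cdot\alpha_r \subset \ker(L')$ does not actually feed into your computation of $\det(L'(L')^*)$ — the identity $\odet{L'}^2 = \det(L'(L')^*)$ holds by definition regardless — so that sentence could be dropped. The bookkeeping step identifying $v_i^\perp = L_i^\perp$ is correct: since the $v_k$ are already orthogonal to $\alpha_r$, the orthogonal complement of $\mathrm{span}(\alpha_r, L_1,\dots,L_{i-1}) = \mathrm{span}(\alpha_r, v_1,\dots,v_{i-1})$ agrees, as far as projecting $v_i$ is concerned, with the orthocomplement of $\mathrm{span}(v_1,\dots,v_{i-1})$ inside $(\R\cdot\alpha_r)^\perp$, and the $\alpha_r$ component of $L_i$ is killed.
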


\begin{proof}
If one of the $L_i^\perp$ is zero, then the vectors $\alpha_r,L_1,\dots,L_{r-1}$ are linearly dependent and $L$ is singular. Since we assumed $L \in \mathcal{B}$ this is not the case and $\left(\frac{L_1^\perp}{\Norm{L_1^\perp}},\dots,\frac{L_{r-1}^\perp}{\Norm{L_{r-1}^\perp}} \right)$ is an orthonormal basis of $\ker(L')^\perp$. Writing the matrix of the restriction of $L'$ to $\ker(L')^\perp$ in this basis and $\left(e'_1,\dots,e'_{r-1}\right)$, we see that it is lower triangular with diagonal coefficients $\Norm{L_1^\perp}$,\dots,$\Norm{L_{r-1}^\perp}$. This proves the lemma.
\end{proof}

Let $\phi$ be a continuous bounded function from $\S (V')\times \R_+$ to $\R$. We have:
\begin{multline*}
\esp{\phi\left(\theta',\odet{L'}\right)}\\
\begin{aligned}
&=\int \phi\left(\theta',\odet{L'}\right)\frac{e^{-\frac{1}{2}\sum_{i=1}^{r-1} \Norm{L_i}^2}\odet{L'}}{(2\pi)^\frac{(n+1)(r-1)}{2}} \frac{\dx \theta' \dx \alpha_r \dx L_1 \dots \dx L_{r-1}}{\vol{\S^{r-1}}\vol{\S^{n-r}}}\\
&= \int_{\alpha_r,\theta'}\int_{L_1^\perp}\hspace{-1mm}\dots \int_{L_{r-1}^\perp}\! \phi\!\left(\theta',\prod_{i=1}^{r-1} \Norm{L_i^\perp}\right) \frac{e^{-\frac{1}{2}\!\sum_{i=1}^{r-1} \Norm{L_i^\perp}^2}\prod_{i=1}^{r-1} \Norm{L_i^\perp}}{(2\pi)^\frac{(2n+2-r)(r-1)}{4}} \frac{\dx L_{r-1}^\perp \dots \dx L_1^\perp \dx \theta' \dx \alpha_r}{\vol{\S^{r-1}}\vol{\S^{n-r}}}.
\end{aligned}
\end{multline*}
Then we make polar changes of variables: for each $i$ we set $\rho_i=\Norm{L_i^\perp}$ and $\alpha_i = \displaystyle \frac{L_i^\perp}{\Norm{L_i^\perp}}$. Note that, when $L_1,\dots,L_{i-1}$ are fixed, $L_i^\perp$ is a vector in a space of dimension $n-i$. We have:
\begin{multline*}
\esp{\phi\left(\theta',\odet{L'}\right)}\\
\begin{aligned}
&=\int \phi\left(\theta',\prod_{i=1}^{r-1} \rho_i\right) \frac{e^{-\frac{1}{2}\sum_{i=1}^{r-1} \rho_i^2}\prod_{i=1}^{r-1} \left(\rho_i\right)^{n-i}}{(2\pi)^\frac{(2n+2-r)(r-1)}{4}} \frac{\dx \rho_1 \dots \dx \rho_{r-1} \dx \alpha_1 \dots \dx \alpha_r \dx \theta'}{\vol{\S^{r-1}}\vol{\S^{n-r}}}\\
&= \int_{\rho_1,\dots,\rho_{r-1},\theta'} \phi\left(\theta',\prod_{i=1}^{r-1} \rho_i\right) \prod_{i=1}^{r-1} \left(\vol{\S^{n-i}} \frac{e^{-\frac{\rho_i^2}{2}}\left(\rho_i\right)^{n-i}}{(2\pi)^\frac{n+1-i}{2}}\right) \dx \rho_1 \dots \dx \rho_{r-1}\frac{\dx \theta'}{\vol{\S^{r-1}}}.
\end{aligned}
\end{multline*}
This shows that $\theta',\rho_1,\dots,\rho_{r-1}$ are independent variables, that $\theta'$ is uniformly distributed in $\S(V')$ and that, for all $i\in \{1,\dots,r-1\}$, $\rho_i$ is distributed as the norm of a standard Gaussian vector in $\R^{n+1-i}$. Finally, this shows that $\odet{L'}$ is distributed as $\displaystyle\prod_{i=1}^{r-1} \rho_i$.

Putting all we have done so far together, we see that $\left(\odet{L},L^{\dagger *}U\right)$ is distributed as $\left(\Norm{X_n}\cdots \Norm{X_{n-r+1}},\displaystyle\frac{\rho \theta'}{\Norm{X_{n-r+1}}} \right)$, where $X_p$ is a standard Gaussian vector in $\R^p$ for all $p$. Moreover, $\theta'$ is uniformly distributed in $\S(V')$, $\rho$ is distributed as the norm of a standard Gaussian vector in $\R^r$, and all these variables are globally independent. Finally $U'=\rho \theta'$ is a standard Gaussian in $V'$, independent of $X_n,\dots,X_{n-r+1}$ so we have proved Proposition~\ref{proposition same distribution}. An immediate corollary of this is the following.

\begin{cor}
\label{corollary distribution orthogonal det}
Let $V$ and $V'$ be two Euclidean spaces of dimension $n$ and $r$ respectively, with $1 \leq r \leq n$. Let $L \sim \mathcal{N}(0,\Id)$ in $V' \otimes V^*$. Then $\odet{L}$ is distributed as $\Norm{X_{n-r+1}}\cdots \Norm{X_n}$, where for all $p \in \{n-r+1,\dots,n\}$, $X_p \sim \mathcal{N}(0,\Id)$ in~$\R^p$ and these vectors are globally independent.
\end{cor}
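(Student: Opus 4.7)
The proof is essentially a one-line consequence of Proposition~\ref{proposition same distribution}, which has already been established above. The plan is to consider the pair $(\odet{L}, (L^\dagger)^* U)$ where $U$ is an auxiliary standard Gaussian in $V$ independent of $L$, invoke Proposition~\ref{proposition same distribution} to identify its joint distribution with that of
\[
\left(\Norm{X_n}\Norm{X_{n-1}}\cdots\Norm{X_{n-r+1}},\; \frac{U'}{\Norm{X_{n-r+1}}}\right),
\]
and then project onto the first coordinate. Since the distribution of $\odet{L}$ is simply the marginal of this joint distribution with respect to the first factor, we obtain directly that $\odet{L}$ is distributed as $\Norm{X_{n-r+1}}\cdots\Norm{X_n}$ with the $X_p$ independent standard Gaussians in $\R^p$.

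There is no genuine obstacle here; the corollary is stated precisely so that Proposition~\ref{proposition same distribution} applies immediately. The only minor point to mention is that introducing the auxiliary variable $U$ is legitimate because $\odet{L}$ does not depend on $U$, so its law is unchanged by conditioning on or marginalizing over $U$. One could alternatively avoid $U$ altogether and observe that the computation carried out in Appendix~\ref{section a useful result} for the distribution of $\odet{L'}$ (via the successive polar decompositions of the rows $L_i = L^* e_i'$) already identifies $\odet{L}$ with a product of norms of independent standard Gaussians of the required dimensions; in that case the variable $\theta'$ and the vector $U$ play no role and the corollary is read off from the chain of changes of variables directly.
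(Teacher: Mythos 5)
Your proof is correct and matches the paper exactly: the corollary is obtained by marginalizing the joint law from Proposition~\ref{proposition same distribution} onto its first coordinate, after introducing an auxiliary independent standard Gaussian $U$ whose presence does not affect the law of $\odet{L}$. Your remark that one could read the result off directly from the chain of polar decompositions, bypassing $U$ and $\theta'$, is also a fair observation, but the paper itself takes the marginalization route.
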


\section{Proof of the Kac--Rice formula}
\label{section proof of Kac-Rice}

In this appendix, we give a proof of the Kac--Rice formula using Federer's coarea formula. This was already done by Bleher, Shiffman and Zelditch in \cite[thm.~4.2]{BSZ2001}. See also \cite[chap.~6]{AW2009}.

\subsection{The coarea formula}
\label{subsection coarea formula}

We start by stating the coarea formula in the case of a smooth map between smooth Riemannian manifolds. A proof in this special case can be found in \cite[Appendix]{How1993} (see \cite[thm.~3.2.12]{Fed1996} for the general case).

Let $\pi : \tilde{M} \to M$ be a smooth map between smooth Riemannian manifolds of respective dimensions $m$ and $n$. We assume that $m\geq n$. Let $\rmes{\tilde{M}}$ (resp.~$\rmes{M}$) denote the Riemannian measure on $\tilde{M}$ (resp.~$M$) induced by its metric. By Sard's theorem, for almost every $y \in M$, $\pi^{-1}(y)$ is a smooth submanifold of dimension $(m-n)$ of $\tilde{M}$. For such $y \in M$, we denote by $\rmes{y}$ the Riemannian measure on $\pi^{-1}(y)$ induced by the metric of $\tilde{M}$. When $m=n$, the dimension $\pi^{-1}(y)$ is $0$ and $\rmes{y}$ is just $\displaystyle\sum_{x \in \pi^{-1}(y)} \delta_x$, where $\delta_x$ is the Dirac measure~at~$x$.

\begin{thm}[Coarea formula, Federer]
\label{theorem coarea formula}
Let $\pi:\tilde{M}\to M$ be a smooth map between smooth Riemannian manifolds of dimension $m$ and $n$ respectively, with $m \geq n$. Let $\phi : \tilde{M} \to \R$ be a Borel measurable function. Then:
\begin{equation*}
\int_{x \in \tilde{M}} \phi(x) \odet{d_x\pi}\rmes{\tilde{M}} = \int_{y \in M}\left( \int_{x \in \pi^{-1}(y)} \phi(x)\rmes{y}\right) \rmes{M},
\end{equation*}
whenever one of these integrals is well-defined.
\end{thm}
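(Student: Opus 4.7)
The plan is to reduce the coarea identity to a pointwise equality of Riemannian top forms at regular points of $\pi$, then to integrate it via a submersion-adapted partition of unity and Fubini's theorem, handling the critical set through Sard's theorem.

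By Sard's theorem the set $C\subset M$ of critical values of $\pi$ has $\rmes{M}$-measure zero, and the left-hand integrand vanishes on the critical set in $\tilde{M}$ (where $\odet{d_x\pi}=0$); thus both integrals effectively see only the open set $\tilde{M}_{\mathrm{reg}}\subset\tilde{M}$ of regular points. On $\tilde{M}_{\mathrm{reg}}$ the map $\pi$ is a submersion whose fibers $\pi^{-1}(y)$ are smooth $(m-n)$-submanifolds of $\tilde{M}$, and the submersion normal form provides, around each point, local coordinates $(u,v)\in\R^{m-n}\times\R^n$ in which $\pi(u,v)=v$. I would cover $\tilde{M}_{\mathrm{reg}}$ by such charts with a subordinate partition of unity $(\chi_\alpha)$.

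The heart of the argument is the pointwise identity, valid at every $x\in\tilde{M}_{\mathrm{reg}}$,
\begin{equation*}
\omega_{\pi^{-1}(\pi(x))}\wedge\pi^*\omega_M=\pm\,\odet{d_x\pi}\cdot\omega_{\tilde{M}},
\end{equation*}
where the sign depends on a local choice of orientations (irrelevant for the densities appearing in the integrals). I would prove it by splitting $T_x\tilde{M}=\ker(d_x\pi)\oplus\ker(d_x\pi)^\perp$, picking orthonormal bases $(e_i)_{i=1}^{m-n}$, $(f_j)_{j=1}^n$ of the two summands and an ONB $(g_k)_{k=1}^n$ of $T_{\pi(x)}M$, and setting $A_{kj}=\prsc{g_k}{d_x\pi\cdot f_j}$, so that $|\det A|=\odet{d_x\pi}$. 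Then $\pi^*g_k^*=\sum_j A_{kj}\,f_j^*$ on $(\ker d_x\pi)^\perp$ and vanishes on $\ker d_x\pi$; taking the wedge product yields $\pi^*\omega_M=\det(A)\,f_1^*\wedge\cdots\wedge f_n^*$, and combining with the ONB expressions $\omega_{\tilde{M}}=e_1^*\wedge\cdots\wedge e_{m-n}^*\wedge f_1^*\wedge\cdots\wedge f_n^*$ and $\omega_{\pi^{-1}(\pi(x))}=e_1^*\wedge\cdots\wedge e_{m-n}^*$ gives the identity.

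Multiplying both sides (as densities) by $\phi\chi_\alpha$ and integrating over each submersion chart, the right-hand side written in the $(u,v)$-coordinates factors by ordinary Fubini into an iterated integral $\int_v\bigl(\int_u\phi\chi_\alpha\,\rmes{y}\bigr)\rmes{M}$; summing over $\alpha$ recovers $\int_M\int_{\pi^{-1}(y)}\phi\,\rmes{y}\,\rmes{M}$ on the regular values, and Sard extends this equality to all of $M$. I expect the main technical obstacle to be verifying that the fiber measure produced by Fubini in the submersion chart truly coincides with the intrinsic Riemannian measure $\rmes{y}$ on the submanifold $\pi^{-1}(y)$; once that point is settled, the remainder is direct linear algebra and standard submersion bookkeeping.
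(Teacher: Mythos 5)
The paper does not prove this theorem: it is stated as a known result with references to Howard \cite[Appendix]{How1993} and Federer \cite[thm.~3.2.12]{Fed1996}, and the proof the paper relies on is the one in Howard's appendix. Your sketch follows essentially that same standard route --- Sard's theorem to discard critical values, submersion normal form and a partition of unity to localize, and a pointwise Jacobian identity to identify the density that Fubini produces with the intrinsic Riemannian density on the fibers --- and it is correct in outline.

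Two small remarks. First, the ``main technical obstacle'' you flag at the end (that the fiber measure produced by Fubini agrees with $\rmes{y}$) is in fact already disposed of by your pointwise wedge identity: evaluating both sides of $\omega_{\pi^{-1}(\pi(x))}\wedge\pi^*\omega_M=\pm\odet{d_x\pi}\,\omega_{\tilde M}$ on the coordinate frame $(\partial_{u_1},\dots,\partial_{u_{m-n}},\partial_{v_1},\dots,\partial_{v_n})$ of a submersion chart gives precisely $\odet{d_{(u,v)}\pi}\sqrt{\det\tilde g(u,v)}=\sqrt{\det\tilde g_{\mathrm{fib}}(u,v)}\sqrt{\det g(v)}$, which is the density identity Fubini needs; it is worth writing this out so the reader sees the obstacle is not a separate issue. (Note also that the wedge on the left is well defined independently of how you extend the fiber form $e_1^*\wedge\dots\wedge e_{m-n}^*$ off $\ker d_x\pi$, because wedging with $\pi^*\omega_M$ kills any term involving an $f_j^*$.) Second, to cover a general Borel measurable $\phi$ and the clause ``whenever one of these integrals is well-defined,'' you should run the chart-by-chart argument first with Tonelli for $\phi\geq 0$ (which also delivers measurability of $y\mapsto\int_{\pi^{-1}(y)}\phi\,\rmes{y}$) and then pass to signed $\phi$ by splitting into $\phi^{\pm}$; your sketch implicitly does this but it deserves a sentence.
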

\noindent
Note that the innermost integral on the right-hand side is only defined almost everywhere.

\subsection{The double-fibration trick}
\label{subsection double fibration trick}

We now describe the double-fibration trick, which consists in applying the coarea formula twice, for different fibrations. Let $M_1$ and $M_2$ be two smooth Riemannian manifolds of dimension $n_1$ and $n_2$ respectively. Let $F:M_1 \times M_2 \to \R^r$ be a smooth submersion, and let $\Sigma = F^{-1}(0)$. We equip $\Sigma$ with the restriction of the product metric on $M_1\times M_2$ and denote by $\rmes{M_1}$, $\rmes{M_2}$ and $\rmes{\Sigma}$ the Riemannian measures on the corresponding manifolds. Finally, let $\pi_1 : \Sigma \to M_1$ and $\pi_2 : \Sigma \to M_2$ be the projections from $\Sigma$ to each factor. Assuming that $r \leq \min(n_1,n_2)$, we have $\dim(\Sigma) = n_1+n_2-r \geq \max(n_1,n_2)$. Thus we can apply the coarea formula both to $\pi_1$ and $\pi_2$.

Let $\phi:\Sigma \to \R$ be a Borel measurable function, then:
\begin{multline}
\label{equation double fibration formula 1}
\int_{y_1 \in M_1} \left( \int_{x \in \pi_1^{-1}(y_1)} \phi(x) \rmes{y_1} \right) \rmes{M_1} = \int_{x \in \Sigma} \phi(x) \odet{d_x\pi_1} \rmes{\Sigma}\\
= \int_{y_2 \in M_2} \left( \int_{x \in \pi_2^{-1}(y_2)} \phi(x) \frac{\odet{d_x\pi_1}}{\odet{d_x\pi_2}}\rmes{y_2} \right) \rmes{M_2},
\end{multline}
whenever one of these integrals is well-defined. Note that if $\odet{d_x\pi_2}$ vanishes then $\pi_2(x)$ is a critical value of $\pi_2$, and the set of such critical values has measure $0$ in $M_2$.

We would like the integrand on the right-hand side to depend on $F$ rather than on $\pi_1$ and $\pi_2$. Let $\partial_1F$ and $\partial_2F$ denote the partial differentials of $F$ with respect to the first and second variable respectively. For any $x=(x_1,x_2) \in \Sigma$,
\begin{equation*}
T_x\Sigma=\left\{(v_1,v_2) \in T_{x_1}M_1 \times T_{x_2}M_2 \mid \partial_1F(x) \cdot v_1 + \partial_2F(x) \cdot v_2 = 0\right\}.
\end{equation*}

\begin{lem}
\label{lemma exchanging orthogonal determinant}
Let $x \in \Sigma$, then $\odet{d_x\pi_2}=0$ if and only if $\odet{\partial_1F(x)}=0$. Moreover,
\begin{equation}
\label{equation exchanging orthogonal determinant}
\odet{d_x\pi_1}\odet{\partial_1F(x)}=\odet{d_x\pi_2}\odet{\partial_2F(x)}.
\end{equation}
\end{lem}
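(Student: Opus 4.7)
The first assertion follows from a direct description of $\operatorname{Im}(d_x\pi_2)$. Since $d_x\pi_2(v_1,v_2) = v_2$ and $T_x\Sigma = \ker\bigl(\partial_1F(x),\partial_2F(x)\bigr)$, the image of $d_x\pi_2$ equals $\{v_2 \in T_{x_2}M_2 : \partial_2F(x)\cdot v_2 \in \operatorname{Im}(\partial_1F(x))\}$. Because $F$ is a submersion, $\operatorname{Im}(\partial_1F(x)) + \operatorname{Im}(\partial_2F(x)) = \R^r$, so $d_x\pi_2$ is onto if and only if $\partial_1F(x)$ is; by Definition~\ref{definition odet} this is the claimed equivalence. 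Swapping indices gives the analogous statement for $d_x\pi_1$ and $\partial_2F(x)$. Consequently, if $\partial_1F(x)$ is not surjective, then $\partial_2F(x)$ must be (to preserve the submersion property), $d_x\pi_2$ is not surjective and $d_x\pi_1$ is, so both sides of~\eqref{equation exchanging orthogonal determinant} vanish. We may therefore assume $L_i := \partial_iF(x)$ is surjective for $i=1,2$.

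The plan is then to compute $\odet{d_x\pi_2}$ explicitly and read off the symmetry. Set $K_i = \ker L_i$; then $\ker(d_x\pi_2) = K_1 \times \{0\}$ inside $T_x\Sigma$. Its orthogonal complement inside $T_x\Sigma$ (for the restricted product metric) is the image of the injection
$$\iota_2 : T_{x_2}M_2 \longrightarrow T_x\Sigma, \qquad v_2 \longmapsto \bigl(-L_1^\dagger L_2\, v_2,\; v_2\bigr),$$
where $L_1^\dagger = L_1^*(L_1L_1^*)^{-1}$ is the Moore–Penrose pseudo-inverse of Definition~\ref{definition pseudo-inverse}. Indeed $\iota_2(v_2) \in T_x\Sigma$ because $L_1 L_1^\dagger = \operatorname{Id}_{\R^r}$, and $\iota_2(v_2) \perp (K_1\times\{0\})$ because $L_1^\dagger L_2 v_2 \in \operatorname{Im}(L_1^*) = K_1^\perp$. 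Since $d_x\pi_2\circ\iota_2 = \operatorname{Id}$, the restriction of $d_x\pi_2$ to $\operatorname{Im}(\iota_2)$ inverts $\iota_2$, and hence $\odet{d_x\pi_2}^2 = 1/\det(\iota_2^*\iota_2)$. Using $(L_1^\dagger)^* L_1^\dagger = (L_1L_1^*)^{-1}$, a direct calculation yields $\iota_2^*\iota_2 = \operatorname{Id}_{T_{x_2}M_2} + L_2^*(L_1L_1^*)^{-1}L_2$.

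Sylvester's determinant identity $\det(I + AB) = \det(I + BA)$, applied with $A = L_2^*(L_1L_1^*)^{-1}$ and $B = L_2$, then gives
$$\det(\iota_2^*\iota_2) \;=\; \det\!\bigl(\operatorname{Id}_{\R^r} + L_2L_2^*(L_1L_1^*)^{-1}\bigr) \;=\; \frac{\det(L_1L_1^* + L_2L_2^*)}{\det(L_1L_1^*)}.$$
Combining this with $\odet{L_i}^2 = \det(L_iL_i^*)$ yields
$$\odet{d_x\pi_2}^2\,\odet{\partial_2F(x)}^2 \;=\; \frac{\det(L_1L_1^*)\det(L_2L_2^*)}{\det(L_1L_1^* + L_2L_2^*)},$$
which is visibly invariant under the exchange $1 \leftrightarrow 2$. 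The same computation with indices swapped gives the identical expression for $\odet{d_x\pi_1}^2\,\odet{\partial_1F(x)}^2$, and taking square roots proves~\eqref{equation exchanging orthogonal determinant}.

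The delicate step is recognizing that the orthogonal complement of $\ker(d_x\pi_2)$ must be taken inside $T_x\Sigma$ (and not inside the ambient product $T_{x_1}M_1 \oplus T_{x_2}M_2$) and parameterizing it cleanly via the pseudo-inverse $L_1^\dagger$; once $\iota_2$ is in hand, Sylvester's identity provides the single nontrivial simplification that exposes the symmetry between the two factors.
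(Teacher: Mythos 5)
Your proof is correct, and it takes a genuinely different route from the paper's. The paper splits $T_x\Sigma = \ker(d_x\pi_1) \oplus \ker(d_x\pi_2) \oplus G$ orthogonally, picks adapted orthonormal bases, writes all four maps as block matrices, and extracts \eqref{equation exchanging orthogonal determinant} from the relation $C_1B_1 = -C_2B_2$ among the square blocks that $\partial_1 F(x)\circ d_x\pi_1 + \partial_2 F(x) \circ d_x\pi_2 = 0$ forces on $G$. You instead parameterize $(\ker d_x\pi_2)^\perp \cap T_x\Sigma$ explicitly as $\operatorname{Im}(\iota_2)$ via the pseudo-inverse, reduce $\odet{d_x\pi_2}$ to $\det(\iota_2^*\iota_2)^{-1/2}$, and let Sylvester's determinant identity do the algebra. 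This yields the \emph{explicit} closed formula $\odet{d_x\pi_i}^2\,\odet{L_i}^2 = \det(L_1L_1^*)\det(L_2L_2^*)/\det(L_1L_1^*+L_2L_2^*)$, which is manifestly symmetric and strictly stronger than what the paper records; the paper's block-matrix argument is perhaps shorter and stays within elementary linear algebra, but never exhibits this common value. Both arguments rely on the same entry point, namely that $L_1\circ d_x\pi_1 + L_2\circ d_x\pi_2 = 0$ on $T_x\Sigma$, which gives $\ker(d_x\pi_2) = \ker(L_1)\times\{0\}$.

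One small slip in the degenerate case: from $\operatorname{Im}(L_1)+\operatorname{Im}(L_2)=\R^r$ with $L_1$ not onto, it does \emph{not} follow that $L_2$ is onto (take $r=2$ with the two images being distinct lines). Fortunately this is harmless: when $L_1$ fails to be surjective you already have $\odet{\partial_1 F(x)}=0$ killing the left-hand side, and your equivalence gives $\odet{d_x\pi_2}=0$ killing the right-hand side, so \eqref{equation exchanging orthogonal determinant} holds with both sides zero without any claim about $L_2$ or $d_x\pi_1$. You should drop the parenthetical assertion and simply invoke the two vanishings directly.
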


\begin{proof}
First note that $d_xF = \partial_1F(x) \circ d_x\pi_1 + \partial_2F(x) \circ d_x\pi_2 = 0$ on $T_x\Sigma$. This shows that:
\begin{align}
\label{equation kernel partial derivatives}
\ker(d_x\pi_1)&=\{0\}\times \ker(\partial_2F(x))& &\text{and} & \ker(d_x\pi_2)&= \ker(\partial_1F(x))\times \{0\}.
\end{align}
The space $T_x\Sigma$ splits as the following orthogonal direct sum:
\begin{equation}
\label{equation orthogonal splitting}
T_x\Sigma = \ker(d_x\pi_1) \oplus \ker(d_x\pi_2) \oplus G,
\end{equation}
where $G$ is the orthogonal complement of $\ker(d_x\pi_1) \oplus \ker(d_x\pi_2)$ in $T_x\Sigma$.

Then, $\odet{d_x\pi_2} = 0$ if and only if $d_x\pi_2$ is not onto. Recalling that $\dim(M_2)=n_2$ and $\dim(\Sigma)=n_1+n_2-r$, this is equivalent to $\dim(\ker(d_x\pi_2))> n_1-r$. In the same way, $\odet{\partial_1F(x)}=0$ if and only if $\dim(\ker(\partial_1F(x)))>n_1-r$. But the kernels of $d_x\pi_2$ and $\partial_1F(x)$ have the same dimension by \eqref{equation kernel partial derivatives}, so that $\odet{d_x\pi_2}=0$ if and only if $\odet{\partial_1F(x)}=0$. A similar argument shows that $\odet{d_x\pi_1}=0$ if and only if $\odet{\partial_2F(x)}=0$. Thus the lemma is true if any of the four maps in \eqref{equation exchanging orthogonal determinant} is singular.

From now on, we assume that these maps are all surjective. In this case, we have:
\begin{align*}
\dim(\ker(\partial_2F(x)))&= \dim(\ker(d_x\pi_1))=n_2-r,\\
\dim(\ker(\partial_1F(x))) &= \dim(\ker(d_x\pi_2))=n_1-r,\\
\intertext{and}
\dim(G) &= r.
\end{align*}
We choose an orthonormal basis of $T_xM_1$ adapted to $\ker(\partial_1F(x))\oplus\ker(\partial_1F(x))^\perp$ and an orthonormal basis of $T_xM_2$ adapted to  $\ker(\partial_2F(x))\oplus\ker(\partial_2F(x))^\perp$. From these, we deduce orthonormal bases of
\begin{align*}
\ker(d_x\pi_1)&=\{0\}\times \ker(\partial_2F(x)) & &\text{and} & \ker(d_x\pi_2)&= \ker(\partial_1F(x))\times \{0\}.
\end{align*}
Finally we complete the resulting basis of $\ker(d_x\pi_1) \oplus \ker(d_x\pi_2)$ in an orthonormal basis of $T_x\Sigma$ adapted to the splitting~\eqref{equation orthogonal splitting}. In these bases the matrix of $d_x\pi_1$ has the form $\begin{pmatrix} 0 & I_{n_1-r} & A_1 \\ 0 & 0 & B_1 \end{pmatrix}$ where $I_{n_1-r}$ stands for the identity matrix of size $n_1-r$. Similarly, the matrix of $d_x\pi_2$ has the form $\begin{pmatrix} I_{n_2-r} & 0 & A_2 \\ 0 & 0 & B_2 \end{pmatrix}$, and the matrices of $\partial_1F(x)$ and $\partial_2F(x)$ have the form $\begin{pmatrix} 0 & C_1 \end{pmatrix}$ and $\begin{pmatrix} 0 & C_2 \end{pmatrix}$ respectively. Thus $B_1$, $B_2$, $C_1$ and $C_2$ are square matrices satisfying the following relations:
\begin{align*}
\odet{d_x\pi_1} &= \norm{\det(B_1)}, & \odet{\partial_1F(x)} &= \norm{\det(C_1)}, \\ \odet{d_x\pi_2} &= \norm{\det(B_2)}, & \odet{\partial_2F(x)} &= \norm{\det(C_2)}.
\end{align*}
Besides the relation $\partial_1F(x) \circ d_x\pi_1 + \partial_2F(x) \circ d_x\pi_2 = 0$ means that $C_1B_1=-C_2B_2$, hence $\norm{\det(C_1)}\norm{\det(B_1)}=\norm{\det(C_2)}\norm{\det(B_2)}$. This proves~\eqref{equation exchanging orthogonal determinant}.
\end{proof}

An immediate consequence of \eqref{equation double fibration formula 1} and \eqref{equation exchanging orthogonal determinant} is the following.

\begin{prop}
\label{proposition double fibration formula}
Let $M_1$ and $M_2$ be two smooth Riemannian manifolds of dimension $n_1$ and $n_2$ respectively. Let $F:M_1 \times M_2 \to \R^r$ be a smooth submersion, and let $\Sigma = F^{-1}(0)$. Let $\phi : \Sigma \to \R$ be a Borel measurable function. Then:
\begin{equation*}
\int_{y_1 \in M_1}\hspace{-1mm} \left( \int_{\pi_1^{-1}(y_1)} \hspace{-1mm}\phi(x) \rmes{y_1} \right)\hspace{-1mm} \rmes{M_1} = \int_{y_2 \in M_2}\hspace{-1mm} \left( \int_{\pi_2^{-1}(y_2)} \hspace{-1mm}\phi(x) \frac{\odet{\partial_2F(x)}}{\odet{\partial_1F(x)}}\rmes{y_2} \right)\hspace{-1mm} \rmes{M_2},
\end{equation*}
whenever one of these integrals is well-defined.
\end{prop}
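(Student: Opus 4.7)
The plan is to assemble Proposition~\ref{proposition double fibration formula} as a direct corollary of the two results that immediately precede it, namely equation~\eqref{equation double fibration formula 1} (itself a double application of Federer's coarea formula, Theorem~\ref{theorem coarea formula}) and Lemma~\ref{lemma exchanging orthogonal determinant}. The former already gives the identity
\[
\int_{y_1 \in M_1} \left( \int_{\pi_1^{-1}(y_1)} \phi(x) \rmes{y_1} \right) \rmes{M_1} = \int_{y_2 \in M_2} \left( \int_{\pi_2^{-1}(y_2)} \phi(x) \frac{\odet{d_x\pi_1}}{\odet{d_x\pi_2}} \rmes{y_2} \right) \rmes{M_2},
\]
so the only step remaining is to rewrite the ratio of orthogonal determinants of the projections as a ratio involving $\partial_1 F$ and $\partial_2 F$.

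Lemma~\ref{lemma exchanging orthogonal determinant} provides exactly this: the pointwise identity $\odet{d_x\pi_1}\,\odet{\partial_1F(x)} = \odet{d_x\pi_2}\,\odet{\partial_2F(x)}$ holds on $\Sigma$, and moreover $\odet{d_x\pi_2}$ and $\odet{\partial_1F(x)}$ vanish on the same subset of $\Sigma$. On the complement of that exceptional set one may divide to obtain
\[
\frac{\odet{d_x\pi_1}}{\odet{d_x\pi_2}} = \frac{\odet{\partial_2F(x)}}{\odet{\partial_1F(x)}},
\]
and substituting into the innermost integral on the right of \eqref{equation double fibration formula 1} yields the announced formula.

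The only point requiring care is that the substitution should not change the value of the fibered integral on account of the exceptional set where the ratios are ill-defined. By Sard's theorem applied to the smooth map $\pi_2:\Sigma \to M_2$, the set of critical values of $\pi_2$ has measure zero in $M_2$, and for every regular value $y_2$ the fiber $\pi_2^{-1}(y_2)$ is a smooth submanifold along which $d_x\pi_2$ is surjective, hence along which $\odet{d_x\pi_2}$ and $\odet{\partial_1 F(x)}$ are both strictly positive; thus on $\rmes{y_2}$-almost every fiber the two integrands coincide, and the integrals match. Since both preliminary ingredients are already in hand, I do not foresee any genuine obstacle: the proof reduces to a short substitution and a standard negligibility check using Sard's theorem.
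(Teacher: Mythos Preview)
Your proposal is correct and matches the paper's approach exactly: the paper states that the proposition is ``an immediate consequence of \eqref{equation double fibration formula 1} and \eqref{equation exchanging orthogonal determinant},'' which is precisely the substitution you carry out, and the Sard-type negligibility remark you make is the same one already noted right after \eqref{equation double fibration formula 1}.
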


\subsection{Proof of Theorem~\ref{theorem Kac-Rice}}
\label{subsection proof of Kac-Rice}

Finally, we prove Theorem~\ref{theorem Kac-Rice}. Let $M$ be a closed Riemannian manifold of dimension $n$ and $V$ be a subspace of $\mathcal{C}^\infty(M,\R^r)$ of dimension $N$ (recall that $1 \leq r \leq n$). We assume that $V$ is $0$-ample, so that $F:(f,x) \mapsto f(x)$ is a smooth submersion from $V\times M$ to $\R^r$ and
\[\Sigma = F^{-1}(0) = \{(f,x) \in V \times M \mid f(x) = 0\}\]
is a submanifold of codimension $r$ of $V\times M$. Let $\dx f$ denote the Lebesgue measure on $V$ or on a subspace of $V$. Let $\phi:\Sigma \to \R$ be a Borel measurable function, by Proposition~\ref{proposition double fibration formula},
\begin{multline*}
\esp{\int_{x \in Z_f} \phi(f,x) \rmes{f}}= \frac{1}{(2\pi)^\frac{N}{2}} \int_{f \in V} \left(\int_{x\in Z_f} \phi(f,x) e^{-\frac{\Norm{f}^2}{2}} \rmes{f} \right) \dx f\\
= \frac{1}{(2\pi)^\frac{N}{2}} \int_{x \in M} \left(\int_{f \in \ker(j_x^0)} \phi(f,x) e^{-\frac{\Norm{f}^2}{2}} \frac{\odet{\partial_2F(x)}}{\odet{\partial_1F(x)}} \dx f \right)\rmes{M}.
\end{multline*}

Recall that for all $x \in M$, $j_x^0 : f \mapsto f(x)$ is onto, since $V$ is $0$-ample. In particular, $\ker(j_x^0)$ has codimension $r$ and $V$ splits as $\ker(j_x^0) \oplus \ker(j_x^0)^\perp$. We recognize the innermost integral to be a conditional expectation given $f(x)=0$ (see Corollary~\ref{corollary conditional expectation}). Thus,
\begin{equation}
\label{eq kac}
\esp{\int_{x \in Z_f} \phi(f,x) \rmes{f}}= \frac{1}{(2\pi)^\frac{r}{2}} \int_{x \in M} \espcond{\phi(f,x) \frac{\odet{\partial_2F(x)}}{\odet{\partial_1F(x)}}}{f(x)=0}\rmes{M}.
\end{equation}

By equation~\eqref{equation partial differentials of F}, $\odet{\partial_2F(x)} = \odet{d_xf}$ and
\begin{equation}
\label{kac2}
\odet{\partial_1F(x)}= \odet{j_x^0}=\sqrt{\det\left(j_x^0 (j_x^0)^*\right)}.
\end{equation}
Since $f \sim \mathcal{N}(0,\Id)$, equation~\eqref{equation pushforward variance operator} shows that $j_x^0j_x^{0*}$ is the variance operator of $f(x) = j_x^0(f)$. Then, by~\eqref{equation var t}, $\det\left(j_x^0 (j_x^0)^*\right) =\det(\var(f(x)))=\det(E(x,x))$. In particular, this quantity does not depend on $f \in V$. Equations~\eqref{eq kac}, \eqref{kac2} and this last equality prove Theorem~\ref{theorem Kac-Rice}.

\bibliographystyle{amsplain}
\bibliography{ExpectedVolumeAndEulerCharacteristic}

\end{document}